\DeclareMathAlphabet{\mathpzc}{OT1}{pzc}{m}{it}
\theoremstyle{plain}
\newcommand{\refnewtheoremn}[4]{
\newaliascnt{#1}{#2}
\newtheorem{#1}[#1]{#3}
\aliascntresetthe{#1}
\expandafter\providecommand\csname #1autorefname\endcsname{#4}}
\newcommand{\refnewtheorem}[3]{\refnewtheoremn{#1}{#2}{#3}{#3}}
\newtheorem{theorem}{Theorem}[section]
\theoremstyle{definition}
\DeclareMathOperator{\Crit}{\operatorname{Crit}}
\DeclareMathOperator{\MCG}{\operatorname{MCG}}
\DeclareMathOperator{\Res}{\operatorname{Res}}
\DeclareMathOperator{\Hom}{\operatorname{Hom}}
\DeclareMathOperator{\PSL}{\operatorname{PSL}}
\DeclareMathOperator{\Cr}{\operatorname{Cr}}
\DeclareMathOperator{\Q}{\operatorname{Q}}
\DeclareMathOperator{\dist}{\operatorname{dist}}
\DeclareMathOperator{\sgn}{\operatorname{sgn}}
\DeclareMathOperator{\Tilt}{\operatorname{Tilt}}
\DeclareMathOperator{\Aut}{\operatorname{Aut}}
\DeclareMathOperator{\Nil}{\operatorname{Nil}}
\DeclareMathOperator{\Tw}{\operatorname{Tw}}
\DeclareMathOperator{\Sph}{\operatorname{Sph}}
\DeclareMathOperator{\Exch}{\operatorname{Exch}}
\DeclareMathOperator{\Tri}{\operatorname{Tri}}
\DeclareMathOperator{\Stab}{\operatorname{Stab}}
\newcommand{\cAut}{\mathpzc{Aut}}
\newcommand{\cSph}{\mathpzc{Sph}}
\renewcommand{\Re}{\operatorname{Re}}
\renewcommand{\Im}{\operatorname{Im}}
\begin{document}

\title{Stability conditions and Teichm\"uller space}
\author{Dylan G.L. Allegretti}

\date{}

\maketitle

\begin{abstract}
We consider a 3-Calabi-Yau triangulated category associated to an ideal triangulation of a marked bordered surface. Using the theory of harmonic maps between Riemann surfaces, we construct a natural map from a component of the space of Bridgeland stability conditions on this category to the enhanced Teichm\"uller space of the surface. We describe a relationship between the central charges of objects in the category and shear coordinates on the Teichm\"uller space.
\end{abstract}

\section{Introduction}

Recently, a remarkable connection has emerged between low-dimensional topology and the theory of triangulated categories. A series of works by Bridgeland and Smith~\cite{BridgelandSmith15}, Haiden, Katzarkov, and Kontsevich~\cite{HKK17}, and Haiden~\cite{Haiden21} has revealed that spaces of stability conditions on various kinds of triangulated categories can be identified with moduli spaces of quadratic differentials on surfaces. Inspired by these developments, a number of authors have proposed categorical analogs of familiar notions from dynamics and Teichm\"uller theory~\cite{BapatDeopurkarLicata20,DHKK14,Fan21,FanFilip20,FFHKL21,FanFuOuchi21}.

In the present paper, we aim to clarify the relationship between the space of stability conditions on a triangulated category and the Teichm\"uller space of a surface. We consider a large class of 3-Calabi-Yau triangulated categories associated to triangulated surfaces in the work of Bridgeland and Smith~\cite{BridgelandSmith15}. Using the theory of harmonic maps between Riemann surfaces, we construct a map from a component of the space of stability conditions on such a category to a version of Teichm\"uller~space known in the literature as the enhanced Teichm\"uller space. In addition, we describe an asymptotic relationship between the central charges of objects in the category and shear coordinates on the enhanced Teichm\"uller space.

The starting point for our construction is a result of Hitchin~\cite{Hitchin87} and Wolf~\cite{Wolf89}, which uses holomorphic quadratic differentials to parametrize the Teichm\"uller space of a compact surface. In the course of proving our main theorems, we obtain a meromorphic generalization of their result. The present paper is also strongly influenced by the work of Gaiotto, Moore, and Neitzke in physics~\cite{GaiottoMooreNeitzke13}. In an earlier series of papers~\cite{Allegretti18,AllegrettiBridgeland20,Allegretti19,Allegretti20,Allegretti21}, we gave a mathematical formalization of their ideas in a particular limit called the conformal limit. In contrast, the present paper can be seen as a first step toward understanding the ideas of Gaiotto, Moore, and Neitzke away from the conformal limit.

By the work of Fock and Goncharov~\cite{FockGoncharov1,FockGoncharov2}, the enhanced Teichm\"uller space arises as the set of $\mathbb{R}_{>0}$-valued points of a certain cluster variety. The latter can be constructed starting from the data of a quiver. Using these facts, we can give a formulation of our main results entirely in terms of the 3-Calabi-Yau triangulated category associated to a quiver with potential. We conjecture that the construction presented here is a special case of a far more general construction valid for such categories.

\subsection{Compact surfaces and holomorphic differentials}

In the following discussion, $S$ will denote a fixed compact Riemann~surface satisfying $\chi(S)<0$. If $C$ is a surface equipped with a complete finite area hyperbolic metric, then a marking of~$C$ by~$S$ is defined to be a diffeomorphism $\psi:S\rightarrow C$. In this case, the pair~$(C,\psi)$ is called a marked hyperbolic surface. Two marked hyperbolic surfaces $(C_1,\psi_1)$ and~$(C_2,\psi_2)$ are considered to be equivalent if there is an isometry $g:C_1\rightarrow C_2$ such that $\psi_2$ is homotopic to~$g\circ\psi_1$, and the Teichm\"uller space $\mathcal{T}(S)$ can be defined as the space of equivalence classes of marked hyperbolic surfaces of this type. It is a real manifold homeomorphic to a Euclidean space of dimension $-3\cdot\chi(S)$.

We will be interested in marked hyperbolic surfaces for which the marking is harmonic. As we will review in Section~\ref{sec:HarmonicMaps}, a map $f:M\rightarrow N$ of Riemannian manifolds is harmonic if it satisfies a certain Euler-Lagrange equation. In particular, we can consider the case where $M$ and~$N$ are Riemann~surfaces and the metrics are given locally by expressions of the form $g|dz|^2$ and $h|dw|^2$ for some positive functions $g$ and $h$, where $z$ and $w$ are complex local coordinates on~$M$ and~$N$, respectively. In this case, a map $f:M\rightarrow N$ is harmonic if and only if it satisfies the partial differential equation 
\[
\frac{\partial^2f}{\partial z\partial\bar{z}}+\frac{1}{h}\frac{\partial h}{\partial w}\frac{\partial f}{\partial z}\frac{\partial f}{\partial\bar{z}}=0.
\]
Note that while this condition depends on the exact form of the metric on~$N$, it only depends on the conformal structure of~$M$. If $M$ and~$N$ are any compact Riemannian manifolds and the metric on~$N$ has negative curvature, then the work of Eells and Sampson~\cite{EellsSampson64} implies that there exists a harmonic map in the homotopy class of any diffeomorphism $M\rightarrow N$. Hartman~\cite{Hartman67} proved that such a map is unique, and Schoen and Yau~\cite{SchoenYau78} and Sampson~\cite{Sampson78} independently proved that it is a diffeomorphism in the case where $M$ and~$N$ are surfaces.

Recall that a holomorphic quadratic differential on a Riemann surface~$S$ is defined as a holomorphic section of~$\omega_S^{\otimes2}$, where $\omega_S$ denotes the holomorphic cotangent bundle of~$S$. If $\psi:S\rightarrow C$ is a harmonic map and $h$ is the metric on~$C$, then the $(2,0)$-part of the pullback $\psi^*h$ defines a holomorphic quadratic differential on~$S$ called the Hopf~differential of~$\psi$. Our goal in this paper is to generalize the following theorem of Wolf.

\begin{theorem}[\cite{Wolf89}]
\label{thm:Wolfcompact}
There is a homeomorphism 
\[
\Phi^S:\mathcal{T}(S)\stackrel{\sim}{\longrightarrow}H^0(S,\omega_S^{\otimes2})
\]
taking a point $(C,\psi)\in\mathcal{T}(S)$ to the Hopf differential of the unique harmonic diffeomorphism homotopic to~$\psi$.
\end{theorem}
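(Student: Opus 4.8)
The plan is to adapt Wolf's argument: transport the statement to a scalar semilinear elliptic equation on the fixed Riemann surface~$S$, and read the theorem off from the unique solvability of that equation. First, $\Phi^{S}$ is well defined: given $(C,\psi)\in\mathcal T(S)$, the theorems of Eells--Sampson, Hartman, and Schoen--Yau and Sampson recalled above provide a unique harmonic diffeomorphism $f\colon S\to C$ homotopic to~$\psi$, and, as already noted, the $(2,0)$-part $\phi\,dz^{2}$ of $f^{*}h$ is a holomorphic section of~$\omega_{S}^{\otimes 2}$; one checks that it depends only on the equivalence class of $(C,\psi)$.

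Next I would reduce the problem to a scalar equation. Equip~$S$ with the complete hyperbolic metric $\sigma|dz|^{2}$ in its conformal class, which exists because $\chi(S)<0$, and for the harmonic map $f\colon(S,\sigma)\to(C,h)$ let $\mathcal H$ and $\mathcal L$ be the holomorphic and anti-holomorphic energy densities; then $\mathcal H\,\mathcal L=|\phi|^{2}$ and the Jacobian $\mathcal H-\mathcal L$ is positive since $f$ is an orientation-preserving diffeomorphism. Writing $u=\tfrac12\log(\mathcal H/\sigma)$ and $\|\phi\|=|\phi|/\sigma$, the Bochner identity for harmonic maps between surfaces (using $K_{C}\equiv K_{\sigma}\equiv-1$) becomes, after normalizing,
\[
\Delta_{\sigma}u=e^{2u}-\|\phi\|^{2}e^{-2u}-1 .
\]
Conversely, from a solution~$u$ of this equation one forms the symmetric $2$-tensor
\[
g_{\phi}=\phi\,dz^{2}+\sigma\bigl(e^{2u}+\|\phi\|^{2}e^{-2u}\bigr)\,|dz|^{2}+\bar\phi\,d\bar z^{2}.
\]
The arithmetic--geometric mean inequality shows $g_{\phi}\ge 0$ with equality only where $e^{2u}=\|\phi\|$, and a maximum-principle argument applied to $2u-\log\|\phi\|$ --- which tends to $+\infty$ along the zero divisor of~$\phi$ --- excludes that locus, so $g_{\phi}$ is a metric; a curvature computation using the displayed equation shows it has constant curvature~$-1$, and since $S$ is compact $g_{\phi}$ is automatically complete and of finite area. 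Finally one verifies that $\mathrm{id}_{S}\colon(S,\sigma)\to(S,g_{\phi})$ is harmonic --- this is exactly where the holomorphicity of~$\phi$ enters, through the relevant Christoffel symbol of~$g_{\phi}$ --- with Hopf differential $\phi\,dz^{2}$; hence $(C_{\phi},\psi_{\phi}):=\bigl((S,g_{\phi}),\mathrm{id}_{S}\bigr)$ is a point of $\mathcal T(S)$ with $\Phi^{S}(C_{\phi},\psi_{\phi})=\phi\,dz^{2}$.

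For each $\phi\in H^{0}(S,\omega_{S}^{\otimes 2})$ the scalar equation has a unique smooth solution, and this yields the bijectivity of $\Phi^{S}$. Uniqueness is the maximum principle: at an interior maximum of $u_{1}-u_{2}$ the left-hand side is $\le 0$ while the right-hand side is strictly increasing in~$u$, forcing $u_{1}\le u_{2}$, and symmetrically. Existence follows from the method of sub- and super-solutions --- large positive and negative constants are barriers, because $\|\phi\|^{2}$ is bounded on~$S$ --- with elliptic regularity; alternatively, minimize the strictly convex coercive functional $\int_{S}\bigl(|\nabla u|^{2}+e^{2u}+\|\phi\|^{2}e^{-2u}-2u\bigr)\,dA_{\sigma}$. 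Surjectivity of $\Phi^{S}$ is then the reconstruction above. Injectivity follows because, if $(C_{1},\psi_{1})$ and $(C_{2},\psi_{2})$ have the same Hopf differential~$\phi$, their harmonic diffeomorphisms yield solutions of the same scalar equation, hence the same~$\mathcal H$ and therefore the same pullback metric on~$S$; then $f_{2}\circ f_{1}^{-1}$ is an isometry $C_{1}\to C_{2}$ carrying~$\psi_{1}$ to a map homotopic to~$\psi_{2}$, so the two classes coincide.

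Finally, continuity of $\Phi^{S}$ follows from continuous dependence of the harmonic map on the target point of $\mathcal T(S)$ together with elliptic regularity, and continuity of $(\Phi^{S})^{-1}$ from continuous dependence of~$u$ --- hence of~$g_{\phi}$ --- on~$\phi$, which the maximum principle provides; since $\mathcal T(S)$ and $H^{0}(S,\omega_{S}^{\otimes 2})$ are both manifolds of real dimension $-3\chi(S)$, it follows that $\Phi^{S}$ is a homeomorphism (alternatively by invariance of domain). The hard part will be the analytic core --- the a priori estimates that make the existence argument work, and carrying the curvature and harmonicity computations for~$g_{\phi}$ through cleanly, i.e.\ checking that the abstract solution of the scalar equation genuinely comes from a harmonic diffeomorphism onto a hyperbolic surface. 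Once the Bochner identity and these estimates are available, the rest is essentially formal.
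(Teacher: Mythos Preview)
The paper does not give its own proof of this theorem: it is stated as a cited result of Wolf~\cite{Wolf89} (equivalently Hitchin~\cite{Hitchin87}) and then generalized. Your sketch is essentially Wolf's original argument --- reduce to the scalar Bochner equation $\Delta_\sigma u = e^{2u} - \|\phi\|^2 e^{-2u} - 1$, solve it by sub/supersolutions or convex minimization, and reconstruct the hyperbolic metric from the solution --- and is correct.

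It is worth noting how the paper's proof of the noncompact generalization (Theorem~\ref{thm:introgeneralizeWolf}) compares. For injectivity the paper uses exactly your Bochner/maximum-principle argument (Lemma~\ref{lem:injective}). But for existence the paper does \emph{not} solve the scalar equation directly; instead it builds the harmonic map as a limit of solutions to Dirichlet problems on a compact exhaustion, with model maps prescribing boundary behavior near the punctures, and controls the limit via uniform energy bounds (Lemmas~\ref{lem:uniformbound}--\ref{lem:convergence}). In the compact case your direct PDE approach is cleaner and self-contained; the exhaustion method is what allows the extension to meromorphic Hopf differentials with prescribed principal parts, where the scalar equation no longer lives on a compact manifold and the barriers you invoke are unavailable.
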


Theorem~\ref{thm:Wolfcompact} is equivalent to a result of Hitchin from~\cite{Hitchin87}. In modern terminology, a quadratic differential determines a Higgs bundle on the Hitchin section. By applying the nonabelian Hodge~correspondence to this Higgs bundle, one obtains a point in the $SL_2(\mathbb{C})$-character variety which can be shown to lie in the Teichm\"uller space. This defines a homeomorphism $H^0(S,\omega_S^{\otimes2})\stackrel{\sim}{\rightarrow}\mathcal{T}(S)$ which is the inverse of~$\Phi^S$.

\subsection{Noncompact surfaces and meromorphic differentials}

In this paper, we generalize Theorem~\ref{thm:Wolfcompact} by considering surfaces of the form 
\begin{equation}
\label{eqn:cuspsboundary}
C=\bar{C}\setminus\left(\bigcup_{i=1}^sD_i\cup\bigcup_{j=1}^t\{p_j\}\right)
\end{equation}
where $\bar{C}$ is a closed oriented surface, $D_1,\dots,D_s\subset\bar{C}$ are open disks whose closures are disjoint, and $p_1,\dots,p_t\in\bar{C}$ are points disjoint from the interiors of the~$D_i$. We equip $C$ with a complete, finite area hyperbolic metric with totally geodesic boundary. By completeness of the metric, there is a cusp neighborhood around each of the deleted points~$p_j$. Note that if a point $p_j$ lies on the boundary of some disk~$D_i$, then the boundary of~$C$ will contain at least one component which is a bi-infinite totally geodesic arc (see Figure~\ref{fig:surface}).

\begin{figure}[ht]
\begin{center}
\begin{tikzpicture}[scale=0.55]
\begin{scope}
\clip[rotate=20](4,-0.5) rectangle (4.25,0.5);
\draw[black, thin, rotate=20] (4,0) ellipse (0.25 and 0.5);
\end{scope}
\begin{scope}
\clip[rotate=20](3.75,-0.5) rectangle (4,0.5);
\draw[black, thin, dotted, rotate=20] (4,0) ellipse (0.25 and 0.5);
\end{scope}
\draw[thin] (-1,0.1) .. controls (-0.75,-0.15) and (-0.5,-0.25) .. (0,-0.25) .. controls (0.5,-0.25) and (0.75,-0.15) .. (1,0.1);
\draw[thin] (-0.875,0) .. controls (-0.75,0.15) and (-0.5,0.25) .. (0,0.25) .. controls (0.5,0.25) and (0.75,0.15) .. (0.875,0);
\draw[thin] (-5,3) .. controls (-2,0.3) and (2,0.5) .. (3.6,1.85);
\draw[thin] (-5,-3) .. controls (-2,-0.3) and (2,-0.5) .. (4,-1.5);
\draw[thin] (3.95,0.9) .. controls (2.75,0.25) and (2,-0.15) .. (4,-1.5);
\draw[black, thin] (-5,-3) .. controls (-2.6,-1) and (-2.6,1) .. (-5,3);
\draw[black, thin] (-5.2,-1) .. controls (-4,-0.6) and (-2.6,0.8) .. (-5,3);
\draw[black, thin] (-5,-3) .. controls (-3.8,-1.6) and (-3,-0.8) .. (-5.2,-1);
\end{tikzpicture}
\end{center}
\caption{A hyperbolic surface of the form~\eqref{eqn:cuspsboundary}.\label{fig:surface}}
\end{figure}
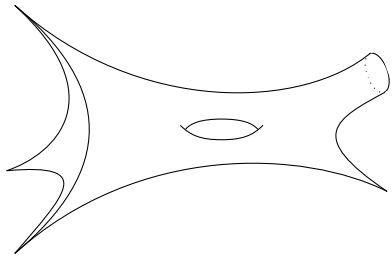

We define a marked bordered surface to be a pair $(\mathbb{S},\mathbb{M})$ consisting of a compact oriented surface~$\mathbb{S}$ with (possibly empty) boundary together with a nonempty finite subset $\mathbb{M}\subset\mathbb{S}$ of marked points such that each component of~$\partial\mathbb{S}$ contains at least one marked point. We will write $\mathbb{P}\subset\mathbb{M}$ for the set of punctures, defined as marked points that lie in the interior of~$\mathbb{S}$. If $C$ is a hyperbolic surface of the form~\eqref{eqn:cuspsboundary}, then a marking of~$C$ by~$(\mathbb{S},\mathbb{M})$ is defined to be a diffeomorphism $\psi:\mathbb{S}\setminus\mathbb{M}\rightarrow C^\circ$ where $C^\circ$ denotes the complement in~$C$ of all boundary components homeomorphic to~$S^1$. Then the pair $(C,\psi)$ is called a marked hyperbolic surface. We can define equivalence of marked hyperbolic surfaces exactly as we did for closed surfaces, and we write $\mathcal{T}(\mathbb{S},\mathbb{M})$ for the space of equivalence classes of marked hyperbolic surfaces of this type.

If $(C,\psi)\in\mathcal{T}(\mathbb{S},\mathbb{M})$ is a marked hyperbolic surface, then we will define a tuple $L=(L_i)$ of numbers, called length data, which are indexed by the punctures and boundary components of~$(\mathbb{S},\mathbb{M})$. If $\partial\mathbb{S}=\emptyset$ then these numbers simply give the lengths of the geodesic boundary components of~$C$, while if $\partial\mathbb{S}\neq\emptyset$ then there is an additional number, called a metric residue, associated to each boundary component of~$\mathbb{S}$. We write 
\[
\mathcal{T}_L(\mathbb{S},\mathbb{M})\subset\mathcal{T}(\mathbb{S},\mathbb{M})
\]
for the set of points with fixed length data given by~$L$.

We also consider a space parametrizing meromorphic quadratic differentials. To define it, let $S$ be a compact Riemann surface and $M=\{p_1,\dots,p_d\}\subset S$ a nonempty finite subset. Fix a local coordinate~$z_i$ defined in a neighborhood of $p_i\in S$ so that $z_i(p_i)=0$. Then for any meromorphic quadratic differential~$\phi$ on~$S$ with poles at the points of~$M$, we define a tuple $P=(P_i(z_i))$ of expressions, called the principal parts of~$\phi$, which are indexed by the points of~$M$. The principal part $P_i(z_i)$ comprises the terms of degree $\leq-1$ in the expansion of a square root $\sqrt{\phi}$ in the variable~$z_i$. The tuple $P=(P_i(z_i))$ determines a corresponding divisor $D=\sum_im_ip_i$ which is the polar divisor for any quadratic differential having principal part~$P_i(z_i)$ at each~$p_i$. We write  
\[
\mathcal{Q}_P(S,M)\subset H^0(S,\omega_S^{\otimes2}(D))
\]
for the space of meromorphic quadratic differentials with fixed principal parts given by~$P$.

As we explain in Section~\ref{sec:CompatibilityConditions}, the principal part $P_i(z_i)$ determines a collection of distinguished tangent directions at~$p_i$ whenever $m_i\geq3$. Using this fact, we can define an associated marked bordered surface $(S_P,M_P)$ in a natural way. The surface $S_P$ is obtained by taking an oriented real blowup of~$S$ at each $p_i\in M$ for which $m_i\geq3$. The distinguished tangent directions determine a collection of marked points on the boundary of the resulting surface, and we define $M_P$ to consist of these points together with all points $p_i\in M$ for which $m_i\leq2$. The tuple $P$ of principal parts also determines a corresponding tuple $L$ of length data, and we write $\mathcal{T}_P(S,M)$ for the space $\mathcal{T}_L(S_P,M_P)$. By construction, $S\setminus M$ can be viewed as a subsurface of $S_P\setminus M_P$, and so if $(C,\psi)\in\mathcal{T}_P(S,M)$, then $\psi$ restricts to a map $S\setminus M\rightarrow C\setminus\partial C$. In Section~\ref{sec:AMapBetweenModuliSpaces}, we prove the following extension of Theorem~\ref{thm:Wolfcompact}.

\begin{theorem}
\label{thm:introgeneralizeWolf}
If $(C,\psi)\in\mathcal{T}_P(S,M)$ then there is a unique harmonic diffeomorphism $F_\psi:S\setminus M\rightarrow C\setminus\partial C$ homotopic to~$\psi$ whose Hopf differential lies in~$\mathcal{Q}_P(S,M)$. There is a homeomorphism 
\[
\Phi_P^{S,M}:\mathcal{T}_P(S,M)\stackrel{\sim}{\longrightarrow}\mathcal{Q}_P(S,M)
\]
taking $(C,\psi)$ to the Hopf differential of this map~$F_\psi$.
\end{theorem}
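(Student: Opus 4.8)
The plan is to adapt Wolf's proof of Theorem~\ref{thm:Wolfcompact}, reducing everything to a single semilinear elliptic equation on the open surface $S\setminus M$, but with a background metric and asymptotic conditions tailored to the poles of the prescribed differential. Fix a conformal background metric $\sigma\,|dz|^2$ on $S\setminus M$ agreeing with a complete hyperbolic cusp metric near each $p_i$ with $m_i\leq2$ and with the flat metric $|\phi|$ near each $p_i$ with $m_i\geq3$. If $f\colon S\setminus M\to C\setminus\partial C$ is an orientation-preserving harmonic diffeomorphism with Hopf differential $\phi\,dz^2$, write $\mathcal{H}$ and $\mathcal{L}$ for its holomorphic and antiholomorphic energy densities measured with~$\sigma$ and the metric~$h$ of~$C$; then $\mathcal{H}>\mathcal{L}$, one has $\mathcal{H}\mathcal{L}=|\phi|^2/\sigma^2$ and $f^*h=\phi\,dz^2+(\mathcal{H}+\mathcal{L})\,\sigma\,|dz|^2+\bar\phi\,d\bar z^2$, and harmonicity together with $\kappa_h\equiv-1$ yields, with $u:=\log\mathcal{H}$,
\[
\Delta_\sigma u=2e^{u}-\frac{2|\phi|^2}{\sigma^2}e^{-u}+2\kappa_\sigma,
\]
$\kappa_\sigma$ and $\Delta_\sigma$ being the curvature and Laplacian of~$\sigma$; near the higher-order poles, where $\sigma=|\phi|$ and $\kappa_\sigma=0$, this is the sinh-Gordon equation $\Delta_\sigma u=4\sinh u$. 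Conversely, a solution $u$ with $\mathcal{H}=e^{u}>\mathcal{L}$ and the correct behaviour at~$M$ reconstructs, through the formula for $f^*h$ and a metric completion, both the target~$C$ and the harmonic map; so everything reduces to solving the displayed equation uniquely for each $\phi\in\mathcal{Q}_P(S,M)$ and controlling how the solution varies with~$\phi$.

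Existence and uniqueness of this solution will follow the pattern of~\cite{Wolf89}, the new content lying near the higher-order poles. There, the local analysis of Section~\ref{sec:CompatibilityConditions} furnishes an explicit harmonic map from a punctured disk around~$p_i$ onto a model hyperbolic end --- a ``crown'' bounded by $m_i-2$ bi-infinite geodesics --- whose shape is dictated by the principal part $P_i(z_i)$, and the logarithm of its holomorphic energy density provides ordered sub- and supersolutions with the required singular profile; near a pole of order $\leq2$ the model is a hyperbolic cusp and this is classical. A sub- and supersolution argument then produces a solution squeezed between these barriers, hence with the prescribed asymptotics at every point of~$M$. Uniqueness is a one-line maximum principle: the difference $v$ of two solutions tends to~$0$ at every end and satisfies $\Delta_\sigma v=c\,v$ with $c>0$ everywhere (both $t\mapsto 2e^{t}$ and $t\mapsto -2|\phi|^2\sigma^{-2}e^{-t}$ being strictly increasing), so $v\equiv0$.

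Given $\phi\in\mathcal{Q}_P(S,M)$ and the solution just obtained, $f^*h$ is a complete, finite-area hyperbolic metric on $S\setminus M$; it has a cusp near each pole of order $\leq2$, and near a pole $p_i$ of order $m_i\geq3$ its metric completion attaches $m_i-2$ bi-infinite totally geodesic arcs, so the completed surface $C_\phi$ has the form~\eqref{eqn:cuspsboundary} and its oriented real blowup carries the marked bordered structure $(S_P,M_P)$. With the tautological marking $\psi_\phi$ this gives a point of $\mathcal{T}(S_P,M_P)$; the lengths of its geodesic boundary components and the metric residues are read off from the leading terms of $f^*h$, which the barriers tie to $P$, and a residue computation identifies the resulting length data with the tuple~$L$ determined by~$P$. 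Hence $(C_\phi,\psi_\phi)\in\mathcal{T}_P(S,M)$, and we obtain a map $\Psi\colon\mathcal{Q}_P(S,M)\to\mathcal{T}_P(S,M)$, $\phi\mapsto(C_\phi,\psi_\phi)$.

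It remains to show that $\Psi$ is a homeomorphism onto $\mathcal{T}_P(S,M)$ with inverse $\Phi_P^{S,M}$. Continuity of $\Psi$ is continuous dependence of the Bochner solution on~$\phi$, by elliptic estimates. Injectivity --- and the uniqueness clause of the theorem --- follow from a maximum-principle argument in the style of Hartman: two harmonic diffeomorphisms $S\setminus M\to C\setminus\partial C$ homotopic to~$\psi$ with Hopf differentials in $\mathcal{Q}_P(S,M)$ have the same asymptotics at the ends, so their distance function is subharmonic and tends to~$0$ at every end, hence vanishes identically, and the maps --- and thus their Hopf differentials --- coincide. Since $\Psi$ is then a continuous injection between manifolds of the same dimension, invariance of domain makes it an open embedding; proving it is proper --- a large $\|\phi\|$ forces $C_\phi$ to degenerate within $\mathcal{T}_P(S,M)$, as in~\cite{Wolf89} --- upgrades this to a homeomorphism onto $\mathcal{T}_P(S,M)$. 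Finally, given $(C,\psi)\in\mathcal{T}_P(S,M)$ the preimage $\Psi^{-1}(C,\psi)$ is the Hopf differential of the harmonic diffeomorphism $F_\psi$ obtained by exhausting $S\setminus M$ by compact subsurfaces $\Omega_n$, solving on each the Dirichlet problem for harmonic maps into the nonpositively curved surface~$C$ with boundary values equal to~$\psi$ away from the higher-order poles and to the attached crown models near them, and passing to the limit --- a diffeomorphism by the results of Schoen--Yau and Sampson~\cite{SchoenYau78,Sampson78} and the a priori barrier estimates; this identifies $\Phi_P^{S,M}$ with $\Psi^{-1}$. The principal obstacle throughout is the analysis at the poles of order $\geq3$: constructing the crown barriers encoding $P_i(z_i)$, establishing sharp decay of $f^*h$ toward the model ends, and matching the metric residues by a residue computation; the interior theory, the cusp analysis, and the uniqueness argument are in essence those of the compact and finite-volume cases treated by Wolf and others.
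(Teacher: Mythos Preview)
Your overall strategy---solve the Bochner equation for $u=\log\mathcal{H}$ with barriers coming from crown models, reconstruct the target metric, and then run injectivity/continuity/properness plus invariance of domain---is a legitimate alternative to the paper's route. The paper instead builds $\Phi_P^{S,M}:\mathcal{T}_P\to\mathcal{Q}_P$ directly: given $(C,\psi)$ it produces the harmonic map $F_\psi$ as a limit of solutions to Dirichlet problems on a compact exhaustion with boundary values dictated by explicit model maps into cusps, boundary circles, and crowns, then proves the resulting $\Phi_P^{S,M}$ is injective, continuous, and proper. Your Bochner-first approach is closer in spirit to Wolf's original argument and to the paper's injectivity proof (its Lemma~\ref{lem:injective} uses exactly the $w_1-w_2\to0$ maximum-principle step you sketch, via Han's estimate near the higher-order poles).

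There is, however, a genuine gap in your uniqueness argument. You assert that for two harmonic diffeomorphisms $F,F':S\setminus M\to C\setminus\partial C$ homotopic to~$\psi$ with Hopf differentials in~$\mathcal{Q}_P(S,M)$, the distance function $\operatorname{dist}(F,F')$ tends to~$0$ at every end. This is only true at the poles with $m_i=0$. At a double pole the target end is a closed geodesic boundary, and at a pole of order $\geq3$ it is a crown; in both cases the end has a nontrivial group of isometries (rotations of the boundary circle, or simultaneous shifts along the crown geodesics), and the available asymptotic estimates give only that $\operatorname{dist}(F,F')$ is \emph{bounded} near such ends, not that it decays. The paper handles this as follows: a bounded subharmonic function on a punctured surface is constant, so $\operatorname{dist}(F,F')\equiv k$; if $k\neq0$ one obtains a nowhere-vanishing vector field on~$C$ tangent to every boundary component, and after doubling along~$\partial C$ this contradicts Poincar\'e--Hopf. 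Without this topological step (or an independent argument that actually pins down the maps at the $m_i\geq2$ ends), your Hartman-style conclusion does not go through, and with it both the uniqueness clause of the theorem and your injectivity argument for~$\Psi$ collapse.
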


Theorem~\ref{thm:introgeneralizeWolf} builds on earlier results of a number of different authors. In particular, Wolf~\cite{Wolf91} proved related results concerning harmonic maps between nodal Riemann surfaces. Lohkamp~\cite{Lohkamp91} proved Theorem~\ref{thm:introgeneralizeWolf} in the special case where the Hopf differentials have at most simple poles. Gupta~\cite{Gupta19} proved the case where the Hopf differentials have only poles of order~$\geq3$, and  Sagman~\cite{Sagman19} further studied the case where the Hopf differentials have double poles. From the perspective of the theory of Higgs bundles, Theorem~\ref{thm:introgeneralizeWolf} is also closely related to the work of Simpson~\cite{Simpson90}, Biswas, Ad\'es-Gastesi, and Govindarajan~\cite{BAGG97}, Sabbah~\cite{Sabbah99}, and Biquard and Boalch~\cite{BiquardBoalch04}.

\subsection{Signed differentials and enhanced Teichm\"uller space}

To formulate our other results, we will need to define various objects naturally associated to a marked bordered surface. Firstly, if $(\mathbb{S}_1,\mathbb{M}_1)$ and $(\mathbb{S}_2,\mathbb{M}_2)$ are marked bordered surfaces, then an isomorphism $(\mathbb{S}_1,\mathbb{M}_1)\rightarrow(\mathbb{S}_2,\mathbb{M}_2)$ is defined to be an orientation preserving diffeomorphism $\mathbb{S}_1\rightarrow\mathbb{S}_2$ that induces a bijection $\mathbb{M}_1\cong\mathbb{M}_2$ of the marked points. Two isomorphisms are called isotopic if they are homotopic through isomorphisms, and the mapping class group $\MCG(\mathbb{S},\mathbb{M})$ is defined as the group of all isotopy classes of isomorphisms $(\mathbb{S},\mathbb{M})\rightarrow(\mathbb{S},\mathbb{M})$. This group acts on the set~$\mathbb{P}\subset\mathbb{M}$ of punctures, and we define the signed mapping class group to be the corresponding semidirect product $\MCG^\pm(\mathbb{S},\mathbb{M})=\MCG(\mathbb{S},\mathbb{M})\ltimes\mathbb{Z}_2^{\mathbb{P}}$.

If $\phi$ is a meromorphic quadratic differential with at least one pole on a compact Riemann surface~$S$, then the pair $(S,\phi)$ determines an associated marked bordered surface $(\mathbb{S},\mathbb{M})$. It is the surface $(S_P,M_P)$ considered above where $P$ is the tuple of principal parts of~$\phi$. This marked bordered surface is in fact independent of the local coordinates used to define the principal parts. For any marked bordered surface $(\mathbb{S},\mathbb{M})$, we can define a moduli space $\mathcal{Q}(\mathbb{S},\mathbb{M})$ parametrizing triples $(S,\phi,\theta)$ where $\phi$ is a meromorphic quadratic differential on a compact Riemann surface~$S$, and $\theta$ is an isomorphism from~$(\mathbb{S},\mathbb{M})$ to the marked bordered surface determined by~$(S,\phi)$. Following Bridgeland and Smith~\cite{BridgelandSmith15}, we require the differential~$\phi$ to satisfy some mild conditions making it into what is known as a Gaiotto-Moore-Neitzke (GMN) differential (see Definition~\ref{def:GMNdifferential} below). If we assume that $|\mathbb{M}|\geq3$ whenever $\mathbb{S}$ has genus zero, then $\mathcal{Q}(\mathbb{S},\mathbb{M})$ has the natural structure of a complex manifold.

Suppose $\phi$ is a quadratic differential on a Riemann surface~$S$ with a double pole at some point $p\in S$. If $z$ is a local coordinate defined in a neighborhood of~$p$ with $z(p)=0$, then the principal part of $\phi$ with respect to the local coordinate is equivalent to the residue of~$\phi$ at~$p$. The latter is defined as $\Res_p(\phi)=\pm4\pi\mathrm{i}\sqrt{a_p}$ where $a_p$ is the leading coefficient in the Laurent expansion of~$\phi$ in the variable~$z$. The residue is independent of the choice of local coordinate and is defined only up to a sign. We define a signing for~$\phi$ to be a choice of one of the two values for the residue at each pole of order two, and we define a signed differential to be a quadratic differential equipped with a signing. There is a branched cover 
\[
\mathcal{Q}^\pm(\mathbb{S},\mathbb{M})\rightarrow\mathcal{Q}(\mathbb{S},\mathbb{M})
\]
where the fiber over a point $(S,\phi,\theta)$ parametrizes the choices of signing for~$\phi$. This cover has degree $2^{|\mathbb{P}|}$ and is branched over the locus of points $(S,\phi,\theta)$ such that $\phi$ has at least one simple pole. There is a natural action of the group $\MCG^\pm(\mathbb{S},\mathbb{M})$ on this space $\mathcal{Q}^\pm(\mathbb{S},\mathbb{M})$ where the $\mathbb{Z}_2^{\mathbb{P}}$ factor acts by changing the signing.

In addition to the space $\mathcal{Q}(\mathbb{S},\mathbb{M})$, we consider the Teichm\"uller space $\mathcal{T}(\mathbb{S},\mathbb{M})$ defined as before. We will see that it has the structure of an orbifold whose real dimension is half that of $\mathcal{Q}(\mathbb{S},\mathbb{M})$. There is a branched cover 
\[
\mathcal{T}^\pm(\mathbb{S},\mathbb{M})\rightarrow\mathcal{T}(\mathbb{S},\mathbb{M})
\]
where the fiber over a marked hyperbolic surface $(C,\psi)$ corresponds to a choice of orientation for every boundary component of~$C$ that is homeomorphic to~$S^1$. This cover has degree $2^{|\mathbb{P}|}$ and is branched over the locus of points $(C,\psi)$ where the surface $C$ has at least one cusp. The space $\mathcal{T}^\pm(\mathbb{S},\mathbb{M})$ is a real manifold known in the literature as the enhanced Teichm\"uller space. There is a natural action of $\MCG^\pm(\mathbb{S},\mathbb{M})$ on the enhanced Teichm\"uller space where the $\mathbb{Z}_2^{\mathbb{P}}$ factor acts by changing the choice of orientations for boundary components. In Section~\ref{sec:AMapBetweenModuliSpaces}, we use Theorem~\ref{thm:introgeneralizeWolf} to prove the following result.

\begin{theorem}
\label{thm:introPsipm}
Let $(\mathbb{S},\mathbb{M})$ be a marked bordered surface, and if $\mathbb{S}$ has genus zero, assume $|\mathbb{M}|\geq3$. Then there is an $\MCG^\pm(\mathbb{S},\mathbb{M})$-equivariant continuous map 
\[
\Psi^\pm:\mathcal{Q}^\pm(\mathbb{S},\mathbb{M})\rightarrow\mathcal{T}^\pm(\mathbb{S},\mathbb{M})
\]
from the space of signed differentials to the enhanced Teichm\"uller space.
\end{theorem}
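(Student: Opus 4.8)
The plan is to build $\Psi^\pm$ pointwise out of the harmonic maps supplied by Theorem~\ref{thm:introgeneralizeWolf}, and then to check that this recipe is well defined, equivariant, and continuous. A point of $\mathcal{Q}^\pm(\mathbb{S},\mathbb{M})$ is represented by a tuple $(S,\phi,\theta)$ together with a signing $\epsilon$, where $\phi$ is a GMN differential on a compact Riemann surface $S$ with poles at a finite set $M$ and $\theta$ is an isomorphism from $(\mathbb{S},\mathbb{M})$ onto the marked bordered surface $(S_P,M_P)$ determined by $(S,\phi)$. Choosing local coordinates at the points of $M$ produces principal parts $P$, so that $\phi\in\mathcal{Q}_P(S,M)$, together with associated length data $L$ and the identification $\mathcal{T}_P(S,M)=\mathcal{T}_L(S_P,M_P)$. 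Applying the inverse of the homeomorphism $\Phi_P^{S,M}$ to $\phi$ yields a marked hyperbolic surface $(C,\psi)\in\mathcal{T}_P(S,M)$; transporting it through $\theta$ gives a point of $\mathcal{T}_L(\mathbb{S},\mathbb{M})\subset\mathcal{T}(\mathbb{S},\mathbb{M})$. Finally I would lift this point to the enhanced Teichm\"uller space using $\epsilon$: under the correspondence between $(S,\phi)$ and $(\mathbb{S},\mathbb{M})$, the poles of order two of $\phi$ match the boundary components of $C$ homeomorphic to $S^1$, and, via the relation between the residue $\Res_p(\phi)=\pm4\pi\mathrm{i}\sqrt{a_p}$ and the boundary and metric residue data described in Section~\ref{sec:CompatibilityConditions}, a choice of residue at such a pole is precisely a choice of orientation of the corresponding geodesic boundary circle. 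Thus $\epsilon$ singles out a point of $\mathcal{T}^\pm(\mathbb{S},\mathbb{M})$ lying over the point of $\mathcal{T}(\mathbb{S},\mathbb{M})$ just obtained, and this point is $\Psi^\pm(S,\phi,\theta,\epsilon)$.

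Next I would dispatch well-definedness and equivariance, both of which should be formal. The marked bordered surface $(S_P,M_P)$, the space $\mathcal{T}_P(S,M)$, and the length data $L$ are independent of the chosen local coordinates, and the recipe is natural in $(S,\phi,\theta,\epsilon)$, so it descends to isomorphism classes. At a simple pole there is no signing to choose, the pole contributes a cusp rather than an $S^1$-boundary to $C$, and a cusp carries no orientation datum; hence the construction is compatible with the branching of both covers $\mathcal{Q}^\pm(\mathbb{S},\mathbb{M})\to\mathcal{Q}(\mathbb{S},\mathbb{M})$ and $\mathcal{T}^\pm(\mathbb{S},\mathbb{M})\to\mathcal{T}(\mathbb{S},\mathbb{M})$ over, respectively, the loci of simple poles and of cusps. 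For equivariance, an element of $\MCG(\mathbb{S},\mathbb{M})$ acts by precomposing $\theta$ (and $\psi$) with an automorphism, which passes transparently through the construction, while the $\mathbb{Z}_2^{\mathbb{P}}$ factor changes $\epsilon$ without changing $\phi$, hence changes only the chosen boundary orientations, which is exactly its action on $\mathcal{T}^\pm(\mathbb{S},\mathbb{M})$.

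The main work, and the step I expect to be the real obstacle, is continuity. Theorem~\ref{thm:introgeneralizeWolf} provides a homeomorphism only for a \emph{fixed} tuple of principal parts, whereas $P$ — and even the orders of the poles at punctures — varies as one moves within $\mathcal{Q}^\pm(\mathbb{S},\mathbb{M})$, so one needs a parametrized version in which the harmonic diffeomorphism $F_\psi$, and therefore the hyperbolic surface $C$, depends continuously on $(S,\phi)$ and on the target metric. On a stratum where the pole orders are fixed this should follow from the uniqueness clause of Theorem~\ref{thm:introgeneralizeWolf} together with a priori estimates and a compactness argument for the harmonic map equation — equivalently, from continuity of the nonabelian Hodge correspondence in families of Higgs bundles — and the signing-to-orientation assignment is locally constant there. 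The delicate point is continuity across the branch locus, where an order-two pole degenerates to a simple pole: the leading coefficient $a_p$ and hence the residue tend to zero, the corresponding boundary geodesic of $C$ must shrink, and in the limit both signings give the same cuspidal configuration. Showing that the hyperbolic surfaces converge geometrically and that the resulting family is continuous for the (branched) topology of $\mathcal{T}^\pm(\mathbb{S},\mathbb{M})$, rather than merely convergent pointwise, is where the effort lies; I would expect to combine continuity of the length-data assignment $P\mapsto L$ with convergence results for harmonic maps into hyperbolic surfaces developing cusps.
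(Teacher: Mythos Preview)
Your construction of $\Psi^\pm$, and your treatment of well-definedness and equivariance, match the paper's exactly (Sections~5.4--5.5): build $\Psi$ on the base by applying $(\Phi_P^{S,M})^{-1}$ and transporting through $\theta$, then lift using the sign of $\Re(r_p)$ to orient the corresponding boundary circle.

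The only substantive difference is in how continuity is organized. You place the difficulty at the branch locus of $\mathcal{Q}^\pm\to\mathcal{Q}$, anticipating an analysis of harmonic maps into hyperbolic targets that develop cusps. The paper instead first proves continuity of the \emph{base} map $\Psi:\mathcal{Q}(\mathbb{S},\mathbb{M})\to\mathcal{T}(\mathbb{S},\mathbb{M})$ (Proposition~\ref{prop:Psicontinuous}); since simple and double poles both correspond to punctures of $(\mathbb{S},\mathbb{M})$, the double-to-simple degeneration is already absorbed there, and continuity of the lift $\Psi^\pm$ then follows in one line from the local structure of the two branched covers (Proposition~\ref{prop:Psipmcontinuous}). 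For the key step---continuity of $\Psi$ with the complex structure held fixed (Lemma~\ref{lem:continuityfixedcomplexstructure})---the paper does not run a direct compactness argument for harmonic maps into degenerating targets. Instead it uses an inverse-function trick: for each $\rho\in\mathcal{T}(\mathbb{S},\mathbb{M})$ choose principal parts $P_\rho$ varying continuously with $\rho$, so that $\rho\mapsto\Phi_{P_\rho}^{S,M}(\rho)$ is a continuous injection into the space of differentials, hence locally a homeomorphism onto its image, and $\Psi$ is its left inverse. This sidesteps the cusp-formation analysis you anticipate, at the cost of needing the forward map $\Phi$ (the Hopf-differential assignment) to vary continuously with the principal-part data, which follows from the harmonic-map convergence arguments already established in Section~\ref{sec:HarmonicMaps}. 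Your proposed direct route would also work, but the paper's detour through $\Phi$ lets it reuse Lemmas~\ref{lem:injective} and~\ref{lem:continuous} rather than proving a new degeneration statement.
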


We conjecture that the map $\Psi^\pm$ of Theorem~\ref{thm:introPsipm} is actually real analytic. It should be possible to prove this by extending the methods from Section~5 of~\cite{Wolf91} using the analytic implicit function theorem. We leave this as a problem for future research.

\subsection{Periods and shear coordinates}

If $\phi$ is a GMN~differential on a compact Riemann surface~$S$, then $\phi$ determines a double cover $\pi:\Sigma_\phi\rightarrow S$ on which the square root $\sqrt{\phi}$ is a well defined meromorphic 1-form. This double cover is known as the spectral cover for~$\phi$. Let $\Sigma_\phi^\circ$ be the surface obtained from~$\Sigma_\phi$ by deleting the preimages of all poles of~$\phi$ of order~$\geq2$, and let $\tau:\Sigma_\phi^\circ\rightarrow\Sigma_\phi^\circ$ be the natural involution interchanging the two sheets of the double cover. Then the set 
\[
\widehat{H}(\phi)=\{\gamma\in H_1(\Sigma_\phi^\circ,\mathbb{Z}):\tau(\gamma)=-\gamma\}
\]
is a lattice called the hat homology of~$\phi$, and the map 
\[
Z_\phi:\widehat{H}(\phi)\rightarrow\mathbb{C}, \quad Z_\phi(\gamma)=\int_\gamma\sqrt{\phi}
\]
is a homomorphism called the period map. If we choose $\phi$ to be sufficiently generic (more precisely, if it is complete and saddle-free in the sense defined below), then $\phi$ determines an associated basis $\{\gamma_i\}$ of the hat homology lattice whose elements $\gamma_i$ are called standard saddle classes. In particular, we can evaluate the periods $Z_\phi(\gamma_i)\in\mathbb{C}$ of these basis elements. As shown in Section~4 of~\cite{BridgelandSmith15}, these periods can be used to define local coordinates on moduli spaces of quadratic differentials.

One can also construct interesting coordinates on the enhanced Teichm\"uller space. We first define an ideal triangulation of a marked bordered surface $(\mathbb{S},\mathbb{M})$ to be a triangulation of~$\mathbb{S}$ whose vertices are the points of~$\mathbb{M}$. We will also consider the slightly more refined concept of a tagged triangulation introduced by Fomin, Shapiro, and Thurston~\cite{FominShapiroThurston08}. To each arc~$\alpha$ of a tagged triangulation, we associate a function $X_\alpha:\mathcal{T}^\pm(\mathbb{S},\mathbb{M})\rightarrow\mathbb{R}_{>0}$ called a \emph{cluster coordinate} or \emph{Fock-Goncharov coordinate}. These functions are closely related to Thurston's shear coordinates and provide a global parametrization of $\mathcal{T}^\pm(\mathbb{S},\mathbb{M})$. As observed by Fock and Goncharov~\cite{FockGoncharov1,FockGoncharov2}, they transform by cluster Poisson transformations when we change the triangulation.

Now suppose we are given a quadratic differential $\phi$ in the space $\mathcal{Q}^\pm(\mathbb{S},\mathbb{M})$. If we choose~$\phi$ generically, then there is an associated tagged triangulation $\tau(\phi)$ of $(\mathbb{S},\mathbb{M})$ called the tagged WKB~triangulation. Its arcs are in bijection with the standard saddle classes, and so we can use the same symbol $\gamma$ to denote a standard saddle class and the corresponding arc of~$\tau(\phi)$. For each $R>0$, we will write $X_{\phi,\gamma}(R)$ for the cluster coordinate of $\Psi^\pm(R^2\cdot\phi)$ with respect to this arc~$\gamma$. Then in Section~\ref{sec:AsymptoticProperty}, we prove the following result.

\begin{theorem}
\label{thm:introclusterasymptotics}
Taking notation as above, we have 
\[
X_{\phi,\gamma}(R)\cdot\exp(R\cdot\Re Z_\phi(\gamma))\rightarrow1 \quad \text{as $R\rightarrow\infty$}.
\]
\end{theorem}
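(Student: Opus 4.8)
The plan is to extract the Fock--Goncharov coordinate $X_{\phi,\gamma}(R)$ directly from the developing map of the enhanced hyperbolic surface $\Psi^\pm(R^2\cdot\phi)$, and to control that developing map by means of the high-energy behaviour of the harmonic diffeomorphism $F_R$ with Hopf differential $R^2\cdot\phi$. Since $\phi$ is complete and saddle-free it has no simple poles, so $\tau(\phi)=\tau(R^2\cdot\phi)$ is an honest ideal triangulation and each arc $\gamma$ is the core of a unique horizontal strip $H_\gamma$, the two triangles of $\tau(\phi)$ adjacent to $\gamma$ being the ones that contain in their interiors the two simple zeros $z_-,z_+$ of $\phi$ lying on the two boundary lines of $H_\gamma$. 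Fixing a natural coordinate $\zeta=\int\sqrt\phi$ on $H_\gamma$, so that $\phi=d\zeta^2$ and $H_\gamma=\{\,0<\Im\zeta<h_\gamma\,\}$ with $\zeta(z_-)$ on the line $\Im\zeta=0$ and $\zeta(z_+)$ on the line $\Im\zeta=h_\gamma$, the standard saddle class satisfies $Z_\phi(\gamma)=\pm 2(\zeta(z_+)-\zeta(z_-))$. Thus $\Im Z_\phi(\gamma)=\pm2h_\gamma\neq0$ --- which is precisely why $\gamma$ is an edge of $\tau(\phi)$ --- while $\Re Z_\phi(\gamma)=\pm2(\Re\zeta(z_+)-\Re\zeta(z_-))$ is twice the horizontal offset between the two zeros.

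Next I would reduce the statement to locating four ideal points. A lift of $\gamma$ together with its two adjacent triangles develops to an ideal quadrilateral in $\mathbb H^2$ with vertices $a,b,c,d$ in cyclic order, where $a$ and $c$ are the poles at the two ends of $H_\gamma$, so that $\gamma=\overline{ac}$, and $b,d$ are the third vertices of the triangles on the $z_-$-side and the $z_+$-side respectively; by definition $X_{\phi,\gamma}(R)$ is the corresponding cross-ratio, which after conjugating by an element of $\mathrm{PSL}_2(\mathbb R)$ so that $a=0$ and $c=\infty$ becomes $X_{\phi,\gamma}(R)=-b/d$. The theorem is then equivalent to the assertion that, as $R\to\infty$, the feet of the perpendiculars dropped from $b$ and $d$ onto the developed arc $\gamma$ (the imaginary axis) lie at Euclidean heights $\kappa\, e^{2R\,\Re\zeta(z_-)}(1+o(1))$ and $\kappa\, e^{2R\,\Re\zeta(z_+)}(1+o(1))$ for a common constant $\kappa>0$, with $b$ and $d$ on opposite sides of that axis; this gives $X_{\phi,\gamma}(R)=e^{-2R(\Re\zeta(z_+)-\Re\zeta(z_-))}(1+o(1))=e^{-R\,\Re Z_\phi(\gamma)}(1+o(1))$ once the orientation conventions are matched. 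It is crucial that the error be a multiplicative $1+o(1)$, equivalently additive $o(1)$ after taking logarithms: since $X_{\phi,\gamma}(R)$ itself grows or decays exponentially in $R$, only the sharp asymptotics of $F_R$ --- and not a mere Gromov--Hausdorff or measured-foliation limit, which would control only $\tfrac1R\log X_{\phi,\gamma}(R)$ --- will do.

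For the required precision I would analyse $F_R$ via the usual reduction to a scalar equation. Writing the pulled-back hyperbolic metric in terms of its holomorphic and antiholomorphic energy densities $\mathcal H_R$ and $\mathcal L_R$ (so $\mathcal H_R\mathcal L_R=R^4|\phi|^2$) and setting $u_R=\tfrac12\log(\mathcal H_R/(R^2|\phi|))$, the Bochner identity for target curvature $-1$ becomes a sinh--Gordon equation $\Delta u_R=4R^2|\phi|\sinh(2u_R)$, and a maximum-principle argument as in Wolf~\cite{Wolf91} and Gupta~\cite{Gupta19} yields $u_R=O(e^{-cR})$ on any fixed compact subset of $S\setminus M$ disjoint from the zeros of $\phi$. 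Consequently, on the core of $H_\gamma$ the map $F_R$ agrees up to error $O(e^{-cR})$ with the model map that collapses the vertical direction, so that the point of $\gamma$ with $\Re\zeta=x$ is sent to Euclidean height proportional to $e^{2Rx}$ on the imaginary axis of $\mathbb H^2$; near each simple zero $z_\pm$ one instead compares $F_R$ with the explicit model harmonic map attached to $\phi=z\,dz^2$ on the scale $R^{-2/3}$ on which the zero is resolved. Tracking the developing map along the third separatrix emanating from $z_\pm$ out to the pole $b$, respectively $d$, then locates these vertices with the accuracy claimed above, and computing $-b/d$ finishes the proof.

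I expect the main difficulty to lie in this last matching step: upgrading the well-understood leading-order picture of high-energy harmonic maps to an estimate that pins down $\log X_{\phi,\gamma}(R)$ to within an additive $o(1)$, uniformly in $R$, and in particular carrying the estimate through the two simple zeros, where the local model changes and where the separatrix structure of $\phi$ --- rather than just the flat metric $|\phi|$ --- enters the position of the ideal vertices $b$ and $d$.
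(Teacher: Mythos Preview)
Your strategy is sound and uses the same underlying analytic input as the paper (the sinh--Gordon reduction and the exponential decay of $u_R$ away from zeros, which is what lies behind the paper's Proposition~\ref{prop:horizontalverticallength}), but you have chosen a harder path, and the difficulty you flag at the end is real for your approach yet entirely absent from the paper's.

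The paper avoids the matching problem at the zeros by never letting the analysis go near them. Instead of locating the ideal vertices $b,d$ by following separatrices \emph{through} the zeros, it surrounds the lifted saddle connection by an octagon $c_{\widetilde\alpha}$ whose eight sides are alternately horizontal and vertical for $\phi$ and stay at $\phi$-distance of order $R$ from both zeros (Figure~\ref{fig:octagon}). It then invokes Penner's identity (Lemma~\ref{lem:lambdalengthcrossratio}) expressing the cross ratio as $\exp\bigl(\tfrac12\sum_i(-1)^i\mu_i\bigr)$, where the $\mu_i$ are horocyclic distances along the four sides of the ideal quadrilateral; this alternating sum is independent of the choice of horocycles. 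Choosing the horocycles to be the (approximately horocyclic) images of the four short vertical sides $\rho_i$ of the octagon, the $\mu_i$ become the hyperbolic lengths of the images of the four horizontal sides $\sigma_i$, and Proposition~\ref{prop:horizontalverticallength} gives $\mu_i=2L_i(R)+O(e^{-kR})$ with $L_i(R)$ the $R^2\phi$-flat length of $\sigma_i$. An elementary residue computation (Lemma~\ref{lem:contourintegral}) shows $\sum_i(-1)^iL_i(R)=R\cdot\Re\int_{c_{\widetilde\alpha}}\sqrt\phi=R\cdot\Re Z_\phi(\gamma)$, and the theorem follows. No model at the zeros, no $R^{-2/3}$ scale, no matching.

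So the key idea you are missing is not analytic but geometric: the horocycle-independence of the alternating lambda-length sum lets one read off $\log X_{\phi,\gamma}(R)$ from data supported entirely in the region where the flat-to-hyperbolic comparison is already sharp. Your approach, by contrast, tries to pin down the absolute positions of $b$ and $d$ up to multiplicative $1+o(1)$, which genuinely requires controlling $F_R$ near the zeros. That can probably be done along the lines you sketch, but it is substantially more work than the paper's argument. You also do not address the case where $\gamma$ is the internal edge of a self-folded triangle (where $X_{\phi,\gamma}=Y_\alpha Y_\beta$), nor the verification that the spiraling direction of the image leaves at a double pole matches the signing defining the tagged triangulation (the paper's Lemma~\ref{lem:spiralingdirection}); both are needed to identify your cross ratio with the cluster coordinate $X_{\phi,\gamma}$.
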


Theorem~\ref{thm:introclusterasymptotics} confirms a conjecture of Gaiotto, Moore, and Neitzke~\cite{GaiottoMooreNeitzke13}, formulated precisely in equation~(3.19) of~\cite{DumasNeitzke20}. It is an analog of Theorem~1.5 in~\cite{Allegretti19}. In physical terms, the construction in~\cite{Allegretti19} is the conformal limit of the construction presented here.

\subsection{Categorical interpretation}

In the final two sections of this paper, we interpret our results in categorical terms. As we will review in Section~\ref{sec:TriangulatedCategories}, one can associate to any 2-acyclic quiver with potential $(Q,W)$ a corresponding 3-Calabi-Yau triangulated category $\mathcal{D}(Q,W)$. Explicitly, it is defined as the full subcategory of the derived category of modules over the Ginzburg algebra of~$(Q,W)$ consisting of modules with finite-dimensional cohomology~\cite{KellerYang11}. The resulting category is equipped with a canonical bounded t-structure whose heart is a full abelian subcategory $\mathcal{A}(Q,W)\subset\mathcal{D}(Q,W)$ encoding the quiver~$Q$.

To see other abelian subcategories of~$\mathcal{D}=\mathcal{D}(Q,W)$, we use the operation of tilting. Given a finite length heart $\mathcal{A}\subset\mathcal{D}$ and a simple object $S\in\mathcal{A}$, there is a notion of tilting with respect to~$S$ to get a new heart~$\mathcal{A}'\subset\mathcal{D}$. We define $\Tilt(\mathcal{D})$ to be the graph whose vertices are in bijection with the finite length hearts in~$\mathcal{D}$, where two vertices are connected by an edge if the associated hearts are related by tilting at a simple object. The group $\Aut(\mathcal{D})$ of autoequivalences of~$\mathcal{D}$ acts naturally on $\Tilt(\mathcal{D})$. This graph has a distinguished connected component $\Tilt_\Delta(\mathcal{D})\subset\Tilt(\mathcal{D})$ containing the distinguished heart $\mathcal{A}(Q,W)$, and we will write $\Aut_\Delta(\mathcal{D})\subset\Aut(\mathcal{D})$ for the subgroup that preserves this distinguished component. We will write $\cAut_\Delta(\mathcal{D})$ for the quotient of $\Aut_\Delta(\mathcal{D})$ by the subgroup of autoequivalences that act trivially on $\Tilt_\Delta(\mathcal{D})$. The spherical twist functors introduced by Seidel and Thomas~\cite{SeidelThomas01} generate a subgroup which we denote $\cSph_\Delta(\mathcal{D})\subset\cAut_\Delta(\mathcal{D})$. The quotient 
\[
\Exch_\Delta(\mathcal{D})=\Tilt_\Delta(\mathcal{D})/\cSph_\Delta(\mathcal{D})
\]
is known in cluster theory as the exchange graph, while 
\[
\mathcal{G}_\Delta(\mathcal{D})=\cAut_\Delta(\mathcal{D})/\cSph_\Delta(\mathcal{D})
\]
is known as the cluster modular group.

The space of stability conditions on a triangulated category was introduced by Bridgeland in~\cite{Bridgeland07}. It is a complex manifold which carries a natural action of the group of autoequivalences. A point in the stability manifold for a category~$\mathcal{D}$ is specified by a heart $\mathcal{A}\subset\mathcal{D}$ together with a group homomorphism $Z:K(\mathcal{A})\rightarrow\mathbb{C}$ satisfying certain requirements. For the categories $\mathcal{D}=\mathcal{D}(Q,W)$ that we consider, the space of stability conditions contains a distinguished connected component which we denote $\Stab_\Delta(\mathcal{D})$. The group $\cSph_\Delta(\mathcal{D})$ acts on $\Stab_\Delta(\mathcal{D})$, and we consider the quotient 
\[
\Sigma(Q,W)=\Stab_\Delta(\mathcal{D})/\cSph_\Delta(\mathcal{D}).
\]
We define a second space $\mathcal{T}(Q)$ by associating a copy of the real manifold $\mathbb{R}_{>0}^n$ to each vertex $t\in\Exch_\Delta(\mathcal{D})$ and then identifying two manifolds by a cluster transformation whenever the associated vertices are connected by an edge. Here $n$ is the rank of the Grothendieck group $K(\mathcal{D})$. The space $\mathcal{T}(Q)$ obtained in this way is the set of $\mathbb{R}_{>0}$-valued points of a cluster variety associated to the quiver~$Q$. We refer to it as the enhanced Teichm\"uller space of~$Q$. For every $t\in\Exch_\Delta(\mathcal{D})$ and $\gamma\in K(\mathcal{D})$, one has a function $X_\gamma:\mathcal{T}(Q)\rightarrow\mathbb{R}_{>0}$ defined using cluster coordinates.

We will apply the above constructions in a class of examples arising from triangulated surfaces. If $(\mathbb{S},\mathbb{M})$ is a marked bordered surface satisfying mild conditions and $\tau$ is a tagged triangulation of~$(\mathbb{S},\mathbb{M})$, then the work of Labardini-Fragoso~\cite{LabardiniFragoso08,LabardiniFragoso16} provides an associated quiver with potential $(Q,W)=(Q(\tau),W(\tau))$, well defined up to a suitable notion of equivalence. We can then consider the associated 3-Calabi-Yau triangulated category $\mathcal{D}=\mathcal{D}(Q,W)$, which was studied in the work of Bridgeland and Smith~\cite{BridgelandSmith15}. Assuming $(\mathbb{S},\mathbb{M})$ satisfies an amenability condition from~\cite{BridgelandSmith15}, one has isomorphisms $\Sigma(Q,W)\cong\mathcal{Q}^\pm(\mathbb{S},\mathbb{M})$ and $\mathcal{T}(Q)\cong\mathcal{T}^\pm(\mathbb{S},\mathbb{M})$. By combining these facts with Theorem~\ref{thm:introPsipm}, we deduce the following result in Section~\ref{sec:TheMainResults}.

\begin{theorem}
\label{thm:intromain1}
Let $(Q,W)$ be the quiver with potential associated to a tagged triangulation of an amenable marked bordered surface, and let $\mathcal{D}=\mathcal{D}(Q,W)$ be the associated 3-Calabi-Yau triangulated category. Then there is a $\mathcal{G}_\Delta(\mathcal{D})$-equivariant continuous map 
\[
\widehat{\Psi}:\Sigma(Q,W)\rightarrow\mathcal{T}(Q)
\]
from the space of stability conditions to the enhanced Teichm\"uller space.
\end{theorem}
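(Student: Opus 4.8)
The plan is to deduce Theorem~\ref{thm:intromain1} from Theorem~\ref{thm:introPsipm} by transporting the map $\Psi^\pm$ through the identifications recalled above. Fix the amenable marked bordered surface $(\mathbb{S},\mathbb{M})$ and a tagged triangulation giving rise to $(Q,W)$. By the work of Bridgeland and Smith~\cite{BridgelandSmith15}, the amenability hypothesis yields a homeomorphism $\Sigma(Q,W)\cong\mathcal{Q}^\pm(\mathbb{S},\mathbb{M})$, while the results of Fock and Goncharov~\cite{FockGoncharov1,FockGoncharov2}, together with the identification of $\Exch_\Delta(\mathcal{D})$ with the tagged flip graph of $(\mathbb{S},\mathbb{M})$ via~\cite{FominShapiroThurston08,BridgelandSmith15}, yield a homeomorphism $\mathcal{T}(Q)\cong\mathcal{T}^\pm(\mathbb{S},\mathbb{M})$ matching the cluster coordinates $X_\gamma$ with the Fock--Goncharov coordinates $X_\alpha$. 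I would begin the proof by recalling both of these isomorphisms in the precise form needed, pinning down the dictionary between the combinatorial data on the two sides: finite length hearts in $\Tilt_\Delta(\mathcal{D})$ correspond to tagged triangulations, simple objects of a heart correspond to tagged arcs, and so on.

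With these identifications in hand I would define $\widehat{\Psi}$ to be the composite
\[
\Sigma(Q,W)\;\xrightarrow{\ \sim\ }\;\mathcal{Q}^\pm(\mathbb{S},\mathbb{M})\;\xrightarrow{\ \Psi^\pm\ }\;\mathcal{T}^\pm(\mathbb{S},\mathbb{M})\;\xrightarrow{\ \sim\ }\;\mathcal{T}(Q).
\]
Continuity is then immediate, since the two outer maps are homeomorphisms and $\Psi^\pm$ is continuous by Theorem~\ref{thm:introPsipm}. What remains is the equivariance statement. Here I would use the fact that, for a quiver with potential arising from a triangulated surface, the cluster modular group $\mathcal{G}_\Delta(\mathcal{D})=\cAut_\Delta(\mathcal{D})/\cSph_\Delta(\mathcal{D})$ is canonically isomorphic to the signed mapping class group $\MCG^\pm(\mathbb{S},\mathbb{M})$, and that under this isomorphism the action of $\mathcal{G}_\Delta(\mathcal{D})$ on $\Sigma(Q,W)$ (resp.\ on $\mathcal{T}(Q)$) corresponds to the action of $\MCG^\pm(\mathbb{S},\mathbb{M})$ on $\mathcal{Q}^\pm(\mathbb{S},\mathbb{M})$ (resp.\ on $\mathcal{T}^\pm(\mathbb{S},\mathbb{M})$). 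Granting this, the $\mathcal{G}_\Delta(\mathcal{D})$-equivariance of $\widehat{\Psi}$ follows formally from the $\MCG^\pm(\mathbb{S},\mathbb{M})$-equivariance of $\Psi^\pm$.

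The main obstacle is therefore not the construction of $\widehat{\Psi}$, which is essentially formal once the dictionary is in place, but rather the verification that the Bridgeland--Smith and Fock--Goncharov identifications are equivariant for the correct group actions. Concretely, one must check that the isomorphism $\Sigma(Q,W)\cong\mathcal{Q}^\pm(\mathbb{S},\mathbb{M})$ intertwines the natural action of $\cAut_\Delta(\mathcal{D})$ — which descends to an action of $\mathcal{G}_\Delta(\mathcal{D})$ after quotienting by $\cSph_\Delta(\mathcal{D})$, the latter acting trivially by the very definition of $\Sigma(Q,W)$ — with the action of $\MCG^\pm(\mathbb{S},\mathbb{M})$ by relabelling marked points and changing signings, and likewise on the Teichm\"uller side. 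This amounts to tracing carefully through the constructions in~\cite{BridgelandSmith15} and~\cite{FockGoncharov1,FockGoncharov2}; I would isolate it as a lemma identifying $\mathcal{G}_\Delta(\mathcal{D})$ with $\MCG^\pm(\mathbb{S},\mathbb{M})$ compatibly with all relevant actions, after which Theorem~\ref{thm:intromain1} follows at once.
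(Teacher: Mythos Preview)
Your proposal is correct and matches the paper's approach essentially verbatim: the paper defines $\widehat{\Psi}$ as the composite of $\Psi^\pm$ with the isomorphisms $\Sigma(Q,W)\cong\mathcal{Q}^\pm(\mathbb{S},\mathbb{M})$ and $\mathcal{T}(Q)\cong\mathcal{T}^\pm(\mathbb{S},\mathbb{M})$, then deduces continuity from Proposition~\ref{prop:Psipmcontinuous} and equivariance from the $\MCG^\pm(\mathbb{S},\mathbb{M})$-equivariance of $\Psi^\pm$ together with the equivariance of those two identifications. The supporting equivariance lemmas you anticipate are exactly Theorems~\ref{thm:graphgroup}, \ref{thm:stabquad}, and~\ref{thm:clusterteich} in the paper, quoted from~\cite{Allegretti21}.
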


Finally, for a generic point $\sigma=(\mathcal{A},Z)$ of~$\Stab_\Delta(\mathcal{D})$, we consider the 1-parameter family of stability conditions $\sigma_R=(\mathcal{A},R\cdot Z)$ for $R>0$. The heart $\mathcal{A}$ determines a vertex $t\in\Exch_\Delta(\mathcal{D})$, and hence for any $\gamma\in K(\mathcal{D})$, we have a function $X_\gamma:\mathcal{T}(Q)\rightarrow\mathbb{R}_{>0}$. We will write $X_{\sigma,\gamma}(R)=X_\gamma(\widehat{\Psi}(\sigma_R))$. Using Theorem~\ref{thm:introclusterasymptotics}, we prove the following in Section~\ref{sec:TheMainResults}.

\begin{theorem}
\label{thm:intromain2}
Take notation as in the last paragraph. Then 
\[
X_{\sigma,\gamma}(R)\cdot\exp(R\cdot\Re Z(\gamma))\rightarrow1 \quad \text{as $R\rightarrow\infty$}.
\]
\end{theorem}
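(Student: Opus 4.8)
The plan is to deduce Theorem~\ref{thm:intromain2} from Theorem~\ref{thm:introclusterasymptotics} by transporting the statement along the isomorphisms $\Sigma(Q,W)\cong\mathcal{Q}^\pm(\mathbb{S},\mathbb{M})$ and $\mathcal{T}(Q)\cong\mathcal{T}^\pm(\mathbb{S},\mathbb{M})$ that are in force under the amenability hypothesis. First I would fix the marked bordered surface $(\mathbb{S},\mathbb{M})$ and tagged triangulation giving rise to $(Q,W)$, and invoke the dictionary of Bridgeland and Smith~\cite{BridgelandSmith15}: under the isomorphism $\Sigma(Q,W)\cong\mathcal{Q}^\pm(\mathbb{S},\mathbb{M})$, a generic stability condition $\sigma=(\mathcal{A},Z)$ corresponds to a signed GMN~differential $\phi$ in such a way that rescaling the central charge by $R$ corresponds to rescaling the differential by $R^2$; that is, $\sigma_R=(\mathcal{A},R\cdot Z)$ maps to $R^2\cdot\phi$. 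Moreover, under this correspondence the Grothendieck group $K(\mathcal{D})$ is identified with the hat homology lattice $\widehat{H}(\phi)$, the heart $\mathcal{A}$ (equivalently its vertex $t\in\Exch_\Delta(\mathcal{D})$) is identified with the tagged WKB~triangulation $\tau(\phi)$, and for a class $\gamma\in K(\mathcal{D})$ the central charge $Z(\gamma)$ equals the period $Z_\phi(\gamma)$.

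Next I would match the cluster coordinate functions on the two sides. Since $\mathcal{T}(Q)\cong\mathcal{T}^\pm(\mathbb{S},\mathbb{M})$ is an isomorphism of cluster varieties compatible with the identification of vertices of $\Exch_\Delta(\mathcal{D})$ with tagged triangulations, the function $X_\gamma:\mathcal{T}(Q)\rightarrow\mathbb{R}_{>0}$ attached to $t$ and $\gamma$ corresponds to the Fock-Goncharov coordinate $X_\alpha:\mathcal{T}^\pm(\mathbb{S},\mathbb{M})\rightarrow\mathbb{R}_{>0}$ associated to the arc $\alpha$ of $\tau(\phi)$ that corresponds to the standard saddle class $\gamma$ under the bijection between arcs and standard saddle classes. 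Combining this with the commutativity of $\widehat{\Psi}$ with $\Psi^\pm$ under the isomorphisms (which is exactly how $\widehat{\Psi}$ is constructed in Theorem~\ref{thm:intromain1}), we get
\[
X_{\sigma,\gamma}(R)=X_\gamma(\widehat{\Psi}(\sigma_R))=X_\alpha(\Psi^\pm(R^2\cdot\phi))=X_{\phi,\gamma}(R),
\]
where the last equality is the definition of $X_{\phi,\gamma}(R)$ given just before Theorem~\ref{thm:introclusterasymptotics}. Likewise $Z(\gamma)=Z_\phi(\gamma)$, so $\Re Z(\gamma)=\Re Z_\phi(\gamma)$.

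Having made these identifications, the conclusion is immediate: Theorem~\ref{thm:introclusterasymptotics} states precisely that $X_{\phi,\gamma}(R)\cdot\exp(R\cdot\Re Z_\phi(\gamma))\rightarrow 1$ as $R\rightarrow\infty$, and substituting the equalities above turns this into $X_{\sigma,\gamma}(R)\cdot\exp(R\cdot\Re Z(\gamma))\rightarrow 1$, which is the desired statement. The one point requiring care — and the step I expect to be the main obstacle — is verifying that the bijections are mutually compatible in the strong sense needed here: that the \emph{same} standard saddle class $\gamma$ simultaneously indexes the element of $K(\mathcal{D})$, the arc of the WKB~triangulation, and the basis element of $\widehat{H}(\phi)$ whose period appears in the exponential, with no sign discrepancy or relabeling between the categorical and the differential-geometric descriptions. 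This amounts to carefully tracking the normalizations in the isomorphisms $\Sigma(Q,W)\cong\mathcal{Q}^\pm(\mathbb{S},\mathbb{M})$ and $\mathcal{T}(Q)\cong\mathcal{T}^\pm(\mathbb{S},\mathbb{M})$ as set up in Section~\ref{sec:TheMainResults}, and checking that genericity of $\sigma$ corresponds to genericity of $\phi$ (completeness and saddle-freeness) so that the WKB~triangulation, the standard saddle classes, and hence all the objects in Theorem~\ref{thm:introclusterasymptotics} are defined. Once that bookkeeping is in place, no further analysis is needed.
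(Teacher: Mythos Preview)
Your proposal is correct and follows essentially the same approach as the paper: transport the asymptotic statement of Theorem~\ref{thm:introclusterasymptotics} (equivalently Theorem~\ref{thm:clusterasymptotics}) through the isomorphisms of Theorems~\ref{thm:stabquad} and~\ref{thm:clusterteich}, using the Bridgeland--Smith dictionary to match $\sigma_R$ with $R^2\cdot\phi$, $K(\mathcal{D})$ with $\widehat{H}(\phi)$, $Z$ with $Z_\phi$, and the cluster coordinates on both sides. The compatibility bookkeeping you flag as the main obstacle is exactly what the paper records in the remarks following Theorems~\ref{thm:stabquad} and~\ref{thm:clusterteich}.
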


The statements in Theorems~\ref{thm:intromain1} and~\ref{thm:intromain2} make sense for the 3-Calabi-Yau triangulated category constructed from any nondegenerate quiver with potential, even when the quiver with potential does not arise from a triangulated surface. We conjecture that Theorems~\ref{thm:intromain1} and~\ref{thm:intromain2} are special cases of more general results valid for such categories.

\subsection*{Acknowledgements.}
I am grateful to Subhojoy~Gupta, Andrew~Neitzke, and Michael~Wolf for answering questions related to the content of this paper.

\section{Meromorphic quadratic differentials}
\label{sec:MeromorphicQuadraticDifferentials}

In this section, we define various moduli spaces parametrizing meromorphic quadratic differentials on Riemann surfaces.

\subsection{Basic definitions}

Let $S$ be a Riemann surface. Then a meromorphic \emph{quadratic differential} on~$S$ is defined to be a meromorphic section of~$\omega_S^{\otimes2}$ where $\omega_S$ is the holomorphic cotangent bundle of~$S$. In terms of a local coordinate $z$ on~$S$, a quadratic differential $\phi$ can be expressed as 
\[
\phi(z)=\varphi(z)dz^{\otimes2}
\]
where $\varphi(z)$ is a meromorphic function in the local coordinate.

By a \emph{critical point} of a quadratic differential~$\phi$, we mean either a zero or a pole of~$\phi$. We will denote by $\Crit(\phi)$ the set of all critical points of~$\phi$. It is useful to classify the critical points into two types. The first type is a \emph{finite critical point}, which is defined as a zero or simple pole. The second type is an \emph{infinite critical point}, defined as a pole of order $\geq2$. We will denote by $\Crit_{<\infty}(\phi)$ and $\Crit_\infty(\phi)$ the sets of finite and infinite critical points, respectively. In this paper, we sometimes restrict attention to quadratic differentials satisfying additional conditions:

\begin{definition}[\cite{BridgelandSmith15}, Definition~2.1]
\label{def:GMNdifferential}
A quadratic differential $\phi$ defined on a compact, connected Riemann surface is called a \emph{Gaiotto-Moore-Neitzke (GMN) differential} if 
\begin{enumerate}
\item $\phi$ has no zero of order~$>1$.
\item $\phi$ has at least one pole.
\item $\phi$ has at least one finite critical point.
\end{enumerate}
A GMN differential is said to be \emph{complete} if it has no simple poles.
\end{definition}

Let $\phi$ be a quadratic differential on~$S$. In a neighborhood of any point of~$S\setminus\Crit(\phi)$, there is a local coordinate~$w$, well defined up to transformations of the form $w\mapsto\pm w+\text{constant}$, with respect to which the quadratic differential can be written $\phi(w)=dw^{\otimes2}$. Indeed, if we have $\phi(z)=\varphi(z)dz^{\otimes2}$ for some local coordinate~$z$, then we can take $w=\int\sqrt{\varphi(z)}dz$. These distinguished local coordinates determine two important geometric structures on the surface. On the one hand, by pulling back the standard Euclidean metric on~$\mathbb{C}$ using the distinguished local coordinates, we get a flat metric defined on $S\setminus\Crit(\phi)$. On the other hand, by pulling back the horizontal lines $\Im(w)=\text{constant}$ using the distinguished local coordinates, we get a foliation of $S\setminus\Crit(\phi)$ called the \emph{horizontal~foliation}.

\subsection{Marked bordered surfaces}
\label{sec:MarkedBorderedSurfaces}

In the following, we would like to consider differentials on surfaces of a fixed topological type with varying complex structure. With this in mind, we define a \emph{marked bordered surface} to be a pair~$(\mathbb{S},\mathbb{M})$ where $\mathbb{S}$ is a compact connected oriented smooth surface with (possibly empty) boundary, and $\mathbb{M}\subset\mathbb{S}$ is a nonempty finite set of marked points such that each component of~$\partial\mathbb{S}$ contains at least one marked point. We will refer to marked points in the interior of~$\mathbb{S}$ as \emph{punctures} and denote the set of all punctures by~$\mathbb{P}\subset\mathbb{M}$.

Given a marked bordered surface $(\mathbb{S},\mathbb{M})$, it is sometimes convenient to consider the surface $\mathbb{S}'$ obtained by taking the oriented real blowup of~$\mathbb{S}$ at each point of~$\mathbb{P}$. The resulting surface has an additional boundary component with no marked points corresponding to each puncture of~$\mathbb{S}$. If we write $k_i$ for the number of marked points on the $i$th boundary component of this modified surface~$\mathbb{S}'$ and write $g=g(\mathbb{S})$ for the genus of~$\mathbb{S}$, then we can define 
\begin{equation}
\label{eqn:dimension}
n=6-6g+\sum_i(k_i+3).
\end{equation}
As we will see below, this number appears as the dimension of various natural moduli spaces associated to the marked bordered surface $(\mathbb{S},\mathbb{M})$.

If $(\mathbb{S}_1,\mathbb{M}_1)$ and $(\mathbb{S}_2,\mathbb{M}_2)$ are marked bordered surfaces, then an \emph{isomorphism} $(\mathbb{S}_1,\mathbb{M}_1)\rightarrow(\mathbb{S}_2,\mathbb{M}_2)$ is defined to be an orientation preserving diffeomorphism $\mathbb{S}_1\rightarrow\mathbb{S}_2$ which induces a bijection $\mathbb{M}_1\cong\mathbb{M}_2$. Two such isomorphisms are said to be \emph{isotopic} if they are homotopic through maps $\mathbb{S}_1\rightarrow\mathbb{S}_2$ which also induce bijections $\mathbb{M}_1\cong\mathbb{M}_2$. The group of all isotopy classes of isomorphisms from a marked bordered surface $(\mathbb{S},\mathbb{M})$ to itself is called the \emph{mapping class group} of~$(\mathbb{S},\mathbb{M})$ and denoted $\MCG(\mathbb{S},\mathbb{M})$.

\subsection{Marked quadratic differentials}

Let $\phi$ be a quadratic differential on~$S$ and let $p\in S$ be a pole of~$\phi$ of order $m\geq3$. If $z$ is a local coordinate defined in a neighborhood of~$p$ such that $z(p)=0$, then we can write 
\[
\phi(z)=\left(a_0z^{-m}+a_1z^{-m+1}+\dots\right)dz^{\otimes2}.
\]
Then the \emph{asymptotic horizontal directions} of~$\phi$ at~$p$ are the $m-2$ tangent vectors at~$p$ which are tangent to the rays defined by the condition $a_0\cdot z^{2-m}\in\mathbb{R}_{>0}$. The name comes from the fact that there is a neighborhood $p\in U\subset S$ such that any leaf of the horizontal foliation that enters~$U$ eventually tends to~$p$ and is asymptotic to one of the asymptotic horizontal directions. In particular, it follows that these directions are independent of the choice of local coordinate.

If we are given a quadratic differential $\phi$ on a compact Riemann surface $S$ with at least one pole, then we can construct an associated marked bordered surface $(\mathbb{S},\mathbb{M})$ by the following procedure. To define the underlying smooth surface~$\mathbb{S}$, we take an oriented real blowup of~$S$ at each pole of~$\phi$ of order $\geq3$. The asymptotic horizontal directions of~$\phi$ determine a collection of points on the boundary of the resulting smooth surface, and we define the set $\mathbb{M}$ to consist of these points together with the poles of~$\phi$ of order~$\leq2$, considered as punctures.

Now suppose we fix a marked bordered surface $(\mathbb{S},\mathbb{M})$. If $\phi$ is a quadratic differential on~$S$, then a \emph{marking} of~$(S,\phi)$ by~$(\mathbb{S},\mathbb{M})$ is defined as an isotopy class of isomorphisms from $(\mathbb{S},\mathbb{M})$ to the marked bordered surface determined by~$\phi$.  A \emph{marked quadratic differential} $(S,\phi,\theta)$ is defined as a quadratic differential~$\phi$ on $S$ together with a marking~$\theta$ of~$(S,\phi)$ by~$(\mathbb{S},\mathbb{M})$. Two marked quadratic differentials $(S_1,\phi_1,\theta_1)$ and $(S_2,\phi_2,\theta_2)$ are considered to be equivalent if there exists a biholomorphism $f:S_1\rightarrow S_2$ that preserves the differentials $\phi_i$ and commutes with the markings~$\theta_i$ in the obvious way.

\subsection{Moduli spaces}
\label{sec:ModuliSpaces}

For any marked bordered surface $(\mathbb{S},\mathbb{M})$, we will write $\mathcal{Q}(\mathbb{S},\mathbb{M})$ for the set of equivalence classes of GMN~differentials together with a marking by~$(\mathbb{S},\mathbb{M})$. It is important to note that if $(S,\phi,\theta)\in\mathcal{Q}(\mathbb{S},\mathbb{M})$ then a puncture of~$(\mathbb{S},\mathbb{M})$ may correspond via~$\theta$ to a pole of~$\phi$ of order one or two, and therefore $(\mathbb{S},\mathbb{M})$ does not uniquely determine the orders of the poles of~$\phi$. The mapping class group $\MCG(\mathbb{S},\mathbb{M})$ acts naturally on~$\mathcal{Q}(\mathbb{S},\mathbb{M})$ by changing the marking of a differential in this set. We will now describe a natural complex manifold structure on this set $\mathcal{Q}(\mathbb{S},\mathbb{M})$ such that the mapping class group acts by biholomorphisms.

We begin by defining our notation. Let us write $g=g(\mathbb{S})$ for the genus of~$\mathbb{S}$ and $d$ for the number of boundary components of the surface~$\mathbb{S}'$ defined in Section~\ref{sec:MarkedBorderedSurfaces}. We will also write $k_1,\dots,k_d$ for the integers encoding the number of marked points on the boundary components of this surface~$\mathbb{S}'$ and set $m_i=k_i+2$. Note that these integers $m_i$ can be viewed as the pole orders for a quadratic differential with associated marked bordered surface $(\mathbb{S},\mathbb{M})$.

If we write $B=\mathcal{T}(g,d)$ for the Teichm\"uller space parametrizing marked Riemann surfaces of genus~$g$ with $d$ marked points, then there is a universal curve $\pi:X\rightarrow B$ whose fiber over $b\in B$ is the corresponding Riemann surface~$X(b)$, and this universal curve comes with $d$ disjoint sections $p_1,\dots,p_d$. We form the effective divisor 
\[
D=\sum_im_i\cdot D_i, \quad D_i=p_i(B)\subset X
\]
and write $D(b)=\sum_im_i\cdot p_i(b)$ for its restriction to~$X(b)$. Then there is a vector bundle $q:E\rightarrow B$ whose fiber over $b\in B$ is the vector space 
\[
E_b=H^0(X(b),\omega_{X(b)}^{\otimes2}(D(b))).
\]
There is a bundle over $E$ with discrete fibers parametrizing the possible markings of a differential by~$(\mathbb{S},\mathbb{M})$, and $\mathcal{Q}(\mathbb{S},\mathbb{M})$ is identified with the subset of the total space of this bundle consisting of GMN~differentials where the marking agrees, after blowing down all boundary components of~$\mathbb{S}$, with the marking of the corresponding point in the Teichm\"uller space~$B$. In this way, the set $\mathcal{Q}(\mathbb{S},\mathbb{M})$ acquires a topology.

\begin{proposition}[\cite{Allegretti21}, Proposition~6.2]
\label{prop:differentialsmanifold}
Let $(\mathbb{S},\mathbb{M})$ be a marked bordered surface, and if $g(\mathbb{S})=0$, assume that $|\mathbb{M}|\geq3$. Then the space $\mathcal{Q}(\mathbb{S},\mathbb{M})$ has the structure of a complex manifold of dimension~$n$ given by~\eqref{eqn:dimension}.
\end{proposition}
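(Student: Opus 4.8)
The plan is to exhibit $\mathcal{Q}(\mathbb{S},\mathbb{M})$ locally as an open subset of a complex vector bundle over Teichm\"uller space, and then to compute the rank. First I would recall that $B=\mathcal{T}(g,d)$ is a complex manifold of dimension $3g-3+d$ (for $g\geq 1$; the case $g=0$ with $d=|\mathbb{M}|\geq 3$ gives dimension $d-3$), and that the universal curve $\pi:X\to B$ and its sections $p_1,\dots,p_d$ are holomorphic. The divisor $D=\sum_i m_i D_i$ is then a holomorphic family of divisors on the fibers, and $\omega_{X/B}^{\otimes 2}(D)$ is a line bundle on the total space of the universal curve. The sheaf $\pi_*\bigl(\omega_{X/B}^{\otimes 2}(D)\bigr)$ is the object whose fiber over $b\in B$ is $E_b=H^0(X(b),\omega_{X(b)}^{\otimes 2}(D(b)))$. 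The first substantive step is to check that this sheaf is locally free of the expected rank, which amounts to verifying that $h^0(X(b),\omega_{X(b)}^{\otimes 2}(D(b)))$ is constant in~$b$; by Grauert's theorem on cohomology and base change, constancy of this dimension is enough to conclude that $q:E\to B$ is a holomorphic vector bundle.

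The rank computation is Riemann--Roch on a fixed fiber $X(b)$, a compact Riemann surface of genus~$g$. Writing $K$ for a canonical divisor, $\deg K=2g-2$, so $\deg\bigl(2K+D(b)\bigr)=2(2g-2)+\sum_i m_i$. Since each $m_i=k_i+2\geq 2$, the degree of $2K+D(b)$ exceeds $2g-2$ whenever $g\geq 1$ (and likewise in the genus-zero case under the stated hypothesis $|\mathbb{M}|\geq 3$), so $H^1(X(b),\omega^{\otimes 2}(D(b)))=H^0(X(b),\omega^{-1}(-D(b)))=0$ by Serre duality — the inverse bundle has negative degree. Riemann--Roch then gives
\[
h^0\bigl(X(b),\omega_{X(b)}^{\otimes 2}(D(b))\bigr)=2(2g-2)+\sum_i m_i-g+1=3g-3+\sum_i m_i.
\]
Adding $\dim B=3g-3+d$ gives $\dim\mathcal{Q}(\mathbb{S},\mathbb{M})=6g-6+\sum_i m_i+d=6g-6+\sum_i(k_i+3)$, which matches $n$ as defined in~\eqref{eqn:dimension} after substituting the sign convention there; a bookkeeping check confirms $6g-6+\sum_i(k_i+3)=6-6g+\sum_i(k_i+3)$ has the opposite genus sign, so here I would be careful to note that $\chi$-type conventions force $n=6-6g+\sum_i(k_i+3)$ and re-derive the fiber dimension accordingly — this is purely a matter of matching conventions and does not affect the argument.

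The remaining ingredients are then formal. The locus of GMN differentials is open in the total space of $E$: conditions (1)–(3) of Definition~\ref{def:GMNdifferential} are open (having only simple zeros, having at least one pole of the prescribed orders, and having at least one finite critical point are each stable under small perturbation, using that the pole orders are pinned down to be exactly $m_i$ by the construction), so passing to this open subset does not change the dimension. The bundle of markings over $E$ has discrete fibers, hence the total space of its restriction to the GMN locus is a local homeomorphism onto that locus and inherits the structure of a complex manifold of the same dimension; the compatibility condition between the marking of the differential and the marking of the underlying point of $B$ simply selects a union of sheets. Finally, the $\MCG(\mathbb{S},\mathbb{M})$-action permutes these sheets and covers the (holomorphic) mapping-class-group action on $B$, hence acts by biholomorphisms.

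The main obstacle is the base-change/local-freeness step: one must ensure $h^0$ really is locally constant over all of~$B$, including near the boundary behavior of Teichm\"uller space, and that the construction of $E$ as a holomorphic bundle (not merely a topological one) is legitimate. This is exactly where the hypothesis ruling out $g=0$ with $|\mathbb{M}|<3$ enters: it guarantees the vanishing $H^1(X(b),\omega^{\otimes 2}(D(b)))=0$ uniformly, which is what makes cohomology-and-base-change apply cleanly. Once that vanishing is in hand, everything else is standard deformation theory of curves with marked points together with the openness observations above; I would expect the bulk of the write-up to be the careful statement of the base-change argument and the verification that the GMN conditions are open.
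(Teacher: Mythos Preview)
The paper does not prove this proposition; it is quoted from \cite{Allegretti21}, so there is no in-paper argument to compare against. Your outline is the standard one (and is essentially what the cited reference does): exhibit $\mathcal{Q}(\mathbb{S},\mathbb{M})$ as an open subset of the total space of a holomorphic vector bundle over Teichm\"uller space, compute the rank by Riemann--Roch, and check that the GMN and exact-pole-order conditions are open.

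There is, however, a genuine gap. You write ``the case $g=0$ with $d=|\mathbb{M}|\geq 3$'', but $d$ is the number of poles (equivalently, the number of boundary components of $\mathbb{S}'$), not the number of marked points. These are different: a disk with three marked points on its boundary has $g=0$, $d=1$, $|\mathbb{M}|=3$. The hypothesis $|\mathbb{M}|\geq 3$ does \emph{not} force $d\geq 3$, so for $g=0$ and $d\in\{1,2\}$ the Teichm\"uller space $\mathcal{T}(0,d)$ is not a manifold of dimension $d-3$; rather $(\mathbb{P}^1,d\text{ points})$ has a positive-dimensional automorphism group ($2$-dimensional for $d=1$, $1$-dimensional for $d=2$), and one must quotient the fibre $E_b$ by this action to get the correct count. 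Once you do, the dimensions match: e.g.\ for $g=0$, $d=1$, $m_1=k_1+2$, one has $h^0(\mathbb{P}^1,\mathcal{O}(m_1-4))=m_1-3$, and subtracting the $2$-dimensional automorphism group gives $m_1-5=k_1-3=n$. Your write-up needs either a separate treatment of these small cases or a reformulation in terms of moduli stacks where the ``virtual'' dimension $3g-3+d$ is uniformly correct.

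A minor point: you are right that the sign on the genus term in~\eqref{eqn:dimension} is a typo in the paper --- the formula should read $n=6g-6+\sum_i(k_i+3)$, as is clear from the later occurrences (e.g.\ Lemma~\ref{lem:dimfixedboundarylength}) and from the fact that $n$ must be nonnegative. Your Riemann--Roch computation giving $6g-6+\sum_i(k_i+3)$ is correct; there is no convention mismatch to reconcile.
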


\subsection{Principal parts}

Let $\phi$ be a quadratic differential on~$S$ and suppose $p\in S$ is a pole of~$\phi$ of order~$m\geq3$. We will also fix a local coordinate $z$ defined in an open neighborhood $p\in U\subset S$ such that $z(p)=0$. If $m$ is even, then we can write down a square root of~$\phi$ given at any point of~$U$ by an expression of the form 
\[
\frac{1}{z^{m/2}}\left(f(z)+z^{\frac{m}{2}}g(z)\right)dz
\]
where $f(z)$ is a polynomial of degree $\leq\frac{m-2}{2}$ and $g(z)$ is a holomorphic function. On the other hand, if $m$ is odd, then after choosing a square root $z^{1/2}$ of~$z$, we can write down a square root of~$\phi$ given at a point of~$U$ by an expression 
\[
\frac{1}{z^{m/2}}\left(f(z)+z^{\frac{m-2}{2}}g(z)\right)dz
\]
where $f(z)$ is a polynomial of degree $\leq\frac{m-3}{2}$ and $g(z)$ is a holomorphic function. In either case, the expression $\pm\frac{1}{z^{m/2}}f(z)dz$ will be called the \emph{principal part} of~$\phi$ at~$p$. Note that this is well defined only up to a sign, and in general it depends on the choice of local coordinate~$z$.

In the case where the pole $p$ has order $m\leq2$, the principal part of~$\phi$ at~$p$ is simply defined as $\pm\sqrt{a_p}\,dz/z$ where $a_p$ is the coefficient of $z^{-2}$ in the Laurent expansion of~$\phi$ near~$p$. In this case, the coefficient $a_p$ is called the \emph{leading coefficient} of~$\phi$ at~$p$ and can be shown to be independent of the choice of local coordinate. The associated quantity 
\[
\Res_p(\phi)=\pm4\pi\mathrm{i}\sqrt{a_p}
\]
is known as the \emph{residue} of~$\phi$ at~$p$. Like the principal part, it is well defined only up to a sign.

\subsection{Prescribing the principal parts}
\label{sec:PrescribingThePrincipalParts}

Let $z$ be a local coordinate on a compact Riemann surface~$S$. In view of the above discussion, it is natural to define a \emph{principal differential} in~$z$ to be an expression of the form 
\[
z^{-\varepsilon}\left(c_rz^{-r}+c_{r-1}z^{r-1}+\dots+c_1z^{-1}\right)dz
\]
where $\varepsilon\in\{0,\frac{1}{2}\}$ and the $c_i$ are complex numbers. Suppose we are given a finite subset $M=\{p_1,\dots,p_d\}\subset S$ and a choice of local coordinate $z_i$ in a neighborhood of each~$p_i$ so that $z_i(p_i)=0$. If $P=(P_i(z_i))$ is a tuple of principal differentials in these local coordinates, then we get a divisor $D=\sum_im_ip_i$ where $m_i$ is the order of the pole of the quadratic differential~$\phi_i(z_i)=P_i(z_i)^{\otimes2}$ at the point $z_i=0$. We define 
\[
\mathcal{Q}_P(S,M)\subset H^0(S,\omega_S^{\otimes2}(D))
\]
to be the set of quadratic differentials having principal part $P_i(z_i)$ at the point~$p_i$ for each index~$i$. Note that if $m_i\leq2$, then this condition is equivalent to fixing the leading coefficient at~$p_i$.

\begin{lemma}
\label{lem:dimfixedprincipalpart}
Take notation as in the last paragraph. Suppose 
\[
q=6g-6+\sum_is_i\geq 0
\]
where $g=g(S)$ and we write $s_i=m_i+1$ if $m_i$ is odd and $s_i=m_i$ if $m_i$ even. Then there is a homeomorphism $\mathcal{Q}_P(S,M)\cong\mathbb{R}^q$.
\end{lemma}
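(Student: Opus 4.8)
The plan is to compute the dimension of the affine space $\mathcal{Q}_P(S,M)$ by a Riemann--Roch argument, together with a count of the real-dimensional constraints imposed by fixing the principal parts. First I would observe that the space $H^0(S,\omega_S^{\otimes 2}(D))$ is a complex vector space, and that $\mathcal{Q}_P(S,M)$ is either empty or a torsor over the subspace $V=H^0(S,\omega_S^{\otimes2}(D'))$, where $D'=\sum_i m_i' p_i$ with $m_i' = m_i - 1$ if $m_i$ is odd and $m_i' = m_i$ if $m_i$ is even: indeed, fixing the principal part $P_i(z_i)$ at $p_i$ amounts to prescribing the terms of the Laurent expansion of a square root $\sqrt\phi$ of degree $\le -1$, and the difference of two quadratic differentials with the same principal part at every $p_i$ has, at $p_i$, a square root whose polar part vanishes. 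Translating this to a condition on $\phi$ itself (rather than on $\sqrt\phi$) shows that the allowed pole order of the difference drops to $m_i'$ as above — the key point being that the $c_1 z^{-1}$ term in the principal differential, when squared, contributes to the $z^{-2}$ coefficient of $\phi$, so for even $m_i$ no order is lost, whereas for odd $m_i$ the bottom term $c_1 z^{-1+\varepsilon}$ with $\varepsilon=\tfrac12$ squares to $c_1^2 z^{-1}$, a simple pole, and so the polar divisor of the difference can drop by exactly one at $p_i$. Note $s_i$ as defined in the statement is precisely $m_i' + \deg(\text{removed})$; more usefully, one checks $\deg D' = \sum_i m_i' = \sum_i s_i - d$ is not quite right — instead I would directly verify that $\deg D' = \sum_i s_i - (\text{correction})$ so that the final arithmetic works out; the cleanest route is to note $s_i = m_i'+1$ in all cases when $m_i'\ge 1$... [here I would double-check the bookkeeping against the definition $s_i = m_i+1$ for $m_i$ odd, $s_i=m_i$ for $m_i$ even].

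Next I would apply Riemann--Roch to the line bundle $\omega_S^{\otimes 2}(D')$ on the compact Riemann surface $S$ of genus $g$. Its degree is $\deg(\omega_S^{\otimes2}) + \deg D' = (4g-4) + \deg D'$. Since $(\mathbb{S},\mathbb{M})$ has at least one pole and (when $g=0$) enough marked points, this degree exceeds $2g-2$, so $\omega_S^{\otimes 2}(D')$ is nonspecial: $H^1(S,\omega_S^{\otimes2}(D'))=0$ and hence $\dim_{\mathbb{C}} V = \deg(\omega_S^{\otimes2}(D')) - g + 1 = 4g - 4 + \deg D' - g + 1 = 3g - 3 + \deg D'$. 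Carrying out the identification $\deg D' = \sum_i s_i - 3g + 3 + \tfrac{q}{2} - \ldots$ — i.e., matching against the hypothesis $q = 6g-6+\sum_i s_i$ — should give $\dim_{\mathbb{C}} V = q/2$, hence $\dim_{\mathbb{R}} V = q$. I would then need to confirm that $\mathcal{Q}_P(S,M)$ is actually nonempty: this follows because $H^1(S,\omega_S^{\otimes 2}(D))=0$ as well (its degree is even larger), so the restriction map from $H^0(S,\omega_S^{\otimes2}(D))$ to the finite-dimensional space of possible polar parts at the points of $M$ is surjective, and in particular hits the prescribed tuple $P$. Therefore $\mathcal{Q}_P(S,M)$ is a nonempty affine subspace of the complex vector space $H^0(S,\omega_S^{\otimes2}(D))$, modeled on the real vector space $V\cong\mathbb{R}^q$, giving the desired homeomorphism $\mathcal{Q}_P(S,M)\cong\mathbb{R}^q$.

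The main obstacle I anticipate is the combinatorial bookkeeping relating the pole orders $m_i$, the "dropped" pole orders controlling $V$, and the quantities $s_i$ in the statement — in particular handling the parity distinction between odd and even $m_i$ correctly, and checking the edge cases where some $m_i\le 2$ (where the principal part is just the leading coefficient and "fixing the principal part" means fixing $a_{p_i}$, a single complex constraint for a double pole and, for a simple pole, the pole order itself is already determined so there is no constraint beyond membership in the right linear system). One must also be careful that the condition $q\ge 0$ is exactly what guarantees the relevant line bundles are nonspecial, so that Riemann--Roch gives the dimension on the nose with no $H^1$ correction; if $q = 0$ the space is a single point, consistent with $\mathbb{R}^0$. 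A secondary point to address is that the "homeomorphism" claimed is with respect to the natural topology on $\mathcal{Q}_P(S,M)$ as a subspace of $H^0(S,\omega_S^{\otimes2}(D))$, which is immediate once the affine-subspace description is in place, since any real-linear isomorphism of finite-dimensional real vector spaces is a homeomorphism.
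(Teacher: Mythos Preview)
Your overall strategy matches the paper's exactly: show that $\mathcal{Q}_P(S,M)$ is an affine space modeled on an $H^0$ group and compute the dimension by Riemann--Roch. However, your identification of the divisor $D'$ is wrong, and the error is conceptual rather than the arithmetic bookkeeping you flag.

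You assert that the difference $\phi_1-\phi_2$ of two differentials with the same principal part at $p_i$ has pole order $m_i'=m_i$ (even case) or $m_i-1$ (odd case). The correct pole order is $s_i/2$, namely $m_i/2$ for $m_i$ even and $(m_i+1)/2$ for $m_i$ odd. Your reasoning goes astray when you speak of ``a square root whose polar part vanishes'' for the \emph{difference}: $\phi_1-\phi_2$ does not inherit a square root from $\sqrt{\phi_1}$ and $\sqrt{\phi_2}$, and in any case fixing the principal part does far more than fix one or two leading coefficients of $\phi$. The paper's argument is to write $\phi(z_i)=z_i^{-m_i}\sum_{j\ge 0}a_j z_i^j\,dz_i^{\otimes 2}$ and observe directly that the principal part of $\sqrt{\phi}$ (the terms of degree $\le -1$ in its expansion) is determined by, and determines, precisely the coefficients $a_0,\dots,a_{\lfloor m_i/2\rfloor-1}$. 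Hence the difference has pole order at most $m_i-\lfloor m_i/2\rfloor=\lceil m_i/2\rceil=s_i/2$, the modeling vector space is $H^0(S,\omega_S^{\otimes 2}(E))$ with $E=\sum_i(s_i/2)p_i$, and Riemann--Roch gives complex dimension $3g-3+\sum_i s_i/2=q/2$.

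With your values of $m_i'$ the count fails badly: for $g=0$ with three poles of order $4$ one has $q=6$, but your $D'$ yields $h^0=9$ rather than $3$. Your surjectivity argument for nonemptiness via $H^1(S,\omega_S^{\otimes2}(D))=0$ is a correct addition that the paper leaves implicit.
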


\begin{proof}
If $\phi\in\mathcal{Q}_P(S,M)$ then in a neighborhood of any~$p_i\in M$ we can write $\phi(z_i)=\varphi(z_i)dz_i^{\otimes2}$ where $\varphi(z_i)=\frac{1}{z_i^{m_i}}\cdot\sum_{j\geq0}a_jz_i^j$. One can check that the choice of the principal part $P_i(z_i)$ is equivalent to the choice of the coefficients $a_0,\dots,a_{\lfloor m_i/2\rfloor-1}$. In particular, if $\phi_1$,~$\phi_2\in\mathcal{Q}_P(S,M)$, then $\phi_1-\phi_2$ has a pole of order at most $m_i-\lfloor m_i/2\rfloor=s_i/2$ at~$p_i$. Hence $\mathcal{Q}_P(S,M)$ is an affine space for the complex vector space $H^0(S,\omega_S(E)^{\otimes2})$ where we have introduced the divisor $E=\sum_i(s_i/2)\cdot p_i$. Under our assumption, the Riemann-Roch theorem implies that this vector space has complex dimension $3g-3+\sum_is_i/2$. Hence $\mathcal{Q}_P(S,M)$ is homeomorphic to a Euclidean space of the required dimension.
\end{proof}

\subsection{Signed differentials}
\label{sec:SignedDifferentials}

We will now define a modification of the moduli space $\mathcal{Q}(\mathbb{S},\mathbb{M})$ that parametrizes marked quadratic differentials with additional data associated to the poles of order two. If $\phi$ is a quadratic differential on~$S$ and $p\in S$ is a pole of~$\phi$ of order two, we have seen that the residue $\Res_p(\phi)=\pm4\pi\mathrm{i}\sqrt{a_p}$ is well defined up to a sign. By a \emph{signing} for~$\phi$, we mean the choice of one of these two values for each pole of order two. A \emph{signed differential} is a quadratic differential together with a signing.

There is a branched cover 
\[
\mathcal{Q}^\pm(\mathbb{S},\mathbb{M})\rightarrow\mathcal{Q}(\mathbb{S},\mathbb{M})
\]
where a point in the fiber over the class of $(S,\phi,\theta)$ is obtained by choosing a signing for~$\phi$. This definition makes sense because the leading coefficient is invariant under pullback. The cover has degree $2^{|\mathbb{P}|}$ and is branched precisely over the locus of points such that $\phi$ has at least one simple pole. The mapping class group $\MCG(\mathbb{S},\mathbb{M})$ acts on the set $\mathbb{P}\subset\mathbb{S}$, and we define the \emph{signed mapping class group} to be the corresponding semidirect product 
\[
\MCG^\pm(\mathbb{S},\mathbb{M})=\MCG(\mathbb{S},\mathbb{M})\ltimes\mathbb{Z}_2^{\mathbb{P}}.
\]
There is a natural action of this group $\MCG^\pm(\mathbb{S},\mathbb{M})$ on $\mathcal{Q}^\pm(\mathbb{S},\mathbb{M})$ where the $\mathbb{Z}_2^{\mathbb{P}}$ factor acts by changing the signing of a point in this space.

\begin{proposition}[\cite{Allegretti21}, Proposition~6.3]
Let $(\mathbb{S},\mathbb{M})$ be a marked bordered surface, and if $g(\mathbb{S})=0$ assume that $|\mathbb{M}|\geq3$. Then $\mathcal{Q}^\pm(\mathbb{S},\mathbb{M})$ has the structure of a complex manifold of dimension~$n$ given by~\eqref{eqn:dimension}.
\end{proposition}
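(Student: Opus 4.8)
The plan is to leverage Proposition~\ref{prop:differentialsmanifold}, which already establishes that $\mathcal{Q}(\mathbb{S},\mathbb{M})$ is a complex manifold of dimension $n$, together with the explicit description of the branched cover $\mathcal{Q}^\pm(\mathbb{S},\mathbb{M})\rightarrow\mathcal{Q}(\mathbb{S},\mathbb{M})$. The key observation is that this cover is built by adjoining, at each puncture of order two, a choice of square root $\sqrt{a_p}$ of the leading coefficient $a_p$. Since a choice of local branch of $\sqrt{a_p}$ is holomorphic wherever $a_p\neq 0$, the cover carries a natural complex structure away from the branch locus; the content of the proposition is that the complex structure extends over the branch locus as well.

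First I would work locally on $\mathcal{Q}(\mathbb{S},\mathbb{M})$. Near a point $(S,\phi,\theta)$, the leading coefficient $a_p$ at each double pole $p$ depends holomorphically on the point of the moduli space (this follows from the construction of $\mathcal{Q}(\mathbb{S},\mathbb{M})$ via the vector bundle $E\rightarrow B$ in Section~\ref{sec:ModuliSpaces}, since $a_p$ is a linear functional on the fiber $E_b$ varying holomorphically with $b$). Thus the functions $a_p$, as $p$ ranges over the double poles specified by $(\mathbb{S},\mathbb{M})$, are holomorphic functions on $\mathcal{Q}(\mathbb{S},\mathbb{M})$, and $\mathcal{Q}^\pm(\mathbb{S},\mathbb{M})$ is, by definition, the fiber product over $\mathbb{C}$ (one factor per double pole) of $\mathcal{Q}(\mathbb{S},\mathbb{M})$ with the squaring map $\mathbb{C}\rightarrow\mathbb{C}$, $w\mapsto w^2$. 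Equivalently, $\mathcal{Q}^\pm(\mathbb{S},\mathbb{M})$ is the locus in $\mathcal{Q}(\mathbb{S},\mathbb{M})\times\mathbb{C}^{\mathbb{P}'}$ (where $\mathbb{P}'$ indexes the double poles) cut out by the equations $w_p^2 = a_p$.

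The main point to verify is that this locus is smooth of dimension $n$, i.e.\ that the equations $w_p^2 - a_p = 0$ define a complex submanifold. Away from the branch locus all $a_p\neq 0$, so each $w_p$ is locally a holomorphic function of the base coordinates and smoothness is immediate. Over the branch locus, where some $a_p$ vanishes, I would argue as follows: the simple-pole locus in $\mathcal{Q}(\mathbb{S},\mathbb{M})$ is precisely the divisor $\{a_p = 0\}$, and it is a classical fact (which can be checked in the explicit local coordinates on $\mathcal{Q}(\mathbb{S},\mathbb{M})$ coming from periods, as in Section~4 of~\cite{BridgelandSmith15}, or directly from the Riemann-Roch computation underlying Proposition~\ref{prop:differentialsmanifold}) that $a_p$ vanishes to order one along this locus --- i.e.\ $da_p\neq 0$ there. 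Choosing a local holomorphic coordinate $t$ on $\mathcal{Q}(\mathbb{S},\mathbb{M})$ with $a_p = t$ (possible after a holomorphic change of coordinates, since $da_p\neq 0$), the equation becomes $w_p^2 = t$, whose solution set is the smooth curve parametrized by $w_p$ with $t = w_p^2$. Carrying this out simultaneously over all double poles (the vanishing loci of distinct $a_p$ are in general position, by a deformation argument: one can independently perturb each leading coefficient), one obtains a holomorphic atlas on $\mathcal{Q}^\pm(\mathbb{S},\mathbb{M})$ of dimension $n$, and the covering map to $\mathcal{Q}(\mathbb{S},\mathbb{M})$ is holomorphic by construction. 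The hardest step is establishing the transversality/order-one vanishing of the $a_p$ along the simple-pole loci and their mutual general position; everything else is routine once the cover is recognized as a fiber product with the squaring map.
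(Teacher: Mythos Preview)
The paper does not give its own proof of this proposition: it is stated with a citation to \cite{Allegretti21}, Proposition~6.3, and no argument is supplied here. So there is no proof in the present paper to compare your proposal against.

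That said, your outline is the standard and correct strategy for endowing such a branched cover with a complex structure. Identifying $\mathcal{Q}^\pm(\mathbb{S},\mathbb{M})$ locally as the zero locus of $w_p^2-a_p$ inside $\mathcal{Q}(\mathbb{S},\mathbb{M})\times\mathbb{C}^{\mathbb{P}}$ is exactly right, and the key technical input is indeed that each $a_p$ vanishes transversally along the corresponding simple-pole divisor. One point you should tighten: the claim that the divisors $\{a_p=0\}$ for distinct punctures $p$ meet in general position is not automatic from a vague ``deformation argument''; you need to check that the map $(a_p)_{p\in\mathbb{P}}:\mathcal{Q}(\mathbb{S},\mathbb{M})\to\mathbb{C}^{\mathbb{P}}$ is a submersion along the locus where several $a_p$ vanish simultaneously. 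This follows from the Riemann--Roch calculation underlying Proposition~\ref{prop:differentialsmanifold}, since one can independently vary the leading coefficients at distinct poles while holding the rest of the differential fixed, but it deserves an explicit sentence rather than a parenthetical.
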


\subsection{Horizontal strip decomposition}
\label{sec:HorizontalStripDecomposition}

We now review some geometric concepts related to quadratic differentials, following the treatment in~\cite{BridgelandSmith15}. Let $\phi$ be a quadratic differential on~$S$. By a \emph{straight arc}, we will mean a path $\alpha:I\rightarrow S\setminus\Crit(\phi)$, defined on some open interval $I\subset\mathbb{R}$, which makes a constant angle in the flat metric with the leaves of the horizontal foliation. We require that the tangent vectors to a straight arc have unit norm in the flat metric, and we consider two straight arcs to be equivalent if they are related by a reparametrization of the form $t\mapsto\pm t+\text{constant}$. We then define a \emph{trajectory} to be a straight arc which is not the restriction of a straight arc defined on a larger interval. There are three kinds of trajectories that will play a role in our discussion:
\begin{enumerate}
\item A \emph{saddle trajectory} is a trajectory which is asymptotic to a finite critical point at each of its ends.
\item A \emph{separating trajectory} is a trajectory which is asymptotic to a finite critical point at one end and an infinite critical point at the other end.
\item A \emph{generic trajectory} is a trajectory which is asymptotic to an infinite critical point at each of its ends.
\end{enumerate}
In the case of a saddle trajectory or generic trajectory, the two ends may be distinct points of~$S$, or they may coincide. We say that a quadratic differential is \emph{saddle-free} if there is no saddle trajectory which is also a leaf of the horizontal foliation.

\begin{lemma}[\cite{BridgelandSmith15}, Lemma~3.1]
\label{lem:completesaddlefree}
If $\phi$ is a complete, saddle-free GMN~differential, then every leaf of the horizontal foliation is either a generic trajectory or one of finitely many separating trajectories.
\end{lemma}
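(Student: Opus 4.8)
The plan is to combine the classical structure theory for the horizontal trajectories of a quadratic differential (due to Strebel) with the defining properties of a complete GMN~differential. Since the leaves of the horizontal foliation are precisely the horizontal trajectories, the assertion amounts to a classification of all such trajectories. I would begin with some easy remarks. As $\phi$ is a GMN~differential it has no zero of order $>1$, and as it is complete it has no simple pole; hence every finite critical point of $\phi$ is a simple zero, near which the horizontal foliation has the standard three-pronged singularity. Since $\phi$ is a nonzero meromorphic section of $\omega_S^{\otimes2}$ on the compact surface $S$, it has only finitely many zeros. Finally, since $\phi$ is saddle-free, no leaf of the horizontal foliation is a saddle trajectory.

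The substantive part is to exclude closed and recurrent leaves. By the structure theory, a recurrent trajectory is contained in a \emph{spiral domain}, whose boundary is a union of horizontal saddle connections (with their endpoints); as $\phi$ is saddle-free there are none of these, so there are no spiral domains and no recurrent leaves. Similarly, a closed trajectory lies in a \emph{ring domain} $R$, a maximal open annulus swept out by a family of parallel closed trajectories, and the boundary $\partial R$ is a union of horizontal saddle connections and poles of order two. (These are the only critical points onto which a family of closed trajectories can accumulate: near a pole of order $\neq2$ the nearby trajectories run into the pole, and a family of closed trajectories cannot accumulate onto a simple zero without producing a horizontal saddle connection.) In the saddle-free case $\partial R$ is therefore a finite set $F$ consisting of double poles, whence $R$ is clopen in the connected surface $S\setminus F$ obtained by removing finitely many points, so $R=S\setminus F$. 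This is impossible, since a ring domain contains no critical point and so $\phi$ would have no finite critical point, contradicting condition~(3) of Definition~\ref{def:GMNdifferential}. Hence $\phi$ has no closed leaves.

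It remains to put the pieces together. A leaf that is neither closed nor recurrent has, by the structure theory, each of its two ends asymptotic to a uniquely determined critical point, which is either a simple zero or a pole of order $\geq2$: if both ends are at poles the leaf is a generic trajectory, if exactly one end is at a zero it is a separating trajectory, and the remaining case of both ends at zeros is excluded because it would be a saddle trajectory. Thus every leaf is generic or separating. For the finiteness, note that a separating trajectory has its unique finite end at a simple zero, and conversely each of the three prongs at a given simple zero extends to a single maximal leaf that is again separating --- its other end cannot be a finite critical point, by saddle-freeness, and it cannot be recurrent, by the previous paragraph. Hence the separating trajectories correspond bijectively to the prongs at zeros of $\phi$, and in particular there are only finitely many of them.

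I expect the main obstacle to be the structure-theoretic inputs used in the second paragraph, especially the local analysis of trajectories near a pole of order two, where, depending on the residue, the nearby trajectories are closed circles, radial rays into the pole, or logarithmic spirals, together with the fact that the quadratic-differential foliation admits no limit cycle --- this is what forces the boundary of a ring domain to be assembled from critical trajectories, and hence, in the saddle-free case, to consist only of double poles. These facts are classical and, having presumably been assembled in the preceding part of this section, reduce the lemma to the bookkeeping above; the one genuinely delicate point, if one wishes to reprove it from scratch, is this description of the boundary of a ring domain.
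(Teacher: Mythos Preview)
The paper does not give its own proof of this lemma; it is quoted verbatim from Bridgeland--Smith~\cite{BridgelandSmith15} without argument. Your proof is correct and follows essentially the same route as the one in~\cite{BridgelandSmith15}: one invokes Strebel's structure theory to reduce to excluding closed and recurrent leaves, and these are ruled out because (in the absence of simple poles) the boundary of any ring or spiral domain must contain a horizontal saddle connection, contradicting saddle-freeness. Your explicit clopen argument for ring domains---forcing $R=S\setminus F$ and then contradicting condition~(3) of Definition~\ref{def:GMNdifferential}---is a clean way to dispose of the residual case where the boundary of $R$ consists solely of double poles; Bridgeland--Smith handle this implicitly by the same mechanism. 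The finiteness count via the three prongs at each simple zero is also the standard one.
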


We will be interested in two kinds of regions on a surface determined by the horizontal foliation of a quadratic differential:
\begin{enumerate} 
\item A \emph{horizontal strip} is defined as a maximal region in~$S$ which is mapped by a distinguished local coordinate to a region of the form 
\[
\{w\in\mathbb{C}:a<\Im(w)<b\}\subset\mathbb{C}.
\]
Any leaf of the foliation contained in such a region is a generic trajectory connecting two (not necessarily distinct) poles. Each boundary component of a horizontal strip is composed of saddle trajectories and separating trajectories.
\item A \emph{half plane} is defined as a maximal region in~$S$ which is mapped by a distinguished local coordinate to a region of the form 
\[
\{w\in\mathbb{C}:\Im(w)>0\}\subset\mathbb{C}.
\]
Any leaf of the foliation contained in such a region is a generic trajectory connecting a fixed pole of order $>2$ to itself. The boundary of a half plane is composed of saddle trajectories and separating trajectories.
\end{enumerate}

In the situation of Lemma~\ref{lem:completesaddlefree}, if we delete the critical points and the separating trajectories from~$S$, then we are left with an open subsurface, which is a disjoint union of horizontal strips and half planes. Since the differential is assumed to be saddle-free, each of the lines $\Im(w)=a$ and $\Im(w)=b$ corresponding to the boundary of a horizontal strip contains a unique point corresponding to a finite critical point of~$\phi$. We get a distinguished saddle trajectory on~$S$ by connecting these two points by a straight line segment in~$\mathbb{C}$ (see Figure~\ref{fig:standardsaddle}) and then embedding the horizontal strip into~$S$. This distinguished saddle trajectory is called the \emph{standard saddle connection} associated to the horizontal strip.

\begin{figure}[ht]
\begin{center}
\begin{tikzpicture}
\draw[black, ultra thin] (-3,0) -- (3,0);
\draw[black, ultra thin] (-3,0.25) -- (3,0.25);
\draw[black, ultra thin] (-3,0.5) -- (3,0.5);
\draw[black, thin] (-3,0.75) -- (3,0.75);
\draw[black, ultra thin] (-3,1) -- (3,1);
\draw[black, thin] (-3,1.25) -- (3,1.25);
\draw[black, ultra thin] (-3,1.5) -- (3,1.5);
\draw[black, thin] (-1,0) -- (1,1.5);
\node at (-1,0) {{\tiny $\times$}};
\node at (1,1.5) {{\tiny $\times$}};
\node at (-3.5,0.75) {{\tiny $\dots$}};
\node at (3.5,0.75) {{\tiny $\dots$}};
\end{tikzpicture}
\end{center}
\caption{Image of a standard saddle connection.\label{fig:standardsaddle}}
\end{figure}
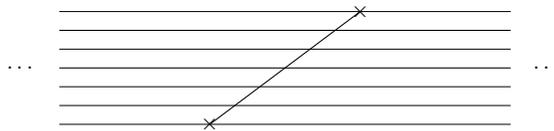

\subsection{The period map}

We conclude this section by describing a construction that can be used to locally parametrize quadratic differentials. Let $\phi$ be a GMN differential on a compact Riemann surface~$S$ with poles of order~$m_i$ at points $p_i\in S$. We can alternatively view this differential $\phi$ as a section 
\[
s\in H^0(S,\omega_S(E)^{\otimes2}), \quad E=\sum_i\left\lceil\frac{m_i}{2}\right\rceil p_i
\]
with simple zeros at the zeros and odd order poles of~$\phi$. In terms of this section, we can define an auxiliary surface, the \emph{spectral cover}, by 
\[
\Sigma_\phi=\{(p,\lambda(p)):p\in S, \lambda(p)\in F_p, \lambda(p)\otimes\lambda(p)=s(p)\}\subset F
\]
where $F$ denotes the total space of the line bundle $\omega_S(E)$. There is a natural projection $\pi:\Sigma_\phi\rightarrow S$ which is a double cover of~$S$ branched precisely at the simple zeros and odd order poles of~$\phi$. We will write $\Sigma_\phi^\circ=\pi^{-1}(S\setminus\Crit_\infty(\phi))$ and write $\tau:\Sigma_\phi^\circ\rightarrow\Sigma_\phi^\circ$ for the covering involution that exchanges the two sheets of the spectral cover.

As explained in Section~2.3 of~\cite{BridgelandSmith15}, the spectral cover gives rise to a finitely generated free abelian group, the \emph{hat homology} $\widehat{H}(\phi)$, which is given by 
\[
\widehat{H}(\phi)=\{\gamma\in H_1(\Sigma_\phi^\circ,\mathbb{Z}):\tau(\gamma)=-\gamma\}.
\]
On the other hand, there is a globally defined meromorphic 1-form $\lambda$ on the spectral cover with the property that $\pi^*(\phi)=\lambda\otimes\lambda$. It is anti-invariant under the action of the covering involution~$\tau$, and we can define the \emph{period map} to be the group homomorphism 
\[
Z_\phi:\widehat{H}(\phi)\rightarrow\mathbb{C}, \quad \gamma\mapsto\int_\gamma\lambda
\]
taking values in the complex numbers.

If $\phi$ is a complete, saddle-free differential, then the hat homology~$\widehat{H}(\phi)$ has a natural basis. To describe this basis, we first note that each standard saddle connection~$\alpha$ can be lifted to a cycle in~$\Sigma_\phi^\circ$. Indeed, since finite critical points are branch points for the covering map $\pi:\Sigma_\phi\rightarrow S$, the preimage of a standard saddle connection under this covering map is a closed loop in~$\Sigma_\phi^\circ$. By choosing an orientation for this loop, we get a homology class~$\gamma_\alpha$ which satisfies $\tau(\gamma_\alpha)=-\gamma_\alpha$, and we can fix this orientation uniquely by requiring that by requiring that the period $Z_\phi(\gamma_\alpha)$ lie in the upper half plane. Thus we associate, to each standard saddle connection~$\alpha$ for~$\phi$, a class $\gamma_\alpha\in\widehat{H}(\phi)$ called a \emph{standard saddle class}. By Lemma~3.2 of~\cite{BridgelandSmith15}, these classes form a basis for~$\widehat{H}(\phi)$. One can think of the periods $Z_\phi(\widehat{\alpha})$ as coordinates on the space of GMN~differentials inducing a given horizontal strip decomposition. See Section~4 of~\cite{BridgelandSmith15} for more precise statements.

\section{Enhanced Teichm\"uller space}
\label{sec:EnhancedTeichmullerSpace}

In this section, we define the enhanced Teichm\"uller space of a punctured surface and describe its parametrization by shear coordinates.

\subsection{Marked hyperbolic surfaces}

Below we will be interested in spaces parametrizing hyperbolic structures on surfaces. The surfaces we consider will be of the form 
\[
C=\bar{C}\setminus\left(\bigcup_{i=1}^sD_i\cup\bigcup_{j=1}^t\{p_j\}\right)
\]
where $\bar{C}$ is a closed oriented surface, $D_1,\dots,D_s\subset\bar{C}$ are open disks whose closures are disjoint, and $p_1,\dots,p_t\in\bar{C}$ are points disjoint from the interiors of the~$D_i$. By a \emph{hyperbolic surface}, we will mean a surface $C$ of this type equipped with a complete, finite area hyperbolic metric with totally geodesic boundary. Note that by completeness of the metric, there will be a cusp neighborhood around each point~$p_i$.

Let $C^\circ$ denote the surface obtained from~$C$ by removing all components of~$\partial C$ homeomorphic to~$S^1$. Then we define a \emph{marking} of $C$ by a marked bordered surface~$(\mathbb{S},\mathbb{M})$ to be a homeomorphism $\psi:\mathbb{S}\setminus\mathbb{M}\rightarrow C^\circ$. We define a \emph{marked hyperbolic surface} to be a hyperbolic surface together with a marking. Two marked hyperbolic surfaces $(C_1,\psi_1)$ and $(C_2,\psi_2)$ are considered to be equivalent if there exists an isometry $g:C_1\rightarrow C_2$ such that the maps $\psi_2$ and $g\circ\psi_1$ are isotopic. In the following, we will write $\mathcal{T}(\mathbb{S},\mathbb{M})$ for the set of all equivalence classes of hyperbolic surfaces with a marking by~$(\mathbb{S},\mathbb{M})$. The mapping class group $\MCG(\mathbb{S},\mathbb{M})$ acts naturally on this set by changing the marking of a hyperbolic surface.

A point $(C,\psi)\in\mathcal{T}(\mathbb{S},\mathbb{M})$ determines a corresponding representation 
\[
\rho:\Gamma\rightarrow G
\]
of $\Gamma=\pi_1(\mathbb{S}\setminus\mathbb{M})$ into $G=\PSL_2(\mathbb{R})$, unique up to the action of~$G$ by conjugation. Indeed, the universal cover of~$C$ can be identified isometrically with a subset of the hyperbolic plane~$\mathbb{H}$ with totally geodesic boundary. The marking $\psi$ induces an isomorphism $\Gamma\cong\pi_1(C)$, and the fundamental group $\pi_1(C)$ acts on the universal cover by orientation preserving isometries of~$\mathbb{H}$. Since the group of all orientation preserving isometries of~$\mathbb{H}$ is identified with~$G$, we get a representation $\rho:\Gamma\rightarrow G$. Note that this construction depends on a choice of embedding of the universal cover into~$\mathbb{H}$. If we make a different choice, the resulting representation changes by conjugation by an element of~$G$. The conjugacy class of~$\rho$ is called the \emph{monodromy} of~$(C,\psi)$.

\subsection{Topological structure}

We will use the monodromy to define a natural topology on the space $\mathcal{T}(\mathbb{S},\mathbb{M})$ for any marked bordered surface $(\mathbb{S},\mathbb{M})$. We begin by considering the special case where $\mathbb{S}$ is a closed surface and $\mathbb{M}=\mathbb{P}$ consists entirely of punctures.

\begin{lemma}
\label{lem:embedding}
Let $\mathbb{S}$ be a closed surface and $\mathbb{P}\subset\mathbb{S}$ a nonempty finite set. Then the map 
\[
\mathcal{T}(\mathbb{S},\mathbb{P})\rightarrow\Hom(\Gamma,G)/G
\]
sending a marked hyperbolic surface to its monodromy is injective.
\end{lemma}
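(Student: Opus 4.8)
The plan is to reconstruct a marked hyperbolic surface $(C,\psi)$, up to the equivalence defining $\mathcal{T}(\mathbb{S},\mathbb{P})$, from the conjugacy class of its monodromy $\rho\colon\Gamma\to G$. First I would record that $\rho$ is discrete and faithful. When $\chi(\mathbb{S}\setminus\mathbb{P})\geq 0$ there are no hyperbolic structures of the required type, so the statement is vacuous; hence assume $\chi(\mathbb{S}\setminus\mathbb{P})<0$. As recalled above, the universal cover of $C^\circ$ embeds isometrically in $\mathbb{H}$ as a convex region $\widetilde{C}$ with totally geodesic frontier, and $\Gamma\cong\pi_1(C^\circ)$ acts on $\widetilde{C}$ freely and properly discontinuously through $\rho$. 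Freeness makes $\rho$ injective, and proper discontinuity of the action on the nonempty open set $\widetilde{C}$ forces $\rho(\Gamma)$ to be a discrete subgroup of $G$; it therefore acts properly discontinuously on all of $\mathbb{H}$ and has a limit set $\Lambda_\rho\subset\partial\mathbb{H}$. Since $\chi(\mathbb{S}\setminus\mathbb{P})<0$, the group $\rho(\Gamma)$ is non-elementary.

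The next step is to see that the embedded copy $\widetilde{C}\subset\mathbb{H}$ is determined by $\rho$ alone. Because $C$ is obtained by deleting open disks and finitely many points from a closed surface, the hyperbolic metric on $C$ has finite area and $C$ has no funnel ends, so $C$ coincides with its own convex core; this forces the universal cover of $C$ to be exactly the closed convex hull $\operatorname{Hull}(\Lambda_\rho)$ of the limit set, a $\rho(\Gamma)$-invariant region depending only on $\rho(\Gamma)$. Moreover, the isometric embedding of the universal cover into $\mathbb{H}$ intertwining the deck action with $\rho$ is unique: any two such differ by post-composition with an element of $G$ centralizing $\rho(\Gamma)$, and a non-elementary subgroup of $G$ has trivial centralizer. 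Passing to quotients, $C=\operatorname{Hull}(\Lambda_\rho)/\rho(\Gamma)$ and $C^\circ$ is the quotient of the interior of $\operatorname{Hull}(\Lambda_\rho)$; under this identification the marking $\psi$ induces on $\pi_1$ precisely the chosen lift of $\rho$.

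With these preliminaries, injectivity is quick. Suppose $(C_1,\psi_1)$ and $(C_2,\psi_2)$ have the same image in $\Hom(\Gamma,G)/G$. Choosing the embeddings of their universal covers appropriately, we may assume their monodromy homomorphisms coincide with a single $\rho$. By the previous paragraph the two surfaces then become literally the same hyperbolic surface $C$, with the same $C^\circ$, and $\psi_1,\psi_2\colon\mathbb{S}\setminus\mathbb{P}\to C^\circ$ induce homomorphisms $\Gamma\to\pi_1(C^\circ)$ agreeing up to conjugacy. Since $\mathbb{S}\setminus\mathbb{P}$ and $C^\circ$ are aspherical (open surfaces of negative Euler characteristic), free homotopy classes of maps between them correspond bijectively to conjugacy classes of homomorphisms of fundamental groups, so $\psi_1$ and $\psi_2$ are homotopic; as homotopic homeomorphisms of surfaces are isotopic, $(C_1,\psi_1)$ and $(C_2,\psi_2)$ are equivalent, with the identity as the required isometry.

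I expect the main obstacle to be the second paragraph: identifying the universal cover precisely with $\operatorname{Hull}(\Lambda_\rho)$ when $C$ has geodesic boundary — that is, checking carefully that the surfaces in play are exactly the convex cores of their complete hulls — and verifying that the resulting identification is natural enough to carry the marking to the prescribed homomorphism. The two remaining ingredients, namely the correspondence between free homotopy classes of maps and conjugacy classes of $\pi_1$-homomorphisms for aspherical targets, and the fact that homotopic homeomorphisms of surfaces are isotopic, are standard and I would simply cite them.
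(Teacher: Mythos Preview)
Your proposal is correct and follows essentially the same approach as the paper: both identify $C$ with the convex core of $\mathbb{H}/\rho(\Gamma)$ and then use that $\mathbb{S}\setminus\mathbb{P}$ and $C^\circ$ are $K(\Gamma,1)$-spaces to conclude that the markings are homotopic. The only organizational difference is that the paper works directly with the conjugating element $g\in G$ to produce the isometry $C_1\to C_2$, whereas you argue for uniqueness of the embedding via the trivial centralizer of a non-elementary group; these are equivalent, and your added remarks on discreteness, faithfulness, and the passage from homotopy to isotopy simply make explicit what the paper leaves implicit.
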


\begin{proof}
Suppose $(C_1,\psi_1)$ and $(C_2,\psi_2)$ are marked hyperbolic surfaces with the same monodromy. Let us fix embeddings of the universal covers of the $C_i$ into the hyperbolic plane. Then we get corresponding representations $\rho_i:\Gamma\rightarrow G$ such that $\rho_2=g\cdot\rho_1\cdot g^{-1}$ for some element $g\in G$. We can think of this element an isometry $g:\mathbb{H}\rightarrow\mathbb{H}$. The quotient $C_i'=\mathbb{H}/\rho_i(\Gamma)$ is a surface equipped with a hyperbolic metric, and $g$ descends to an isometry $g:C_1'\rightarrow C_2'$. The surface $C_i$ is identified with the \emph{convex core} of~$C_i'$, defined as the smallest closed convex subset of~$C_i'$ such that the inclusion into $C_i'$ is a homotopy equivalence. Thus we get an isometry $g:C_1\rightarrow C_2$.

By construction, the maps $\psi_2$ and $g\circ\psi_1$ induce the same isomorphism $\pi_1(\mathbb{S}\setminus\mathbb{P})\cong\pi_1(C_2^\circ)$. Since the surfaces $\mathbb{S}\setminus\mathbb{P}$ and $C_2^\circ$ are $K(\Gamma,1)$-spaces, there is a unique homotopy class of homotopy equivalences $\mathbb{S}\setminus\mathbb{P}\rightarrow C_2^\circ$ inducing this isomorphism. In particular, we see that the maps $\psi_2$ and $g\circ\psi_1$ are homotopic. Hence $(C_1,\psi_1)$ and $(C_2,\psi_2)$ are equivalent marked hyperbolic surfaces, and the map in the statement of the lemma is injective.
\end{proof}

Using Lemma~\ref{lem:embedding}, we can equip the set $\mathcal{T}(\mathbb{S},\mathbb{P})$ with a natural topology. Indeed, we can equip~$\Gamma$ with the discrete topology and $G$ with its standard topology as a Lie group. Then the set $\Hom(\Gamma,G)$ has the compact open topology, and $\mathcal{T}(\mathbb{S},\mathbb{P})$ is a subspace of a quotient of this set.

On the other hand, if $(\mathbb{S},\mathbb{M})$ is a marked bordered surface such that $\mathbb{S}$ has nonempty boundary, then we can define a corresponding marked bordered surface $(\mathbb{\mathbb{S}}^\circ,\mathbb{M}^\circ)$ where $\mathbb{\mathbb{S}}^\circ$ is the surface~$\mathbb{S}$ equipped with the opposite orientation and $\mathbb{M}^\circ$ is the image of~$\mathbb{M}$ under the tautological map $\mathbb{S}\rightarrow\mathbb{S}^\circ$. We then define a marked bordered surface $(\widehat{\mathbb{S}},\widehat{\mathbb{M}})$ where $\widehat{\mathbb{S}}$ is obtained by gluing $\mathbb{S}$ and~$\mathbb{S}^\circ$ along corresponding boundary segments and $\widehat{\mathbb{M}}$ is the union of the images of~$\mathbb{M}$ and~$\mathbb{M}^\circ$ in the resulting surface. Note that $\widehat{\mathbb{S}}$ is a closed surface and $\widehat{\mathbb{M}}$ consists entirely of punctures so that we may apply Lemma~\ref{lem:embedding} to give $\mathcal{T}(\widehat{\mathbb{S}},\widehat{\mathbb{M}})$ the structure of a topological space. If $(C,\psi)\in\mathcal{T}(\mathbb{S},\mathbb{M})$, then by a similar doubling of $C$ along its totally geodesic boundary segments, we get a point $(\widehat{C},\widehat{\psi})\in\mathcal{T}(\widehat{S},\widehat{M})$ where the hyperbolic surface $\widehat{C}$ is invariant under a natural involution. This defines an embedding $\mathcal{T}(\mathbb{S},\mathbb{M})\hookrightarrow\mathcal{T}(\widehat{\mathbb{S}},\widehat{\mathbb{M}})$, and hence $\mathcal{T}(\mathbb{S},\mathbb{M})$ acquires a topology.

\subsection{Length data}

Below we will be interested in some data naturally associated to the boundary of a hyperbolic surface. Given a point $(\psi,C)\in\mathcal{T}(\mathbb{S},\mathbb{M})$, we will assign a number $L_i$ called a \emph{length~datum} to each boundary component $\partial_i$ of the associated surface~$\mathbb{S}'$ defined in Section~\ref{sec:ModuliSpaces}. We first consider a boundary component $\partial_i$ with no marked points. Such a boundary component arises from a puncture in the original surface~$\mathbb{S}$. By applying the map~$\psi$, we see that this corresponds either to a closed boundary component or a cusp of~$C$. In the first case, we define $L_i$ to be the hyperbolic length of the boundary component; in the second case, we define $L_i=0$.

Next suppose that $\partial_i$ is a component of~$\partial\mathbb{S}'$ containing marked points. Let $p_1,\dots,p_k$ denote the marked points on~$\partial_i$ in cyclic order where we consider the indices modulo~$k$. We start off by assuming that $k$ is even. By applying the map~$\psi$ and then lifting to the universal cover, we see that these points $p_j$ correspond to a $\pi_1(C)$-invariant collection of points on~$\partial\bar{\mathbb{H}}$. Let us choose a $\pi_1(C)$-invariant collection of horocycles centered around the latter points whose projections to~$C$ are disjoint. If $b_j$ is the segment of $\partial_i$ connecting $p_j$ and $p_{j+1}$, then we can lift the corresponding arc in~$C$ to a geodesic $\widetilde{b}_j\subset\mathbb{H}$ connecting some points $\widetilde{p}_j$,~$\widetilde{p}_{j+1}\in\partial\bar{\mathbb{H}}$. We define $\mu_j$ to be the length of the geodesic $\widetilde{b}_j$ between the horocycles at its endpoints and set 
\[
L_i=\pm\frac{1}{2}\sum_j(-1)^j\mu_j.
\]
This is well defined up to a sign and, by the argument of~\cite{Gupta19}, Lemma~2.10, is independent of the choice of horocycles. Borrowing terminology from~\cite{Gupta19}, we call this quantity the \emph{metric residue} associated to~$\partial_i$. If $\partial_i$ is component of~$\partial\mathbb{S}'$ with an odd number of marked points, then we define the metric residue to be $L_i=0$.

\subsection{Hyperbolic crowns}

Following~\cite{Gupta19}, we define a \emph{crown} to be a hyperbolic surface which is topologically a closed annulus with finitely many points deleted from one of its boundary components. Note that geometrically such a surface has a single closed geodesic boundary component and a cusp corresponding to each of the deleted points (see the right hand side of Figure~\ref{fig:crown}). Given a crown~$A$, let us label the cusps by $1,\dots,k$ in cyclic order where $k$ is the number of cusps on~$A$. We denote by $\widetilde{A}\subset\mathbb{H}$ the universal cover of~$A$, considered as a subset of the Poincar\'e disk model of the hyperbolic plane~$\mathbb{H}$. By applying a suitable isometry of~$\mathbb{H}$, we can assume that the geodesic connecting $\pm1\in\partial\bar{\mathbb{H}}$ projects to the closed boundary component of~$A$ and that the point $\mathrm{i}\in\partial\bar{\mathbb{H}}$ corresponds to the cusp labeled~1 (see Figure~\ref{fig:crown}).

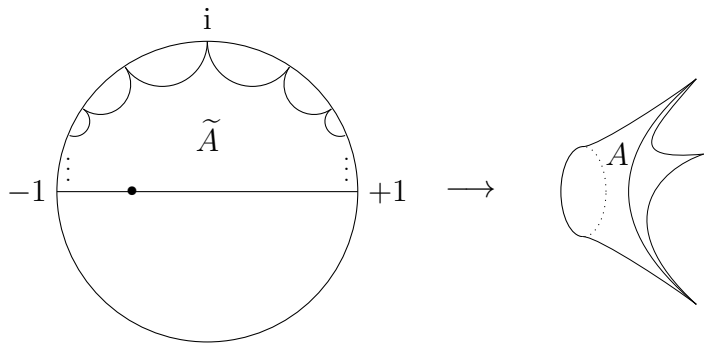
\begin{figure}[ht]
\begin{center}
\begin{tikzpicture}
\draw[black, thin] (0,0) circle (2);
\draw[black, thin] (-2,0) -- (2,0);
\draw[black, thin] (0,2) arc(180:325:0.6);
\draw[black, thin] (1.091491,1.65585) arc(145:305:0.4);
\draw[black, thin] (1.648582,1.098754) arc(125:290:0.2);
\draw[black, thin] (0,2) arc(0:-145:0.6);
\draw[black, thin] (-1.091491,1.65585) arc(35:-125:0.4);
\draw[black, thin] (-1.648582,1.098754) arc(55:-110:0.2);
\node at (-1.85,0.4) {{\tiny $\vdots$}};
\node at (1.85,0.4) {{\tiny $\vdots$}};
\node at (0,0.75) {{$\widetilde{A}$}};
\node at (-1,0) {{\tiny $\bullet$}};
\node at (0,2.3) {$\mathrm{i}$};
\node at (-2.4,0) {$-1$};
\node at (2.4,0) {$+1$};
\node at (3.5,0) {$\longrightarrow$};
\begin{scope}
\clip(5,-0.6) rectangle (5.3,0.6);
\draw[black, thin, dotted] (5,0) ellipse (0.3 and 0.6);
\end{scope}
\begin{scope}
\clip(4.7,-0.6) rectangle (5,0.6);
\draw[black, thin] (5,0) ellipse (0.3 and 0.6);
\end{scope}
\draw[black, ultra thin] (5,0.6) .. controls (5.2,0.6) and (6.25,1.3) .. (6.5,1.5);
\draw[black, ultra thin] (5,-0.6) .. controls (5.2,-0.6) and (6.25,-1.3) .. (6.5,-1.5);
\draw[black, ultra thin] (6.5,1.5) .. controls (5.3,0.5) and (5.3,-0.5) .. (6.5,-1.5);
\draw[black, ultra thin] (6.6,0.5) .. controls (6,0.3) and (5.3,-0.4) .. (6.5,-1.5);
\draw[black, ultra thin] (6.5,1.5) .. controls (5.9,0.8) and (5.5,0.4) .. (6.6,0.5);
\node at (5.45,0.5) {{$A$}};
\end{tikzpicture}
\end{center}
\caption{The universal cover of a hyperbolic crown.\label{fig:crown}}
\end{figure}

Having fixed the normalization of~$\widetilde{A}$ in this way, let us choose a basepoint on the geodesic connecting $\pm1\in\partial\bar{\mathbb{H}}$. Then we can introduce an additional real parameter, the \emph{boundary twist}, which is defined as the signed distance from this basepoint to~$0\in\mathbb{H}$. Below we will use this parameter to glue hyperbolic crowns in the same way that the Fenchel-Nielsen twist parameter is used in classical Teichm\"uller theory to glue pairs of pants.

Now let $\mathbb{S}$ be a closed oriented disk and $\mathbb{M}\subset\mathbb{S}$ a set consisting of $k$ marked points on~$\partial\mathbb{S}$ and one puncture in the interior of~$\mathbb{S}$. Label the marked points on the boundary by $1,\dots,k$ in cyclic order. If we are given a crown $A$ with cusps labeled by $1,\dots,k$, then the correspondence between marked points and cusps gives rise to a homeomorphism $\psi:\mathbb{S}\setminus\mathbb{M}\rightarrow A^\circ$ and hence a pair $(A,\psi)$, well defined up to equivalence. Thus we can view the set of crowns with $k$ labeled cusps as a subspace of $\mathcal{T}(\mathbb{S},\mathbb{M})$. Choosing a nonnegative number $L\in\mathbb{R}_{\geq0}$ so that $L=0$ if $k$ is odd, we define
\[
\Cr_L(k)\subset\mathcal{T}(\mathbb{S},\mathbb{M})
\]
to be the subset of crowns with metric residue equal to~$L$. We will also write $\Cr_L^*(k)\cong\Cr_L(k)\times\mathbb{R}$ for the space parametrizing crowns in~$\Cr_L(k)$ together with a choice of boundary twist.

\begin{lemma}[\cite{Gupta19}, Lemma~2.13]
\label{lem:parametrizecrowns}
Let $q=k$ if $k$ is odd and $q=k-1$ if $k$ is even. Then there is a homeomorphism $\Cr_L(k)\cong\mathbb{R}^q$.
\end{lemma}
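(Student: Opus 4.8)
The plan is to parametrize the space $\Cr_L(k)$ of hyperbolic crowns with $k$ labeled cusps and fixed metric residue $L$ by introducing explicit geometric coordinates coming from the ideal polygon structure visible in the universal cover $\widetilde{A}\subset\mathbb{H}$. A crown with $k$ cusps has a single closed geodesic boundary component, and its universal cover is an infinite-sided ideal polygon: the preimages of the $k$ cusps give a $\pi_1(A)\cong\mathbb{Z}$-invariant cyclic family of points on $\partial\bar{\mathbb{H}}$, together with the two fixed points $\pm1$ of the generator. A fundamental domain for the $\mathbb{Z}$-action is an ideal polygon with $k+2$ vertices (the two fixed points $\pm1$ and one full period of $k$ cusp points), and a crown is determined up to isometry by the shape of this ideal polygon modulo the identification. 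So the first step is to set up these coordinates precisely: for the geodesic boundary of length, say, the translation length $\ell$ of the generator, record $\ell$ together with the cross-ratios (or equivalently the shear parameters along a spanning collection of ideal diagonals) of the ideal polygon, subject to the gluing constraint that the translation carrying one boundary fixed point configuration to the next be consistent.

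The key steps, in order, are as follows. First I would describe the decomposition of $\widetilde{A}$ into ideal triangles by drawing a collection of disjoint bi-infinite geodesics: connect the cusp vertices to the endpoints $\pm1$ of the boundary-stabilizing geodesic, obtaining a triangulation of the fundamental domain into $k$ ideal triangles. Second, assign to each interior edge of this triangulation a real shear coordinate in the usual way (the log cross-ratio of the four ideal endpoints of the two triangles adjacent to the edge). Counting: a crown fundamental region triangulated this way has $k$ edges running to $\pm1$ plus $k-1$ "diagonal" edges among the cusps, but the closure-up relation imposed by the $\mathbb{Z}$-translation removes exactly the right number of parameters. Third, I would impose the metric residue constraint. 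Recall from the Length data subsection that the metric residue is $L_i = \pm\tfrac12\sum_j (-1)^j\mu_j$, which is meaningful (nonzero) only when $k$ is even; this is one linear condition on the shear-type coordinates when $k$ is even, and is vacuous ($L=0$ automatically) when $k$ is odd. Fourth, I would check that the resulting map from $\Cr_L(k)$ to $\mathbb{R}^q$ — with $q=k$ for $k$ odd, $q=k-1$ for $k$ even — is a bijection, by exhibiting the inverse: given shear data satisfying the constraint, reconstruct the ideal polygon in $\mathbb{H}$, verify the holonomy around the core is a hyperbolic element (not parabolic or elliptic), and quotient to produce a genuine crown; continuity in both directions is then routine from the explicit formulas.

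The main obstacle I anticipate is the bookkeeping of the gluing constraint and confirming the dimension count lands on exactly $q$. Naively one has a shear parameter per interior edge of the triangulated fundamental domain, plus the core geodesic length, and it is not a priori obvious that the "close up the translation" relation plus the metric-residue relation together cut this down to precisely $k$ (resp.\ $k-1$). One must be careful that the metric residue is not already forced to vanish, or conversely over-counted, when $k$ is even, and that the core length is recoverable from the shear data rather than being an independent parameter — in fact the translation length of the holonomy is a function of the shears along any path crossing a fundamental domain, so it is \emph{not} independent, which is what makes the count work. A clean way to organize this, and the route I would actually follow, is to first handle the case $L=0$ (all crowns, including the odd-$k$ case) by a direct shear-coordinate argument along the lines of Thurston's parametrization of the Teichm\"uller space of an ideal polygon, and then treat general $L$ for even $k$ by showing that varying $L$ simply shifts one coordinate, so that $\Cr_L(k)$ is diffeomorphic to $\Cr_0(k)$; the odd case then follows since $L$ is always forced to be $0$. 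This reduces the problem to a single, well-understood model computation and isolates the genuinely new input — the metric residue — into one transparent linear algebra step.
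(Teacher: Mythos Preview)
The paper does not supply its own proof of this lemma: it is quoted directly from \cite{Gupta19}, Lemma~2.13, and no argument is given here. So there is nothing in the present paper to compare your proposal against.

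That said, your outline is a workable route to the result, and in spirit it matches the direct geometric parametrization one finds in Gupta's paper. One simplification worth noting: rather than introducing a full shear-coordinate triangulation and then wrestling with the gluing and holonomy constraints, you can parametrize a crown more directly from the normalized picture the paper sets up (universal cover $\widetilde{A}\subset\mathbb{H}$ with the core axis running between $\pm1$ and cusp~$1$ at~$\mathrm{i}$). After this normalization the remaining data are exactly the translation length $\ell>0$ of the core holonomy together with the positions of cusps $2,\dots,k$ on the boundary arc, giving $k$ real parameters outright; for even~$k$ the metric residue $L=\pm\tfrac12\sum_j(-1)^j\mu_j$ imposes one further condition, cutting down to $k-1$. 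This sidesteps the bookkeeping you flag as the main obstacle (the interaction between the gluing constraint, the core length, and the shear data), since the core length appears as an independent coordinate from the outset rather than as a derived quantity. Your shear-coordinate version is equivalent once the dust settles, but the direct cusp-position picture makes the dimension count immediate and is closer to what Gupta actually does.
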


\subsection{Prescribing the length data}

We now define spaces parametrizing marked hyperbolic surfaces with prescribed length data. Let $(\mathbb{S},\mathbb{M})$ be a marked bordered surface, and let $\mathbb{S}'$ be the surface from Section~\ref{sec:ModuliSpaces} with boundary components $\partial_1,\dots,\partial_d$. Choose a vector $L=(L_i)\in\mathbb{R}_{\geq0}^d$ so that $L_i=0$ whenever $\partial_i$ has an odd number of marked points. We then write 
\[
\mathcal{T}_L(\mathbb{S},\mathbb{M})\subset\mathcal{T}(\mathbb{S},\mathbb{M})
\]
for the set of all marked hyperbolic surfaces $(C,\psi)$ such that $L_i$ is the length datum corresponding to~$\partial_i$. This set $\mathcal{T}_L(\mathbb{S},\mathbb{M})$ has a topology coming from the topology on~$\mathcal{T}(\mathbb{S},\mathbb{M})$.

\begin{lemma}
\label{lem:dimfixedboundarylength}
Let $(\mathbb{S},\mathbb{M})$ be a marked bordered surface, and let $k_i$ be the number of marked points on the $i$th boundary component of the associated surface~$\mathbb{S}'$. Suppose 
\[
q=6g-6+\sum_is_i\geq0
\]
where $g=g(\mathbb{S})$ and we write $s_i=k_i+3$ if $k_i$ is odd and $s_i=k_i+2$ if $k_i$ is even. Then there is a homeomorphism $\mathcal{T}_L(\mathbb{S},\mathbb{M})\cong\mathbb{R}^q$.
\end{lemma}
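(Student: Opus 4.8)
The plan is to decompose a marked hyperbolic surface into a compact ``core'' piece together with a family of hyperbolic crowns, to parametrize the core by classical Fenchel--Nielsen coordinates and the crowns by Lemma~\ref{lem:parametrizecrowns}, and then to reassemble, keeping track of the gluing twists.

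In detail, given $(C,\psi)$ in the space $\mathcal{T}_L(\mathbb{S},\mathbb{M})$ defined above, order the boundary components $\partial_1,\dots,\partial_d$ of~$\mathbb{S}'$ so that $\partial_1,\dots,\partial_{d_1}$ carry $k_i\geq1$ marked points while $\partial_{d_1+1},\dots,\partial_d$ (with $k_i=0$) come from the punctures of~$\mathbb{S}$. For $i\leq d_1$ the marking identifies an annular neighborhood of~$\partial_i$ with a crown end of~$C$, and I would let $\delta_i\subset C$ be the geodesic representative of the (non-peripheral) core curve of that annulus; the curves $\delta_1,\dots,\delta_{d_1}$ are simple and pairwise disjoint, except in degenerate cases noted below. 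Cutting $C$ along $\delta_1\cup\dots\cup\delta_{d_1}$ exhibits it as a connected hyperbolic surface~$C_0$ of genus $g=g(\mathbb{S})$ with totally geodesic boundary, whose $d$ ends are the $\delta_i$ together with the $d_0=d-d_1$ puncture-ends of~$C$, and with a crown $A_i\in\Cr_{L_i}(k_i)$ glued back along each~$\delta_i$. Since each $\delta_i$ lies in the interior of~$C$, the gluing contributes a length $\ell_i>0$ (the length of~$\delta_i$, shared with~$A_i$) and a twist $t_i\in\mathbb{R}$; this should set up a canonical bijection between $\mathcal{T}_L(\mathbb{S},\mathbb{M})$ and the set of data $(C_0,(A_i),(t_i))$ with $C_0$ realizing the prescribed length data at its $d_0$ puncture-ends, with $A_i$ having root length equal to the root length $\ell_i$ of the corresponding end of~$C_0$, and with $t_i\in\mathbb{R}$ arbitrary.

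To count dimensions, I would first fix $(\ell_1,\dots,\ell_{d_1})\in\mathbb{R}_{>0}^{d_1}$. The structures on~$C_0$ with all $d$ ends prescribed form a Teichm\"uller space which, by the Fenchel--Nielsen parametrization for hyperbolic surfaces with geodesic boundary and cusps, is homeomorphic to~$\mathbb{R}^{6g-6+2d}$. By Lemma~\ref{lem:parametrizecrowns}, $\Cr_{L_i}(k_i)\cong\mathbb{R}^{q_i}$ with $q_i=k_i$ for $k_i$ odd and $q_i=k_i-1$ for $k_i$ even, and the root-length function is --- by the explicit parametrization of~\cite{Gupta19} --- a trivial fibration onto~$\mathbb{R}_{>0}$ with fibers~$\mathbb{R}^{q_i-1}$, so the crowns with prescribed root length~$\ell_i$ form a space homeomorphic to~$\mathbb{R}^{q_i-1}$. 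Hence the slice of $\mathcal{T}_L(\mathbb{S},\mathbb{M})$ over $(\ell_i)$ is homeomorphic to $\mathbb{R}^{6g-6+2d}\times\prod_{i=1}^{d_1}\mathbb{R}^{q_i-1}\times\mathbb{R}^{d_1}$, and letting $(\ell_i)$ range over $\mathbb{R}_{>0}^{d_1}\cong\mathbb{R}^{d_1}$ (the Fenchel--Nielsen coordinates assemble these slices into a global product) shows $\mathcal{T}_L(\mathbb{S},\mathbb{M})$ is homeomorphic to a Euclidean space of dimension
\[
d_1+(6g-6+2d)+\sum_{i=1}^{d_1}(q_i-1)+d_1 = 6g-6+2d_0+3d_1+\sum_{i=1}^{d_1}q_i.
\]
Since $s_i=q_i+3$ for $i\leq d_1$ and $s_i=2$ for $i>d_1$, this equals $q=6g-6+\sum_i s_i$.

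The assumption $q\geq0$ guarantees that hyperbolic structures of the required topological type exist. Two points will need care. First, the low-complexity cases where the ``core'' is not a hyperbolic surface of negative Euler characteristic --- for instance $\mathbb{S}$ a disk with boundary marked points and one puncture (where $\mathcal{T}_L(\mathbb{S},\mathbb{M})$ is literally $\Cr_L(k)$ and the statement reduces to Lemma~\ref{lem:parametrizecrowns}), or an annulus with marked points on both boundary circles (where $C$ is two crowns glued directly along a single geodesic, with no core) --- must be treated separately, although the final count is the same. Second, and more substantially, one must check that the decomposition and reassembly are homeomorphisms for the topologies defined via monodromy representations: this is the usual continuity of Fenchel--Nielsen-type surgery --- lengths and twists of a fixed curve system vary continuously with the holonomy, and conversely a representation is reconstructed continuously from this data and from the crown parameters, which vary continuously by Lemma~\ref{lem:parametrizecrowns}. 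I expect this continuity verification, together with the degenerate-case bookkeeping, to be the only genuine obstacle; the rest is standard two-dimensional hyperbolic geometry.
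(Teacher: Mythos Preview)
Your approach is essentially the paper's: cut off crowns, parametrize them by Lemma~\ref{lem:parametrizecrowns}, parametrize the core by Fenchel--Nielsen coordinates, and add gluing twists; the paper organizes the count slightly differently (it folds the root lengths into the crown parameters rather than fibering over them separately) but arrives at the same total. Two small corrections to your degenerate-case remarks: for the once-punctured disk with $k$ boundary marked points, $\mathcal{T}_L(\mathbb{S},\mathbb{M})$ is not all of $\Cr_{L_1}(k)$ but its fiber over the prescribed root length~$L_2$ (so dimension $q_i-1$, which does agree with the formula for~$q$); and the paper's explicit base case is actually the \emph{unpunctured} disk with $k\geq 3$ boundary marked points (an ideal $k$-gon, parametrized directly by vertex positions on~$\partial\bar{\mathbb{H}}$), which is not among your listed examples but is the one case where no crown can be cut off at all.
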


\begin{proof}
If $g=0$ and $\mathbb{S}'$ has exactly one boundary component with $k$ marked points, then the condition $q\geq0$ implies that $k\geq3$. If $(C,\psi)\in\mathcal{T}(\mathbb{S},\mathbb{M})$ then we can identify $C$ with an ideal $k$-gon in~$\mathbb{H}$. If $p_1,\dots,p_k\in\mathbb{M}$ are the marked points in cyclic order, then after composing with an isometry of~$\mathbb{H}$, we can assume that $p_1$, $p_2$, and~$p_3$ correspond to the points 0, 1, and~$\infty\in\mathbb{RP}^1\cong\partial\bar{\mathbb{H}}$, respectively. If $k$ is odd, then we have $q=k-3$, so by using the positions of the remaining $k-3$ vertices as coordinates, we can construct a homeomorphism $\mathcal{T}_L(\mathbb{S},\mathbb{M})=\mathcal{T}(\mathbb{S},\mathbb{M})\cong\mathbb{R}^q$. If $k$ is even, then we can freely choose the positions of $k-4$ vertices, and the remaining one is determined by the metric residue. Hence we have $\mathcal{T}_L(\mathbb{S},\mathbb{M})\cong\mathbb{R}^q$ in this case as well.

Next, suppose that $g>0$ or that $\mathbb{S}'$ has more than one boundary component. If $\partial_i$ is a component of~$\partial\mathbb{S}'$ with $k_i>0$ marked points, then it follows from our assumption that there is a nontrivial loop $\gamma_i\subset\mathbb{S}$ which is retractable to~$\partial_i$. Given a general point $(C,\psi)\in\mathcal{T}(\mathbb{S},\mathbb{M})$, the image $\psi(\gamma_i)$ is homotopic to a geodesic in~$C$. By cutting along this geodesic for each boundary component $\partial_i$ having marked points, we obtain a decomposition 
\[
C=C_0\cup\bigcup_iA_i
\]
where $C_0$ is a surface of genus~$g$ with punctures and boundary and $A_i$ is a crown with $k_i$ cusps. By Lemma~\ref{lem:parametrizecrowns}, the hyperbolic structure on~$A_i$ with its prescribed metric residue is determined by $k_i$ independent real parameters if $k_i$ is odd or $k_i-1$ parameters if $k_i$ is even. A pants decomposition for~$C_0$ consists of $3g-3+d$ essential closed curves where $d$ is the number of boundary components of~$\mathbb{S}'$, so a hyperbolic structure on $C_0$ with specified boundary lengths is determined by $6g-6+2d$ Fenchel-Nielsen coordinates corresponding to the curves in the pants decomposition. To glue the surfaces $C_0$ and $A_i$, we require a choice of boundary twist. Combining these facts one sees that $\mathcal{T}_L(\mathbb{S},\mathbb{M})\cong\mathbb{R}^q$ as desired.
\end{proof}

\subsection{Orientation of boundary components}

We now give the definition of the enhanced Teichm\"uller space of a marked bordered surface~$(\mathbb{S},\mathbb{M})$. This construction modifies the space $\mathcal{T}(\mathbb{S},\mathbb{M})$ defined previously by associating additional data to the boundary components of the surfaces.

\begin{definition}
The \emph{enhanced Teichm\"uller space} $\mathcal{T}^\pm(\mathbb{S},\mathbb{M})$ is the branched $2^{|\mathbb{P}|}$-fold cover 
\[
\mathcal{T}^\pm(\mathbb{S},\mathbb{M})\rightarrow\mathcal{T}(\mathbb{S},\mathbb{M})
\]
where a point in the fiber over $(C,\psi)$ is obtained by choosing an orientation for each component of~$\partial C$ which is homeomorphic to~$S^1$. This cover is branched over the set of points $(C,\psi)$ where the surface $C$ has at least one puncture with a cusp neighborhood.
\end{definition}

There is a natural action of the signed mapping class group $\MCG^\pm(\mathbb{S},\mathbb{M})$ on $\mathcal{T}^\pm(\mathbb{S},\mathbb{M})$ where the $\mathbb{Z}_2^{\mathbb{P}}$ factor acts by changing the orientations of the boundary components. We will see below that the enhanced Teichm\"uller space is homeomorphic to a Euclidean space of some dimension given explicitly in terms of the genus and number of marked points on~$(\mathbb{S},\mathbb{M})$.

\subsection{Ideal triangulations}

Given a marked bordered surface~$(\mathbb{S},\mathbb{M})$, we define an \emph{arc} to be a smooth path $\alpha$ on~$\mathbb{S}$ connecting points of~$\mathbb{M}$ whose interior lies in $\mathbb{S}\setminus\mathbb{M}$ and which has no self-intersections in its interior. In addition, we require that $\alpha$ is not homotopic to a single point or to a segment of $\partial\mathbb{S}$ containing no marked points in its interior via a homotopy through such paths. Two arcs are considered equivalent if they are related by a homotopy through arcs or a reversal of orientations. Two arcs are \emph{compatible} if we can find arcs in their respective equivalence classes that do not intersect in~$\mathbb{S}\setminus\mathbb{M}$. An \emph{ideal triangulation} of~$(\mathbb{S},\mathbb{M})$ is a maximal collection of pairwise compatible arcs on~$(\mathbb{S},\mathbb{M})$, considered up to equivalence.

Given an ideal triangulation~$T$ of~$(\mathbb{S},\mathbb{M})$, we always choose representatives for the arcs of~$T$ which do not intersect in~$\mathbb{S}\setminus\mathbb{M}$. Then a \emph{triangle} of~$T$ is defined to be the closure in~$\mathbb{S}$ of a connected component of the complement of all arcs of~$T$. Topologically, such a triangle is a disk containing two or three marked points. If a triangle has only two marked points, it is said to be \emph{self-folded} (see Figure~\ref{fig:triangles}). In this case, there is a marked point in the interior of the triangle and a unique arc incident to this point which we call the \emph{internal edge}. The remaining edge of a self-folded triangle is called the \emph{encircling edge}.

\begin{figure}[ht]
\begin{center}
\begin{tikzpicture}
\draw[black, thin] (-3,0) -- (-1,0);
\draw[black, thin] (-3,0) -- (-2,1.732);
\draw[black, thin] (-1,0) -- (-2,1.732);
\draw[black, thin] (2,1) circle (1);
\draw[black, thin] (2,0) -- (2,1);
\node at (-1,0) {{\tiny $\bullet$}};
\node at (-3,0) {{\tiny $\bullet$}};
\node at (-2,1.732) {{\tiny $\bullet$}};
\node at (2,0) {{\tiny $\bullet$}};
\node at (2,1) {{\tiny $\bullet$}};
\end{tikzpicture}
\end{center}
\caption{An ordinary triangle (left) and a self-folded triangle (right).\label{fig:triangles}}
\end{figure}
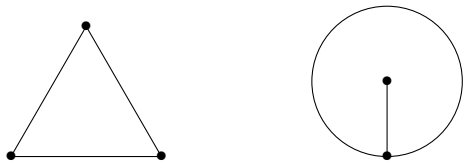

If $T$ is an ideal triangulation of~$(\mathbb{S},\mathbb{M})$, then there is an associated skew-symmetric matrix that encodes the combinatorics of~$T$. For each arc~$\alpha$ of~$T$, let us write $\pi_T(\alpha)$ for the arc defined as follows: If $\alpha$ is the internal edge of a self-folded triangle, then $\pi_T(\alpha)$ is the encircling edge, and $\pi_T(\alpha)=\alpha$ otherwise. Then for each non-self-folded triangle~$t$ of the triangulation~$T$, we define a number~$\varepsilon_{\alpha\beta}^t$ by
\begin{enumerate}
\item $\varepsilon_{\alpha\beta}^t=+1$ if $\pi_T(\alpha)$ and $\pi_T(\beta)$ are arcs of~$t$ with $\pi_T(\beta)$ immediately following $\pi_T(\alpha)$ as we travel around~$t$ in the counterclockwise direction.
\item $\varepsilon_{\alpha\beta}^t=-1$ if the same holds with the counterclockwise direction.
\item $\varepsilon_{\alpha\beta}^t=0$ otherwise.
\end{enumerate}
Then we define the \emph{exchange matrix} of~$T$ to be the square matrix indexed by arcs of~$T$ with $(\alpha,\beta)$~entry given by $\varepsilon_{\alpha\beta}^T=\sum_t\varepsilon_{\alpha\beta}^t$ where the sum is taken over all non-self-folded triangles.

There is also an extension of the notion of an ideal triangulation that will be important later. Given a marked bordered surface $(\mathbb{S},\mathbb{M})$ as before, we define a \emph{signing} to be a function $\epsilon:\mathbb{P}\rightarrow\{\pm1\}$ which associates a sign $\epsilon(p)$ to each puncture $p\in\mathbb{P}$. We define a \emph{signed triangulation} of~$(\mathbb{S},\mathbb{M})$ to be a pair $(T,\epsilon)$ consisting of an ideal triangulation~$T$ of~$(\mathbb{S},\mathbb{M})$ and a signing~$\epsilon$. Two signed triangulations $(T_1,\epsilon_1)$ and $(T_2,\epsilon_2)$ are considered to be equivalent if we have $T_1=T_2$ and the signings $\epsilon_i$ differ only at a puncture in the interior of a self-folded triangle. This generates an equivalence relation on the set of all signed triangulations, and an equivalence class of signed triangulations is called a \emph{tagged triangulation}. Given a tagged triangulation~$\tau$, any arc of a representative signed triangulation determines a \emph{tagged arc} of~$\tau$. Suppose $(T,\epsilon_1)$ and $(T,\epsilon_2)$ are two representatives for~$\tau$ where the $\epsilon_i$ differ only at a puncture in the interior of a self-folded triangle. Let $\alpha$ be the interior edge of this triangle and $\beta$ the encircling edge. Then the tagged arc represented by~$\alpha$ in~$(T,\epsilon_1)$ is considered to be equivalent to the tagged arc represented by~$\beta$ in~$(T,\epsilon_2)$.

\subsection{Shear coordinates}

Suppose that we are given a point of $\mathcal{T}^\pm(\mathbb{S},\mathbb{P})$. This consists of a point $(C,\psi)\in\mathcal{T}(\mathbb{S},\mathbb{P})$ together with a choice of orientation for each component of the boundary of~$C$ which is homeomorphic to~$S^1$. We can view the universal cover of~$C$ as a subset $\widetilde{C}\subset\mathbb{H}$ with totally geodesic boundary, and if $g\subset\widetilde{C}$ is a geodesic that projects to an $S^1$ boundary component of~$C$, then the orientation of this boundary component determines a distinguished endpoint of~$g$. If $T$ is an ideal triangulation of~$(\mathbb{S},\mathbb{P})$, then we can represent $T$ by a collection of arcs on~$\mathbb{S}$ which do not intersect in~$\mathbb{S}\setminus\mathbb{P}$. Let $\alpha$ be any one of these arcs, and let $\widetilde{\alpha}\subset\widetilde{C}$ be a curve that projects onto $\psi(\alpha)\subset C$ (see Figure~\ref{fig:liftarc}). We will modify this curve in two ways. First, if $\widetilde{\alpha}$ has an endpoint on some boundary geodesic $g$ of~$\widetilde{C}$ projecting to an $S^1$ boundary component of~$C$, then we will drag this endpoint along $g$ until it coincides with the distinguished endpoint of~$g$ determined by the orientation. We will then straighten the resulting curves to geodesics in~$\mathbb{H}$ (see Figure~\ref{fig:geodesicspiral}).

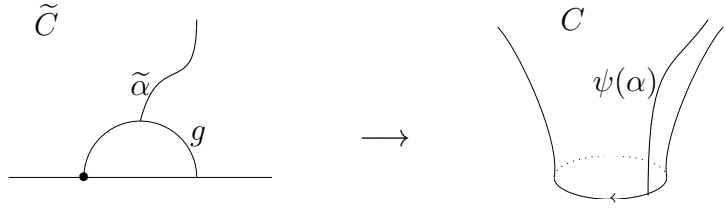
\begin{figure}[ht]
\begin{center}
\begin{tikzpicture}
\node at (-4.5,2.1) {$\widetilde{C}$};
\node at (-3.25,1.25) {$\widetilde{\alpha}$};
\draw[black, thin] (-2.5,2.1) .. controls (-2.5,1) and (-3,1.75) .. (-3.25,0.75);
\node at (-2.48,0.55) {$g$};
\draw[black, ultra thin] (-2.5,0) arc(0:180:0.75);
\draw[black, ultra thin] (-5,0) -- (-1.5,0);
\node at (-4,0) {{\tiny $\bullet$}};
\node at (0,0.5) {$\longrightarrow$};
\node at (2.5,2.1) {$C$};
\node at (3.2,1.25) {$\psi(\alpha)$};
\draw[black, thin] (4.3,2.1) .. controls (3.8,1.4) and (3.5,1.6) .. (3.5,-0.24);
\draw[black, ultra thin] (1.5,2) .. controls (1.75,1.75) and (2.35,0.5) .. (2.25,0);
\draw[black, ultra thin] (4.5,2) .. controls (4.25,1.75) and (3.65,0.5) .. (3.75,0);
\begin{scope}
\clip(2.25,-0.281) rectangle (3.75,0);
\draw[black, ultra thin] (3,0) ellipse (0.75 and 0.29);
\end{scope}
\begin{scope}
\clip(2.25,0) rectangle (3.75,0.281);
\draw[black, thin, dotted] (3,0) ellipse (0.75 and 0.29);
\end{scope}
\draw[black, thin, ->] (3.001,-0.27)--(3,-0.27);
\end{tikzpicture}
\end{center}
\caption{Lifting an arc to the universal cover.\label{fig:liftarc}}
\end{figure}

In this way, we obtain a $\pi_1(C)$-invariant collection of geodesics in~$\mathbb{H}$ which decompose the universal cover $\widetilde{C}$ into ideal triangles. If $C$ has a boundary component homeomorphic to~$S^1$, then the images of these geodesics under the covering map $\widetilde{C}\rightarrow C$ spiral into this boundary component in the direction prescribed by the orientation (see Figure~\ref{fig:geodesicspiral}).

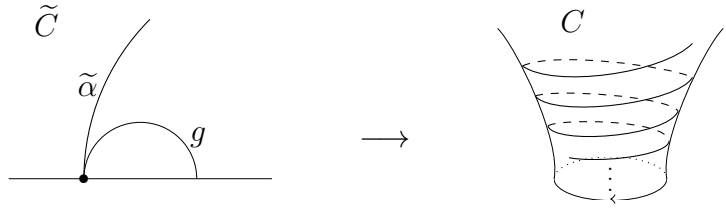
\begin{figure}[ht]
\begin{center}
\begin{tikzpicture}
\node at (-4.5,2.1) {$\widetilde{C}$};
\node at (-3.95,1.25) {$\widetilde{\alpha}$};
\draw[black, thin] (-4,0) arc(0:-45:-3);
\node at (-2.48,0.55) {$g$};
\draw[black, ultra thin] (-2.5,0) arc(0:180:0.75);
\draw[black, ultra thin] (-5,0) -- (-1.5,0);
\node at (-4,0) {{\tiny $\bullet$}};
\node at (0,0.5) {$\longrightarrow$};
\node at (2.5,2.1) {$C$};
\draw[black, ultra thin] (1.5,2) .. controls (1.75,1.75) and (2.35,0.5) .. (2.25,0);
\draw[black, ultra thin] (4.5,2) .. controls (4.25,1.75) and (3.65,0.5) .. (3.75,0);
\begin{scope}
\clip(2.45,0.25) rectangle (3.78,0.5);
\draw[black, thin] (2.22,0.4) .. controls (2.25,0.15) and (3.75,0.25) .. (3.78,0.5);
\end{scope}
\draw[black, thin, dashed] (2.17,0.7) .. controls (2.25,0.8) and (3.75,0.8) .. (3.78,0.5);
\draw[black, thin] (2.17,0.7) .. controls (2.25,0.45) and (3.75,0.55) .. (3.9,0.9);
\draw[black, thin, dashed] (2,1.1) .. controls (2.25,1.2) and (3.75,1.1) .. (3.9,0.9);
\draw[black, thin] (2,1.1) .. controls (2.25,0.8) and (3.9,1) .. (4.09,1.35);
\draw[black, thin, dashed] (1.83,1.5) .. controls (2.25,1.7) and (3.75,1.5) .. (4.09,1.35);
\draw[black, thin] (1.83,1.5) .. controls (2.25,1.2) and (3.7,1.4) .. (4.09,1.8);
\node[black] at (3,0.1) {{\tiny $\vdots$}};
\begin{scope}
\clip(2.25,-0.281) rectangle (3.75,0);
\draw[black, ultra thin] (3,0) ellipse (0.75 and 0.29);
\end{scope}
\begin{scope}
\clip(2.25,0) rectangle (3.75,0.281);
\draw[black, thin, dotted] (3,0) ellipse (0.75 and 0.29);
\end{scope}
\draw[black, thin, ->] (3.001,-0.27)--(3,-0.27);
\end{tikzpicture}
\end{center}
\caption{A geodesic spiral.\label{fig:geodesicspiral}}
\end{figure}

Let $\alpha$ be an arc of the ideal triangulation~$T$, and let $\widetilde{\alpha}$ be one of the corresponding geodesics in~$\mathbb{H}$ obtained by the above construction. Then the decomposition of~$\widetilde{C}$ includes two ideal triangles which share the edge~$\widetilde{\alpha}$. Their union is a quadrilateral with vertices in~$\partial\bar{\mathbb{H}}$. Let us label the vertices as~$z_1,\dots,z_4\in\partial\bar{\mathbb{H}}$ so that the order is compatible with the orientation of $\partial\bar{\mathbb{H}}\cong\mathbb{RP}^1$ and $\widetilde{\alpha}$ connects~$z_1$ and~$z_3$ (see Figure~\ref{fig:shear}). Then we can associate to $\alpha$ the cross ratio 
\begin{equation}
\label{eqn:crossratio}
Y_\alpha=\frac{(z_1-z_2)(z_3-z_4)}{(z_2-z_3)(z_1-z_4)}\in\mathbb{R}_{>0}.
\end{equation}
One can check that this cross ratio is independent of the choice of the geodesic lift~$\widetilde{\alpha}$ as well as the labeling of the vertices of the quadrilateral. The logarithm $\log Y_\alpha$ is known as the \emph{shear coordinate} associated to the arc~$\alpha$.

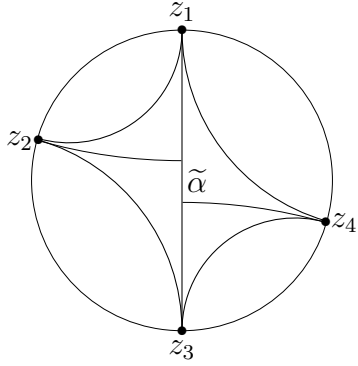
\begin{figure}[ht]
\begin{center}
\begin{tikzpicture}
\draw[black, ultra thin] (0,2) circle (2);
\draw[black, thin] (0,0) -- (0,4);
\draw[black, thin] (0,0) arc(180:74:1.5);
\draw[black, thin] (0,4) arc(180:254:2.64);
\draw[black, thin] (0,4) arc(0:-106:1.5);
\draw[black, thin] (0,0) arc(0:74:2.64);
\node[black] at (0,0) {{\tiny $\bullet$}};
\node[black] at (0,4) {{\tiny $\bullet$}};
\node[black] at (1.91,1.44) {{\tiny $\bullet$}};
\node[black] at (-1.91,2.53) {{\tiny $\bullet$}};
\node[black] at (0,4.25) {{$z_1$}};
\node[black] at (2.16,1.44) {{$z_4$}};
\node[black] at (0,-0.25) {$z_3$};
\node[black] at (-2.16,2.53) {{$z_2$}};
\draw[black, ultra thin] (1.91,1.44) arc(74:90:6.9);
\draw[black, ultra thin] (-1.91,2.53) arc(-106:-90:6.9);
\node[black] at (0.2,2) {{$\widetilde{\alpha}$}};
\end{tikzpicture}
\end{center}
\caption{Defining the shear coordinate.\label{fig:shear}}
\end{figure}

One can also describe the shear coordinate more geometrically as follows. For $i=2,4$, there exists a unique geodesic in~$\mathbb{H}$ that starts at~$z_i$ and meets the geodesic~$\widetilde{\alpha}$ orthogonally (see Figure~\ref{fig:shear}). If we write~$p_i$ for the point of intersection between this geodesic and~$\widetilde{\alpha}$, then the shear coordinate equals the signed distance between~$p_2$ and~$p_4$. The sign detects whether $p_2$ appears before or after~$p_4$ for a given choice of orientation of~$\widetilde{\alpha}$. See \cite{Penner12}, Chapter~1, Corollary~4.16 for the proof.

\begin{proposition}
\label{prop:shearcoordinates}
Let $(\mathbb{S},\mathbb{M})$ be a marked bordered surface, and if $g(\mathbb{S})=0$ assume that $|\mathbb{M}|\geq3$. Then for any ideal triangulation of~$(\mathbb{S},\mathbb{M})$, the associated cross ratios~\eqref{eqn:crossratio} provide a homeomorphism $\mathcal{T}^{\pm}(\mathbb{S},\mathbb{M})\cong\mathbb{R}_{>0}^n$ where the dimension~$n$ is given by~\eqref{eqn:dimension}.
\end{proposition}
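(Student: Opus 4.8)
The plan is to show that the cross ratios $Y_\alpha$ of~\eqref{eqn:crossratio}, taken over the arcs $\alpha$ of a fixed ideal triangulation $T$ of $(\mathbb S,\mathbb M)$, assemble into a map
\[
\Xi\colon \mathcal T^{\pm}(\mathbb S,\mathbb M)\longrightarrow\mathbb R_{>0}^{\,n}
\]
that is a continuous bijection with continuous inverse. I would start with the combinatorial input: by Fomin--Shapiro--Thurston~\cite{FominShapiroThurston08}, every ideal triangulation of $(\mathbb S,\mathbb M)$ has the same number of arcs, and that number is the $n$ of~\eqref{eqn:dimension}; this is the point at which the hypothesis ``$|\mathbb M|\ge 3$ if $g(\mathbb S)=0$'' is used, since it excludes the low-complexity cases for which the count would degenerate. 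Next I would verify that $\Xi$ is well defined and continuous. Independence of $Y_\alpha$ from the geodesic lift $\widetilde\alpha$ and from the cyclic labelling of $z_1,\dots,z_4$ is a short M\"obius-transformation computation (already asserted just before the statement); the substantive observation is that the quadrilateral is constructed solely from the $\pi_1(C)$-invariant ideal triangulation of $\widetilde C\subset\mathbb H$ together with the endpoints singled out on the $S^1$-boundary geodesics by the chosen orientations --- exactly the data of a point of $\mathcal T^{\pm}(\mathbb S,\mathbb M)$, not of $\mathcal T(\mathbb S,\mathbb M)$, which is why one works on the enhanced space. When $\partial\mathbb S\ne\emptyset$ the construction is the obvious analogue, lifting interior arcs to the universal cover, now a convex subset of $\mathbb H$ with geodesic boundary. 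Since the ideal vertices depend continuously on the monodromy $\rho$, since $(C,\psi)\mapsto[\rho]$ is a homeomorphism onto its image for the topology we have put on $\mathcal T(\mathbb S,\mathbb M)$, and since the orientation data is locally constant, $\Xi$ is continuous.

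The heart of the proof is an explicit continuous inverse, which yields injectivity and surjectivity together. Fix a base triangle of the lifted triangulation of $\widetilde C$ and develop it into $\mathbb H$ with ideal vertices $0,1,\infty$; crossing an edge $\widetilde\alpha$, the two adjacent triangles form an ideal quadrilateral three of whose vertices are already in place, and~\eqref{eqn:crossratio} determines the fourth from $Y_\alpha$. Iterating places every triangle, and the deck transformations assemble into a representation $\rho\colon\Gamma\to\PSL_2(\mathbb R)$, well defined up to conjugacy. Run on the triangulation of a point $(C,\psi)\in\mathcal T^{\pm}(\mathbb S,\mathbb M)$, this returns its monodromy, while the directions in which the developed arcs spiral into the $S^1$-boundary geodesics (Figure~\ref{fig:geodesicspiral}) return the boundary orientations; hence $(C,\psi)$ is recovered from $\Xi(C,\psi)$, so $\Xi$ is injective. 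Run instead on an arbitrary tuple $(Y_\alpha)\in\mathbb R_{>0}^{\,n}$, it produces a representation $\rho$, and one verifies --- as in the classical theory of shear coordinates~\cite{Penner12,FockGoncharov1} --- that the developed triangles form a non-overlapping tiling of a convex domain $\Omega\subseteq\mathbb H$, that $\rho$ is discrete and faithful with $\rho(\Gamma)$ torsion-free and acting properly discontinuously on $\Omega$, and that the quotient, after passing to its convex core, is a complete finite-area hyperbolic surface $C$ with totally geodesic boundary, of the topological type~\eqref{eqn:cuspsboundary}, equipped with a canonical marking by $(\mathbb S,\mathbb M)$ and canonical orientations of its $S^1$-boundary components, and whose shear coordinates are the prescribed $Y_\alpha$. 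Thus $\Xi$ is onto; and since the development depends continuously (indeed real-analytically) on $(Y_\alpha)$, this recipe is a continuous inverse, so $\Xi$ is a homeomorphism.

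The step I expect to be the main obstacle is the geometric half of surjectivity: that \emph{every} positive shear tuple produces a genuine hyperbolic surface of the correct type --- that the developed triangles never overlap and exhaust a convex domain, that the monodromy group is discrete and torsion-free, and that the ends behave correctly, each puncture yielding a cusp or (when the product of the shears around it is not $1$) a hyperbolic funnel whose convex core adds an $S^1$-boundary component, and each component of $\partial\mathbb S$ yielding a totally geodesic boundary built from ideal arcs. A secondary nuisance is that $T$ may contain self-folded triangles, where the quadrilateral picture degenerates; I would deal with this by choosing, when possible, an ideal triangulation with no self-folded triangle, or else by defining $Y_\alpha$ for an internal edge through the tagged triangulation it represents, consistently with the exchange matrix $\varepsilon^{T}_{\alpha\beta}$, and checking that the conclusion is unchanged --- so that the homeomorphism holds for \emph{any} ideal triangulation, as stated.
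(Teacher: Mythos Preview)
Your approach is correct and matches the paper's: both invoke Fomin--Shapiro--Thurston~\cite{FominShapiroThurston08} for the arc count~$n$ and then appeal to the standard shear-coordinate theory for the inverse homeomorphism. The paper's proof is simply a two-line citation (Proposition~2.10 of~\cite{FominShapiroThurston08} for the count, and Chapter~2, Theorem~4.4 of~\cite{Penner12} for the inverse), whereas you have unpacked the content of the Penner reference --- the developing-map construction, the recovery of monodromy and boundary orientations, and the verification that arbitrary positive shears yield a hyperbolic surface of the right type --- which is exactly the argument those sources contain.
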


\begin{proof}
By Proposition~2.10 of~\cite{FominShapiroThurston08}, any ideal triangulation of~$(\mathbb{S},\mathbb{M})$ consists of~$n$ arcs. The inverse homeomorphism is described in Chapter~2, Theorem~4.4 of~\cite{Penner12}.
\end{proof}

Note that the space $\mathcal{T}(\mathbb{S},\mathbb{M})$ considered above is the quotient of $\mathcal{T}^\pm(\mathbb{S},\mathbb{M})$ by the action of the group~$\mathbb{Z}_2^{\mathbb{P}}$. It therefore follows from Proposition~\ref{prop:shearcoordinates} that this space $\mathcal{T}(\mathbb{S},\mathbb{M})$ has the natural structure of an orbifold.

\subsection{Cluster coordinates}

From Proposition~\ref{prop:shearcoordinates} we see that, for any ideal triangulation of~$(\mathbb{S},\mathbb{M})$, the associated shear coordinates provide a global parametrization of enhanced Teichm\"uller space. We will also consider an extension of the above construction which is better suited to dealing with changes of triangulation. To describe the extended construction, suppose we are given a signed triangulation $(T,\epsilon)$. By changing the orientations of boundary components, we get an action of $\mathbb{Z}_2^{\mathbb{P}}$ on~$\mathcal{T}^\pm(\mathbb{S},\mathbb{M})$ by homeomorphisms. Given a point of $\mathcal{T}^\pm(\mathbb{S},\mathbb{M})$, let us act on this point by $\epsilon\in\mathbb{Z}_2^{\mathbb{P}}$ and write $Y_\alpha\in\mathbb{R}_{>0}$ for the cross ratio~\eqref{eqn:crossratio} associated to the resulting point of~$\mathcal{T}^\pm(\mathbb{S},\mathbb{M})$ and an arc~$\alpha$ of~$T$. We consider two cases:
\begin{enumerate}
\item If $\alpha$ is not the interior edge of a self-folded triangle, then we define $X_\alpha=Y_\alpha$.
\item If $\alpha$ is the interior edge of a self-folded triangle, let $\beta$ denote the encircling edge. Then we define $X_\alpha=Y_\alpha Y_\beta$.
\end{enumerate}
In this way, we get a collection of parameters $X_\alpha\in\mathbb{R}_{>0}$ which provide a homeomorphism as in the statement of Proposition~\ref{prop:shearcoordinates}. As in Lemma~9.6 of~\cite{AllegrettiBridgeland20}, one can show that the number~$X_\alpha$ associated to an arc~$\alpha$ of a signed triangulation depends only on the underlying tagged arc. These numbers are called \emph{cluster coordinates} or \emph{Fock-Goncharov coordinates}.

Let $\gamma$ be any arc of~$T$. We say that an ideal triangulation~$T'$ is obtained from~$T$ by a \emph{flip} of~$\gamma$ if $T'$ is different from~$T$ and there is an arc $\gamma'$ of~$T'$ such that $T\setminus\{\gamma\}=T'\setminus\{\gamma'\}$. In this case we also say that the signed triangulation $(T',\epsilon)$ is obtained from $(T,\epsilon)$ by a flip of~$\gamma$ and that the underlying tagged triangulation of $(T',\epsilon)$ is obtained from the underlying tagged triangulation of $(T,\epsilon)$ by a flip of the tagged arc~$\gamma$. Figure~\ref{fig:flip} illustrates a neighborhood of the arcs $\gamma$ and~$\gamma'$.
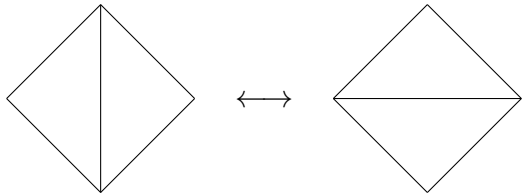
\begin{figure}[ht]
\begin{center}
\[
\begin{tikzpicture}[baseline=-0.65ex]
\coordinate (a) at (1.25,0);
\coordinate (b) at (0,1.25);
\coordinate (c) at (-1.25,0);
\coordinate (d) at (0,-1.25);
\draw[black, thin] (a) -- (b);
\draw[black, thin] (b) -- (c);
\draw[black, thin] (c) -- (d);
\draw[black, thin] (d) -- (a);
\draw[black, thin] (b) -- (d);
\end{tikzpicture}
\quad
\longleftrightarrow
\quad
\begin{tikzpicture}[baseline=-0.65ex]
\coordinate (a) at (1.25,0);
\coordinate (b) at (0,1.25);
\coordinate (c) at (-1.25,0);
\coordinate (d) at (0,-1.25);
\draw[black, thin] (a) -- (b);
\draw[black, thin] (b) -- (c);
\draw[black, thin] (c) -- (d);
\draw[black, thin] (d) -- (a);
\draw[black, thin] (a) -- (c);
\end{tikzpicture}
\]
\end{center}
\caption{A flip of an arc.\label{fig:flip}}
\end{figure}

If $(T,\epsilon)$ and~$(T',\epsilon)$ are related by a flip of the arc~$\gamma$, then there is a natural bijection between the arcs of $T$ and the arcs of~$T'$. Abusing notation, we will use the same symbol to denote an arc of~$T$ and the corresponding arc of~$T'$. We will write $X_\alpha$ and $X_\alpha'$ for the Fock-Goncharov coordinates with respect to an arc~$\alpha$ of the signed triangulations $(T,\epsilon)$ and $(T',\epsilon)$, respectively.

\begin{proposition}
\label{prop:changecoordinates}
Take notation as in the last paragraph. Then the coordinates~$X_\alpha$ and~$X_\alpha'$ are related by 
\[
X_\alpha'=
\begin{cases}
X_\gamma^{-1} & \text{if $\alpha=\gamma$} \\
X_\alpha\left(1+X_\gamma^{-\operatorname{sgn}(\varepsilon_{\alpha\gamma})}\right)^{-\varepsilon_{\alpha\gamma}} & \text{if $\alpha\neq\gamma$}
\end{cases}
\]
where $\varepsilon_{\alpha\gamma}=\varepsilon_{\alpha\gamma}^T$ is the exchange matrix associated to~$T$.
\end{proposition}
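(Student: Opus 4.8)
The plan is to reduce the statement to an elementary cross-ratio computation carried out in a single lift to the hyperbolic plane. First I would record that, by the analogue of Lemma~9.6 of~\cite{AllegrettiBridgeland20} recalled above, the quantity~$X_\alpha$ depends only on the tagged arc underlying~$\alpha$, so I am free to replace $(T,\epsilon)$ and $(T',\epsilon)$ by whichever representative signed triangulations are most convenient. In the generic case neither of the two triangles of~$T$ incident to~$\gamma$ is self-folded, their union is a quadrilateral~$Q$, and the flip alters the triangulation only in the interior of~$Q$. Hence for any arc~$\alpha$ of~$T$ with a representative disjoint from the interior of~$Q$, the geodesic quadrilateral used in~\eqref{eqn:crossratio} to compute~$Y_\alpha$ is untouched, so $X_\alpha' = X_\alpha$; and such an~$\alpha$ has $\varepsilon_{\alpha\gamma}^T = 0$, which is precisely the assertion of the proposition in that case because $\bigl(1 + X_\gamma^{0}\bigr)^{0} = 1$. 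It then remains to handle~$\gamma$ itself and the at most four arcs of~$T$ that bound~$Q$.

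For these, I would fix a lift of~$Q$ to~$\mathbb{H}$ with ideal vertices $z_1, z_2, z_3, z_4 \in \partial\bar{\mathbb{H}}$ in cyclic order, the lift of~$\gamma$ being the geodesic $(z_1, z_3)$ and that of~$\gamma'$ the geodesic $(z_2, z_4)$, and normalize by an isometry of~$\mathbb{H}$ so that $z_1 = \infty$, $z_3 = 0$, $z_2 = -1$, $z_4 = t$ with $t > 0$; then~\eqref{eqn:crossratio} gives $Y_\gamma = t$, and evaluating~\eqref{eqn:crossratio} for $(z_2, z_4)$ with the cyclic order rotated so that~$\gamma'$ joins its first and third vertices yields $Y_{\gamma'} = t^{-1}$, that is, $X_\gamma' = X_\gamma^{-1}$. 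For a side~$\alpha$ of~$Q$, the quadrilateral computing~$Y_\alpha$ is the union of the triangle of~$T$ lying inside~$Q$ along~$\alpha$ with a fixed external triangle, with apex~$w$ say, glued across~$\alpha$; passing to~$T'$ replaces the inner triangle by another one whose apex jumps from~$z_3$ to~$z_4$ (or from~$z_1$ to~$z_2$), while~$w$ and the endpoints of~$\alpha$ are unchanged. Substituting these data into~\eqref{eqn:crossratio} before and after, the factors involving~$w$ and the common endpoints cancel in the ratio, and in each of the four cases one finds $Y_\alpha'/Y_\alpha$ equal to $1 + t$ or to $t/(1+t) = (1 + t^{-1})^{-1}$, according as~$\alpha$ immediately follows~$\gamma$ or is immediately followed by~$\gamma$ when one runs counterclockwise around the triangle of~$T$ containing both. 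Comparing with the sign conventions defining~$\varepsilon_{\alpha\beta}^t$, the first alternative is exactly $\varepsilon_{\alpha\gamma}^T = -1$ and the second $\varepsilon_{\alpha\gamma}^T = +1$ (the external triangles, being non-self-folded and not incident to~$\gamma$, contribute nothing to~$\varepsilon_{\alpha\gamma}^T$), so $Y_\alpha' = Y_\alpha\bigl(1 + Y_\gamma^{-\sgn(\varepsilon_{\alpha\gamma})}\bigr)^{-\varepsilon_{\alpha\gamma}}$ in both cases; passing from the~$Y$'s to the~$X$'s multiplies both sides by a common factor~$Y_\beta$ whenever an encircling edge is involved, and that factor is itself unchanged by the flip, so the stated identity follows.

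The remaining work is the degenerate configurations, in which a self-folded triangle appears inside or adjacent to~$Q$; the worst case is when one of the triangles of~$T$ incident to~$\gamma$ is self-folded, so that~$\gamma$ is an encircling edge and~$Q$ collapses to a once-punctured monogon glued to a triangle. My plan here is to use the tagged-arc independence once more: the tagged flip of~$\gamma$ corresponds to an ordinary flip inside a signed triangulation with no self-folded triangle near~$\gamma$ (obtained by changing the tagging at the enclosed puncture), so that the cross-ratio computation above applies verbatim, and the definitions $X_\alpha = Y_\alpha Y_\beta$ for an interior edge together with $\varepsilon_{\alpha\beta}^T = \sum_t \varepsilon_{\alpha\beta}^t$ via~$\pi_T$ are precisely what make the unfolded computation reproduce the formula. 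I expect this matching of the~$\pi_T$-corrected exchange matrix with the~$Y_\alpha Y_\beta$-corrected coordinates through the unfolding to be the only genuinely delicate point; the cross-ratio identities themselves are routine and, as a fallback, the transformation law can be quoted from~\cite{FockGoncharov1}, or from Chapter~2 of~\cite{Penner12} together with~\cite{FominShapiroThurston08}.
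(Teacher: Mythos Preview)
Your approach is correct and is essentially the standard direct cross-ratio computation underlying this result. The paper's own proof, however, is a one-line citation: it simply observes that the identical statement is proved as Proposition~9.8 of~\cite{AllegrettiBridgeland20} for cluster coordinates on moduli spaces of local systems, and that the proof carries over verbatim. So you are supplying by hand the computation that the paper defers to a reference; your ``fallback'' of quoting~\cite{FockGoncharov1} or~\cite{Penner12} is in fact closer to what the paper actually does. The one place your write-up is a bit thin is the self-folded matching, where you say you ``expect'' the $\pi_T$-corrected exchange matrix to align with the $Y_\alpha Y_\beta$-corrected coordinates; this is indeed the substantive bookkeeping step, and in a self-contained proof it deserves to be checked explicitly rather than asserted.
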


\begin{proof}
The same statement appears in~\cite{AllegrettiBridgeland20}, Proposition~9.8, for cluster coordinates on moduli spaces of local systems. The proof is the same in our setting.
\end{proof}

\section{Harmonic maps}
\label{sec:HarmonicMaps}

In this section, we review the basic theory of harmonic maps between Riemann surfaces and prove a result on the existence and uniqueness of such maps.

\subsection{Basic definitions}

We begin by recalling the general notion of a harmonic map between Riemannian manifolds $(M,g)$ and~$(N,h)$. In terms of local coordinates $\{x^{\alpha}\}$ on~$M$ and $\{y^i\}$ on~$N$, the Riemannian metrics can be written $g=g_{\alpha\beta}dx^\alpha\otimes dy^\beta$ and $h=h_{ij}dy^i\otimes dy^j$. (Here we employ the Einstein summation convention and sum over repeated indices.) Given a smooth map $f:M\rightarrow N$, we define the \emph{energy density} of~$f$ by the formula 
\[
e(f)=\frac{\partial f^i}{\partial x^\alpha}\frac{\partial f^j}{\partial x^\beta}g^{\alpha\beta}h_{ij}\circ f.
\]
One can check that this is independent of the choice of local coordinates and gives a globally defined function on~$M$. We define the \emph{energy} of $f$ on~$V\subset M$ by 
\[
E_V(f)=\int_Ve(f)d\mu_g
\]
where $\mu_g$ is the volume form associated to~$g$. We can view $E_M$ as a functional on the space of smooth maps $M\rightarrow N$. Such a map is called \emph{harmonic} if it is a critical point of this functional~\cite{DW07}.

In this paper, we will deal exclusively with harmonic maps between Riemann surfaces. Let $M$ and $N$ be Riemann surfaces, and assume the metrics are \emph{conformal}. This means that for any local coordinate $z$ on~$M$ and any local coordinate $w$ on~$N$, the metrics are given by local expressions of the form $g|dz|^2$ and $h|dw|^2$ for some positive functions $g$ and~$h$. In this case, one can rewrite the energy density of~$f$ as 
\[
e(f)=\frac{h(f(z))}{g(z)}\left(|f_z|^2+|f_{\bar{z}}|^2\right)
\]
where we write $f_z\coloneqq\partial_zf$, $f_{\bar{z}}\coloneqq\partial_{\bar{z}}f$, etc., and by a common abuse of notation we confuse $f$ with $w\circ f$. The energy density is the sum of the \emph{holomorphic energy} $\mathcal{H}(f)$ and the \emph{antiholomorphic energy} $\mathcal{L}(f)$, given by the expressions 
\[
\mathcal{H}(f)=\frac{h(f(z))}{g(z)}|f_z|^2, \quad \mathcal{L}(f)=\frac{h(f(z))}{g(z)}|f_{\bar{z}}|^2.
\]
The \emph{Jacobian} of~$f$ is defined as the difference $\mathcal{J}(f)=\mathcal{H}(f)-\mathcal{L}(f)$. The statement that $f$ is harmonic is equivalent to an Euler-Lagrange equation, which in this case takes the form 
\begin{equation}
\label{eqn:harmonicity}
f_{z\bar{z}}+\frac{h_w}{h}f_zf_{\bar{z}}=0
\end{equation}
in a local patch on~$M$ (see~\cite{DW07}). This equation implies that the condition of being harmonic does not depend on the exact form of the metric on~$M$ but only on its conformal class. From~equation~\eqref{eqn:harmonicity}, one can derive the \emph{Bochner equation} 
\begin{equation}
\label{eqn:Bochner}
\Delta_g\log\mathcal{H}(f)=-2K_N\mathcal{J}(f)+2K_M
\end{equation}
where $\Delta_g=(4/g)\partial^2/\partial z\partial\bar{z}$ is the Laplace-Beltrami operator, and~$K_M$ and~$K_N$ are the Gaussian curvature of $M$ and~$N$, respectively. A proof of this identity can be found in Section~1 of~\cite{SchoenYau78}.

Using the identities $f^*(dw)=f_zdz+f_{\bar{z}}d\bar{z}$ and $f^*(d\bar{w})=\bar{f}_zdz+\bar{f}_{\bar{z}}d\bar{z}$ together with $\bar{f}_z=\overline{f_{\bar{z}}}$ and $\bar{f}_{\bar{z}}=\overline{f_z}$, one computes 
\begin{equation}
\label{eqn:pullback}
f^*\left(h|dw|^2\right)=\varphi(z) dz^{\otimes2}+g(z)e(f)dz\otimes d\bar{z}+\overline{\varphi(z)}d\bar{z}^{\otimes2}
\end{equation}
where we have written $\varphi(z)=h(f(z))f_z\bar{f}_z$. From this we see that the $(2,0)$-part of the pullback of the metric on $N$ is the quadratic differential $\varphi(z) dz^{\otimes2}$. It is called the \emph{Hopf differential} of the map~$f$. One can also check that 
\[
\partial_{\bar{z}}\varphi=h(f(z))\left(\bar{f}_z\tau(f)+f_z\overline{\tau(f)}\right)
\]
where $\tau(f)$ denotes the \emph{tension} of~$f$, defined as the expression appearing on the left hand side of the harmonicity equation~\eqref{eqn:harmonicity}. From this we see that if the map $f$ is harmonic, then its associated Hopf~differential is holomorphic. The converse also holds provided the Jacobian is nowhere vanishing (see~\cite{Sampson78}).

\subsection{Model maps into crowns}

We will now review a result of Gupta~\cite{Gupta19} which establishes the existence of a harmonic map from the punctured disk $\mathbb{D}^*=\{z\in\mathbb{C}:0<|z|<1\}\subset\mathbb{C}$ to a hyperbolic crown. We will see later that such a map provides a local asymptotic model of a harmonic map into a more general cusped hyperbolic surface.

We consider meromorphic quadratic differentials~$\phi$ on the disk $\mathbb{D}=\{z\in\mathbb{C}:|z|<1\}\subset\mathbb{C}$ which have the form 
\[
\phi(z)=\left(a_mz^{-m}+a_{m-1}z^{-m+1}+\dots+a_2z^{-2}\right)dz^{\otimes2}
\]
for some $m\geq3$. If $P(z)=\pm\frac{1}{z^{m/2}}f(z)dz$ is a principal differential, we will denote by $\Q_P(m)$ the space of quadratic differentials of this form with principal part~$P(z)$. It follows from the arguments of~\cite{Gupta19} that this space is homeomorphic to the Euclidean space $\mathbb{R}^q$ where $q=m-1$ if $m$~is odd and $q=m-2$ if $m$ is even. On the other hand, by Lemma~\ref{lem:parametrizecrowns}, we know that the space $\Cr_L^*(m-2)$ of hyperbolic crowns with prescribed metric residue and a choice of boundary twist is homeomorphic to a Euclidean space of the same dimension.

We can construct a canonical homeomorphism between these spaces using the theory of harmonic maps.  In the following, we define the \emph{analytic residue} of a principal differential $P(z)$ to be the coefficient of $z^{-1}$, which is well defined up to a sign.

\begin{theorem}[\cite{Gupta19}, Theorem~3.2]
\label{thm:modelmap}
Let $P(z)$ be a principal differential, and let $L$ be the real part of its analytic residue. For any quadratic differential $\phi\in\Q_P(m)$, there exists a hyperbolic crown $A\in\Cr_L(m-2)$ and a harmonic map 
\[
f:\mathbb{D}^*\rightarrow A\setminus\partial A
\]
with Hopf differential $\phi$. There is a canonical choice of boundary twist for~$A$ so that the resulting map 
\[
\Q_P(m)\rightarrow\Cr_L^*(m-2)
\]
is a homeomorphism.
\end{theorem}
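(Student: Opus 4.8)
The plan is to construct the harmonic map $f$ by prescribing explicit \emph{model maps} on a punctured neighbourhood of $0$ in $\mathbb{D}^*$, solving the harmonic map equation on an exhaustion by compact annuli with these model maps as asymptotic data, and passing to a limit; the assignment $\phi\mapsto(A,\text{twist})$ is then shown to be a continuous, injective, proper map between Euclidean spaces of the same dimension, hence a homeomorphism.

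First I would analyse the local geometry of $\phi$ near the pole. Since $\phi$ has a pole of order $m\geq3$ at $z=0$, a punctured neighbourhood of $0$ in the flat metric $|\phi|$ decomposes, after removing the finitely many separating trajectories that issue from the pole, into $m-2$ half-plane regions in the sense of Section~\ref{sec:HorizontalStripDecomposition}; in the natural coordinate $w=\int\sqrt\phi$ each such region is a Euclidean half-plane, and the $m-2$ asymptotic horizontal directions of $\phi$ at $0$ are destined to become the $m-2$ cusps of the crown we seek. On each such half-plane region I would write down an explicit harmonic map into a standard cusp neighbourhood $\{\Im\zeta>c\}/\langle\zeta\mapsto\zeta+1\rangle$ whose Hopf differential is the leading local model $a_mz^{-m}\,dz^{\otimes2}$ of $\phi$; such model maps can be produced by reducing the harmonicity equation~\eqref{eqn:harmonicity} to the Bochner equation~\eqref{eqn:Bochner} for the holomorphic energy, which for the model data reduces to an ODE. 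The real part of the analytic residue of $P$ enters this ODE as precisely the parameter governing how far the model translates along the cusp, and this is what ultimately forces the metric residue of the final crown to equal $L$.

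Next I would globalise. Exhaust $\mathbb{D}^*$ by annuli $A_1\subset A_2\subset\cdots$; on each $\partial A_r$ impose the boundary values coming from the model maps, and solve the Dirichlet problem for harmonic maps into a complete surface of nonpositive curvature of the expected topological type. Existence and uniqueness of each solution follow from the classical theory of harmonic maps into nonpositively curved targets~\cite{EellsSampson64,Hartman67,SchoenYau78,Sampson78}, while the maximum principle applied to~\eqref{eqn:Bochner} bounds the holomorphic energy in terms of $|\phi|$; with interior Schauder estimates this gives uniform $C^1$ and higher-order control, so one may extract a limit as $r\to\infty$. The limit is a harmonic map $f\colon\mathbb{D}^*\to A\setminus\partial A$ whose Hopf differential is $\phi$, where the target crown $A$ is reconstructed from the limiting behaviour of $f$ along the $m-2$ ends. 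Uniqueness of $f$ with the prescribed asymptotics follows from a Hartman-type argument: the distance between two such harmonic maps is a bounded subharmonic function on $\mathbb{D}^*$ tending to zero at the puncture, hence identically zero.

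Finally, the model maps come with a preferred phase, and demanding that $f$ be asymptotic to them pins down a canonical basepoint on the core geodesic of $A$, hence a canonical boundary twist, so that $\Phi\colon\Q_P(m)\to\Cr_L^*(m-2)$, $\phi\mapsto(A,\text{twist})$, is well defined. Continuity of $\Phi$ is continuous dependence of the harmonic maps on $\phi$, which comes from the same elliptic estimates together with uniqueness; injectivity is immediate, since $\phi$ is recovered as the Hopf differential of $f$ and $f$ is determined by $(A,\text{twist})$ — this is the meromorphic analogue of Wolf's argument in the compact case~\cite{Wolf89}. Properness can be checked by tracking the horizontal-strip widths of $\phi$: as $\phi\to\infty$ in $\Q_P(m)$ these widths blow up, forcing the geometry of $A$ to leave every compact subset of $\Cr_L^*(m-2)$. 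Since $\Q_P(m)$ and $\Cr_L^*(m-2)$ are both homeomorphic to $\mathbb{R}^q$ for the same $q$ (by the arguments of~\cite{Gupta19} recalled before the theorem and by Lemma~\ref{lem:parametrizecrowns}), a continuous injective map between them is open by invariance of domain and a proper map is closed, so connectedness of the target makes $\Phi$ a homeomorphism. I expect the main obstacle to be the asymptotic comparison in the globalisation step: matching the genuine harmonic map $f$ to the model maps on each end, sub-leading terms of $\phi$ included, is exactly what guarantees that the limiting target is honestly a hyperbolic crown with $m-2$ cusps and metric residue equal to $L$.
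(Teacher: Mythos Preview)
The paper does not give a proof of this statement: Theorem~\ref{thm:modelmap} is quoted from~\cite{Gupta19}, Theorem~3.2, and is used as a black box in Section~\ref{sec:ConstructionOfHarmonicMaps} to supply the model maps near poles of order $m\geq3$. There is therefore no proof in the present paper to compare your sketch against. Your outline is nonetheless in the spirit both of Gupta's argument and of the paper's own Section~\ref{sec:HarmonicMaps}, which runs the same exhaustion-and-limit strategy for the global harmonic map (with Theorem~\ref{thm:modelmap} already in hand for the local models).

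One step in your sketch would not go through as written. You argue uniqueness of $f$ by saying that the distance between two candidate harmonic maps is a bounded subharmonic function on $\mathbb{D}^*$ that \emph{tends to zero at the puncture}, hence vanishes. For a pole of order $m\geq3$ the decay claim is false: as $z\to0$ the image of $f$ runs off to infinity along the $m-2$ cusps of the crown, and the distance between two harmonic maps with the same principal part is merely bounded there, not decaying (this is exactly what~\cite{Gupta19}, Proposition~3.9, provides and what the paper records in Lemma~\ref{lem:boundeddistance}). Moreover, $\mathbb{D}^*$ carries nonconstant bounded subharmonic functions (it has genuine boundary at $|z|=1$), so even boundedness alone would not force constancy. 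The actual uniqueness mechanism, both in~\cite{Gupta19} and in Section~\ref{sec:Uniqueness} of the paper, is different: one first shows $\dist(f,f')$ is constant and then rules out a nonzero constant by a Poincar\'e--Hopf obstruction on the induced unit vector field along $\partial A$, using that the tangent vectors must alternate direction at the cusps of the crown.
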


\subsection{Compatibility conditions}
\label{sec:CompatibilityConditions}

For the rest of this section, we fix a compact Riemann surface $S$ and a nonempty finite subset $M=\{p_1,\dots,p_d\}\subset S$. For each point $p_i\in M$, we fix a neighborhood $p_i\in U_i\subset S$ and a local coordinate $z_i$ providing a biholomorphism $U_i\cong\mathbb{D}$ with $z_i(p_i)=0$. We also choose a principal differential $P_i(z_i)$ in this local coordinate and set $P=(P_i(z_i))$.

The square $\phi_i(z_i)=P_i(z_i)^{\otimes2}$ defines a quadratic differential $\phi_i$ on~$U_i$ with a pole of some order~$m_i$ at the point~$p_i$. By definition, we can have either $m_i=0$,~$2$, or $m_i\geq3$. If we have $m_i\geq3$, then the quadratic differential $\phi_i$ determines $m_i-2$ asymptotic horizontal directions at~$p_i$. Using this fact, we can define a marked bordered surface $(S_P,M_P)$ associated to $(S,M)$ and~$P$. The surface $S_P$ is defined by taking an oriented real blow up of~$S$ at each point $p_i$ for which $m_i\geq3$. The asymptotic horizontal directions determine a collection of points on the boundary of the resulting surface, and we define~$M_P$ to consist of these points together with all points $p_i\in M$ for which $m_i\leq2$. In~particular, note that if $\phi\in\mathcal{Q}_P(S,M)$, then $(S_P,M_P)$ coincides with the marked bordered surface that we previously associated to the pair~$(S,\phi)$.

By applying the constructions of Section~\ref{sec:EnhancedTeichmullerSpace}, we get a space $\mathcal{T}_L(S_P,M_P)$ parametrizing marked hyperbolic surfaces with prescribed length data. Note that the tuples $L=(L_i)$ and $P=(P_i(z_i))$ can be indexed by the same set. We will say that $L$ is \emph{compatible} with $P$ if the following conditions are satisfied: 
\begin{enumerate}
\item If $m_i\leq2$ then we have 
\[
L_i^2=16\pi^2|a_i|\sin^2(\theta_i/2)
\]
where $a_i$ is the leading coefficient of $\phi_i$ at~$p_i$ and $\theta_i=\arg(a_i)$ is its argument.
\item If $m_i\geq3$ then $L_i$ equals the real part of the analytic residue of~$P_i(z_i)$.
\end{enumerate}
If $L=(L_i)$ is a tuple of length data compatible with $P=(P_i(z_i))$, then we will use the abbreviation $\mathcal{T}_P(S,M)\coloneqq\mathcal{T}_L(S_P,M_P)$.

\subsection{Construction of harmonic maps}
\label{sec:ConstructionOfHarmonicMaps}

Take notation as above. In the discussion that follows, we will assume that $\chi(S\setminus M)<0$ and also that each phase $\theta_i$ lies in the interval $(0,2\pi)$. Given a point $(C,\psi)\in\mathcal{T}_P(S,M)$, we will define a collection of maps 
\[
f_i:U_i\setminus\{p_i\}\rightarrow C^\circ
\]
called the \emph{model maps}. We will then construct a collection of harmonic maps, defined on subsets of~$S\setminus M$, whose behavior near~$p_i$ is controlled by~$f_i$.

We first suppose that $p_i\in M$ is a point with $m_i=0$. In this case, we have $L_i=0$ in the compatibility condition, and so this point $p_i$ corresponds via the map $\psi$ to a cusp in~$C$. This cusp has a neighborhood conformally equivalent to a punctured disk in~$\mathbb{C}$, and we define $f_i$ to be any biholomorphism from $U_i\setminus\{p_i\}$ onto such a neighborhood.

Next we suppose that $p_i\in M$ is a point with $m_i=2$. In this case, we have $L_i\neq0$ in the compatibility condition, and so $p_i$ corresponds via the map~$\psi$ to a totally geodesic boundary component $\partial_i\subset\partial C$. Note that if we equip $S\setminus M$ with its uniformizing metric, then there is an isometry
\begin{equation}
\label{eqn:cusp}
U_i\setminus\{p_i\}\cong\{z=x+\mathrm{i}y\in\mathbb{C}:0\leq x\leq1,y>y_0\}/z\sim z+1
\end{equation}
for some $y_0>0$ where the metric on the right hand side is $|dz|^2/y^2$. If $\alpha:[0,1]\rightarrow\partial_i$ is a constant speed parametrization of~$\partial_i$, then we let $u_i:U_i\setminus\{p_i\}\rightarrow\partial_i$ be the map $u_i(x+\mathrm{i}y)=\alpha(x)$ and let $t_i:U_i\setminus\{p_i\}\rightarrow U_i\setminus\{p_i\}$ be the map $t_i(z)= z+\delta_i\cdot(y-y_0)$ where $\delta_i=(\sin\theta_i)/(1-\cos\theta_i)$. We then define $f_i=u_i\circ t_i$.

Finally, suppose that $p_i\in M$ is a point for which $m_i\geq3$. In this case, $p_i$ corresponds to a boundary component of the blow up~$S_P$ with $m_i-2$ marked points. Let $\gamma_i$ be a loop on~$S_P$ which is retractible to this boundary component. Under our assumption on~$(S,M)$, its image $\psi(\gamma_i)$ is essential and is therefore homotopic to a unique geodesic in~$C$. We will denote by $A_i$ the corresponding hyperbolic crown obtained by cutting along this geodesic. By the compatibility condition, the metric residue~$L_i$ equals the real part of the analytic residue of~$P_i(z_i)$. Hence, after choosing a boundary twist for~$A_i$, we can apply Theorem~\ref{thm:modelmap} to get a harmonic map $f_i:U_i\setminus\{p_i\}\rightarrow A_i\setminus\partial A_i\subset C^\circ$ whose Hopf differential has principal part $P_i(z_i)$.

We now use the model maps to construct a collection of harmonic maps. For each index~$i$ and real number $l>y_0$, let $U_i^l\subset U_i$ be the disk such that the isometry~\eqref{eqn:cusp} provides an identification 
\[
U_i^l\setminus\{p_i\}\cong\{x+\mathrm{i}y:0\leq x<1,y>l\}/\sim.
\]
If we define $S_l=S\setminus\bigcup_iU_i^l$, then these sets $S_l$ form a compact exhaustion of~$S\setminus M$, that is, a nested collection of compact sets such that $\bigcup_lS_l=S\setminus M$. Note that there is a canonical homeomorphism $S\setminus M\cong S_P\setminus(M_P\cup\partial S_P)$, and hence we may view the $S_l$ as subsets of~$S_P\setminus M_P$. After replacing~$\psi$ by a homotopic map if necessary, we may assume it agrees with~$f_i$ on $\partial U_i^l\subset\partial S_l$, and then the work of Lemaire~\cite{Lemaire82} implies that there exists a unique harmonic map $F_l:S_l\rightarrow C^\circ$ in the homotopy class of~$\psi|_{\partial S_l}$ relative to~$\partial S_l$. An important fact that we will use below is that this map $F_l$ has least energy among maps with the given boundary conditions.

\subsection{Convergence of harmonic maps}
\label{sec:ConvergenceOfHarmonicMaps}

Our next goal is to prove that there is a subsequence of the maps $F_l$ that converges to a harmonic map $F:S\setminus M\rightarrow C^\circ$. Note that the map $F_l$ depends on the phases $\theta_i$ associated to points $p_i\in M$ for which $m_i=2$. We write $F_l=F_l^\theta$ where $\theta=(\theta_i)$ to indicate the dependence on these parameters explicitly. Then we have the following bound for the energy of this map on~$S_l$.

\begin{lemma}
\label{lem:twistenergy}
There exists a constant $N_0>0$ such that 
\[
E_{S_l}(F_l^\theta)\leq E_{S_l}(F_l^0)+N_0
\]
for all~$l$.
\end{lemma}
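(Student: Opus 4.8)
The plan is to compare the harmonic map $F_l^\theta$ against an explicit competitor that has the correct boundary behavior and whose energy we can control uniformly in $l$. Since $F_l^\theta$ is the unique energy-minimizer in the homotopy class of $\psi|_{\partial S_l}$ relative to $\partial S_l$, it suffices to exhibit \emph{any} map $G_l:S_l\to C^\circ$ agreeing with $F_l^\theta$ on $\partial S_l$ (equivalently, agreeing with $f_i^\theta$ on each $\partial U_i^l$) such that $E_{S_l}(G_l)\leq E_{S_l}(F_l^0)+N_0$; then $E_{S_l}(F_l^\theta)\leq E_{S_l}(G_l)$ gives the claim. The natural candidate is to take $G_l$ to equal $F_l^0$ on a slightly shrunken core and to interpolate, on a fixed collar inside each $U_i$, between the boundary value of $F_l^0$ and the boundary value $f_i^\theta$ of $F_l^\theta$.

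The key observation is that $\theta$ only affects the model maps $f_i$ at the double poles ($m_i=2$), where $f_i^\theta = u_i\circ t_i^\theta$ with $t_i^\theta(z)=z+\delta_i(y-y_0)$ and $\delta_i=(\sin\theta_i)/(1-\cos\theta_i)$; so $f_i^\theta$ differs from $f_i^0$ only by precomposition with the shear $t_i^\theta\circ(t_i^0)^{-1}$, which is a fixed affine shear of the half-cylinder depending only on $\theta_i$, not on $l$. Concretely, I would work in the cylindrical coordinate~\eqref{eqn:cusp} near each such $p_i$: on the annular region $\{l-1 < y < l\}$ I build an interpolation between $f_i^0$ and $f_i^\theta$ by composing $u_i$ with an affine-in-$y$ family of shears $z\mapsto z + s(y)\delta_i(y-y_0)$, where $s$ runs from $0$ at $y=l-1$ to $1$ at $y=l$. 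The energy of this interpolating map over the annulus $\{l-1<y<l\}$ is an integral over a fixed-size region (in the cylindrical coordinate, a cylinder of circumference $1$ and height $1$), with integrand bounded in terms of $\theta_i$, $\delta_i$, $y_0$, the hyperbolic metric on the geodesic boundary component $\partial_i$, and $l$; one checks using $l>y_0$ that this contribution stays bounded as $l\to\infty$ (the hyperbolic metric $|dz|^2/y^2$ and the geometry of the collar of $\partial_i$ make the integrand decay, so the supremum over $l$ is finite). On the shrunken core $S_{l-1}$ I set $G_l = F_{l-1}^0$ restricted appropriately — wait, more cleanly: I take $G_l$ to equal $F_l^0$ on $S_l$ everywhere except inside the fixed collars $\{l-1<y<l\}$ of the $m_i=2$ cusps, where I replace it by the interpolation described above. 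This leaves $E_{S_l}(G_l) = E_{S_l}(F_l^0) - (\text{energy of }F_l^0\text{ on the collars}) + (\text{energy of interpolation on the collars}) \leq E_{S_l}(F_l^0) + N_0$, where $N_0$ bounds the total interpolation energy over all finitely many such collars (dropping the subtracted nonnegative term).

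The main obstacle is making the uniform-in-$l$ energy bound on the interpolating collar genuinely uniform: one must verify that the energy density of $f_i^\theta$ and of the interpolation, computed with respect to the hyperbolic (uniformizing) metric on $S\setminus M$ and the hyperbolic metric on $C$, does not grow as the collar recedes toward the cusp. This requires knowing the asymptotics of the target metric near the geodesic boundary component $\partial_i$ — i.e., that $f_i^\theta$ maps a neighborhood of $y=\infty$ into a standard collar of $\partial_i$, where the metric is comparable to a fixed model independent of where we are — together with the fact that the shear $t_i^\theta$ acts by a bounded affine distortion there. All of this is either immediate from the explicit form of the model map $f_i$ in~\eqref{eqn:cusp} or follows from the standard collar lemma for the geodesic boundary of $C$; once it is in place, the finitely many collar contributions are each $O(1)$ in $l$, their sum defines $N_0$, and the minimality of $F_l^\theta$ closes the argument.
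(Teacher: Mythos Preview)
Your overall strategy---build a competitor with the right boundary values and invoke the energy-minimizing property of $F_l^\theta$---is exactly right, and matches the paper's. But the competitor you actually describe has a genuine gap: it is not continuous. You set $G_l=F_l^0$ on $S_l\setminus\{l-1<y<l\}$ and replace it on the collar by the interpolation $z\mapsto u_i\bigl(z+s(y)\delta_i(y-y_0)\bigr)$. At the inner edge $y=l-1$ this interpolation equals $u_i=f_i^0$, whereas $F_l^0$ restricted to $\{y=l-1\}$ is \emph{not} $f_i^0$; the harmonic map $F_l^0$ agrees with the model map $f_i^0$ only on the outer boundary $\partial U_i^l=\{y=l\}$, not in the interior. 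So $G_l$ jumps across $\{y=l-1\}$ and is not an admissible competitor.

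The paper avoids this issue with a cleaner competitor: it precomposes $F_l^0$ with the global shear $t^\theta:S_l\to S_l$ defined by $t^\theta=t_i$ on each $U_i$ with $m_i=2$ and the identity elsewhere. This is continuous because $t_i(z)=z+\delta_i(y-y_0)$ is the identity on $\partial U_i=\{y=y_0\}$, and on $\partial U_i^l$ one has $F_l^0\circ t_i=u_i\circ t_i=f_i^\theta$, so the boundary values match $F_l^\theta$. The energy estimate $E_{S_l}(F_l^0\circ t^\theta)\leq E_{S_l}(F_l^0)+N_0$ then follows from a quasiconformal-distortion bound (Lemma~4.4 of \cite{Gupta19}), since $t^\theta$ is a fixed shear independent of~$l$. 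Your approach can be repaired along the same lines---take the collar interpolation to be $F_l^0$ precomposed with an interpolated shear rather than $u_i$ precomposed with one---but at that point you are doing the paper's argument on a smaller region, and you still need the same distortion estimate.
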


\begin{proof}
Define $t^\theta:S_l\rightarrow S_l$ by 
\[
t^\theta(z)=
\begin{cases}
t_i(z) & \text{if $z\in U_i$ for $m_i=2$} \\
z & \text{otherwise}
\end{cases}
\]
where $t_i$ is the map defined above using the parameter $\theta_i$. Then the composition~$F_l^0\circ t^\theta$ agrees with~$F_l^\theta$ on~$\partial S_l$. Since the map $F_l^\theta$ is harmonic, we have $E_{S_l}(F_l^\theta)\leq E_{S_l}(F_l^0\circ t^\theta)$. By Lemma~4.4 of~\cite{Gupta19}, there is a constant $N_0$, independent of~$l$, such that $E_{S_l}(F_l^0\circ t^\theta)\leq E_{S_l}(F_l^0)+N_0$. Combining this with the previous inequality gives the desired bound.
\end{proof}

In the following, we will write $f_i^\theta$ for the model map $f_i:U_i\setminus\{p_i\}\rightarrow C^\circ$ that we constructed above using the data~$\theta=(\theta_j)$. Note that although we use this notation for all of the model maps for the sake of uniformity, $f_i^\theta$ is actually independent of $\theta$ unless $m_i=2$. We will also use the notation $U_i^{k,l}=U_i^k\setminus U_i^l$ whenever $l>k$.

\begin{lemma}
\label{lem:energynearpole}
There exists a constant $N_i>0$, such that 
\[
E_{U_i^{k,l}}(f_i^0)-E_{U_i^{k,l}}(F_l^\theta)\leq N_i
\]
for all~$l$.
\end{lemma}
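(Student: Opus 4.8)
The plan is to argue case by case on $m_i$. When $m_i=0$ there is nothing to do: $f_i^0=f_i$ is a biholomorphism of $U_i\setminus\{p_i\}$ onto a fixed cusp neighbourhood of $C$, hence conformal, so its energy on any subregion equals the hyperbolic area of the image, which is at most the (finite) area of a cusp neighbourhood; one simply takes $N_i$ to be that bound, and $F_l^\theta$ plays no role.

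When $m_i=2$ I would show that $f_i^0=u_i$ already minimises energy over its homotopy class on $U_i^{k,l}$, so the assertion holds with $N_i=0$. Since the Dirichlet energy depends only on the conformal class of the domain metric, the isometry \eqref{eqn:cusp} identifies $E_{U_i^{k,l}}(F)$ with the flat Dirichlet energy of $F$ over the rectangle $\{0\le x<1,\ k<y<l\}$. For each fixed $y$ the circle $\{y=c\}$ is carried by $F$ to a loop freely homotopic to the geodesic boundary component $\partial_i$, hence of length at least $L_i$, so Cauchy--Schwarz gives $E_{U_i^{k,l}}(F)\ge\int_k^l\bigl(\int_0^1|\partial_x F|\,dx\bigr)^2\,dy\ge L_i^2(l-k)$. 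Since $|\partial_x u_i|\equiv L_i$ and $\partial_y u_i\equiv 0$ we have $E_{U_i^{k,l}}(u_i)=L_i^2(l-k)$, while $F_l^\theta$, being homotopic to $\psi$, satisfies the lower bound; hence $E_{U_i^{k,l}}(f_i^0)=L_i^2(l-k)\le E_{U_i^{k,l}}(F_l^\theta)$.

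The case $m_i\ge 3$ is the substantive one, and I would follow the pattern of the proof of Lemma~\ref{lem:twistenergy}. Here $f_i^0=f_i$ is the harmonic diffeomorphism onto the crown $A_i$ provided by Theorem~\ref{thm:modelmap}, both $f_i$ and $F_l^\theta$ restrict to harmonic maps on $U_i^{k,l}$, and, crucially, $F_l^\theta$ coincides with $f_i$ on $\partial U_i^l$ by construction. As the hyperbolic target is non-positively curved, $f_i$ has least energy on $U_i^{k,l}$ among maps with its boundary values and relative homotopy class, so it suffices to produce one competitor $G_l$ with those boundary data and $E_{U_i^{k,l}}(G_l)\le E_{U_i^{k,l}}(F_l^\theta)+N_i$. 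I would take $G_l=F_l^\theta$ on $U_i^{k+1,l}$ — admissible precisely because $F_l^\theta$ and $f_i$ already agree on $\partial U_i^l$ — and on the fixed-modulus collar $U_i^{k,k+1}$ let $G_l$ interpolate, within the correct homotopy class, between $F_l^\theta|_{\partial U_i^{k+1}}$ and $f_i|_{\partial U_i^k}$. The lemma then reduces to showing that this interpolation costs energy bounded by a constant $C(k)$ independent of $l$, since then $E_{U_i^{k,k+1}}(G_l)\le C(k)\le E_{U_i^{k,k+1}}(F_l^\theta)+C(k)$.

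I expect this collar estimate to be the main obstacle, since it amounts to controlling the loop $F_l^\theta(\partial U_i^{k+1})$ uniformly in $l$. The route I would try first is to invoke the barrier estimates of \cite{Gupta19} near a pole of order $\ge 3$, which pin any harmonic map to the model map — in terms of the principal part of its Hopf differential — with error decaying exponentially towards $p_i$. Applied on $U_i^{k+1,l}$, where $F_l^\theta$ and $f_i$ have the same Hopf‑differential principal part $\phi_i$ and agree on the inner boundary $\partial U_i^l$, this should force $F_l^\theta$ to be $C^1$-close to $f_i$ on the fixed region $U_i^{k,k+1}$ uniformly in $l$, so that $F_l^\theta(\partial U_i^{k+1})$ lies in a fixed neighbourhood of the $l$-independent loop $f_i(\partial U_i^{k+1})$ and the interpolation can be built with energy bounded in terms of $k$ alone. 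Short of a clean barrier argument, one can fall back on a dichotomy: if $F_l^\theta(\partial U_i^{k+1})$ strays far from the core geodesic $\partial A_i$ of the crown, then subharmonicity of $z\mapsto\dist(F_l^\theta(z),\partial A_i)$ on a collar of $\partial A_i$, a Cauchy--Schwarz lower bound along vertical paths, and the fact that $E_{U_i^{k,l}}(F_l^\theta)$ and $E_{U_i^{k,l}}(f_i)$ share the same leading asymptotics in $l$ (both governed near $p_i$ by the common $\phi_i$) together make the desired inequality vacuous. Pinning the difference of the two energies down to $O(1)$, rather than merely to lower order, is the step I anticipate will require the most care.
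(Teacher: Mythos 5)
For $m_i=0$ and $m_i=2$ your proposal matches the paper: the $m_i=0$ case is verbatim, and your Cauchy--Schwarz computation for $m_i=2$ is an unrolled version of the estimate the paper delegates to Lemma~3.2 of~\cite{Wolf91}. Both are fine.

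The case $m_i\geq3$ is where you depart from the paper, and it is where the gap lies. You compare $f_i$ and $F_l^\theta$ via the Dirichlet problem on $U_i^{k,l}$ with prescribed values on \emph{both} boundary circles $\partial U_i^k$ and $\partial U_i^l$, and then try to manufacture a competitor $G_l$ by patching $F_l^\theta$ to $f_i|_{\partial U_i^k}$ across the fixed collar $U_i^{k,k+1}$. As you say, this hinges on a bound, uniform in $l$, for the energy of that interpolation, which in turn requires controlling the loop $F_l^\theta(\partial U_i^{k+1})$ uniformly in $l$. Neither of your two fallbacks supplies this at the point where Lemma~\ref{lem:energynearpole} sits in the paper. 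The barrier route presupposes that $F_l^\theta$ and $f_i$ have Hopf differentials with the same principal part near $p_i$; but $F_l^\theta$ is defined only on the compact piece $S_l$, which does not meet $p_i$, so ``principal part at $p_i$'' does not yet make sense for it, and control of the limiting Hopf differential is exactly what Lemma~\ref{lem:Hopfprincipalparts} establishes \emph{after} Lemmas~\ref{lem:energynearpole}--\ref{lem:convergence}, so invoking it here is circular. The dichotomy fallback likewise assumes matched leading asymptotics of $E_{U_i^{k,l}}(F_l^\theta)$ and $E_{U_i^{k,l}}(f_i)$ in $l$, which again is an output, not an input, of this circle of lemmas.

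The paper sidesteps the interpolation entirely by using a \emph{partially free} Dirichlet problem: let $g$ minimize energy on $U_i^{k,l}$ subject only to $g|_{\partial U_i^l}=f_i|_{\partial U_i^l}$, with no constraint on the outer circle $\partial U_i^k$. Since $F_l^\theta$ is constructed to agree with $f_i$ on $\partial U_i^l$, the restriction $F_l^\theta|_{U_i^{k,l}}$ is already an admissible competitor, so $E_{U_i^{k,l}}(g)\leq E_{U_i^{k,l}}(F_l^\theta)$ comes for free. The comparison $E_{U_i^{k,l}}(f_i^0)\leq E_{U_i^{k,l}}(g)+N_i$ with $N_i$ independent of $l$ is then precisely Lemma~4.6 of \cite{Gupta19}. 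Freeing the outer boundary is what removes the need to control $F_l^\theta(\partial U_i^{k+1})$, and that is the ingredient your argument is missing.
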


\begin{proof}
Let us first assume that $m_i=0$. In this case, the model map $f_i^0:U_i\setminus\{p_i\}\rightarrow C^\circ$ is a biholomorphism onto its image. It follows that the energy of~$f_i^0$ on a subsurface $V\subset U_i\setminus\{p_i\}$ equals the area of $f_i^0(V)$ with respect to the hyperbolic metric on~$C^\circ$. We then obtain the desired bound 
\[
E_{U_i^{k,l}}(f_i^0)-E_{U_i^{k,l}}(F_l^\theta)\leq E_{U_i^{k,l}}(f_i^0)\leq N_i
\]
where $N_i$ is defined as the total area of $f_i^0(U_i\setminus\{p_i\})$. On the other hand, if $m_i=2$ then we have $E_{U_i^{k,l}}(f_i^0)\leq E_{U_i^{k,l}}(F_l^\theta)$ as in Lemma~3.2 of~\cite{Wolf91} so that the desired bound holds for any $N_i>0$. Finally, if $m_i\geq3$ then we define $g$ to be a map $U_i^{k,l}\rightarrow U_i^{k,l}$ having the lowest possible energy subject to the constraint $g|_{\partial U_i^l}=f_i|_{U_i^{k,l}}$. (In~\cite{Gupta19}, such a function $g$ is called a solution of the \emph{partially free Dirichlet boundary problem}.) By Lemma~4.6 of~\cite{Gupta19}, there is a constant $N_i>0$, independent of~$l$, such that 
\[
E_{U_i^{k,l}}(f_i^0)\leq E_{U_i^{k,l}}(g)+N_i.
\]
Thus we have 
\[
E_{U_i^{k,l}}(f_i^0)-E_{U_i^{k,l}}(F_l^\theta)\leq E_{U_i^{k,l}}(f_i^0)-E_{U_i^{k,l}}(g)\leq N_i
\]
as desired.
\end{proof}

\begin{lemma}
\label{lem:uniformbound}
The maps $F_l$ have energy uniformly bounded on compact sets. That is, for any compact set $K\subset S\setminus M$, there is a constant $N>0$ such that $E_K(F_l)\leq N$ for all~$l$.
\end{lemma}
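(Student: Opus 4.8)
The plan is to pass from an arbitrary compact set to a fixed piece of the exhaustion $\{S_l\}$ and then balance the least-energy property of the maps $F_l=F_l^\theta$ against the near-pole estimates of Lemmas~\ref{lem:twistenergy} and~\ref{lem:energynearpole}. First I would fix a compact set $K\subset S\setminus M$. Since the $S_l$ form a compact exhaustion, there is an index $l_0$ with $K\subset S_{l_0}$, and for every $l\geq l_0$ one has $K\subset S_{l_0}\subset S_l$, so that $E_K(F_l)\leq E_{S_{l_0}}(F_l)$ because the energy density is nonnegative. It therefore suffices to bound $E_{S_{l_0}}(F_l)$ uniformly for $l\geq l_0$, and one may enlarge $l_0$ freely. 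Let $F_l^0$ be the harmonic map appearing in Lemma~\ref{lem:twistenergy}, built from the model maps $f_i^0$, and let $\psi^0$ be the corresponding reference map used in its construction; modifying $\psi^0$ within its homotopy class, I would assume $\psi^0$ coincides with $f_i^0$ on $U_i^{l_0}\setminus\{p_i\}$ for each~$i$, so that $\psi^0|_{S_l}$ is an admissible competitor for the least-energy problem defining $F_l^0$ on every $S_l$ with $l\geq l_0$.

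Next I would split $E_{S_{l_0}}(F_l)=E_{S_l}(F_l)-\sum_i E_{U_i^{l_0,l}}(F_l)$ and bound the two terms from opposite sides. For the first term, Lemma~\ref{lem:twistenergy} gives $E_{S_l}(F_l)\leq E_{S_l}(F_l^0)+N_0$, and comparing $F_l^0$ with the competitor $\psi^0$ gives $E_{S_l}(F_l^0)\leq E_{S_l}(\psi^0)=E_{S_{l_0}}(\psi^0)+\sum_i E_{U_i^{l_0,l}}(f_i^0)$, using that $\psi^0=f_i^0$ on each annulus $U_i^{l_0,l}$ and that $S_l=S_{l_0}\cup\bigcup_i U_i^{l_0,l}$. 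For the second term, Lemma~\ref{lem:energynearpole} gives $E_{U_i^{l_0,l}}(F_l)\geq E_{U_i^{l_0,l}}(f_i^0)-N_i$ for each~$i$. Substituting both estimates into the decomposition, the quantities $E_{U_i^{l_0,l}}(f_i^0)$---which individually grow without bound as $l\to\infty$---cancel, leaving $E_{S_{l_0}}(F_l)\leq E_{S_{l_0}}(\psi^0)+N_0+\sum_i N_i$, a constant independent of $l$. Calling this constant $N$ finishes the proof, as it bounds $E_K(F_l)$ for all $l\geq l_0$, hence for all $l$ for which $E_K(F_l)$ makes sense.

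The delicate point is the matching of the two sides near the poles: the argument works only because the energies $E_{U_i^{l_0,l}}(f_i^0)$ that appear in the upper bound for $E_{S_l}(F_l^0)$ are recovered, up to an $l$-independent error, in the lower bounds of Lemma~\ref{lem:energynearpole}. All of the genuinely hard local analysis---for cusps, for double poles (as in Lemma~3.2 of~\cite{Wolf91}), and for poles of order $\geq3$ (as in Lemmas~4.4 and~4.6 of~\cite{Gupta19})---has already been absorbed into Lemmas~\ref{lem:twistenergy} and~\ref{lem:energynearpole}, so the remaining obstacle is mainly bookkeeping: one must be sure that $\psi^0$ has been modified near the punctures so as to lie in the correct homotopy class rel $\partial S_l$ (so that it is genuinely a competitor for $F_l^0$), and that the decomposition $S_l=S_{l_0}\cup\bigcup_i U_i^{l_0,l}$ is used consistently on both sides of the estimate.
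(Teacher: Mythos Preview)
Your proof is correct and follows essentially the same strategy as the paper's: decompose $E_{S_{l_0}}(F_l)=E_{S_l}(F_l)-\sum_i E_{U_i^{l_0,l}}(F_l)$, bound the first term above via Lemma~\ref{lem:twistenergy} and a competitor that equals $f_i^0$ on the annuli, bound the second term below via Lemma~\ref{lem:energynearpole}, and cancel the $E_{U_i^{l_0,l}}(f_i^0)$ terms. The only cosmetic difference is the competitor on the inner piece $S_{l_0}$: the paper uses the harmonic map $F_{l_0}^0$ itself (glued to $f_i^0$ on the annuli), whereas you use the modified reference map $\psi^0$; either choice gives a finite $l$-independent constant, so the arguments are interchangeable.
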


\begin{proof}
Given any compact subset $K\subset S\setminus M$, we can find a real number $k>y_0$ such that $K\subset S_k$. It then suffices to show that the energies $E_{S_k}(F_l)$ for $l\geq k$ are bounded by some positive constant which is independent of~$l$. According to Lemma~\ref{lem:twistenergy}, we can find a constant $N_0>0$ independent of~$l$ so that 
\begin{align*}
E_{S_k}(F_l^\theta) &= E_{S_l}(F_l^\theta)-\sum_iE_{U_i^{k,l}}(F_l^\theta) \\
&\leq E_{S_l}(F_l^0)-\sum_iE_{U_i^{k,l}}(F_l^\theta)+N_0.
\end{align*}
Consider the map $g:S_l\rightarrow C^\circ$ given by the formula 
\[
g(z)=
\begin{cases}
f_i^0(z) & \text{if $z\in U_i^{k,l}$} \\
F_k^0(z) & \text{if $z\in S_k$}.
\end{cases}
\]
It is continuous and satisfies $g|_{\partial U_l}=F_l^0|_{\partial U_l}$. Since $F_l^0$ is harmonic, it follows that $E_{S_l}(F_l^0)\leq E_{S_l}(g)$. Combining this fact with Lemma~\ref{lem:energynearpole}, we obtain 
\begin{align*}
E_{S_k}(F_l^\theta) &\leq E_{S_k}(F_k^0)+\sum_iE_{U_i^{k,l}}(f_i^0)-\sum_iE_{U_i^{k,l}}(F_l^\theta)+N_0 \\
&\leq E_{S_k}(F_k^0)+\sum_iN_i+N_0
\end{align*}
for some constants $N_i>0$ independent of~$l$. This completes the proof.
\end{proof}

\begin{lemma}
\label{lem:convergence}
There is a subsequence $\{F_{l_j}\}$ of the maps $F_l$ which converges with respect to the $C^1$-norm on compact sets to a harmonic map $F:S\setminus M\rightarrow C^\circ$.
\end{lemma}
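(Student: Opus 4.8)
The plan is a standard compactness argument for harmonic maps with a fixed domain and a nonpositively curved target: use the uniform energy bounds of Lemma~\ref{lem:uniformbound} to obtain uniform interior $C^1$ (in fact $C^\infty$) bounds on compact subsets of $S\setminus M$, extract a limit by Arzel\`a--Ascoli together with a diagonal argument over the exhaustion $\{S_l\}$, and then check that harmonicity and the target constraint persist in the limit.

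First I would promote the integral energy bounds to pointwise bounds on the energy density. Fix a compact set $K\subset S\setminus M$ and a slightly larger compact set $K'$ with $K\subset\operatorname{int}(K')\subset K'\subset S\setminus M$; by Lemma~\ref{lem:uniformbound} there is a constant $N$ with $E_{K'}(F_l)\le N$ for all $l$. Since the target $C^\circ$ carries the hyperbolic metric, which has curvature $K_N=-1\le 0$, the Eells--Sampson Bochner identity (of which~\eqref{eqn:Bochner} is the scalar shadow) shows that the energy density satisfies a differential inequality of the form $\Delta_g e(F_l)\ge -c\,e(F_l)$ on $K'$, with $c$ depending only on the uniformizing metric of $S$ on $K'$ and on the curvature of $C^\circ$. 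A mean-value (Moser iteration) estimate for nonnegative subsolutions of such an inequality then gives $\sup_K e(F_l)\le C(K,K')\,E_{K'}(F_l)\le C(K,K')\,N$, uniformly in $l$. This is precisely the place where nonpositive curvature of the target enters: it rules out the energy concentration that could otherwise spoil interior estimates.

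Next I would feed these uniform $C^0$ bounds on $e(F_l)$, hence uniform $C^1$ bounds on $F_l$ on compact sets, into the harmonic map equation~\eqref{eqn:harmonicity}. Regarding~\eqref{eqn:harmonicity} as a semilinear elliptic system for $F_l$ in a local coordinate patch, interior Schauder estimates give uniform $C^{1,\alpha}$ bounds on compact sets, and bootstrapping yields uniform $C^k$ bounds for every $k$ on compact sets. Applying Arzel\`a--Ascoli along an exhausting sequence of compact sets and passing to a diagonal subsequence produces a subsequence $\{F_{l_j}\}$ converging in $C^\infty_{\mathrm{loc}}$ on $S\setminus M$ --- in particular in $C^1$ on compact sets --- to a smooth map $F:S\setminus M\rightarrow\bar C$. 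Since~\eqref{eqn:harmonicity} is preserved under $C^2_{\mathrm{loc}}$ convergence, the limit $F$ is harmonic.

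The remaining point, which I expect to be the main obstacle, is to show that $F$ actually takes values in $C^\circ$, i.e.\ that the maps do not degenerate by collapsing onto $\partial C$ or escaping into a cusp or toward a deleted $S^1$ boundary component. Here I would argue that on each fixed compact set the $F_{l}$ stay in a compact subset of $C^\circ$: near each $p_i$ the maps are pinned to the model maps $f_i^\theta$ on $\partial S_l$, and the least-energy property of $F_l$ (relative to these boundary values) together with the fact that a map sweeping deep into a thin collar of a short geodesic, a cusp, or an end of $C^\circ$ in a fixed homotopy class must carry energy growing without bound would contradict the uniform bound of Lemma~\ref{lem:uniformbound}. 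Granting this uniform ``no escape'' estimate, the $C^1_{\mathrm{loc}}$ limit $F$ lands in $C^\circ$, and the lemma follows.
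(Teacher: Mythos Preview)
Your approach is correct and essentially the same as the paper's: uniform energy bounds on compact sets (Lemma~\ref{lem:uniformbound}) $\Rightarrow$ uniform pointwise bounds on $e(F_l)$ $\Rightarrow$ elliptic regularity for the harmonic map equation~\eqref{eqn:harmonicity} $\Rightarrow$ Arzel\`a--Ascoli plus a diagonal over the exhaustion $\{S_k\}$, with harmonicity of the limit read off from~\eqref{eqn:harmonicity}. The paper obtains the sup bound on $e(F_l)$ by quoting the Schoen--Yau estimate $e(F_l)\le R\cdot E_{S_k}(F_l)$ rather than spelling out the Bochner/Moser argument, and it runs the bootstrap via a Poisson-equation regularity theorem rather than interior Schauder, but these are the same ingredients in different packaging.

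The one substantive difference is your final paragraph. The paper does not isolate a ``no escape'' step; it simply asserts uniform $C^{1,\alpha}$ bounds after one application of regularity and then invokes Arzel\`a--Ascoli. You are right that something is needed here: both the coefficient $h_w/h$ in~\eqref{eqn:Flharmonic} and the Arzel\`a--Ascoli step require that, on each fixed $S_k$, the images $F_l(S_k)$ remain in a fixed compact subset of $C^\circ$. Your proposed mechanism---the homotopy constraint plus the uniform Lipschitz bound forces $F_l(S_k)$ to avoid the thin parts of $C^\circ$, since a curve of bounded length in a nontrivial free homotopy class cannot sit deep in a cusp or collar---is the standard way to close this gap, and it works. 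So your write-up is in fact slightly more complete on this point than the paper's.
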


\begin{proof}
Consider any real number $k>y_0$. As shown in Section~1 of~\cite{SchoenYau76}, there is a constant $R>0$ independent of~$l$ such that $e(F_l)\leq R\cdot E_{S_k}(F_l)$ on~$S_k$. Combining this with Lemma~\ref{lem:uniformbound}, we see that the energy densities $e(F_l)$ are uniformly bounded on~$S_k$. The harmonicity equation~\eqref{eqn:harmonicity} applied to~$F_l$ can be written 
\begin{equation}
\label{eqn:Flharmonic}
\Delta F_l=-\frac{h_w}{h}\partial_zF_l\partial_{\bar{z}}F_l,
\end{equation}
and from what we have said, we get a $C^0$ bound for the right hand side on~$S_k$. By a standard regularity theorem for Poisson's equation (see~\cite{Jost13}, Theorem~10.1.2), the $F_l$ are uniformly bounded on~$S_k$ with respect to the $C^{1,\alpha}$-norm. This gives a $C^\alpha$ bound for the right hand side of~\eqref{eqn:Flharmonic}, and by applying the regularity theorem once again, we conclude that the $F_l$ are uniformly bounded on~$S_k$ with respect to the $C^{2,\alpha}$-norm. By the Arzel\`a-Ascoli theorem, there is a subsequence of the~$F_l$ that converges with respect to the $C^1$-norm on $S_k$.

We can now repeat the above argument, replacing $k$ by $k+1$, to get a further subsequence that converges in~$C^1$ on~$S_{k+1}$. If we continue this process inductively and let $\{F_{l_j}\}$ be a diagonal subsequence, then $\{F_{l_j}\}$ converges in~$C^1$ on compact subsets to a map $F:S\setminus M\rightarrow C^\circ$. Since each $F_{l_j}$ satisfies the Euler-Lagrange equation~\eqref{eqn:harmonicity}, so does the limit~$F$. Hence this limit map is harmonic.
\end{proof}

\subsection{Computing the principal parts}

Our next goal is to compute the principal parts of the Hopf differential of~$F$. We begin by analyzing the behavior of this map near a point $p_i\in S$ for which $m_i=0$.

\begin{lemma}
\label{lem:energysimplepole}
If $m_i=0$ then the map~$F$ has finite energy on~$U_i$.
\end{lemma}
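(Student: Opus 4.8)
The plan is to exploit the fact that near a point $p_i$ with $m_i = 0$, the model map $f_i^0$ is a biholomorphism onto a cusp neighborhood of~$C$, and to compare the energy of the limit map~$F$ with that of~$f_i^0$ on the nested annuli $U_i^{k,l}$. The key observation is that in Lemma~\ref{lem:energynearpole} we already established a uniform bound $E_{U_i^{k,l}}(f_i^0) - E_{U_i^{k,l}}(F_l^\theta) \leq N_i$ for all~$l$, where $N_i$ is the (finite) hyperbolic area of the cusp neighborhood $f_i^0(U_i \setminus \{p_i\})$. Since $f_i^0$ is a biholomorphism onto that cusp neighborhood, $E_{U_i^{k,l}}(f_i^0)$ is simply the hyperbolic area of $f_i^0(U_i^{k,l})$, which is at most~$N_i$; so rearranging gives $E_{U_i^{k,l}}(F_l^\theta) \leq 2N_i$, a bound independent of both $k$ and~$l$ (in fact one gets a cleaner bound, but any uniform bound suffices).

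From here I would pass to the limit. First fix $k > y_0$. For any $l > k$ we have, combining the above with the fact that $E_{U_i^k}(F_l) = E_{U_i^{k,l}}(F_l)$ (since $F_l$ is only defined on $S_l$, i.e.\ outside $U_i^l$), a bound $E_{U_i^k \setminus \overline{U_i^l}}(F_l) \leq 2N_i$ uniform in~$l$. Now restrict attention to the convergent subsequence $\{F_{l_j}\}$ of Lemma~\ref{lem:convergence}, which converges in $C^1$ on compact subsets of $S \setminus M$ to~$F$. For any fixed $l' > k$, the closed annulus $\overline{U_i^{k,l'}} = \overline{U_i^k} \setminus U_i^{l'}$ is a compact subset of $S \setminus M$, so $F_{l_j} \to F$ in $C^1$ there, and in particular the energy densities converge pointwise and are dominated, giving $E_{U_i^{k,l'}}(F) = \lim_j E_{U_i^{k,l'}}(F_{l_j}) \leq 2N_i$. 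Since this bound is independent of~$l'$, letting $l' \to \infty$ (i.e.\ taking the supremum over the exhausting annuli) and using monotone convergence yields $E_{U_i^k \setminus \{p_i\}}(F) \leq 2N_i < \infty$. As $S_k$ already carries finite energy by Lemma~\ref{lem:uniformbound} applied to~$F$ (or directly, since $F$ is a smooth map on the compact set $S_k$), we conclude $E_{U_i}(F) = E_{S_k \cap U_i}(F) + E_{U_i^k \setminus \{p_i\}}(F) < \infty$.

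The main obstacle is the bookkeeping at the puncture: one must be careful that the energy of $F_l$ on the annulus $U_i^{k,l}$ is genuinely controlled uniformly in~$l$ even as the annulus grows to the full punctured disk, and that the $C^1$-convergence on compact subsets can be leveraged to transfer this to~$F$ without any loss at~$p_i$ itself. The point $p_i$ is a removable-type singularity for the energy here precisely because the target cusp has finite area; there is no Bochner-type argument needed, only the area comparison and a limiting argument. One should also note that the conclusion is independent of the choice of convergent subsequence, since it only uses the uniform bound~$2N_i$, which holds for every~$F_l$.
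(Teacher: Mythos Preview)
Your argument contains a genuine error in the first paragraph: the inequality from Lemma~\ref{lem:energynearpole} goes the wrong way for your purposes. That lemma gives
\[
E_{U_i^{k,l}}(f_i^0) - E_{U_i^{k,l}}(F_l^\theta) \leq N_i,
\]
which rearranges to a \emph{lower} bound $E_{U_i^{k,l}}(F_l^\theta) \geq E_{U_i^{k,l}}(f_i^0) - N_i$, not an upper bound. Knowing additionally that $E_{U_i^{k,l}}(f_i^0)\leq N_i$ only tells you that this lower bound is nonpositive, hence vacuous. (Indeed, in the proof of Lemma~\ref{lem:energynearpole} for $m_i=0$ the bound is obtained simply by dropping the nonnegative term $E_{U_i^{k,l}}(F_l^\theta)$.) So the conclusion $E_{U_i^{k,l}}(F_l^\theta)\leq 2N_i$ does not follow, and the rest of your argument, which relies on a uniform upper bound for the energy of $F_l$ on the growing annuli $U_i^{k,l}$, collapses.

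The paper's proof takes a different route precisely because no such direct bound on $E_{U_i^{k,l}}(F_l)$ is available. Instead it introduces an auxiliary family of harmonic maps $h_l:U_i^{y_0,l}\to C^\circ$ that agree with $f_i$ on the inner boundary $\partial U_i^l$ but with $F$ on the outer boundary $\partial U_i$. The energy of $h_l$ is then controlled by comparison with the competitor $g$ (agreeing with $f_i$ near the puncture), and this gives the uniform bound $E_{U_i^{y_0,l}}(h_l)\leq N$. One extracts a convergent subsequence $h_{l_j}\to h$ with $E_{U_i}(h)\leq N$, and finally uses subharmonicity of $\dist(F_l,h_l)$ together with the matching inner boundary values to force $h=F$. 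The maximum-principle step is the missing ingredient in your approach: it is what transfers the energy bound from the comparison family to $F$ itself.
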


\begin{proof}
Fix some $k>y_0$. Then we can find a smooth embedding $g:U_i\setminus\{p_i\}\rightarrow C^\circ$ such that $g|_{\partial U_i}=F|_{\partial U_i}$ and $g|_{\partial U_i^l}=f_i|_{\partial U_i^l}$ for all $l\geq k$. By~\cite{Lemaire82} there is a harmonic map $h_l:U_i^{y_0,l}\rightarrow C^\circ$ in the homotopy class of $g|_{U_i^{y_0,l}}$ relative to $\partial U_i^{y_0,l}$ which coincides with $g$ on this boundary. By the energy minimizing property of $h_l$, we have 
\[
E_{U_i^{y_0,l}}(h_l)\leq E_{U_i}(g)\leq E_{U_i^{y_0,k}}(g)+E_{U_i^k}(f_i).
\]
Since the model map $f_i$ is a biholomorphism, the energy $E_{U_i^k}(f_i)$ equals the hyperbolic area of the image $f_i(U_i^k)$. Thus there exists a constant $N>0$, independent of~$l$, such that $E_{U_i^{y_0,l}}(h_l)\leq N$. Arguing as in the proof of Lemma~\ref{lem:convergence}, we see that there is a sequence $\{h_{l_j}\}$ that converges uniformly on compact sets to a harmonic map $h:U_i\setminus\{p_i\}\rightarrow C^\circ$. We can assume that the indices $l_j$ form a subsequence of the ones appearing in the statement of Lemma~\ref{lem:convergence}. Note that, by construction, the limit map~$h$ satisfies $E_{U_i}(h)\leq N$.

Now the function $\dist(F_l,h_l)$ is subharmonic on~$U_i^{y_0,l}$, being a distance function between two harmonic maps, so it attains its maximum on~$\partial U_i^{y_0,l}$. Since $F_l$ and $h_l$ agree on~$\partial U_i^l$, this maximum must in fact lie on~$\partial U_i$. By Lemma~\ref{lem:convergence}, we have $\dist(F_{l_j},h_{l_j})\rightarrow0$ uniformly on $\partial U_i$ as $j\rightarrow\infty$. Hence $h=\lim_{j\rightarrow\infty}h_{l_j}=\lim_{j\rightarrow\infty}F_{l_j}=F$ and $E_{U_i}(F)\leq N$.
\end{proof}

\begin{lemma}
\label{lem:Hopfintegrable}
If $m_i=0$ then the Hopf differential of~$F$ is integrable near $p_i$.
\end{lemma}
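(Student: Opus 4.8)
The plan is to deduce integrability directly from the finite-energy estimate of Lemma~\ref{lem:energysimplepole} by means of a pointwise bound comparing the norm of the Hopf differential with the energy density of~$F$.

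First I would write $\phi=\varphi(z)\,dz^{\otimes2}$ for the Hopf differential of~$F$ in a local coordinate~$z$ on~$U_i$, the conformal metrics on the domain and target being $g\,|dz|^2$ and $h\,|dw|^2$. By~\eqref{eqn:pullback} we have $\varphi=h(F)\,F_z\,\overline{F_{\bar z}}$, so that $|\varphi|=h(F)\,|F_z|\,|F_{\bar z}|$; comparing with the holomorphic and antiholomorphic energies $\mathcal{H}(F)=\frac{h(F)}{g}|F_z|^2$ and $\mathcal{L}(F)=\frac{h(F)}{g}|F_{\bar z}|^2$, this reads $|\varphi|=g\,\sqrt{\mathcal{H}(F)\,\mathcal{L}(F)}$. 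The elementary inequality $\sqrt{ab}\le\tfrac12(a+b)$ then gives $|\varphi|\le\tfrac12 g\,\bigl(\mathcal{H}(F)+\mathcal{L}(F)\bigr)=\tfrac12 g\,e(F)$, i.e.\ the coordinate-free bound $|\phi|\le\tfrac12\,e(F)\,d\mu_g$ as measures on~$S\setminus M$.

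Next I would integrate this inequality over~$U_i$. Since $\int_{U_i}e(F)\,d\mu_g=E_{U_i}(F)$, we obtain $\int_{U_i}|\phi|\le\tfrac12 E_{U_i}(F)$, and the right-hand side is finite by Lemma~\ref{lem:energysimplepole}. Hence $\int_{U_i}|\phi|<\infty$, which is precisely the statement that $\phi$ is integrable near~$p_i$. If desired, one can record the standard consequence that, because $F$ is harmonic and hence $\phi$ is holomorphic on $U_i\setminus\{p_i\}$, a holomorphic quadratic differential with finite total mass near a puncture extends across it with at worst a simple pole.

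There is no real obstacle here beyond invoking the preceding lemma; the only point requiring a little care is the bookkeeping of the conformal factor~$g$, so that the pointwise estimate is genuinely an inequality of measures (matching the coordinate-independent quantity $|\phi|$ against $e(F)\,d\mu_g$), together with the observation that the uniformizing metric~$g$ is finite and nonvanishing on $U_i\setminus\{p_i\}$, ruling out any degeneration of~$g$ that could spoil the comparison.
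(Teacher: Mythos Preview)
Your proof is correct and follows essentially the same route as the paper: bound $|\varphi|$ pointwise by $\tfrac{1}{2}g\,e(F)$ via the AM--GM inequality, then integrate and invoke Lemma~\ref{lem:energysimplepole}. Your version is in fact slightly more explicit about the conformal factor~$g$ than the paper's, and your closing remark about the at-worst-simple-pole extension is exactly what the paper uses in the subsequent lemma.
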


\begin{proof}
Recall that the Hopf differential of~$F$ is given explicitly by $\phi(z_i)=\varphi(z_i)dz_i^{\otimes2}$ where $\varphi(z_i)=h(F(z_i))F_{z_i}\bar{F}_{z_i}$ and $h$ is the function defining the hyperbolic metric on~$C$. By the inequality of arithmetic and geometric means, one has 
\[
|\varphi(z_i)|\leq\frac{1}{2}h(F(z_i))\left(|F_{z_i}|^2+|F_{\bar{z_i}}|^2\right).
\]
Integrating both sides yields 
\[
\int_{U_i}|\varphi(z_i)|dz_id\bar{z}_i\leq\frac{E_{U_i}(F)}{2}<\infty
\]
by Lemma~\ref{lem:energysimplepole}.
\end{proof}

We can now prove the desired statement about principal parts.

\begin{lemma}
\label{lem:Hopfprincipalparts}
The Hopf differential of~$F$ has principal part $P_i(z_i)$ at~$p_i$ for every~$i$.
\end{lemma}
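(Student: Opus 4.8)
The plan is to treat the three cases $m_i=0$, $m_i=2$, and $m_i\geq 3$ separately, in each case comparing the Hopf differential of $F$ near $p_i$ with that of the model map $f_i$, whose principal part is $P_i(z_i)$ by construction.

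First I would dispose of the case $m_i\geq 3$. Here the model map $f_i\colon U_i\setminus\{p_i\}\to A_i\setminus\partial A_i$ supplied by Theorem~\ref{thm:modelmap} already has Hopf differential with principal part $P_i(z_i)$, and the hyperbolic crown $A_i$ is a piece of the cut-open surface $C$. The key point is that $F$ and $f_i$ are asymptotic near $p_i$: both are harmonic maps into the same target, and the difference function $\dist(F,f_i)$ (or $\dist(F_l, f_i)$ before passing to the limit) is subharmonic, so it obeys a maximum principle on the annuli $U_i^{y_0,l}$; combined with the uniform energy bounds of Lemma~\ref{lem:uniformbound} and the $C^1$-convergence of Lemma~\ref{lem:convergence}, this forces the two maps to agree to high enough order at $p_i$ that their Hopf differentials share the same finite (degree $\leq -1$ part of the square root) expansion. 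I would structure this as: bound $\dist(F,f_i)$ near $p_i$, upgrade to a $C^1$ (or $C^{2}$) bound on the difference via the interior elliptic estimates already used in Lemma~\ref{lem:convergence}, and then read off that $\varphi_F - \varphi_{f_i}$ has a pole of order strictly less than needed to affect the principal part, so the principal parts coincide.

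For $m_i=0$, Lemma~\ref{lem:Hopfintegrable} already shows the Hopf differential of $F$ is integrable near $p_i$; an integrable holomorphic quadratic differential on a punctured disk extends holomorphically across the puncture or has at worst a simple pole, which by the conventions of the ``Principal parts'' subsection means its principal part is the prescribed $P_i(z_i)=\pm\sqrt{a_i}\,dz_i/z_i$ — and in fact since $f_i$ is a biholomorphism onto a cusp neighborhood (so its own Hopf differential has the standard cusp form with leading coefficient matching $m_i=0$, i.e.\ no pole or the designated simple pole), the comparison argument via $h=\lim h_{l_j}=\lim F_{l_j}=F$ already carried out in the proof of Lemma~\ref{lem:energysimplepole} pins down the leading behavior. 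For $m_i=2$, the model map is $f_i=u_i\circ t_i$ with $t_i(z)=z+\delta_i(y-y_0)$, $\delta_i=(\sin\theta_i)/(1-\cos\theta_i)$; a direct computation of the Hopf differential of this explicit map on the cusp coordinate~\eqref{eqn:cusp} gives a double pole whose leading coefficient $a_i$ satisfies exactly the relation $L_i^2 = 16\pi^2|a_i|\sin^2(\theta_i/2)$ with $\theta_i=\arg(a_i)$, i.e.\ the compatibility condition; then the same subharmonic-distance/elliptic-regularity comparison shows $F$ is asymptotic to $f_i$ near $p_i$ to the order needed to conclude the leading coefficient (equivalently the principal part) of the Hopf differential of $F$ agrees with that of $f_i$.

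The main obstacle is the comparison step in the case $m_i\geq 3$ (and to a lesser extent $m_i=2$): one must show not merely that $F$ and $f_i$ are close in $C^0$ or $C^1$ but that their difference decays fast enough near $p_i$ that the negative-degree terms of $\sqrt{\varphi_F}$ and $\sqrt{\varphi_{f_i}}$ literally coincide. This requires a genuine asymptotic analysis — controlling the difference of two harmonic maps with the same boundary data on a shrinking family of annuli, exploiting the exponential convergence of $t^\theta$-type deformations and the energy estimates of Gupta (Lemmas~4.4 and~4.6 of~\cite{Gupta19}) — rather than the soft arguments that sufficed for the earlier lemmas. I expect the bulk of the work, and any subtlety about which terms of the square-root expansion are ``seen'' by the $C^1$ versus $C^2$ estimates, to be concentrated here.
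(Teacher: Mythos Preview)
Your case split $m_i=0,\ m_i=2,\ m_i\geq 3$ matches the paper's proof exactly, and the underlying idea --- compare $F$ with the model map $f_i$ near each $p_i$ --- is correct. The paper, however, does not carry out the comparison analysis itself: for $m_i=2$ it simply quotes Proposition~5.5 of Sagman~\cite{Sagman19}, which computes the leading coefficient of the Hopf differential of~$F$ at a double pole to be $-\Lambda(\delta_i)L_i^2/16\pi^2$ with $\Lambda(\delta)=1-\delta^2-2\mathrm{i}\delta$, and then checks algebraically via the compatibility condition that this equals the leading coefficient of $P_i(z_i)^{\otimes 2}$; for $m_i\geq 3$ it quotes Lemma~4.9 of Gupta~\cite{Gupta19}. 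The distance-comparison and elliptic-regularity program you outline is precisely what those cited results implement, so you are in effect proposing to reprove them rather than invoke them.

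One small correction in your $m_i=0$ case: when $m_i=0$ the principal part $P_i(z_i)$ is identically zero (the coefficient $a_i$ of $z_i^{-2}$ vanishes), so the conclusion is simply that the Hopf differential of $F$ has at worst a simple pole at~$p_i$ --- which follows immediately from Lemma~\ref{lem:Hopfintegrable}, since an $L^1$ holomorphic quadratic differential on a punctured disk cannot have a pole of order $\geq 2$. No further comparison with $f_i$ is needed there.
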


\begin{proof}
Let $\phi$ denote the Hopf differential of~$F$. If $m_i=0$ then we have $P_i(z_i)=0$. On the other hand, it follows from Lemma~\ref{lem:Hopfintegrable} that $\phi$ has at worst a simple pole at any $p_i\in M$. Thus the differential~$\phi$ has vanishing leading coefficient at~$p_i$ and its principal part is zero. If $m_i=2$ then by Proposition~5.5 of~\cite{Sagman19}, the Hopf differential $\phi$ has a pole of order two at~$p_i$ with leading coefficient $-\Lambda(\delta_i)L_i^2/16\pi^2$ where 
\[
\Lambda(\delta)=1-\delta^2-2i\delta
\]
and we write $\delta_i=(\sin\theta_i)/(1-\cos\theta_i)$ as above. Using the compatibility condition, one easily checks that this equals the leading coefficient of~$\phi_i=P_i(z_i)^{\otimes2}$. Hence $\phi$ has principal part $P_i(z_i)$ at~$p_i$. Finally, if $m_i\geq3$ then the desired statement is proved in Lemma~4.9 of~\cite{Gupta19}.
\end{proof}

\subsection{Uniqueness}
\label{sec:Uniqueness}

We have now constructed a harmonic map $F:S\setminus M\rightarrow C^\circ$ whose Hopf~differential has principal part $P_i(z_i)$ at the point $p_i\in M$. In the next two lemmas, we will write $F':S\setminus M\rightarrow C^\circ$ for any harmonic map in the homotopy class of~$\psi$ whose Hopf differential has principal part $P_i(z_i)$ at~$p_i$.

\begin{lemma}
\label{lem:boundeddistance}
The function $\dist(F,F')$ is bounded on~$S\setminus M$ and tends to zero near any $p_i$ for which $m_i=0$.
\end{lemma}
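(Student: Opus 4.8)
The plan is to combine the subharmonicity of the distance function between two harmonic maps into a negatively curved target with a puncture-by-puncture analysis of the asymptotics of $F$ and $F'$. Since $C^\circ$ carries a hyperbolic metric, the function $\rho:=\dist(F,F')$ is subharmonic on $S\setminus M$; this is the classical second-variation computation behind Hartman's uniqueness theorem, and in the surface setting one may refer to Section~1 of \cite{SchoenYau78}. Consequently it suffices to bound $\rho$ in a punctured neighbourhood of each point of~$M$: on the complement of such neighbourhoods the surface $S\setminus M$ is compact and $\rho$ is continuous, hence bounded there, and this will also reduce the boundedness assertion to the local analysis.

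For a puncture $p_i$ with $m_i\geq 2$ I would compare both maps to the model map $f_i$. The restrictions of $F$ and $F'$ to $U_i\setminus\{p_i\}$ are harmonic maps whose Hopf differentials have the same principal part $P_i(z_i)$ as $f_i$ (and hence, when $m_i=2$, the same double pole with the same leading coefficient, computed in the proof of Lemma~\ref{lem:Hopfprincipalparts}). The asymptotic analysis of \cite{Gupta19} when $m_i\geq 3$, and of \cite{Sagman19} together with \cite{Wolf91} when $m_i=2$, shows that any such map stays within bounded distance of $f_i$ on $U_i\setminus\{p_i\}$. The triangle inequality $\rho\leq\dist(F,f_i)+\dist(f_i,F')$ then bounds $\rho$ near $p_i$.

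The essential case is a puncture $p_i$ with $m_i=0$, where I must show $\rho\to 0$. By Lemma~\ref{lem:energysimplepole} (and the analogous estimate for $F'$, whose Hopf differential has vanishing principal part, hence at worst a simple pole at $p_i$), both maps have finite energy near $p_i$, so $E_{U_i^{l,\infty}}(F)$ and $E_{U_i^{l,\infty}}(F')$ tend to $0$ as $l\to\infty$. Writing a cusp neighbourhood of~$C$ as $\{\Im w>c\}/(w\sim w+1)$ and $U_i\setminus\{p_i\}$ as $\{\Im z>y_0\}/(z\sim z+1)$, the maps $F$ and $F'$ lift to maps that are equivariant for the \emph{same} parabolic deck transformation $w\mapsto w+1$, since they are homotopic to the common marking $\psi$; thus $\rho$ is computed from these lifts. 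To conclude I would compare each of $F$ and $F'$ directly with the biholomorphic model map $f_i$, exactly as in the proof of Lemma~\ref{lem:energysimplepole}: $\dist(F,f_i)$ is a bounded subharmonic function on $U_i\setminus\{p_i\}$, and feeding the decay $E_{U_i^{l,\infty}}(F)\to 0$ into the maximum principle applied to the approximating harmonic maps from that proof forces $\dist(F,f_i)\to 0$, and likewise $\dist(F',f_i)\to 0$; hence $\rho\to 0$.

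The step I expect to be the main obstacle is precisely this cusp asymptotics: showing that finite energy near $p_i$ pins a harmonic map to the standard cusp map strongly enough that $F$ and $F'$ cannot drift apart up the cusp. Concretely, one must run interior gradient estimates for harmonic maps into nonpositively curved surfaces on the thin part near $p_i$, where the annuli on which the energy is small already wrap around the cusp cylinder, so a single disk estimate does not suffice and the covering has to be arranged with care. This is the analogue, in the present infinite-energy setting, of the corresponding step in \cite{Lohkamp91} and \cite{Wolf91}.
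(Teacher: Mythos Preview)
For $m_i\geq 2$ your argument is essentially the paper's: boundedness of $\dist(F,F')$ near such punctures is precisely what Proposition~3.9 of~\cite{Gupta19} (for $m_i\geq 3$) and Lemma~5.9 of~\cite{Sagman19} (for $m_i=2$) provide, and the compact core is handled by continuity.

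The gap, which you correctly locate, is at the cusps ($m_i=0$). Your proposed mechanism---energy decay together with the maximum principle applied to the approximants $h_l$ from Lemma~\ref{lem:energysimplepole}---does not yield $\dist(F,f_i)\to 0$. That argument only shows $h=\lim h_l=F$; the model map $f_i$ appears there solely as the boundary value of $h_l$ on the shrinking circle $\partial U_i^l$, and this condition evaporates in the limit, leaving no residual comparison between $F$ and $f_i$. Nor does subharmonicity of $\dist(F,f_i)$ on the punctured disk help: bounded subharmonic functions on a punctured disk need not tend to zero at the puncture (nonzero constants are subharmonic), so even if you had boundedness you could not conclude decay this way.

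The paper avoids the issue by comparing pullback metrics rather than comparing either map to a model. From Lemma~\ref{lem:Hopfintegrable} the Hopf differentials of $F$ and $F'$ are integrable near~$p_i$; then $\mathcal{H}(F),\mathcal{H}(F')\to 1$ by Proposition~3.13 of~\cite{Wolf91} and $\mathcal{L}(F),\mathcal{L}(F')\to 0$ as in Corollary~2 of~\cite{Lohkamp91}. Since the Hopf differentials have at worst simple poles, they decay exponentially in the cusp coordinate, and formula~\eqref{eqn:pullback} shows that $F^*h$ and $(F')^*h$ both converge to the uniformizing metric near~$p_i$. Hence $F'\circ F^{-1}$ is asymptotically an isometry of a cusp neighbourhood; such an isometry is parabolic, and parabolic translation distances tend to zero toward the cusp, giving $\dist(F,F')\to 0$. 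The missing ingredient in your sketch is this pointwise control on the derivatives of $F$ and $F'$ (via $\mathcal{H}$ and $\mathcal{L}$), which is strictly stronger than the integral control coming from finite energy.
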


\begin{proof}
Let us first consider a point $p_i\in M$ for which $m_i=0$. Then by Lemma~\ref{lem:Hopfintegrable}, the Hopf~differentials of~$F$ and~$F'$ are integrable near~$p_i$. If we equip $S\setminus M$ with the uniformizing metric, then the holomorphic energy functions $\mathcal{H}(F)$ and~$\mathcal{H}(F')$ tend to unity near~$p_i$ by Proposition~3.13 of~\cite{Wolf91} while the antiholomorphic energy functions $\mathcal{L}(F)$ and~$\mathcal{L}(F')$ tend to zero near~$p_i$ as in Corollary~2 of~\cite{Lohkamp91}. Working in the coordinate $z=x+\mathrm{i}y$ provided by the isomorphism~\eqref{eqn:cusp}, we see that the Hopf~differentials of~$F$ and~$F'$ have the form $O(e^{-2\pi y})dz^{\otimes2}$ as $y\rightarrow\infty$ or equivalently as $z\rightarrow p_i$. If $h$ is the hyperbolic metric on~$C^\circ$, then it follows from equation~\eqref{eqn:pullback} that the tautological map of Riemannian manifolds $(U_i\setminus\{p_i\},F^*h)\rightarrow(U_i\setminus\{p_i\},(F')^*h)$ is approximately an isometry near~$p_i$. Equivalently, $F'\circ F^{-1}$ is approximately an isometry near the corresponding cusp of~$C^\circ$. Since an isometry of a cusp neighborhood is given by a parabolic transformation, the translation distance of such an isometry tends to zero as we approach the cusp. Hence $\dist(F,F')(z)\rightarrow0$ as $z\rightarrow p_i$.

In particular, we see that $\dist(F,F')$ is bounded on~$U_i\setminus\{p_i\}$ whenever $m_i=0$. In the case $m_i=2$, it was shown in Lemma~5.9 of~\cite{Sagman19} that $\dist(F,F')$ is also bounded on~$U_i\setminus\{p_i\}$. For $m_i\geq3$, this was shown in Proposition~3.9 of~\cite{Gupta19}. The function $\dist(F,F')$ is bounded on~$S\setminus\bigcup_iU_i$ since the latter is compact. Therefore it is bounded on all of~$S\setminus M$.
\end{proof}

\begin{lemma}
\label{lem:uniqueness}
The maps $F$ and $F'$ are equal.
\end{lemma}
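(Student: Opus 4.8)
The plan is to run the standard maximum-principle argument for harmonic maps into a target of strictly negative curvature. Both $F$ and $F'$ are homotopic to $\psi$, so we may fix a homotopy between them and let $d\colon S\setminus M\to\mathbb{R}_{\geq0}$ be the function $\dist(F,F')$ appearing in Lemma~\ref{lem:boundeddistance}; that is, $d(x)$ is the length of the geodesic from $F(x)$ to $F'(x)$ in the homotopy class prescribed by the chosen homotopy. Since the hyperbolic metric on the target has curvature $-1<0$, a pointwise computation using the harmonicity equation~\eqref{eqn:harmonicity} for $F$ and $F'$ --- carried out, for instance, by passing to the universal cover (a convex $\mathrm{CAT}(-1)$ space) and using the convexity of the distance function there, as in Section~1 of~\cite{SchoenYau78} --- shows that $d$ is subharmonic on $S\setminus M$. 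This step is local, so neither the non-compactness of $S\setminus M$ nor the presence of geodesic boundary in the target causes trouble here.

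Next I would use the boundedness of $d$ from Lemma~\ref{lem:boundeddistance} to pass to the compact surface $S$. A nonnegative subharmonic function on a punctured disk that is bounded above extends across the puncture to a subharmonic function on the whole disk; applying this at each point of $M$, we obtain a subharmonic function on the compact Riemann surface $S$ that restricts to $d$ on $S\setminus M$. By the maximum principle it is constant, so $d\equiv c$ for some constant $c\geq0$.

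It remains to show $c=0$. If there is a point $p_i\in M$ with $m_i=0$, then Lemma~\ref{lem:boundeddistance} gives $d(z)\to0$ as $z\to p_i$, and hence $c=0$. In general I would argue by contradiction: if $c>0$, the geodesic homotopy $\{F_t\}$ from $F$ to $F'$ consists of harmonic maps whose images together sweep out a flat, totally geodesic immersed band in the target, which is impossible in curvature $-1$ unless that band degenerates to a single geodesic; equivalently, by the rigidity part of Hartman's uniqueness theorem~\cite{Hartman67}, two homotopic harmonic maps into a negatively curved surface coincide unless their common image lies in a point or a closed geodesic. But $F$ is homotopic to the diffeomorphism $\psi$ and hence induces an isomorphism $\pi_1(\mathbb{S}\setminus\mathbb{M})\cong\pi_1(C^\circ)$, and the latter group is non-abelian because $\chi(S\setminus M)<0$; so the image of $F$ cannot be contained in a point or a single geodesic. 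Therefore $c=0$ and $F=F'$. Together with the construction of $F$ in Sections~\ref{sec:ConstructionOfHarmonicMaps} and~\ref{sec:ConvergenceOfHarmonicMaps} and with Lemma~\ref{lem:Hopfprincipalparts}, this gives the existence and uniqueness assertion of Theorem~\ref{thm:introgeneralizeWolf}.

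The step I expect to be the main obstacle is the last one, where one has to rule out a nonzero constant $c$: invoking Hartman-style rigidity requires knowing that the $F_t$ are genuinely harmonic and that the second-variation argument producing the flat band goes through on the non-compact domain $S\setminus M$ and for a target with totally geodesic boundary. Since at that point $d$ is already known to be constant, the identities involved need only be verified locally, where the computations of~\cite{SchoenYau78} and~\cite{Hartman67} apply verbatim; alternatively, one can first double the target across its geodesic boundary arcs to reduce to the boundaryless situation before quoting~\cite{Hartman67}.
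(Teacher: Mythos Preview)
Your argument is correct, and the first half---subharmonicity of $d=\dist(F,F')$, extension across the punctures, constancy by the maximum principle---matches the paper exactly. Where you diverge is in ruling out a nonzero constant $c$: you invoke Hartman-type rigidity to force $\operatorname{rank} dF\le 1$ and then use that $F_*$ is injective on the non-abelian group $\pi_1(S\setminus M)$, whereas the paper instead builds from the constant-distance geodesic homotopy a nowhere-vanishing vector field on~$C$ (the vector at $F(z)$ points toward $F'(z)$), analyses its behaviour along each $S^1$ boundary component and along the crown ends, doubles $C$ across its geodesic boundary to a closed surface~$\widehat C$, and derives a contradiction from Poincar\'e--Hopf: the vector field has index $+1$ at every marked point, but $\chi(\widehat C)<0$.

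Your route is the standard harmonic-maps argument and is more portable; the paper's route is specifically two-dimensional but sidesteps exactly the point you flag as delicate, since it never needs the equality case of the second-variation inequality on a non-compact domain. Your justification of that step is nonetheless sound: once $d\equiv c>0$, the identity $\Delta\bigl(\dist^2\circ(\tilde F,\tilde F')\bigr)=0$ holds pointwise, and since $\dist^2$ is strictly convex on $\mathbb H\times\mathbb H$ away from the diagonal with one-dimensional Hessian kernel (tangent to the connecting geodesic), each $d\tilde F(e_i)$ is forced into that kernel. No integration or finiteness of energy is needed. One small remark: your suggested alternative of doubling the \emph{target} does not by itself repair the non-compactness of the \emph{domain}, so it is the pointwise argument, not the doubling, that actually closes the gap.
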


\begin{proof}
On a punctured surface, any bounded subharmonic function is necessarily constant, so the distance function $\dist(F,F')$ equals some constant $k$ on~$S\setminus M$ by Lemma~\ref{lem:boundeddistance}. If this constant $k$ is nonzero, then we can construct a nonvanishing vector field on $C$ where the vector assigned to a point $F(z)$ points from $F(z)$ to~$F'(z)$. If $\{z_j\}\subset S\setminus M$ is a sequence of points converging to some $p_i\in M$ with $m_i=2$, then $F(z_j)\rightarrow w$ and $F'(z_j)\rightarrow w'$ where $w$ and~$w'$ lie on a common boundary component of~$C$ and $\dist(w,w')=k$. It follows that the vector field is tangent to those boundary components of~$C$ that are homeomorphic to~$S^1$. As explained in the proof of Corollary~3.10 of~\cite{Gupta19}, this vector field is also tangent to any boundary component of~$C$ which forms part of a hyperbolic crown, and the tangent vectors point alternately toward and away from the cusps as we travel around the crown. In particular, we cannot have any $p_i$ with $m_i\geq3$ odd. We also cannot have $p_i$ with $m_i=0$ since the distance function $\dist(F,F')$ tends to zero near such a point by Lemma~\ref{lem:boundeddistance}.

Let $\widehat{C}$ be the punctured surface obtained by doubling~$C$ along its boundary. We view $\widehat{C}$ as a closed surface with finitely many marked points obtained by filling in the punctures. This surface inherits a vector field from the one on~$C$, and this vector field has index $+1$ at each marked point. But then $\widehat{C}$ has negative Euler characteristic while the total index of the vector field is positive. This contradicts the Poincar\'e-Hopf theorem. It follows that $k=0$ and consequently $F=F'$.
\end{proof}

Thus we see that the harmonic map $F:S\setminus M\rightarrow C^\circ$ constructed above is independent of which functions~$f_i$ we choose for the model maps and also independent of which subsequence $\{F_{l_j}\}$ we take the limit of in Lemma~\ref{lem:convergence}. Combining these observations with Lemma~\ref{lem:Hopfprincipalparts}, we get a well defined map 
\[
\Phi_P^{S,M}:\mathcal{T}_P(S,M)\rightarrow\mathcal{Q}_P(S,M)
\]
taking a point $(\psi,C)\in\mathcal{T}_P(S,M)$ to the Hopf differential of the unique associated harmonic map~$F:S\setminus M\rightarrow C^\circ$.

\subsection{Regularity}

Finally, we will show that the map $F:S\setminus M\rightarrow C^\circ$ constructed above is a diffeomorphism onto its image.

\begin{lemma}
\label{lem:boundholomorphicenergy}
Assume $S\setminus M$ is equipped with the uniformizing metric, and let $\mathcal{H}(F)$ denote the holomorphic energy of~$F$ with respect to this choice of metric. Then $\mathcal{H}(F)\geq1$ on $S\setminus M$.
\end{lemma}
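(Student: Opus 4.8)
The plan is to run a maximum principle argument built on the Bochner identity~\eqref{eqn:Bochner}. Since $S\setminus M$ carries the uniformizing metric $g$ and $C^\circ$ carries a hyperbolic metric, both of constant curvature $-1$, equation~\eqref{eqn:Bochner} specializes, at every point where $\mathcal{H}(F)>0$, to
\[
\Delta_g\log\mathcal{H}(F)=2\mathcal{H}(F)-2\mathcal{L}(F)-2\leq 2\bigl(\mathcal{H}(F)-1\bigr),
\]
using $\mathcal{L}(F)\geq0$. In particular $\log\mathcal{H}(F)$ is strictly superharmonic on the open set $\Omega=\{x\in S\setminus M:\mathcal{H}(F)(x)<1\}$, away from the zeros of $\mathcal{H}(F)$. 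The goal is then to show $\Omega=\emptyset$ by producing a point of $\Omega$ at which $\mathcal{H}(F)$ attains an interior minimum with \emph{positive} value, contradicting the displayed inequality.

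To make this work on the noncompact surface $S\setminus M$, I would first record two facts. (i) \emph{$\mathcal{H}(F)>0$ everywhere}; equivalently, $F$ has no orientation-reversing points. Since $F$ is a $C^1$-limit of the energy-minimizing maps $F_l$ of Section~\ref{sec:ConstructionOfHarmonicMaps}, it is itself locally energy minimizing (on any compact subsurface with smooth boundary, $F$ has least energy among maps with the same boundary values), and the regularity theory for energy-minimizing maps into a surface of negative curvature, as in Schoen--Yau~\cite{SchoenYau78} and Sampson~\cite{Sampson78}, excludes orientation-reversing and branch points. (ii) \emph{End behavior.} Near a puncture $p_i\in M$ with $m_i=0$ one has $\mathcal{H}(F)\to1$ (Proposition~3.13 of~\cite{Wolf91}, already invoked in the proof of Lemma~\ref{lem:boundeddistance}). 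Near a puncture with $m_i\geq2$ the uniformizing metric has a cusp while the Hopf differential has a pole of order $m_i\geq2$, so the product $\mathcal{H}(F)\mathcal{L}(F)$, which equals the squared pointwise $g$-norm of the Hopf differential, tends to $\infty$; combining this with the asymptotics of harmonic maps into collars of geodesic boundary components (\cite{Sagman19}, for $m_i=2$) and into hyperbolic crowns (\cite{Gupta19} together with Theorem~\ref{thm:modelmap}, for $m_i\geq3$) yields $\liminf_{x\to p_i}\mathcal{H}(F)(x)\geq1$. Consequently, for every $c<1$ the sublevel set $\{\mathcal{H}(F)\leq c\}$ is a compact subset of $S\setminus M$.

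Granting (i) and (ii), suppose for contradiction that $\Omega\neq\emptyset$, pick $x_0$ with $\mathcal{H}(F)(x_0)<1$, and choose $c\in(\mathcal{H}(F)(x_0),1)$. Then $\{\mathcal{H}(F)\leq c\}$ is nonempty and compact, so $\mathcal{H}(F)$ attains its global minimum over $S\setminus M$ at some point $x_1$, with $0<\mathcal{H}(F)(x_1)\leq\mathcal{H}(F)(x_0)<1$ by (i). Since $\log\mathcal{H}(F)$ is smooth near $x_1$ with a local minimum there, $\Delta_g\log\mathcal{H}(F)(x_1)\geq0$; but the displayed identity gives $\Delta_g\log\mathcal{H}(F)(x_1)=2\mathcal{H}(F)(x_1)-2\mathcal{L}(F)(x_1)-2\leq 2\mathcal{H}(F)(x_1)-2<0$, a contradiction. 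Hence $\Omega=\emptyset$, i.e.\ $\mathcal{H}(F)\geq1$ on $S\setminus M$.

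The main obstacle I anticipate is fact (i): one must check that the limit map $F$ genuinely inherits enough of the minimizing property of the $F_l$ for the Schoen--Yau--Sampson regularity results to apply and thereby rule out orientation-reversing (and branch) points, since this cannot be deduced from harmonicity alone. A secondary difficulty lies in (ii) near poles of order $m_i\geq3$, where the model map takes values in a crown and $\mathcal{H}(F)$ approaches $1$ near the crown's cusps while blowing up near its geodesic boundary, so that obtaining the uniform lower bound $\liminf_{x\to p_i}\mathcal{H}(F)(x)\geq1$ requires combining the estimates of~\cite{Gupta19} with Theorem~\ref{thm:modelmap} with some care.
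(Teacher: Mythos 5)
Your strategy is essentially the paper's: establish $\liminf\mathcal{H}(F)\ge 1$ at every puncture, then run a minimum principle based on the Bochner identity $\Delta_g\log\mathcal{H}=2\mathcal{J}-2$ (your phrasing $\Delta_g\log\mathcal{H}\le 2(\mathcal{H}-1)$ via $\mathcal{L}\ge0$ is the same inequality). Two points of comparison are worth flagging.

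First, for the end behavior near punctures with $m_i\ge2$ you correctly note $\mathcal{H}\mathcal{L}=|\phi|_g^2\to\infty$, but then reach for the crown and collar asymptotics of~\cite{Gupta19} and~\cite{Sagman19}, which is more than is needed and a bit vague. The paper closes this step in one line: since $\mathcal{J}(F)=\mathcal{H}-\mathcal{L}\ge0$ (a limit of $\mathcal{J}(F_l)>0$), one has $\mathcal{L}\le\mathcal{H}$, so $\mathcal{H}^2\ge\mathcal{H}\mathcal{L}=|\phi|_g^2\to\infty$ and in fact $\mathcal{H}\to\infty$ at such a pole. Since you already intend to use that $F$ is a limit of orientation-preserving diffeomorphisms for step~(i), you have $\mathcal{J}(F)\ge0$ in hand, and the external asymptotics can be dropped.

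Second, your step~(i) -- that $\mathcal{H}(F)>0$ so that $\log\mathcal{H}$ is genuinely defined at an interior minimum -- identifies a real point that the paper's proof leaves implicit: if $\mathcal{H}$ had a zero, the Bochner minimum principle could not be applied there, and indeed the conclusion $\mathcal{H}\ge1$ subsumes positivity. You are right to worry that this cannot be dismissed by harmonicity alone and that one must appeal to some form of the Schoen--Yau/Sampson regularity for the energy-minimizing limit; note, however, that invoking this carefully on a noncompact domain (and before the paper's Lemma~\ref{lem:localdiffeo}, which depends on the present lemma) is delicate and would need to be spelled out, since as stated your sketch is not yet a proof. The paper takes this for granted, so on this point your proposal is if anything more scrupulous, but not yet complete.

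Overall, same approach and sound in spirit; tighten (ii) with the $\mathcal{L}\le\mathcal{H}$ step and flesh out (i) to make it airtight.
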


\begin{proof}
If $m_i=0$ then Proposition~3.13 of~\cite{Wolf91} says that $\lim_{z\rightarrow p_i}\mathcal{H}(F)(z)=1$. On the other hand, for $m_i\geq2$ we claim that $\lim_{z\rightarrow p_i}\mathcal{H}(F)(z)\geq1$. Indeed, we can choose a local coordinate~$z$ defined in a neighborhood of~$p_i$ so that the local expression for the metric is $g|dz|^2$ where $g(z)=\frac{1}{|z|^2\log^2|z|}$. Let $\phi(z)=\varphi(z)dz^{\otimes2}$ be the Hopf~differential of~$F$ in this coordinate. Since the map $F_l$ is an orientation preserving diffeomorphism, the Jacobian satisfies $\mathcal{J}(F_l)>0$ so that $\mathcal{H}(F)-\mathcal{L}(F)=\mathcal{J}(F)\geq0$ and hence $|\varphi|^2/g^2=\mathcal{L}\mathcal{H}\leq\mathcal{H}^2$. This implies the claim since $\varphi(z)$ has a pole of order $\geq2$ at~$z=0$. To complete the proof, suppose that $z_0$ is a local minimum for $\mathcal{H}(F)$ on~$S\setminus M$. Then the Bochner equation~\eqref{eqn:Bochner} implies  
\[
0\leq\left(\Delta_g\log\mathcal{H}\right)(z_0)=2\mathcal{J}(F)(z_0)-2.
\]
so that $1\leq\mathcal{J}(F)(z_0)\leq\mathcal{H}(F)(z_0)$. It follows that $\mathcal{H}(F)\geq1$ on all of~$S\setminus M$.
\end{proof}

\begin{lemma}
\label{lem:localdiffeo}
We have $\mathcal{J}(F)>0$ on~$S\setminus M$ so that $F$ is an orientation preserving local diffeomorphism.
\end{lemma}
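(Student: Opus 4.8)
The plan is to combine the lower bound $\mathcal{H}(F)\geq1$ from Lemma~\ref{lem:boundholomorphicenergy} with a maximum principle for the ratio of the two energies, following the classical approach of Schoen--Yau~\cite{SchoenYau78} and Sampson~\cite{Sampson78}. First I would record that $\mathcal{J}(F)\geq0$ on $S\setminus M$: each $F_l$ is an orientation preserving diffeomorphism, so $\mathcal{J}(F_l)>0$, and $\mathcal{J}(F)\geq0$ follows by passing to the limit using the $C^1$-convergence of Lemma~\ref{lem:convergence}. Writing $\varphi(z)=h(F(z))F_z\bar{F}_z$ for the (holomorphic) Hopf differential of $F$, formula~\eqref{eqn:pullback} gives $|\varphi|^2=\mathcal{H}(F)\,\mathcal{L}(F)\,g^2$, so since $\mathcal{H}(F)\geq1>0$ the antiholomorphic energy $\mathcal{L}(F)$ vanishes exactly at the zeros of $\varphi$. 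If $\varphi\equiv0$, then $\mathcal{L}(F)\equiv0$ and $\mathcal{J}(F)=\mathcal{H}(F)\geq1$ and we are done, so I may assume $\varphi\not\equiv0$, in which case $\{\mathcal{L}(F)=0\}$ is a discrete subset of $S\setminus M$.

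Next I would set $v=\mathcal{L}(F)/\mathcal{H}(F)$, a continuous function $S\setminus M\to[0,1]$, where $v\leq1$ is exactly the statement $\mathcal{J}(F)\geq0$. On the open set $\{\varphi\neq0\}$ both energies are positive and smooth, and alongside the Bochner equation~\eqref{eqn:Bochner} for $\mathcal{H}(F)$ one has the companion identity $\Delta_g\log\mathcal{L}(F)=2K_N\mathcal{J}(F)+2K_M$ valid where $\mathcal{L}(F)>0$ (derived exactly as~\eqref{eqn:Bochner}). Since the uniformizing metric on $S\setminus M$ and the hyperbolic metric on $C^\circ$ both have curvature $-1$, subtracting these gives $\Delta_g\log v=-4\mathcal{J}(F)\leq0$, so $\log v$ is superharmonic on $\{\varphi\neq0\}$. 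Now $\log v\leq0$ with equality precisely on $\{\mathcal{J}(F)=0\}$, and any point of $\{\mathcal{J}(F)=0\}$ satisfies $\mathcal{L}(F)=\mathcal{H}(F)\geq1>0$, hence lies in $\{\varphi\neq0\}$; so by the strong maximum principle $\{\mathcal{J}(F)=0\}$ is open, and it is closed by continuity of $\mathcal{J}(F)$. Since $S\setminus M$ is connected, either $\mathcal{J}(F)>0$ everywhere, which is what we want, or $\mathcal{J}(F)\equiv0$.

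It remains to exclude $\mathcal{J}(F)\equiv0$. In that case $dF$ has rank $\leq1$ at every point, so by the structure theory of harmonic maps between surfaces with non-positively curved target~\cite{SchoenYau78,Sampson78} the image $F(S\setminus M)$ lies in a point or a single closed geodesic of $C^\circ$; in particular $F_*\pi_1(S\setminus M)$ is cyclic. But $F$ is homotopic to the marking $\psi$, so $F_*$ induces an isomorphism onto $\pi_1(C^\circ)$, which is free of rank $1-\chi(S\setminus M)\geq2$ because $\chi(S\setminus M)<0$, hence non-cyclic --- a contradiction. Therefore $\mathcal{J}(F)>0$ on all of $S\setminus M$. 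Finally, in any local conformal coordinate $\mathcal{J}(F)=\tfrac{h}{g}\bigl(|F_z|^2-|F_{\bar z}|^2\bigr)$, which is the real Jacobian determinant of $F$ up to the positive factor $h/g$; its positivity means $dF_p$ is an orientation preserving linear isomorphism for every $p$, so $F$ is an orientation preserving local diffeomorphism by the inverse function theorem.

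I expect the heart of the argument --- and the place where the hypotheses really get used --- to be the maximum principle step of the second paragraph together with the exclusion of the degenerate case: one must verify that $\log v$ is genuinely superharmonic on the locus where it is finite, and, crucially, that the vanishing locus of $\mathcal{J}(F)$ cannot leak out through the zeros of $\varphi$. This is precisely where the bound $\mathcal{H}(F)\geq1$ of Lemma~\ref{lem:boundholomorphicenergy} is indispensable, since it forces every zero of $\mathcal{J}(F)$ to occur away from the zeros of the Hopf differential.
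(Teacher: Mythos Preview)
Your strategy is essentially the content of the Schoen--Yau claim the paper cites, and the overall architecture is sound, but the maximum-principle step as you state it does not go through. You write that $\log v$ is superharmonic on $\{\varphi\neq0\}$ (correct: $\Delta_g\log v=-4\mathcal{J}\le0$) and then invoke the ``strong maximum principle'' at the interior \emph{maximum} $\log v(z_0)=0$. But the strong maximum principle for superharmonic functions is a \emph{minimum} principle; a superharmonic function can perfectly well attain an interior maximum without being constant (take $-|z|^2$). So ``superharmonic $+$ interior max $\Rightarrow$ locally constant'' is false, and your openness claim is not yet justified.

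The repair is short. From $\mathcal{J}=\mathcal{H}(1-v)$ one has $\Delta_g\log v=-4\mathcal{H}\bigl((1-v)+\log v\bigr)-4\mathcal{H}\log v$; since $(1-v)+\log v\le0$ for $0<v\le1$, this yields $(\Delta_g-4\mathcal{H})\log v\ge0$. Now $L=\Delta_g-4\mathcal{H}$ has non-positive zeroth-order coefficient, $L(\log v)\ge0$, and $\log v$ attains the non-negative maximum $0$ at $z_0$; the strong maximum principle for such operators (Hopf, or Gilbarg--Trudinger Theorem~3.5) then gives $\log v\equiv0$ on the component of $\{\varphi\neq0\}$ containing $z_0$, which is the openness you want. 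With this correction your argument is complete.

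For comparison, the paper does not redo any of this: it simply quotes the Schoen--Yau claim (zeros of $\mathcal{J}$ are zeros of $\mathcal{H}$ once $\mathcal{J}\not\equiv0$) as a black box, rules out $\mathcal{J}\equiv0$ via Sard, and concludes from $\mathcal{H}\ge1$. Your route unpacks that black box and replaces the Sard step by the observation that a rank-$\le1$ harmonic map to a hyperbolic surface has geodesic image, hence cyclic $\pi_1$-image, contradicting $F\simeq\psi$. Both are valid; yours is more self-contained, the paper's is shorter.
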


\begin{proof}
We already know that $\mathcal{J}(F)\geq0$, and Sard's theorem implies that $\mathcal{J}(F)$ is not identically zero. If $\mathcal{J}(F)(z)=0$, then the claim on page~270 of~\cite{SchoenYau78} implies $\mathcal{H}(F)(z)=0$, which contradicts Lemma~\ref{lem:boundholomorphicenergy}. Hence the Jacobian of~$F$ is nowhere vanishing.
\end{proof}

\begin{lemma}
\label{lem:Fsurjective}
$F$ surjects onto $C\setminus\partial C$.
\end{lemma}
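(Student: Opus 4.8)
The plan is to show that the image $F(S\setminus M)$ is a nonempty subset of $C\setminus\partial C$ that is both open and closed, and then to conclude by connectedness. First I would note that $F$ does land in $C\setminus\partial C$ and is an open map, both of which follow from Lemma~\ref{lem:localdiffeo}: since $S\setminus M$ is a surface without boundary and $F$ is an orientation preserving local diffeomorphism, no point can be carried to a boundary point of~$C^\circ$, because a neighborhood of such a point in~$C^\circ$ is a half-disk rather than a disk; and as $C^\circ$ is obtained from $C$ by deleting its circle boundary components, this gives $F(S\setminus M)\subseteq C\setminus\partial C$. Openness is automatic for a local diffeomorphism.

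The substance of the argument is to prove that $F\colon S\setminus M\to C\setminus\partial C$ is proper, since a proper map into a locally compact Hausdorff space has closed image. Given a compact set $K\subseteq C\setminus\partial C$, it suffices to find an exhaustion level $l$ with $F^{-1}(K)\subseteq S_l$; then $F^{-1}(K)$, being closed, is a closed subset of the compact set $S_l$ and hence compact. This reduces to showing, for each~$i$, that $F\bigl(U_i^l\setminus\{p_i\}\bigr)$ is disjoint from~$K$ once $l$ is large, and I would treat the three cases of Section~\ref{sec:ConstructionOfHarmonicMaps}. If $m_i=0$, then Lemmas~\ref{lem:energysimplepole} and~\ref{lem:Hopfintegrable} together with the asymptotics recalled above ($\mathcal{H}(F)\to1$ and $\mathcal{L}(F)\to0$ near $p_i$) show that $F$ carries a punctured neighborhood of $p_i$ into a cusp neighborhood of~$C$, so $F(z)$ eventually enters every cusp neighborhood of that cusp; since $K$ is disjoint from a cusp neighborhood of each cusp of~$C$, we get $F\bigl(U_i^l\setminus\{p_i\}\bigr)\cap K=\emptyset$ for $l$ large. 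If $m_i=2$, the analysis of Sagman~\cite{Sagman19} underlying the earlier lemmas shows that $F$ is asymptotic near $p_i$ to the model map $f_i$, whose image is the closed geodesic boundary component $\partial_i\subseteq\partial C$, with the image loops $F\bigl(\{|z_i|=r\}\bigr)$ converging to~$\partial_i$ as $r\to0$; since $\dist(K,\partial C)>0$, the set $F\bigl(U_i^l\setminus\{p_i\}\bigr)$ lies in a thin collar of $\partial_i$ disjoint from~$K$ for $l$ large. If $m_i\geq3$, then by Theorem~\ref{thm:modelmap} and the work of Gupta~\cite{Gupta19} recalled above, $F$ maps $U_i\setminus\{p_i\}$ into the hyperbolic crown $A_i$ and $F\bigl(U_i^l\setminus\{p_i\}\bigr)$ converges, as $l\to\infty$, to the cusps of $A_i$, which are among the cusps of~$C$; as these are again disjoint from~$K$, we get $F\bigl(U_i^l\setminus\{p_i\}\bigr)\cap K=\emptyset$ for $l$ large. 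Taking $l$ large enough to work for every~$i$ simultaneously gives $F^{-1}(K)\subseteq S_l$.

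Finally, $C\setminus\partial C$ is the interior of the connected surface~$C$, hence connected, so the nonempty open and closed subset $F(S\setminus M)$ must be all of $C\setminus\partial C$. The main obstacle is the properness step, and in particular the case $m_i\geq3$: one needs to know that the harmonic map near a pole of order $\geq3$ escapes to the cusps of the crown $A_i$ rather than accumulating on the interior closed geodesic $\psi(\gamma_i)$ along which we cut. This is exactly the asymptotic description of $F$ near such a pole established by Gupta~\cite{Gupta19}, on which the earlier lemmas (notably Lemma~\ref{lem:boundeddistance}) already rely, so I would quote it rather than reprove it.
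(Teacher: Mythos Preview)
Your argument is correct and takes a genuinely different route from the paper's. You prove that the image is open (immediate from Lemma~\ref{lem:localdiffeo}) and closed (via properness of $F$, established by the case-by-case escape analysis near each $p_i$), and then invoke connectedness of $C\setminus\partial C$. The paper instead records the same boundary asymptotics (diameter of $F(\partial U_i^l)\to 0$ for $m_i=0$ from~\cite{Lohkamp91}; $\dist(F,R\circ f_i)\to 0$ for $m_i=2$ from~\cite{Sagman19}), but for $m_i\geq 3$ it does \emph{not} argue properness: it invokes Gupta's polygonal exhaustion (Proposition~2.29 of~\cite{Gupta19}) to show directly that $F$ surjects onto the crown~$A_i$, and then uses the homotopy $F\simeq\psi$ to conclude surjectivity onto the remaining core of~$C$.

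The trade-off is this. Your open--closed--connected scheme is structurally cleaner and avoids any appeal to the homotopy $F\simeq\psi$, but it requires the properness statement near higher-order poles, which (as you note) needs the bounded-distance comparison with the model map from~\cite{Gupta19} to ensure $F(z)$ escapes to the cusps rather than accumulating on the interior geodesic $\psi(\gamma_i)$. The paper's approach sidesteps this by citing surjectivity onto the crown as a black box, at the cost of a slightly less self-contained final step (the sentence ``Since $F\simeq\psi$ these facts imply\ldots'' is a degree/homotopy argument left implicit). Both routes ultimately rest on the same asymptotic input from Lohkamp, Sagman, and Gupta.
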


\begin{proof}
If $m_i=0$ then Lemma~5 of~\cite{Lohkamp91} says that the diameter of $F(\partial U_i^l)$ tends to zero as $l\rightarrow\infty$. If $m_i=2$ then Lemma~5.4 of~\cite{Sagman19} says there exists a hyperbolic isometry $R$ stabilizing the corresponding boundary component of~$C$ such that $\dist(F,R\circ f_i)(z)\rightarrow0$ as $z\rightarrow p_i$. Finally, if $m_i\geq3$, then by considering a polygonal exhaustion as in Proposition~2.29 of~\cite{Gupta19}, one sees that $F$ surjects onto the hyperbolic crown corresponding to~$p_i$. Since $F\simeq\psi$ these facts imply that $F$ is surjective onto~$C\setminus\partial C$.
\end{proof}

\begin{lemma}
\label{lem:Finjective}
$F$ is injective.
\end{lemma}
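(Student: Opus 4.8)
The plan is to promote the local information about $F$ established in Lemmas~\ref{lem:localdiffeo} and~\ref{lem:Fsurjective} to a global statement: I would show that $F$ is a covering map and then use its homotopy class to see that the covering has a single sheet. This is the noncompact analogue of the Schoen--Yau argument that a harmonic map of degree one between closed hyperbolic surfaces is a diffeomorphism, the extra input here being the asymptotic control of $F$ near each puncture. Throughout I regard $F$ as a map $F\colon S\setminus M\to C\setminus\partial C$ between connected open surfaces; by Lemma~\ref{lem:localdiffeo} it is an orientation preserving local diffeomorphism (in particular its image is open, so it really does land in the interior $C\setminus\partial C$), and by Lemma~\ref{lem:Fsurjective} it is surjective.

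The first, and main, step is to prove that $F$ is proper. Since $S\setminus\bigcup_i U_i$ is compact, it is enough to show that for each~$i$ the image under $F$ of a sufficiently small punctured-disk neighborhood of $p_i$ is disjoint from any prescribed compact subset of $C\setminus\partial C$; equivalently, that $F(z)$ leaves every compact set of $C\setminus\partial C$ as $z\to p_i$. For $m_i=0$ this follows from the fact, used already in the proof of Lemma~\ref{lem:Fsurjective}, that $\operatorname{diam}F(\partial U_i^l)\to0$ (Lemma~5 of~\cite{Lohkamp91}) together with the description of $F$ near the cusp, so that $F(z)$ tends to the cusp end. For $m_i=2$ it follows from Lemma~5.4 of~\cite{Sagman19}: there is a hyperbolic isometry $R$ stabilising the geodesic boundary circle $\partial_i\subset\partial C$ with $\dist(F,R\circ f_i)(z)\to0$ as $z\to p_i$, and $R\circ f_i$ takes values on $\partial_i$, so $\dist_C(F(z),\partial C)\to0$. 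For $m_i\geq3$ it follows from the polygonal-exhaustion analysis of Proposition~2.29 of~\cite{Gupta19}, which locates the image of $F$ near $p_i$ inside the hyperbolic crown $A_i$, approaching its cusps and bi-infinite boundary arcs. Any compact $K\subset C\setminus\partial C$ lies at positive distance from $\partial C$ and avoids a neighborhood of each cusp, so $F^{-1}(K)$ is disjoint from a neighborhood of each $p_i$; being closed in $S\setminus M$, it is then a closed subset of a compact set, hence compact.

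The second step is routine topology. A proper local diffeomorphism between connected manifolds is a finite-sheeted covering map, so $F\colon S\setminus M\to C\setminus\partial C$ is a covering; its number of sheets equals the index $[\pi_1(C\setminus\partial C):F_*\pi_1(S\setminus M)]$. Now $F$ is homotopic to $\psi$, and the restriction of $\psi$ is a homotopy equivalence $S\setminus M\to C\setminus\partial C$: the inclusions $S\setminus M\hookrightarrow\mathbb{S}\setminus\mathbb{M}$ and $C\setminus\partial C\hookrightarrow C^\circ$ are homotopy equivalences, obtained by deformation retracting the collar neighborhoods of the boundary arcs. Hence $F_*$ is an isomorphism on fundamental groups, the index is~$1$, and $F$ is a homeomorphism. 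Being also a local diffeomorphism it is a diffeomorphism, and in particular it is injective.

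I expect the properness step to be the only genuine obstacle: it is where the boundary behavior of the harmonic map must be invoked separately in the cases $m_i=0$, $m_i=2$, $m_i\geq3$ and assembled into a single uniform escape statement, after which everything reduces to soft arguments about covering spaces. An alternative would be to lift $F$ to a $\rho$-equivariant harmonic map $\widetilde F\colon\mathbb{H}\to\Omega$ between the universal covers, with $\Omega\subset\mathbb{H}$ a convex domain, and argue injectivity of $\widetilde F$ directly; but that route requires the same asymptotic analysis near the $p_i$ in order to control $\widetilde F$ at infinity, so it does not save work.
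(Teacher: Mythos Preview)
Your proof is correct and follows essentially the same approach as the paper: both establish that preimages of compact sets are compact via the same case-by-case asymptotic analysis at each $p_i$ (invoking Lohkamp for $m_i=0$, Sagman for $m_i=2$, Gupta for $m_i\geq3$), and both then use that $F\simeq\psi$ to conclude. The only cosmetic difference is in the final step: the paper argues pointwise via degree, observing that $1=\sum_{x\in F^{-1}(y)}\sgn\mathcal{J}(F)(x)$ together with $\mathcal{J}(F)>0$ forces $|F^{-1}(y)|=1$, whereas you pass through the equivalent statement that a proper local diffeomorphism is a covering and read off the sheet number from $\pi_1$.
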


\begin{proof}
Let $y\in C\setminus\partial C$ and consider the preimage $F^{-1}(y)$. We claim that this preimage is contained in a compact set in~$S\setminus M$. Indeed, if it is not contained in such a set, then we can find a sequence $\{x_i\}\subset F^{-1}(y)$ such that $\lim_{i\rightarrow\infty}x_i\in M$. But then the points $F(x_i)$ tend to a cusp or boundary component of~$C$ as~$i\rightarrow\infty$, and we can bound $F(x_i)$ away from~$y$ by choosing $i$ sufficiently large. This contradiction establishes the claim.

Now since $y_0$ is a regular value by Lemma~\ref{lem:localdiffeo}, the preimage $F^{-1}(y)$ is a compact zero-dimensional manifold, hence finite. Since $\psi$ is a diffeomorphism and $F\simeq\psi$, we know that $F$ has degree one. Therefore 
\[
1=\sum_{x\in F^{-1}(y)}\sgn\mathcal{J}(F)(x).
\]
From Lemma~\ref{lem:localdiffeo}, we know that $\mathcal{J}(F)(x)>0$ for all $x\in F^{-1}(y)$, and hence $|F^{-1}(y)|=1$.
\end{proof}

\begin{proposition}
$F$ is a diffeomorphism $S\setminus M\stackrel{\sim}{\longrightarrow}C\setminus\partial C$.
\end{proposition}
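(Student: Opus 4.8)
The plan is to assemble the three preceding lemmas. By Lemma~\ref{lem:localdiffeo}, the differential of~$F$ is everywhere nonsingular, so $F$ is an immersion and, by the inverse function theorem, a local diffeomorphism onto its image; by Lemma~\ref{lem:Fsurjective}, this image contains $C\setminus\partial C$; and by Lemma~\ref{lem:Finjective}, $F$ is injective. It therefore remains only to check that the image of~$F$ is exactly $C\setminus\partial C$ and that a bijective local diffeomorphism is a diffeomorphism.

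For the first point, I would argue that $F$ cannot meet $\partial C$ (meaning here the totally geodesic boundary arcs and the geodesic boundary components of the crowns; the $S^1$-components were already removed in passing to~$C^\circ$). If $F(x)\in\partial C$ for some $x\in S\setminus M$, then since $\mathcal{J}(F)(x)>0$ the map~$F$ restricts to an embedding of a small coordinate disk around~$x$ onto a neighborhood of~$F(x)$ in the smooth surface $F(S\setminus M)$, of which $F(x)$ is an interior point. But a neighborhood of a point of $\partial C$ inside $C^\circ$ is a half-disk, and it cannot contain a full two-dimensional disk centered at that boundary point, a contradiction. Hence $F$ maps $S\setminus M$ into $C\setminus\partial C$, and combined with Lemma~\ref{lem:Fsurjective} the image is precisely $C\setminus\partial C$.

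For the second point, $F\colon S\setminus M\to C\setminus\partial C$ is now a smooth bijection, and its set-theoretic inverse is smooth: near any point $y=F(x)$, the inverse function theorem (applicable since $dF_x$ is invertible by Lemma~\ref{lem:localdiffeo}) produces a smooth local inverse to~$F$ on a neighborhood of~$y$, which must coincide with $F^{-1}$ there by injectivity of~$F$. Thus $F^{-1}$ is smooth near every point, so $F$ is a diffeomorphism.

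The conceptual work has already been done in Lemmas~\ref{lem:localdiffeo}--\ref{lem:Finjective} (local diffeomorphism via the Bochner inequality, surjectivity via the asymptotics of the model maps, injectivity via a degree argument), so this final step is essentially a formality; the only point requiring a moment's care is ruling out that~$F$ touches the geodesic boundary, which I expect to be the sole, minor obstacle.
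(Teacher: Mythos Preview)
Your proof is correct and follows the same approach as the paper, which simply writes ``This follows from Lemmas~\ref{lem:localdiffeo}, \ref{lem:Fsurjective}, and~\ref{lem:Finjective}.'' You have filled in two details the paper leaves implicit: that $F$ cannot touch the remaining geodesic boundary arcs of $C^\circ$ (your open-image argument from $\mathcal{J}(F)>0$ is the standard way to see this), and that a smooth bijection with everywhere invertible differential has smooth inverse. Both additions are sound and make the argument more self-contained, but they do not alter the strategy.
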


\begin{proof}
This follows from Lemmas~\ref{lem:localdiffeo}, \ref{lem:Fsurjective}, and~\ref{lem:Finjective}.
\end{proof}

\section{A map between moduli spaces}
\label{sec:AMapBetweenModuliSpaces}

In this section, we use our results on harmonic maps to construct a natural map from the moduli space of signed quadratic differentials to the enhanced Teichm\"uller space.

\subsection{A homeomorphism of moduli spaces}
\label{sec:AHomeomorphismOfModuliSpaces}

As in the previous section, we consider a compact Riemann surface~$S$ and a nonempty finite subset $M=\{p_1,\dots,p_d\}\subset S$. We choose a local coordinate~$z_i$ defined on a disk around~$p_i$ and set $P=(P_i(z_i))$ where $P_i(z_i)$ is a principal differential in this local coordinate. We write $m_i$ for the order of the pole of the quadratic differential $\phi_i(z_i)=P_i(z_i)^{\otimes2}$ and write $\theta_i$ for the argument of its leading coefficient when $m_i=2$.

Assuming $\chi(S\setminus M)<0$ and $\theta_i\in(0,2\pi)$, we have constructed a well defined map $\Psi_P^{S,M}:\mathcal{T}_P(S,M)\rightarrow\mathcal{Q}_P(S,M)$. Here we will show that it is a homeomorphism, following the approach of~\cite{Wolf89}.

\begin{lemma}
\label{lem:injective}
$\Phi_P^{S,M}$ is injective.
\end{lemma}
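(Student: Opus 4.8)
The plan is to follow Wolf's original strategy from~\cite{Wolf89}, adapted to the meromorphic setting. Suppose $(C_1,\psi_1)$ and $(C_2,\psi_2)$ are two points of $\mathcal{T}_P(S,M)$ whose associated harmonic maps $F_1:S\setminus M\to C_1\setminus\partial C_1$ and $F_2:S\setminus M\to C_2\setminus\partial C_2$ have the same Hopf differential $\phi\in\mathcal{Q}_P(S,M)$. I want to conclude that $(C_1,\psi_1)$ and $(C_2,\psi_2)$ are equivalent. First I would form the composition $G=F_2\circ F_1^{-1}:C_1\setminus\partial C_1\to C_2\setminus\partial C_2$, which is a diffeomorphism homotopic in the appropriate sense to $\psi_2\circ\psi_1^{-1}$. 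The key point is that because $F_1$ and $F_2$ have the same Hopf differential, the pullback metrics $F_1^*h_1$ and $F_2^*h_2$ on $S\setminus M$ have the same $(2,0)$-part, namely $\phi$, by equation~\eqref{eqn:pullback}; the metrics differ only in their $(1,1)$-parts $g_j(z)e(F_j)\,dz\otimes d\bar z$. So proving injectivity amounts to showing the two energy densities agree, i.e.\ that the holomorphic energies $\mathcal{H}(F_1)$ and $\mathcal{H}(F_2)$ (with respect to a fixed conformal metric on $S\setminus M$, say the uniformizing one) coincide.

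The heart of the argument is the Bochner-type identity. Writing $w_j=\log\mathcal{H}(F_j)$, each $w_j$ satisfies an equation of the form $\Delta_g w_j = 2e^{2w_j} - 2|\phi|^2 g^{-2}e^{-2w_j} + 2K_M$, obtained from~\eqref{eqn:Bochner} together with the relations $\mathcal{J}(F_j)=\mathcal{H}(F_j)-\mathcal{L}(F_j)$ and $\mathcal{H}(F_j)\mathcal{L}(F_j)=|\phi|^2 g^{-2}$ (using $K_{C_j}=-1$ since the target is hyperbolic). Subtracting the two equations for $w_1$ and $w_2$, the curvature and $|\phi|^2$-normalization terms are arranged so that the difference $u=w_1-w_2$ satisfies $\Delta_g u = (\text{a nonnegative multiple of }u)$, with the coefficient strictly positive wherever $\phi\neq 0$; this uses the convexity of $t\mapsto e^{2t}+|\phi|^2g^{-2}e^{-2t}$. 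Then I would invoke a maximum principle: if $u$ attains a positive maximum at an interior point of $S\setminus M$, the subharmonicity of $u$ at that point forces $u\le 0$ there, a contradiction; similarly a negative minimum is excluded. The lower bound $\mathcal{H}(F_j)\ge 1$ from Lemma~\ref{lem:boundholomorphicenergy} guarantees $u$ is well-behaved and, crucially, that the coefficient of $u$ does not degenerate.

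The main obstacle will be the behavior of $u$ near the punctures $p_i\in M$, which is where the meromorphic setting genuinely departs from Wolf's compact case. I would split into the three cases $m_i=0$, $m_i=2$, $m_i\ge 3$. For $m_i=0$ one has $\mathcal{H}(F_j)\to 1$ by Proposition~3.13 of~\cite{Wolf91}, so $u\to 0$ and the puncture can simply be filled in. For $m_i\ge 3$ one uses the precise asymptotics of $\mathcal{H}(F_j)$ near a higher-order pole from Gupta's work~\cite{Gupta19}, which depend only on the principal part $P_i(z_i)$ and hence agree for $F_1$ and $F_2$, so again $u\to 0$. The genuinely delicate case is $m_i=2$: here one must use Sagman's analysis~\cite{Sagman19} of the holomorphic energy near a double pole, whose leading asymptotics are governed by the leading coefficient (equivalently the residue) of $\phi$ — which is fixed in $\mathcal{Q}_P(S,M)$ — to deduce that $u$ is bounded near $p_i$ and in fact has a limit determined by $P$, so the difference tends to zero. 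Once $u$ extends continuously by $0$ (or is at least bounded and subharmonic-type near each puncture), the removable-singularity argument for bounded subharmonic functions on a punctured surface — the same principle already used in Lemma~\ref{lem:uniqueness} — lets me treat $S\setminus M$ effectively as closed, and the maximum principle gives $u\equiv 0$. Then $e(F_1)=e(F_2)$, so $F_1^*h_1=F_2^*h_2$, hence $G=F_2\circ F_1^{-1}$ is an isometry $C_1\setminus\partial C_1\to C_2\setminus\partial C_2$; it extends to an isometry $C_1\to C_2$ carrying $\psi_1$ to a map homotopic to $\psi_2$, so the two marked hyperbolic surfaces are equivalent, proving injectivity.
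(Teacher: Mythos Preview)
Your plan is correct and follows the same overall strategy as the paper: set $w_k=\tfrac12\log\mathcal{H}(F_k)$, use the Bochner equation~\eqref{eqn:Bochner} in the form $\Delta w_k=e^{2w_k}-|\varphi|^2e^{-2w_k}$, show $w_1-w_2\to 0$ at each puncture, and conclude $w_1\equiv w_2$ by the maximum principle, whence the pullback metrics agree via~\eqref{eqn:pullback}.

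The one place where you diverge from the paper, and make things harder for yourself, is the puncture analysis for $m_i\ge 2$. You split into $m_i\ge 3$ (Gupta) and $m_i=2$ (Sagman), calling the latter ``genuinely delicate.'' The paper treats both cases at once with a single estimate of Han~\cite{Han96} (also recorded as equation~(8) of~\cite{Gupta19}): writing $\widetilde w_k=w_k-\tfrac12\log|\varphi|$, one has $0\le\widetilde w_k(z)\le e^{-CR(z)}$ where $R(z)$ is the $\phi$-flat distance to the nearest zero. Since the $\phi$-metric is complete at any pole of order $\ge 2$, one has $R(z)\to\infty$ and hence $\mathcal{H}(F_k)/|\varphi|\to 1$ there, giving $w_1-w_2\to 0$ immediately. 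So no separate appeal to Sagman's double-pole analysis is needed here, and the $m_i=2$ case is no more delicate than $m_i\ge 3$. Your version would still work, but the unified Han bound is both cleaner and what the paper actually uses.
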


\begin{proof}
Suppose we have $\Psi_P^{S,M}(C_1,\psi_1)=\Psi_P^{S,M}(C_2,\psi_2)=\phi$. Then by construction $\phi$ arises as the Hopf differential of harmonic maps $F_1,F_2:S\setminus M\rightarrow C^\circ$ constructed from~$\psi_1$ and~$\psi_2$, respectively. These maps satisfy the Bochner equation~\eqref{eqn:Bochner}, namely 
\begin{equation}
\label{eqn:modifiedBochner}
\Delta w_k=e^{2w_k}-e^{-2w_k}|\varphi|^2
\end{equation}
where we have written $w_k=\frac{1}{2}\log\mathcal{H}(F_k)$ and $\phi(z)=\varphi(z)dz^{\otimes2}$, and by applying a conformal transformation we may assume the metric on~$S\setminus M$ has the expression $|dz|^2$ in the coordinate~$z$ and the Laplace-Beltrami operator in~\eqref{eqn:Bochner} specializes to the ordinary Laplacian.

If we define $\widetilde{w}_k=w_k-\frac{1}{2}\log|\varphi|=\frac{1}{2}\log\frac{\mathcal{H}(F_k)}{|\varphi|}$, then by the result in Section~5 of~\cite{Han96} (see also equation~(8) of~\cite{Gupta19}), we have the bound 
\[
0\leq\widetilde{w}_k(z)\leq e^{-CR(z)}
\]
where $C>0$ and $R(z)$ is the distance from $z$ to the nearest zero of~$\phi$, measured using the flat metric associated to~$\phi$. Exponentiating these inequalities, we obtain 
\[
1\leq\frac{\mathcal{H}(F_k)(z)^{1/2}}{|\varphi(z)|^{1/2}}\leq\exp(e^{-CR(z)}).
\]
When $m_i\geq2$, it follows that $\mathcal{H}(F_k)(z)/|\varphi(z)|\rightarrow1$ and hence $\mathcal{H}(F_1)(z)/\mathcal{H}(F_2)(z)\rightarrow1$ as $z\rightarrow p_i$. Proposition~3.13 of~\cite{Wolf91} implies that the same is true when $m_i=0$. Thus, for any value of~$m_i$, we see that $w_1(z)-w_2(z)\rightarrow0$, as $z\rightarrow p_i$.

If we have $w_1>w_2$ at some point of $S\setminus M$, then it follows from what we have said that there exists a local maximum of the difference $w_1-w_2$. But then at this local maximum point the Bochner equation~\eqref{eqn:modifiedBochner} implies 
\[
0\geq\Delta(w_1-w_2)=(e^{2w_1}-e^{2w_2})-|\varphi|^2(e^{-2w_1}-e^{-2w_2})>0,
\]
which is a contradiction. If $w_2>w_1$ then we reach a similar contradiction by interchanging the roles of~$w_1$ and $w_2$. We must therefore have $w_1\equiv w_2$ and $\mathcal{H}_1\equiv\mathcal{H}_2$ on~$S\setminus M$. Since $\mathcal{L}(F_k)=|\varphi|^2/\mathcal{H}(F_k)$, we have $e(F_1)\equiv e(F_2)$ on~$S\setminus M$ as well. Then from~\eqref{eqn:pullback}, we see that the hyperbolic metrics on~$C_1$ and~$C_2$ pull back via~$F_1$ and~$F_2$ to give the same metric on~$S\setminus M$. Hence $(C_1,\psi_1)$ and $(C_2,\psi_2)$ represent the same point of $\mathcal{T}_P(S,M)$.
\end{proof}

\begin{lemma}
\label{lem:continuous}
$\Phi_P^{S,M}$ is continuous.
\end{lemma}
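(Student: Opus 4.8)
The plan is to follow Wolf's strategy from~\cite{Wolf89}. Fix a convergent sequence $(C_n,\psi_n)\to(C_\infty,\psi_\infty)$ in $\mathcal{T}_P(S,M)$, let $F_n\colon S\setminus M\to C_n^\circ$ be the associated harmonic maps and $\phi_n=\Phi_P^{S,M}(C_n,\psi_n)$ their Hopf differentials. The goal is to show $\phi_n\to\phi_\infty=\Phi_P^{S,M}(C_\infty,\psi_\infty)$. By Lemma~\ref{lem:dimfixedprincipalpart} the space $\mathcal{Q}_P(S,M)$ is an affine space over the finite-dimensional vector space $H^0(S,\omega_S(E)^{\otimes2})$; in such a space a sequence converges to a prescribed point as soon as every subsequence admits a further subsequence doing so, so it suffices to prove this subsequential statement.

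First I would set up a common target. Convergence in $\mathcal{T}_P(S,M)=\mathcal{T}_L(S_P,M_P)$, via the doubling construction and Lemma~\ref{lem:embedding}, gives convergence of the holonomy representations $\rho_n\colon\Gamma\to G$ (up to conjugacy, and after choosing consistent lifts) to $\rho_\infty$; correspondingly the hyperbolic metrics $h_n$ converge in $C^\infty_{\mathrm{loc}}$ once everything is transported to a fixed uniformized model of $S\setminus M$, and the model data near each $p_i$ --- the cusp neighborhoods, the geodesic boundary components attached to double poles, and, via Theorem~\ref{thm:modelmap}, the hyperbolic crowns --- converge as well. One may therefore choose model maps $f_{i,n}$ depending continuously on $n$, so that the comparison-map estimates of Section~\ref{sec:ConvergenceOfHarmonicMaps} (Lemmas~\ref{lem:twistenergy}, \ref{lem:energynearpole}, \ref{lem:uniformbound}) apply with constants independent of $n$; this yields a bound $E_K(F_n)\le N_K$ for every compact $K\subset S\setminus M$, uniformly in $n$. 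Lifting the $F_n$ to $\rho_n$-equivariant maps into $\mathbb{H}$ and running the elliptic-regularity argument of Lemma~\ref{lem:convergence}, a subsequence converges in $C^1_{\mathrm{loc}}$ to a $\rho_\infty$-equivariant harmonic map, hence to a harmonic map $F_\infty\colon S\setminus M\to C_\infty^\circ$.

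It remains to identify $F_\infty$. The $C^1_{\mathrm{loc}}$-convergence of $F_n$ and the $C^0_{\mathrm{loc}}$-convergence of $h_n$ force the Hopf differentials $\phi_n$ to converge uniformly on compacta of $S\setminus M$ to the Hopf differential $\phi_\infty'$ of $F_\infty$; since the differences $\phi_n-\phi_{n_0}$ lie in $H^0(S,\omega_S(E)^{\otimes2})$, a space on which uniform-on-compacta convergence coincides with convergence of coefficients, we get $\phi_\infty'\in\mathcal{Q}_P(S,M)$, i.e.\ $\phi_\infty'$ has principal part $P_i(z_i)$ at each $p_i$. One also checks $F_\infty\simeq\psi_\infty$: on compacta this is automatic from $C^0$-convergence, and near the punctures it follows from the convergence of the model maps together with the boundedness-near-the-ends estimates underlying Lemma~\ref{lem:boundeddistance}, which keep the homotopy from escaping through the $p_i$. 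Then the uniqueness statement behind the definition of $\Phi_P^{S,M}$ (Lemmas~\ref{lem:uniqueness} and~\ref{lem:Hopfprincipalparts}) gives $F_\infty=F$, hence $\phi_\infty'=\phi_\infty$, which proves the subsequential claim and therefore the lemma. The main obstacle is the uniform-in-$n$ energy bound on compacta: it requires rerunning the ``model map plus comparison'' estimates of Section~\ref{sec:ConvergenceOfHarmonicMaps} while simultaneously controlling the convergence of the target metrics and crowns, and verifying that the auxiliary constants $N_0$ and $N_i$ can be chosen independently of $n$.
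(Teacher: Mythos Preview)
Your approach is correct and essentially matches the paper's: sequential compactness from uniform-in-$n$ energy bounds on compacta, then uniqueness to identify the limit. The paper differs only in execution---it works with the two-parameter family of Dirichlet approximants $F_{i,l}$ and a diagonal subsequence $G_i=F_{i,l_i}$, using the Eells--Lemaire continuity result for Dirichlet problems under varying metrics and boundary data to obtain the uniform bound---whereas you propose to bound $E_K(F_n)$ directly by making the comparison constants $N_0,N_i$ uniform in~$n$; these amount to the same thing, and you have correctly flagged the uniform energy bound as the one nontrivial step.
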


\begin{proof}
Suppose $\{\rho_i\}_{i\geq1}\subset\mathcal{T}_P(S,M)$ is a sequence of points that converge to a point $\rho_\infty\in\mathcal{T}_P(S,M)$. For $i=1,\dots,\infty$, we can represent $\rho_i$ as a pair $(C_i,\psi_i)$ where $C_i^\circ$ is the surface $S\setminus M$ equipped with a metric tensor $h_i$ and $\psi_i$ is the identity map. We can further assume that the sequence of metric tensors $h_i$ converges to $h_\infty$ as $i\rightarrow\infty$.

Let $F_i:S\setminus M\rightarrow C_i^\circ$ be the harmonic map associated to~$(C_i,\psi_i)$ as in Section~\ref{sec:HarmonicMaps}. Recall that it is constructed as a limit of harmonic maps $F_{i,l}:S_l\rightarrow C^\circ$ defined on sets $S_l$ forming a compact exhaustion of $S\setminus M$. We can assume the boundary conditions for the maps $F_{i,l}$ on~$\partial S_l$ converge as $i\rightarrow\infty$ so that the results in Sections~3 and~4 of~\cite{EellsLemaire81} imply that $F_{i,l}\rightarrow F_{\infty,l}$ with respect to the~$C^1$-norm as $i\rightarrow\infty$. Combining this observation with Lemma~\ref{lem:uniformbound}, we see that for any fixed compact set $K$ there is a constant $N>0$ such that $E_K(F_{i,l})<N$ for all~$i$ and all~$l$.

Let $\{\varepsilon_i\}_{i\geq1}$ be a strictly decreasing sequence of positive real numbers such that $\varepsilon_i\rightarrow0$ as $i\rightarrow\infty$. Then by Lemma~\ref{lem:convergence}, we can find an increasing sequence of numbers $l_i$ such that $l_i\rightarrow\infty$ and $\dist(F_{i,l_i},F_i)<\varepsilon_i$ on~$S_{l_i}$. Since the maps $G_i=F_{i,l_i}$ have energy uniformly bounded on compact sets, the argument of Lemma~\ref{lem:convergence} implies that there is a subsequence of indices $i_k$ such that $G_{i_k}$ converges with respect to the $C^1$-norm on compact sets to some harmonic map~$G$. By our uniqueness results, we have $G=\lim_{i\rightarrow\infty}G_i$ without passing to a subsequence, and $G=F_\infty$. Therefore $\lim_{i\rightarrow\infty}F_i=\lim_{i\rightarrow\infty}F_{i,l_i}=F_\infty$. Since the $F_i$ converge in~$C^1$ on compact sets, their Hopf~differentials converge to the Hopf~differential of~$F_\infty$. That is, $\Phi_P^{S,M}(\rho_i)\rightarrow\Phi_P^{S,M}(\rho_\infty)$.
\end{proof}

\begin{lemma}
\label{lem:proper}
$\Phi_P^{S,M}$ is proper.
\end{lemma}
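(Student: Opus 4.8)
The plan is to verify properness through the sequential criterion: I will argue that if $\{\rho_i\}_{i\ge1}\subset\mathcal{T}_P(S,M)$ is a sequence for which the Hopf differentials $\phi_i=\Phi_P^{S,M}(\rho_i)$ converge in $\mathcal{Q}_P(S,M)$ to some $\phi_\infty$, then a subsequence of $\{\rho_i\}$ converges in $\mathcal{T}_P(S,M)$. This suffices: given a compact $K\subset\mathcal{Q}_P(S,M)$ and a sequence in $(\Phi_P^{S,M})^{-1}(K)$, the differentials subconverge in $K$, the criterion gives a subsequential limit $\rho_\infty$, and by the continuity of Lemma~\ref{lem:continuous} one has $\Phi_P^{S,M}(\rho_\infty)=\lim\phi_i\in K$, so $(\Phi_P^{S,M})^{-1}(K)$ is sequentially compact, hence compact. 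Following~\cite{Wolf89}, I would represent $\rho_i$ by a marked hyperbolic surface $(C_i,\psi_i)$, let $F_i\colon S\setminus M\to C_i^\circ$ be the harmonic diffeomorphism with Hopf differential $\phi_i$ constructed in Section~\ref{sec:HarmonicMaps}, and transfer the analysis to the fixed surface $S\setminus M$ by studying the pulled-back hyperbolic metrics $g_i=F_i^*h_i$.

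The first step is to bound the holomorphic energy $\mathcal{H}(F_i)$ uniformly on compact subsets of $S\setminus M$. The lower bound $\mathcal{H}(F_i)\ge1$, with respect to the uniformizing metric, is Lemma~\ref{lem:boundholomorphicenergy}. For the upper bound I would argue exactly as in the proof of Lemma~\ref{lem:injective}: away from the zeros of $\phi_\infty$ the estimate of~\cite{Han96} gives $\mathcal{H}(F_i)\le|\varphi_i|\exp(2e^{-CR_i})$, where $R_i$ is the $\phi_i$-flat distance to the zero set of $\phi_i$, and since $\phi_i\to\phi_\infty$ both the constant $C$ and a positive lower bound for $R_i$ may be chosen uniformly on compact subsets avoiding the zeros of $\phi_\infty$; near a zero of $\phi_\infty$ one instead uses that $\log\mathcal{H}(F_i)$ is subharmonic in a flat local coordinate, by the Bochner equation~\eqref{eqn:Bochner} together with $\mathcal{J}(F_i)\ge0$ (Lemma~\ref{lem:localdiffeo}), so that it is dominated by its boundary values, which are controlled by the previous case. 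Since $\mathcal{H}(F_i)\ge1$, the antiholomorphic energy $\mathcal{L}(F_i)=|\varphi_i|^2/(g^2\mathcal{H}(F_i))$ is bounded above on compact subsets as well, whence by~\eqref{eqn:pullback} the metrics $g_i$ are uniformly comparable to the uniformizing metric on every compact subset of $S\setminus M$.

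The second step is to control $g_i$ uniformly near each point of $M$. The crucial observation is that the local models do not vary with $i$: near a point $p_i$ with $m_i\le2$ the asymptotic behaviour of $F_i$ is pinned down by the fixed principal part $P_i(z_i)$ and the compatibility condition (which fixes the length datum $L_i$), while near a point with $m_i\ge3$ Theorem~\ref{thm:modelmap} identifies the associated crown and model map with a point depending continuously on $\phi_i$, so that the crowns converge as $\phi_i\to\phi_\infty$. Invoking the asymptotic estimates of~\cite{Wolf91,Lohkamp91,Sagman19,Gupta19} already used in Sections~\ref{sec:ConvergenceOfHarmonicMaps}--\ref{sec:Uniqueness}, one obtains that $g_i$ is uniformly comparable to a fixed model metric in a neighbourhood of each $p_i$. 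Combining the two steps, the hyperbolic surfaces $C_i$ have uniformly bounded geometry on their thick parts and uniformly controlled ends; in particular no essential closed geodesic can degenerate, and by a compactness theorem for complete finite-area hyperbolic surfaces with cusps, geodesic boundary, and crown ends of the prescribed type (in the spirit of Mumford's compactness theorem) a subsequence of the $g_i$ converges to a complete, finite-area hyperbolic metric $g_\infty$ on $S\setminus M$ of the required type. This $g_\infty$, together with the tautological marking, defines the desired limit $\rho_\infty\in\mathcal{T}_P(S,M)$, and convergence of the metrics yields convergence of the monodromy representations, so $\rho_i\to\rho_\infty$ in $\mathcal{T}_P(S,M)$.

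The main obstacle is the second step. One must make the control of $g_i$ near the points of $M$ genuinely uniform in $i$ — which requires that the asymptotic expansions near the poles appearing in the cited work hold uniformly over the relevant families of differentials — and one must ensure that in the limit no area escapes into a forming cusp and that the crown ends do not degenerate, so that $g_\infty$ has exactly the prescribed asymptotic type and genuinely represents a point of $\mathcal{T}_P(S,M)$. Granting this, the rest is a direct adaptation of Wolf's argument in the compact case.
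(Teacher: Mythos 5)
Your proposal takes a genuinely different route from the paper's. The paper gets a uniform bound on the total energy $E_{S_l}(F_i)$ directly from the elementary pointwise inequality $e(F_i)\le\mathcal{J}(F_i)+2|\varphi_i|/g$ (Wolf's Lemma~3.2), which integrates to $E_{S_l}(F_i)\le A(F_i(S_l))+2\|\Phi(\rho_i)\|_{S_l}$; since $A(F_i(S_l))\le A(C_i)$ is a topological constant by Gauss--Bonnet and $\|\Phi(\rho_i)\|_{S_l}$ is bounded because the differentials converge, one has a uniform $C^0$ energy bound on each $S_l$, and then the Arzel\`a--Ascoli argument of Lemma~\ref{lem:convergence} hands over $C^1$-subconvergence of the $F_i$ and hence of the pullback metrics, with no need to control the ends of $C_i$ at all. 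You instead try to bound the holomorphic energy $\mathcal{H}(F_i)$ pointwise and then invoke a Mumford-type compactness theorem for hyperbolic surfaces with crown/annulus ends; this requires making the asymptotic expansions of the cited works uniform in $i$ and establishing a compactness statement for the relevant class of surfaces, neither of which is immediate.

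This leaves a genuine gap, which you flag yourself: the ``second step'' — uniform control of $g_i=F_i^*h_i$ in neighborhoods of the poles, plus a suitable compactness theorem for the resulting hyperbolic surfaces — is not carried out, and this is precisely the part that the paper's energy-bound argument is designed to avoid. (A further subtlety in your second step: near a pole with $m_i=2$ the principal part and compatibility condition fix the boundary length but not the boundary twist, so the asymptotic behavior of $F_i$ is not ``pinned down'' by $P_i(z_i)$ alone; one is comparing against $R\circ f_i$ for a boundary-translating isometry $R$ depending on $i$, as in Lemma~\ref{lem:Fsurjective}.) The cleaner path is to bound the integrated energy on the fixed exhaustion $S_l$ using $e(F)\le\mathcal{J}(F)+2|\varphi|/g$ and Gauss--Bonnet, and then appeal to the already-established subconvergence machinery of Lemma~\ref{lem:convergence}; convergence of the pullback metrics on compact sets then gives convergence of the monodromies and hence of $\rho_{i_k}$ in $\mathcal{T}(\mathbb{S},\mathbb{M})$, landing in the closed subset $\mathcal{T}_P(S,M)$ as required.
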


\begin{proof}
Let $K\subset\mathcal{Q}_P(S,M)$ be a compact set in the space of quadratic differentials, and let us consider its preimage $\Phi^{-1}(K)\subset\mathcal{T}_P(S,M)$, where to simplify notation we have written $\Phi=\Phi_P^{S,M}$. If $\{\rho_i\}_{i\geq1}\subset\Phi^{-1}(K)$ is any sequence of points, then $\Phi(\rho_i)\in K$ for $i\geq1$ so that after passing to a subsequence we may assume the $\Phi(\rho_i)$ converge to a point of~$K$.

Let $S_l\subset S\setminus M$ be the sets forming the compact exhaustion from Section~\ref{sec:ConstructionOfHarmonicMaps}. For any~$l$, let us write $\|\phi\|_{S_l}$ for the total area of~$S_l$ with respect to the flat metric determined by a quadratic differential~$\phi$ on~$S\setminus M$. Let $(C_i,\psi_i)$ be a marked hyperbolic surface representing~$\rho_i$ where $C_i$ is $S\setminus M$ equipped with some hyperbolic metric~$h_i$ and $\psi_i$ is the identity map. Let $F_i:S\setminus M\rightarrow C_i$ be the associated harmonic map. Then the calculation in Lemma~3.2 of~\cite{Wolf89} shows that 
\[
E_{S_l}(F_i)\leq2\|\Phi(\rho_i)\|_{S_l}+A(F_i(S_l))
\]
for every~$i$ where $A$ is the area function for the hyperbolic metric on~$C_i$. Note that we have $A(F_i(S_l))\leq A(C_i)<\infty$. Since the $\Phi(\rho_i)$ converge, it follows that there exists a constant $N>0$, independent of~$i$, such that $E_{S_l}(F_i)\leq N$.

Arguing as in the proof of Lemma~\ref{lem:convergence}, we see that there is a subsequence $\{F_{i_k}\}$ of the $F_i$ which converges with respect to the $C^1$-norm on compact subsets of $S\setminus M$. Then the energy densities~$e(F_i)$, and by~\eqref{eqn:pullback} the pullback metrics $F_i^*(h_i)$, converge uniformly on compact sets. From this we get convergence of the subsequence $\{\rho_{i_k}\}$. This proves that $\Phi^{-1}(K)$ is sequentially compact, hence compact.
\end{proof}

\begin{proposition}
$\Phi_P^{S,M}$ is a homeomorphism.
\end{proposition}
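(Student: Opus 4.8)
The plan is to deduce the statement formally from the three properties already established --- injectivity (Lemma~\ref{lem:injective}), continuity (Lemma~\ref{lem:continuous}), and properness (Lemma~\ref{lem:proper}) --- together with the fact that the source and target are Euclidean spaces of the same dimension, following the argument used by Wolf~\cite{Wolf89} in the compact case.

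First I would record that $\mathcal{T}_P(S,M)$ and $\mathcal{Q}_P(S,M)$ are both homeomorphic to a single Euclidean space $\mathbb{R}^q$. By Lemma~\ref{lem:dimfixedprincipalpart} we have $\mathcal{Q}_P(S,M)\cong\mathbb{R}^q$ with $q=6g-6+\sum_i s_i$, where $s_i=m_i+1$ if $m_i$ is odd and $s_i=m_i$ if $m_i$ is even. By Lemma~\ref{lem:dimfixedboundarylength} applied to the blown-up surface $(S_P,M_P)$, and using that the $i$th boundary component of $S_P'$ carries $k_i=m_i-2$ marked points (so that $k_i+3=m_i+1$ when $k_i$ is odd and $k_i+2=m_i$ when $k_i$ is even), we get $\mathcal{T}_P(S,M)=\mathcal{T}_L(S_P,M_P)\cong\mathbb{R}^q$ for exactly the same value of $q$. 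In particular both are connected topological manifolds of the same finite dimension, and $\mathcal{T}_P(S,M)$ is nonempty since $\chi(S\setminus M)<0$.

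Next I would argue as follows. Since $\Phi=\Phi_P^{S,M}$ is continuous and proper and the target is locally compact Hausdorff, $\Phi$ is a closed map; being also injective, it is therefore an embedding onto its image, which is a closed subset of $\mathcal{Q}_P(S,M)$. On the other hand, $\Phi$ is a continuous injective map between manifolds of equal dimension, so by Brouwer's invariance of domain it is an open map, and its image is open. A nonempty subset of the connected space $\mathcal{Q}_P(S,M)\cong\mathbb{R}^q$ that is both open and closed must be the whole space, so $\Phi$ is surjective. Hence $\Phi$ is a continuous open bijection, i.e.\ a homeomorphism.

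The substantive analytic work --- the existence, uniqueness, and the continuous and proper dependence of the energy-minimizing harmonic map on the marked hyperbolic structure --- has already been carried out in the preceding sections, so this final step is purely formal. The one point genuinely requiring attention is the bookkeeping needed to verify that $\dim\mathcal{T}_P(S,M)=\dim\mathcal{Q}_P(S,M)$, since it is precisely this equality that licenses the application of invariance of domain and, together with connectedness of the target, forces surjectivity; this is a matter of matching the two combinatorial expressions for $q$ using the relation between pole orders and the numbers of marked points produced by the oriented real blowup.
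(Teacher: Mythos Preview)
Your proposal is correct and follows essentially the same route as the paper: both arguments reduce to invoking Lemmas~\ref{lem:dimfixedprincipalpart} and~\ref{lem:dimfixedboundarylength} to identify source and target as Euclidean spaces of equal dimension, and then combining Lemmas~\ref{lem:injective}, \ref{lem:continuous}, \ref{lem:proper} with invariance of domain. The paper compresses the topological endgame into a single sentence, whereas you have spelled out the open--closed--connected argument for surjectivity, but the substance is identical.
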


\begin{proof}
From Lemmas~\ref{lem:dimfixedprincipalpart} and~\ref{lem:dimfixedboundarylength}, we know that $\mathcal{Q}_P(S,M)$ and $\mathcal{T}_P(S,M)$ are homeomorphic to Euclidean spaces of the same dimension. The proposition therefore follows from Lemmas~\ref{lem:injective}, \ref{lem:continuous}, \ref{lem:proper} and the invariance of domain.
\end{proof}

\subsection{Exceptional surfaces}

In the above discussion, we assumed that the Riemann surface~$S$ and set $M\subset S$ satisfy $\chi(S\setminus M)<0$. On the other hand, if $\chi(S\setminus M)\geq0$ then $S$ is isomorphic to the Riemann sphere and $M$ consists of one or two marked points on~$S$. We will now explain how our arguments can be extended to include these cases. Since the proofs are similar to those in previous sections, some details are left to the reader.

First suppose that $M=\{p_1\}$ consists of a single marked point. In this case, we may assume $m_1\geq3$ since otherwise $\mathcal{Q}_P(S,M)$ and $\mathcal{T}_P(S,M)$ are both empty. Then the associated marked bordered surface $(S_P,M_P)$ is a disk with $m_1-2$ marked points on its boundary. It was shown in Proposition~3.12 of~\cite{Gupta19} that for any $(C,\psi)\in\mathcal{T}_P(S,M)$, there exists a unique harmonic map $F:S\setminus M\rightarrow C$ homotopic to~$\psi$ whose Hopf~differential lies in~$\mathcal{Q}_P(S,M)$. Moreover, the map taking $(C,\psi)$ to the Hopf differential of this harmonic map~$F$ is a homeomorphism $\Phi_P^{S,M}:\mathcal{T}_P(S,M)\rightarrow\mathcal{Q}_P(S,M)$ as before.

Next suppose that $M=\{p_1,p_2\}$ consists of two marked points. In this case, we must have $m_1\geq3$ or $m_2\geq3$ since otherwise $\mathcal{T}_P(S,M)$ is empty. If we have both $m_1\geq3$ and $m_2\geq3$, then $(S_P,M_P)$ is an annulus with $m_1-2$ marked points on one boundary component and $m_2-2$ marked points on the other boundary component. In such examples, the construction of the homeomorphism $\Phi_P^{S,M}$ proceeds exactly as above.

It remains to consider the cases where $m_1\geq3$ and $m_2\leq2$. In these cases, the marked bordered surface $(S_P,M_P)$ is a once-punctured disk with $m_1-2$ marked points on its boundary. If $m_2=2$ then for any $(C,\psi)\in\mathcal{T}_P(S,M)$, the compatibility condition implies that $C$ is a hyperbolic crown. Then exactly as above, we can then construct a harmonic map $F:S\setminus M\rightarrow C^\circ$ whose Hopf~differential lies in~$\mathcal{Q}_P(S,M)$. On the other hand, if $m_2=0$ then more care is needed. Indeed, if $(C,\psi)\in\mathcal{T}_P(S,M)$, then $p_2$ corresponds to a cusp in~$C$, and this surface $C$ has no closed geodesics. In particular, we cannot apply Theorem~\ref{thm:modelmap} to construct a model map into~$C$.

To get around this difficulty, first note that if $F:S\setminus M\rightarrow C$ is a harmonic map homotopic to~$\psi$ with Hopf differential in~$\mathcal{Q}_P(S,M)$, then $F$ is the unique map with these properties by the arguments of Section~\ref{sec:Uniqueness}. Let us choose a sequence of principal differentials $P_{2,i}(z_2)=\pm\sqrt{a_i}\,dz_2/z_2$ with $a_i\in\mathbb{C}^*$ for all~$i$ such that $a_i\rightarrow0$ as~$i\rightarrow\infty$ and define $P_i=(P_1(z_1),P_{2,i}(z_2))$. Then $(S_P,M_P)$ and the $(S_{P_i},M_{P_i})$ are all equal to a common marked bordered surface~$(\mathbb{S},\mathbb{M})$. Suppose $\{\rho_i\}\subset\mathcal{T}(\mathbb{S},\mathbb{M})$ is a sequence of points $\rho_i\in\mathcal{T}_{P_i}(S,M)$ that converges to some $(C,\psi)\in\mathcal{T}_P(S,M)$, and let $F_i$ be the harmonic map associated to~$\rho_i$. As in the proof of Lemma~\ref{lem:continuous}, the maps $F_i$ converge to a unique harmonic map $S\setminus M\rightarrow C$ in the homotopy class of~$\psi$ with Hopf differential in $\mathcal{Q}_P(S,M)$. One then shows as in Section~\ref{sec:AHomeomorphismOfModuliSpaces} that there is a homeomorphism $\Phi_P^{S,M}:\mathcal{T}_P(S,M)\rightarrow\mathcal{Q}_P(S,M)$ taking $(C,\psi)$ to the Hopf differential of this harmonic map.

\subsection{The inverse homeomorphism}
 
Up until now, we have assumed that the phase~$\theta_i$ associated to a point $p_i\in M$ with $m_i=2$ lies in~$(0,2\pi)$. Equivalently, we have assumed that the leading coefficient of a differential in $\mathcal{Q}_P(S,M)$ at such a point~$p_i$ is not a positive real number. We will now show that this assumption can be dropped, and in doing so, establish Theorem~\ref{thm:introgeneralizeWolf} from the introduction.

\begin{theorem}
\label{thm:homeo}
Let $P=(P_i(z_i))$ be a tuple of principal differentials with no restriction on the phases of the leading coefficients. Then there exists a unique homeomorphism 
\[
\Psi_P^{S,M}:\mathcal{Q}_P(S,M)\rightarrow\mathcal{T}_P(S,M)
\]
such that the image of a differential $\phi$ is represented by a pair $(C,\psi)$ where $\psi:S\setminus M\rightarrow C\setminus\partial C$ is a harmonic map with Hopf differential~$\phi$.
\end{theorem}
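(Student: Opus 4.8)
The plan is to deduce the general statement from the special case already proved --- where every double pole $p_i$ of a differential in $\mathcal{Q}_P(S,M)$ has $\theta_i\in(0,2\pi)$ --- by a perturbation argument of the kind used above for exceptional surfaces. The one new phenomenon is a double pole $p_i$ whose leading coefficient $a_i$ is a positive real number, so that $\theta_i=0$, the compatibility condition forces $L_i=0$, and $p_i$ corresponds to a cusp of $C$ rather than a geodesic boundary component; in that situation the quantity $\delta_i=(\sin\theta_i)/(1-\cos\theta_i)$ entering the model map of Section~\ref{sec:ConstructionOfHarmonicMaps} is ill-defined, so the construction of $F$ given there does not apply directly. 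To get around this I would fix principal differentials $P^{(n)}\to P$ that agree with $P$ away from the offending double poles and have $\arg(a_i^{(n)})\in(0,2\pi)$ at them (for instance $a_i^{(n)}=a_i e^{\mathrm{i}/n}$). Because a pole of order two always contributes a puncture to the associated marked bordered surface, $(S_{P^{(n)}},M_{P^{(n)}})=(S_P,M_P)=:(\mathbb{S},\mathbb{M})$ for all $n$; only the length data change, with $L^{(n)}\to L$. In particular Lemmas~\ref{lem:dimfixedprincipalpart} and~\ref{lem:dimfixedboundarylength}, whose proofs do not see the phases, still identify $\mathcal{Q}_P(S,M)$ and $\mathcal{T}_P(S,M)$ with Euclidean spaces of the same dimension $q$.

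To construct $\Psi_P^{S,M}$, given $\phi\in\mathcal{Q}_P(S,M)$ I would choose $\phi^{(n)}\in\mathcal{Q}_{P^{(n)}}(S,M)$ with $\phi^{(n)}\to\phi$ and set $\rho^{(n)}=(\Phi_{P^{(n)}}^{S,M})^{-1}(\phi^{(n)})$, with associated harmonic diffeomorphism $F^{(n)}\colon S\setminus M\to(C^{(n)})^\circ$ of Hopf differential $\phi^{(n)}$. The area estimate from Lemma~\ref{lem:proper}, together with the Gauss-Bonnet identity $A(C^{(n)})=-2\pi\chi(S\setminus M)$, bounds $E_{S_l}(F^{(n)})$ uniformly in $n$ on each $S_l$; elliptic regularity and the Arzel\`a-Ascoli theorem, exactly as in Lemma~\ref{lem:convergence}, then produce a subsequence converging in $C^1$ on compact sets to a harmonic map $F\colon S\setminus M\to C^\circ$. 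Representing $\rho^{(n)}$ by the pair $(C^{(n)},F^{(n)})$ and invoking formula~\eqref{eqn:pullback}, the pullback metrics converge, so $\rho^{(n)}$ converges to a point $\rho_\infty\in\mathcal{T}(\mathbb{S},\mathbb{M})$ whose length data is $\lim L^{(n)}=L$ and whose Hopf differential is $\lim\phi^{(n)}=\phi$; thus $\rho_\infty\in\mathcal{T}_P(S,M)$ and $F$ is a harmonic map with Hopf differential $\phi$. That $F$ is in fact a diffeomorphism onto $C\setminus\partial C$ follows by rerunning Lemmas~\ref{lem:boundholomorphicenergy}--\ref{lem:Finjective}: the bounds $\mathcal{H}(F)\ge1$ and $\mathcal{J}(F)>0$ use only that $\mathcal{H}(F)\to\infty$ at any pole of order $\ge2$, which holds for every phase; injectivity uses only that $F(x)$ eventually leaves every compact subset of $C$ as $x\to M$; and surjectivity onto $C\setminus\partial C$ follows from the surjectivity of the maps $F^{(n)}$ together with the geometric convergence $C^{(n)}\to C$. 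I would then define $\Psi_P^{S,M}(\phi)=\rho_\infty$.

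Finally, to see that $\Psi_P^{S,M}$ is a homeomorphism with the stated property, and the unique such map, note first that the proof of Lemma~\ref{lem:injective} never used the hypothesis $\theta_i\in(0,2\pi)$: the decay $0\le\widetilde{w}_k(z)\le e^{-CR(z)}$ near the zeros of $\phi$, the asymptotics $\mathcal{H}(F_k)/|\varphi|\to1$ at poles of order $\ge2$ and $\mathcal{H}(F_k)\to1$ at poles of order $\le1$, and the maximum-principle argument in the Bochner equation~\eqref{eqn:modifiedBochner} all go through verbatim. Hence any harmonic map $S\setminus M\to C\setminus\partial C$ with Hopf differential $\phi$ determines the pullback hyperbolic metric, and therefore the point of $\mathcal{T}_P(S,M)$, uniquely --- this gives both well-definedness and injectivity of $\Psi_P^{S,M}$, as well as its uniqueness among maps with the stated property. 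Continuity is the same compactness argument run with a sequence $\phi^{(j)}\to\phi$ inside $\mathcal{Q}_P(S,M)$, the uniqueness just established upgrading subsequential convergence to convergence; properness follows as in Lemma~\ref{lem:proper}, now combined with the reverse estimate $\|\phi\|_{S_l}\le\frac12 E_{S_l}(F)$ coming from $\mathcal{H}(F)+\mathcal{L}(F)\ge2|\varphi|/g$ and the existence of the limit harmonic map constructed above, which furnishes competitors of bounded energy. A continuous, proper, injective self-map of $\mathbb{R}^q$ is an open map whose image is open and closed, hence all of $\mathbb{R}^q$, so $\Psi_P^{S,M}$ is a homeomorphism. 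The step I expect to cost the most work is the geometric one flagged above: controlling the maps $F^{(n)}$ in the long thin half-collar around the boundary geodesic of $C^{(n)}$ that is pinching to a cusp as $\theta_i^{(n)}\to0$, so as to conclude that the limit $F$ is surjective near the resulting cusp and that no energy is lost there.
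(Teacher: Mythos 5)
Your proposal takes a genuinely different route from the paper and, as you yourself flag, has a real gap precisely where your approach is hardest. You propose to approximate a bad phase $\theta_i=0$ at a double pole by phases $\theta_i^{(n)}\to 0$, run the existing machinery for each $P^{(n)}$, and pass to the limit. The problem is that this limit is a pinching limit: the geodesic boundary component of $C^{(n)}$ corresponding to $p_i$ has length $L_i^{(n)}\to 0$ and degenerates to a cusp, and the model map of Section~\ref{sec:ConstructionOfHarmonicMaps} used for $m_i=2$ (which projects onto a constant-speed parametrization of a closed geodesic) has no sensible limit. Your $C^1$-on-compact-sets convergence of $F^{(n)}$, obtained from the energy bound plus elliptic regularity, is sound and does give a harmonic limit $F$ with Hopf differential $\phi$. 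But it does not by itself control $F^{(n)}$ in the thinning half-collar, so (i) it does not give surjectivity of $F$ onto the cusp region of $C$, and (ii) it does not establish that the limiting marked hyperbolic surface has the required length datum $L_i=0$ without an argument ruling out loss of geometry at the neck. To close this you would need a replacement for the Sagman/Lohkamp estimates in the pinching regime --- essentially a uniform version of the asymptotic analysis near the boundary geodesic as its length tends to zero --- and that is a nontrivial piece of analysis you have not supplied.

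The paper sidesteps the degeneration entirely with a global rotation trick. Choose a single $\theta$ so that $e^{2\mathrm{i}\theta}a_i\notin\mathbb{R}_{>0}$ for every double pole, and apply the already-constructed $\Psi^{S,M}_{P(\theta)}$ to the rotated differential $e^{2\mathrm{i}\theta}\phi$. Represent the image as $(S\setminus M,h_\theta)$ with the identity map as marking; by~\eqref{eqn:pullback} the metric is $h_\theta=e^{2\mathrm{i}\theta}\varphi\,dz^{\otimes 2}+g\,e(\mathrm{id})\,dz\otimes d\bar z+e^{-2\mathrm{i}\theta}\bar\varphi\,d\bar z^{\otimes 2}$. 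Now simply replace the $(2,0)$ and $(0,2)$ parts by $\varphi$ and $\bar\varphi$ to get a new metric $h$. Because rotating $\varphi$ leaves $|\varphi|$, hence $\mathcal{H}$, $\mathcal{L}$, $\mathcal{J}$, and (via the Bochner equation~\eqref{eqn:Bochner}) the target curvature, unchanged, the metric $h$ is still hyperbolic. Because $\varphi$ is holomorphic and $\mathcal{J}(\mathrm{id})>0$, the converse direction of the harmonicity criterion shows that $\mathrm{id}:(S\setminus M,g)\to(S\setminus M,h)$ is harmonic with Hopf differential exactly $\phi$. This produces $\Psi^{S,M}_P(\phi)$ with no limit, no pinching, and no new estimates; uniqueness and the homeomorphism property are inherited verbatim from $\Psi^{S,M}_{P(\theta)}$, and the compatibility of length data follows from the $\theta\to 0$ continuity of $h_\theta$. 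In short, where you perturb locally at the offending poles and pay for it with a delicate degeneration, the paper rotates globally and pays nothing.

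One further remark: in your first paragraph the claim that Lemma~\ref{lem:dimfixedboundarylength} ``does not see the phases'' is fine, but the more substantive point you rely on implicitly --- that $\mathcal{T}_{L^{(n)}}(\mathbb{S},\mathbb{M})$ and $\mathcal{T}_L(\mathbb{S},\mathbb{M})$ sit inside a common ambient space $\mathcal{T}(\mathbb{S},\mathbb{M})$ in which you can take limits --- is true, but you should say explicitly that the topology there is the one induced from the monodromy representation (Lemma~\ref{lem:embedding} and the doubling construction), and that $C^1$-convergence of $F^{(n)}$ and of the pullback metrics on a fixed fundamental domain gives convergence of holonomy. This is not hard, but it is an additional step your write-up glosses over.
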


\begin{proof}
As usual we will write $m_i$ for the order of the differential $\phi_i(z_i)=P_i(z_i)^{\otimes2}$ at the point~$p_i\in M$. We write $a_i$ for the leading coefficient of~$\phi_i$ when $m_i=2$ and write $\theta_i=\arg(a_i)\in[0,2\pi)$. Choose a real number $\theta\in\mathbb{R}$ so that $e^{2\mathrm{i}\theta}\cdot a_i\not\in\mathbb{R}_{>0}$ for all $p_i\in M$ with $m_i=2$ and define $P(\theta)=(P_i^\theta(z_i))$ where $P_i^\theta(z_i)=e^{\mathrm{i}\theta}\cdot P_i(z_i)$. Then our earlier results imply that there exists a homeomorphism 
\[
\Psi_{P(\theta)}^{S,M}:\mathcal{Q}_{P(\theta)}(S,M)\rightarrow\mathcal{T}_{P(\theta)}(S,M)
\]
defined as the inverse of $\Phi_{P(\theta)}^{S,M}$. Given a differential $\phi\in\mathcal{Q}_P(S,M)$, we can apply this homeomorphism $\Psi_{P(\theta)}^{S,M}$ to the rotated differential $e^{2\mathrm{i}\theta}\cdot\phi\in\mathcal{Q}_{P(\theta)}(S,M)$ to get an equivalence class of marked hyperbolic surfaces. This equivalence class can be represented by a pair $(C_\theta,\psi_\theta)$ where $C_\theta\setminus\partial C_\theta$ is identified with the surface $S\setminus M$ equipped with an auxiliary metric~$h_\theta$, and $\psi_\theta$ is the identity map. According to~\eqref{eqn:pullback}, the metric $h_\theta$ can be written in a local coordinate~$z$ as 
\[
e^{2\mathrm{i}\theta}\cdot\varphi(z)dz^{\otimes2}+g(z)e(\mathrm{id})dz\otimes d\bar{z}+e^{-2\mathrm{i}\theta}\cdot\overline{\varphi(z)}d\bar{z}^{\otimes2}
\]
where $\phi(z)=\varphi(z)dz^{\otimes2}$. We can define a modified metric $h$ on~$S\setminus M$ given in the local coordinate by 
\[
\varphi(z)dz^{\otimes2}+g(z)e(\mathrm{id})dz\otimes d\bar{z}+\overline{\varphi(z)}d\bar{z}^{\otimes2}.
\]
Since $\varphi(z)$ is holomorphic and the Jacobian $\mathcal{J}(\mathrm{id})$ of the identity map is strictly positive, we see that the identity map is a harmonic map of Riemannian manifolds $(S\setminus M,g)\rightarrow(S\setminus M,h)$ with Hopf differential $\phi$. By equipping the surface $S\setminus M$ with the metric~$h$ and defining $\psi=\mathrm{id}$, we get a point $\Psi_P^{S,M}(\phi)=(C,\psi)\in\mathcal{T}(S,M)$. It follows from the uniqueness of $\Psi_{P(\theta)}^{S,M}$ that this point is independent of the choice of~$\theta$. Since $h$ is the limit of the metrics $h_\theta$ as $\theta\rightarrow0$, we see that $C$ has length data compatible with~$P$. Hence $\Psi_{P}^{S,M}(\phi)\in\mathcal{T}_P(S,M)$. Since the map $\Psi_{P(\theta)}^{S,M}$ is unique and a homeomorphism, it follows that the resulting map $\Psi_P^{S,M}:\mathcal{Q}_P(S,M)\rightarrow\mathcal{T}_P(S,M)$ has these same properties.
\end{proof}

\subsection{Incorporating the markings}

Let $(\mathbb{S},\mathbb{M})$ be a marked bordered surface, and if $g(\mathbb{S})=0$ assume that $|\mathbb{M}|\geq3$. Recall that the moduli space $\mathcal{Q}(\mathbb{S},\mathbb{M})$ parametrizes triples $(S,\phi,\theta)$ where $\phi$ is a GMN~differential on~$S$ and $\theta$ is a marking of~$(S,\phi)$ by~$(\mathbb{S},\mathbb{M})$. Suppose we are given such a triple $(S,\phi,\theta)$. Let $M=\{p_1,\dots,p_d\}\subset S$ be the set of poles of~$\phi$ and choose a local coordinate $z_i$ defined on a disk around~$p_i$. We will write $P=(P_i(z_i))$ for a tuple of principal differentials where $P_i(z_i)$ is the principal part of~$\phi$ at~$p_i$. Then the point $\Psi_P^{S,M}(\phi)\in\mathcal{T}_P(S,M)$ is represented by a pair~$(C,\psi)$, and the composition 
\[
\mathbb{S}\setminus\mathbb{M}\xrightarrow{\theta|_{\mathbb{S}\setminus\mathbb{M}}}S_P\setminus M_P\stackrel{\psi}{\longrightarrow}C^\circ
\]
determines a marking of $C$ by~$(\mathbb{S},\mathbb{M})$. Hence we have a marked hyperbolic surface whose equivalence class defines a point $\Psi(S,\phi,\theta)\in\mathcal{T}(\mathbb{S},\mathbb{M})$.

\begin{proposition}
This construction gives a well defined $\MCG(\mathbb{S},\mathbb{M})$-equivariant map 
\[
\Psi:\mathcal{Q}(\mathbb{S},\mathbb{M})\rightarrow\mathcal{T}(\mathbb{S},\mathbb{M}).
\]
\end{proposition}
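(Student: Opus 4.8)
The plan is to verify, in turn, that $\Psi(S,\phi,\theta)$ is independent of the auxiliary data entering its construction, that it depends only on the equivalence class of $(S,\phi,\theta)$, and that it intertwines the $\MCG(\mathbb{S},\mathbb{M})$-actions on source and target. For the first point, the key observation is that the point $\Psi_P^{S,M}(\phi)\in\mathcal{T}_P(S,M)$ is intrinsic to the pair $(S,\phi)$: by Theorem~\ref{thm:homeo} together with the preceding discussion of exceptional surfaces, it is the unique equivalence class of marked hyperbolic surfaces admitting a representative $(C,\psi)$ whose marking restricts to a harmonic map $S\setminus M\to C\setminus\partial C$ with Hopf differential $\phi$. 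This description refers only to $(S,\phi)$ and not to the local coordinates $z_i$ used to single out $\mathcal{Q}_P(S,M)$; moreover the ambient marked bordered surface $(S_P,M_P)$ is itself independent of those coordinates, as recorded in Section~\ref{sec:CompatibilityConditions}. So changing the $z_i$ alters neither $(S_P,M_P)$ nor $\Psi_P^{S,M}(\phi)$, hence neither the composite marking $\psi\circ\theta|_{\mathbb{S}\setminus\mathbb{M}}$ of $C$ by $(\mathbb{S},\mathbb{M})$. The choice of representative $(C,\psi)$ of $\Psi_P^{S,M}(\phi)$ is likewise harmless: an isometry $g\colon C\to C'$ with $\psi'\simeq g\circ\psi$ gives $\psi'\circ\theta|\simeq g\circ(\psi\circ\theta|)$, so $(C,\psi\circ\theta|)$ and $(C',\psi'\circ\theta|)$ determine the same point of $\mathcal{T}(\mathbb{S},\mathbb{M})$.

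Next I would check compatibility with equivalences. Let $(S_1,\phi_1,\theta_1)$ and $(S_2,\phi_2,\theta_2)$ be equivalent via a biholomorphism $f\colon S_1\to S_2$ with $f^*\phi_2=\phi_1$ and $\theta_2=\bar f\circ\theta_1$, where $\bar f\colon(S_{1,P_1},M_{1,P_1})\to(S_{2,P_2},M_{2,P_2})$ is the isomorphism of marked bordered surfaces induced by $f$, which restricts to $f$ away from the poles. Let $(C,\psi)$ represent $\Psi_{P_1}^{S_1,M_1}(\phi_1)$, so that its marking restricts to a harmonic map $S_1\setminus M_1\to C\setminus\partial C$ of Hopf differential $\phi_1$. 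Then $\psi\circ f^{-1}$ restricts to a harmonic map $S_2\setminus M_2\to C\setminus\partial C$ — harmonicity of a map of Riemann surfaces depends only on the conformal structure of the source, and $f^{-1}$ is biholomorphic — with Hopf differential $(f^{-1})^*\phi_1=(f^{-1})^*f^*\phi_2=\phi_2$. By the uniqueness in Theorem~\ref{thm:homeo}, the pair $(C,\psi\circ\bar f^{-1})$ represents $\Psi_{P_2}^{S_2,M_2}(\phi_2)$. The marking of $C$ by $(\mathbb{S},\mathbb{M})$ attached to $(S_2,\phi_2,\theta_2)$ is therefore $(\psi\circ\bar f^{-1})\circ\theta_2|_{\mathbb{S}\setminus\mathbb{M}}=\psi\circ\bar f^{-1}\circ\bar f\circ\theta_1|_{\mathbb{S}\setminus\mathbb{M}}=\psi\circ\theta_1|_{\mathbb{S}\setminus\mathbb{M}}$, which is exactly the marking attached to $(S_1,\phi_1,\theta_1)$. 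Hence $\Psi(S_1,\phi_1,\theta_1)=\Psi(S_2,\phi_2,\theta_2)$, and $\Psi$ is well defined on $\mathcal{Q}(\mathbb{S},\mathbb{M})$.

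Finally, for equivariance, recall that $\MCG(\mathbb{S},\mathbb{M})$ acts on $\mathcal{Q}(\mathbb{S},\mathbb{M})$ by $h\cdot(S,\phi,\theta)=(S,\phi,\theta\circ h^{-1})$ and on $\mathcal{T}(\mathbb{S},\mathbb{M})$ by $h\cdot(C,\psi)=(C,\psi\circ h^{-1})$. Since the harmonic representative $(C,\psi)$ of $\Psi_P^{S,M}(\phi)$ depends only on $(S,\phi)$ and not on $\theta$, the marking of $C$ by $(\mathbb{S},\mathbb{M})$ attached to $h\cdot(S,\phi,\theta)$ is $\psi\circ(\theta\circ h^{-1})|_{\mathbb{S}\setminus\mathbb{M}}=(\psi\circ\theta|_{\mathbb{S}\setminus\mathbb{M}})\circ h^{-1}$, whose class is $h\cdot\Psi(S,\phi,\theta)$. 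Thus $\Psi$ is $\MCG(\mathbb{S},\mathbb{M})$-equivariant. I expect the one genuinely delicate point to be the first step: confirming that $\Psi_P^{S,M}(\phi)$, and the marked bordered surface $(S_P,M_P)$ into which $\theta$ maps, really are insensitive to the choice of local coordinates at the poles. This is why the argument is organized around the coordinate-free uniqueness statement of Theorem~\ref{thm:homeo} rather than around the explicit coordinate-dependent construction of $\Phi_P^{S,M}$.
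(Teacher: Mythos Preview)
Your proof is correct and follows essentially the same approach as the paper: verify coordinate-independence via the uniqueness of the harmonic representative, then check invariance under equivalence of marked differentials by transporting the harmonic marking along the biholomorphism, and finally read off equivariance from the definitions. The only cosmetic differences are that the paper cites the injectivity argument (Lemma~\ref{lem:injective}) directly rather than packaging it as the uniqueness in Theorem~\ref{thm:homeo}, and it pulls back along $f$ (comparing $(C_1,\psi_1)$ with $(C_2,\psi_2\circ f)$) where you push forward along $f^{-1}$; these are equivalent.
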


\begin{proof}
We first note that the above construction is independent of the choice of local coordinate~$z_i$. Indeed, if we choose a different local coordinate~$z_i'$ near each~$p_i$, then we get a tuple $P'=(P_i'(z_i'))$ of principal parts for~$\phi$. This $P'$ determines the same compatible length data as~$P$, and so we have $\mathcal{T}_{P'}(S,M)=\mathcal{T}_P(S,M)$. The proof of Lemma~\ref{lem:injective} shows that a point in $\mathcal{T}_P(S,M)$ represented by a hyperbolic surface with harmonic marking is uniquely determined by the Hopf~differential of the marking. Hence we must have $\Psi_{P'}^{S,M}(\phi)=\Psi_P^{S,M}(\phi)$.

It remains to check that the construction is independent of the choice of representative for a point of $\mathcal{Q}(\mathbb{S},\mathbb{M})$. Suppose that $(S_1,\phi_1,\theta_1)$ and $(S_2,\phi_2,\theta_2)$ represent the same point of $\mathcal{Q}(\mathbb{S},\mathbb{M})$. For $k=1,2$, let $M_k\subset S_k$ be the set of poles of~$\phi_k$, and let $P_k$ be a tuple of principal differentials encoding the principal parts of~$\phi_k$. Then $\Psi_{P_k}^{S_k,M_k}(\phi_k)$ is a point of $\mathcal{T}_{P_k}(S_k,M_k)$ represented by a pair $(C_k,\psi_k)$. From the definition of equivalence of marked quadratic differentials, we get a biholomorphism $f:S_1\rightarrow S_2$ that preserves the $\phi_k$ and commutes with the markings~$\theta_k$. This map $f$ induces an diffeomorphism $(S_1)_{P_1}\setminus(M_1)_{P_1}\rightarrow (S_2)_{P_2}\setminus(M_2)_{P_2}$, also denoted~$f$, and we claim that the pairs $(C_1,\psi_1)$ and $(C_2,\psi_2\circ f)$ represent the same point of~$\mathcal{T}_{P_1}(S_1,M_1)$.

Recall that $S_k\setminus M_k$ can be considered as a subsurface of $(S_k)_{P_k}\setminus(M_k)_{P_k}$. By Theorem~\ref{thm:homeo}, we can assume that the map $\psi_k$ is harmonic on $S_k\setminus M_k$. Since $f$ is conformal, it follows that $\psi_2\circ f$ is harmonic as well. Let us write $\alpha^{(p,q)}$ for the $(p,q)$-part of a differential form~$\alpha$. Since $f$ is holomorphic, the pullback along~$f$ of a $(p,q)$-form on~$S_2$ is a $(p,q)$-form on~$S_1$. It follows that if $h$ is the hyperbolic metric on~$C_2$, then 
\[
\left(f^*(\psi_2^*h)\right)^{(2,0)}=f^*\left((\psi_2^*h)^{(2,0)}\right)=f^*\phi_2=\phi_1.
\]
This means the maps $\psi_2\circ f$ and $\psi_1$ have the same Hopf differential. The claim now follows from the proof of Lemma~\ref{lem:injective}.

Thus we see that there exists an isometry $g:C_1\rightarrow C_2$ such that $\psi_2\circ f$ and $g\circ\psi_1$ are homotopic. By a simple diagram chase, one sees that the compositions $\psi_2\circ\theta_2$ and $g\circ\psi_1\circ\theta_1$ are homotopic. Hence we have two equivalent representatives for a point in $\mathcal{T}(\mathbb{S},\mathbb{M})$, and the map $\Psi$ is well defined. It follows immediately from the way we have defined the $\MCG(\mathbb{S},\mathbb{M})$-actions that this map is $\MCG(\mathbb{S},\mathbb{M})$-equivariant.
\end{proof}

\subsection{A lift to covering spaces}
\label{sec:ALiftToCoveringSpaces}

Now suppose we are given a point of the moduli space $\mathcal{Q}^\pm(\mathbb{S},\mathbb{M})$. We can think of this as a point $(S,\phi,\theta)$ in $\mathcal{Q}(\mathbb{S},\mathbb{M})$ together with a choice of $r_p\in\mathbb{C}$ for each~$p\in\mathbb{P}$ of~$\phi$ satisfying 
\[
a_p=(r_p/4\pi i)^2
\]
where $a_p$ is the leading coefficient of~$\phi$ at~$\theta(p)$. Let $(C,\psi)$ be a marked hyperbolic surface representing the point $\Psi(S,\phi,\theta)\in\mathcal{T}(\mathbb{S},\mathbb{M})$. From the compatibility condition, one sees that if $\Re(r_p)=0$ then $p$ corresponds via the map $\psi$ to a cusp of~$C$. Otherwise it corresponds to a boundary component of~$C$. We will choose an orientation of the boundary component to agree with the orientation of~$C$ if $\Re(r_p)>0$, and we choose the opposite orientation of the boundary component if~$\Re(r_p)<0$. This defines a point of $\mathcal{T}^\pm(\mathbb{S},\mathbb{M})$, and hence we have the following.

\begin{proposition}
\label{prop:lift}
This construction defines a lift of~$\Psi$ to an $\MCG^\pm(\mathbb{S},\mathbb{M})$-equivariant map 
\[
\Psi^\pm:\mathcal{Q}^\pm(\mathbb{S},\mathbb{M})\rightarrow\mathcal{T}^\pm(\mathbb{S},\mathbb{M})
\]
of the covering spaces.
\end{proposition}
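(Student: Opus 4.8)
The plan is to establish, in turn, well-definedness of the recipe, the lift relation, continuity, and $\MCG^\pm(\mathbb{S},\mathbb{M})$-equivariance, everything resting on the preceding proposition, which produces a well-defined $\MCG(\mathbb{S},\mathbb{M})$-equivariant map $\Psi\colon\mathcal{Q}(\mathbb{S},\mathbb{M})\to\mathcal{T}(\mathbb{S},\mathbb{M})$ (writing $\pi_{\mathcal Q}$ and $\pi_{\mathcal T}$ for the two covering projections); one should first record that $\Psi$ is continuous, which follows by a families version of the argument of Lemma~\ref{lem:continuous}, now allowing the complex structure on the source to vary as well. For well-definedness: the orientation attached to the $S^1$-boundary component of $C$ corresponding to a puncture $p$ depends only on $\sgn\Re(r_p)$, and since the leading coefficient $a_p$, hence the chosen square root $r_p$ with $a_p=(r_p/4\pi\mathrm{i})^2$, is invariant under pullback by biholomorphisms (Section~\ref{sec:SignedDifferentials}), this sign is an invariant of the equivalence class in $\mathcal{Q}^\pm(\mathbb{S},\mathbb{M})$; likewise the orientations of the boundary components of $C$ are intrinsic, so the recipe respects the equivalence relation on $\mathcal{T}^\pm(\mathbb{S},\mathbb{M})$. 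A one-line computation from the compatibility condition for $m_p=2$ (using $\theta_p=\arg a_p$ and $a_p=-r_p^2/16\pi^2$) gives the identity $L_p=|\Re(r_p)|$, so $p$ yields a cusp exactly when $\Re(r_p)=0$ and otherwise an $S^1$-boundary component that the recipe orients unambiguously. Forgetting the orientations recovers $\Psi(S,\phi,\theta)$, which is precisely the lift relation $\pi_{\mathcal T}\circ\Psi^\pm=\Psi\circ\pi_{\mathcal Q}$.

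For continuity, over the open locus of $\mathcal{Q}^\pm(\mathbb{S},\mathbb{M})$ where all $\Re(r_p)\neq 0$ the signs $\sgn\Re(r_p)$ are locally constant, so there $\Psi^\pm$ is $\Psi$ composed with a continuous local section of the covering $\mathcal{T}^\pm(\mathbb{S},\mathbb{M})\to\mathcal{T}(\mathbb{S},\mathbb{M})$, hence continuous. Near a point where some $\Re(r_p)=0$, the identity $L_p=|\Re(r_p)|$ shows that the boundary component attached to $p$ degenerates to a cusp as the point is approached; since the cover is branched precisely over this cusp locus, the two sheets recording the two orientations of that shrinking geodesic come together there, so a fluctuating choice of orientation cannot obstruct convergence in $\mathcal{T}^\pm(\mathbb{S},\mathbb{M})$. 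Concretely, in a chart for $\mathcal{T}^\pm(\mathbb{S},\mathbb{M})$ near such a branch point the coordinate transverse to the branch locus has the form $\sgn(\Re r_p)\,g(|\Re r_p|)$ for a homeomorphism $g$ with $g(0)=0$, and this is a continuous function of $r_p\in\mathbb{C}$; combined with continuity of $\Psi$, continuity of $\Psi^\pm$ follows everywhere.

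It remains to check equivariance. For $h\in\MCG(\mathbb{S},\mathbb{M})$ both covers merely re-index the intrinsic signing and orientation data by the action on $\mathbb{P}$, so $\MCG(\mathbb{S},\mathbb{M})$-equivariance of $\Psi^\pm$ is inherited from that of $\Psi$. The generator of the $\mathbb{Z}_2^{\mathbb{P}}$-factor at a puncture $q$ acts on $\mathcal{Q}^\pm(\mathbb{S},\mathbb{M})$ by $r_q\mapsto -r_q$, hence reverses $\sgn\Re(r_q)$ and so flips the orientation of the boundary component at $q$ (acting trivially when $\Re(r_q)=0$, i.e.\ at a cusp), which is exactly how this generator acts on $\mathcal{T}^\pm(\mathbb{S},\mathbb{M})$, all other data being untouched. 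Assembling these two computations, with attention to the semidirect-product multiplication in $\MCG^\pm(\mathbb{S},\mathbb{M})=\MCG(\mathbb{S},\mathbb{M})\ltimes\mathbb{Z}_2^{\mathbb{P}}$, gives $\Psi^\pm(g\cdot x)=g\cdot\Psi^\pm(x)$ for all $g$. The only step with genuine (if mild) content is continuity across the branch loci; everything else is bookkeeping on top of the established properties of $\Psi$ and the explicit relation $L_p=|\Re(r_p)|$ coming from the compatibility condition.
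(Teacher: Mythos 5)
Your proof is correct, and it is in fact considerably more explicit than what the paper offers: Proposition~\ref{prop:lift} is stated without proof, the construction of $\Psi^\pm$ in Section~\ref{sec:ALiftToCoveringSpaces} being taken as self-explanatory, and the proposition asserts only well-definedness and $\MCG^\pm(\mathbb{S},\mathbb{M})$-equivariance. Your checks of well-definedness (invariance of $\sgn\Re(r_p)$ under biholomorphic pullback) and of $\mathbb{Z}_2^{\mathbb{P}}$- and $\MCG(\mathbb{S},\mathbb{M})$-equivariance are exactly the bookkeeping the paper leaves to the reader, and your identity $L_p=|\Re(r_p)|$ is the clean way to see that the compatibility condition makes the cusp/boundary dichotomy match the sign of $\Re(r_p)$. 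One organizational difference to note: the paper does not fold continuity into Proposition~\ref{prop:lift}. Instead it proves continuity of $\Psi$ (Proposition~\ref{prop:Psicontinuous}) and then of $\Psi^\pm$ (Proposition~\ref{prop:Psipmcontinuous}) in the following two subsections, with the latter proof being just the one-line observation that $\Psi^\pm$ is a lift of the continuous map $\Psi$. Your treatment of continuity near the branch locus --- observing that as $\Re(r_p)\to 0$ the two sheets coalesce because the chart coordinate transverse to the branch locus behaves like $\sgn(\Re r_p)\,g(|\Re r_p|)$ with $g(0)=0$ --- is actually more careful than the paper's terse justification, which silently uses that $\pi_{\mathcal T}$ is a local homeomorphism away from the branch locus and that the branched cover is modeled on $\mathbb{R}^n\to\mathbb{R}^n$ in shear coordinates. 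So your proof is essentially the paper's approach, but with the details filled in and with continuity absorbed that the paper defers to Propositions~\ref{prop:Psicontinuous}--\ref{prop:Psipmcontinuous}.
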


Our goal in the remainder of this section is to prove that the map~$\Psi^\pm$ of Proposition~\ref{prop:lift} is continuous.

\subsection{Fixing the marking}

Let us consider a fixed compact Riemann surface~$S$, a finite subset $M=\{p_1,\dots,p_d\}\subset S$, and positive integers $m_1,\dots,m_d\in\mathbb{Z}_{>0}$. Let $\mathbb{S}$ be the smooth surface obtained by taking an oriented real blowup of~$S$ at each $p_i$ for which~$m_i\geq3$, and let $\mathbb{M}\subset\mathbb{S}$ be a set consisting of $m_i-2$ marked points on the resulting boundary component of~$\mathbb{S}$, together with those points $p_i$ for which $m_i\leq2$. Let $D=\sum_im_ip_i$ and write 
\[
\mathcal{N}=\mathcal{N}(\mathbb{S},\mathbb{M})\subset H^0(S,\omega_S^{\otimes2}(D))
\]
for the set of quadratic differentials whose associated marked bordered surface equals the fixed surface~$(\mathbb{S},\mathbb{M})$. Thus the asymptotic horizontal directions associated to a differential~$\phi$ remain constant as we vary~$\phi$ within~$\mathcal{N}$.

Fix a choice of local coordinate $z_i$ defined in a neighborhood of~$p_i\in S$. Then for any $\phi\in\mathcal{N}$, there is a tuple $P_\phi=(P_{\phi,i}(z_i))$ of principal differentials giving the principal parts of~$\phi$ in these local coordinates. We can view $\Psi_{P_\phi}^{S,M}(\phi)$ as a point of $\mathcal{T}(\mathbb{S},\mathbb{M})$, and hence there is a map 
\[
\Psi_{\mathcal{N}}:\mathcal{N}\rightarrow\mathcal{T}(\mathbb{S},\mathbb{M})
\]
given by $\phi\mapsto\Psi_{P_\phi}^{S,M}(\phi)$.

\begin{lemma}
\label{lem:continuityfixedcomplexstructure}
This map $\Psi_{\mathcal{N}}$ is continuous.
\end{lemma}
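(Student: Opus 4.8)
The plan is to adapt the compactness arguments of Lemmas~\ref{lem:uniformbound}--\ref{lem:continuous}, now allowing the principal parts to vary, and then to invoke the uniqueness statement of Theorem~\ref{thm:homeo} to pin down the limit. Fix $\phi_\infty\in\mathcal{N}$ and a sequence $\phi_j\in\mathcal{N}$ with $\phi_j\to\phi_\infty$; it suffices to show that every subsequence of $\{\phi_j\}$ has a further subsequence along which $\Psi_{\mathcal{N}}(\phi_j)\to\Psi_{\mathcal{N}}(\phi_\infty)$. Write $P_j=(P_{j,i}(z_i))$ for the tuple of principal parts of $\phi_j$ in the fixed local coordinates, so that $P_j\to P_\infty$; since $\phi_\infty\in\mathcal{N}$, the leading coefficients at the double poles stay bounded away from $0$, the compatible length data $L_j$ converge to $L_\infty$, and the model maps $f_{j,i}$ of Section~\ref{sec:ConstructionOfHarmonicMaps} can be chosen to depend continuously on $j$. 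Let $F_j\colon S\setminus M\to C_j^\circ$ be the harmonic diffeomorphism with Hopf differential $\phi_j$ provided by Theorem~\ref{thm:homeo}, representing $\Psi_{\mathcal{N}}(\phi_j)$; after lifting to universal covers and normalising so that each $F_j$ sends a fixed basepoint and frame of $S\setminus M$ to a fixed point and frame of $\mathbb{H}$, the point $\Psi_{\mathcal{N}}(\phi_j)$ is recorded by the induced monodromy representation $\rho_j\colon\Gamma\to\PSL_2(\mathbb{R})$, where $\Gamma=\pi_1(S\setminus M)$.

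The key step is a uniform energy estimate: for every compact $K\subset S\setminus M$ there is a constant $N>0$, independent of $j$, with $E_K(F_j)\le N$. This is proved exactly as in Lemma~\ref{lem:uniformbound}, the only new point being that the twist constant $N_0$ of Lemma~\ref{lem:twistenergy} and the near-pole constants $N_i$ of Lemma~\ref{lem:energynearpole} can be taken locally uniform in $\phi\in\mathcal{N}$. In turn, this reduces to the uniformity, as $\phi$ ranges over a compact subset of $\mathcal{N}$, of the energy comparison for the partially free Dirichlet problem at a pole of order $\ge3$ (\cite{Gupta19}, Lemmas~4.4 and~4.6), the corresponding comparison at a double pole (\cite{Wolf91}, Lemma~3.2; \cite{Sagman19}), and the finite-energy estimate of Lemma~\ref{lem:energysimplepole} at the remaining poles; in each case the relevant comparison and model maps depend continuously on $\phi$ and the estimates hold uniformly on compacta. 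Granting this estimate, the elliptic bootstrap of Lemma~\ref{lem:convergence} applies verbatim: after passing to a subsequence, the $F_j$ converge in $C^1$ on compact subsets of $S\setminus M$ to a harmonic map $F$, and since $C^1$-convergence on compacta carries Hopf differentials to Hopf differentials, $F$ has Hopf differential $\phi_\infty$.

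It remains to identify $F$ with the harmonic map underlying $\Psi_{\mathcal{N}}(\phi_\infty)$. Because the model maps converge, the behaviour of $F$ near each $p_i$ is governed by the limiting model map $f_{\infty,i}$, so the computations of principal parts in Lemmas~\ref{lem:Hopfintegrable} and~\ref{lem:Hopfprincipalparts} show that the Hopf differential of $F$ has principal part $P_{\infty,i}$ at $p_i$; moreover the convergence $L_j\to L_\infty$, together with the fact that Theorem~\ref{thm:homeo} imposes no restriction on the phases, guarantees that the limiting pullback metrics do not degenerate, so $F$ is a harmonic diffeomorphism onto $C_\infty\setminus\partial C_\infty$ for some marked hyperbolic surface $(C_\infty,\psi_\infty)$. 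By the uniqueness half of Theorem~\ref{thm:homeo}, $(C_\infty,\psi_\infty)$ represents $\Psi_{\mathcal{N}}(\phi_\infty)$. Finally, $C^1$-convergence of the $F_j$ on compacta forces the normalised developing maps, and hence the representations $\rho_j$, to converge; equivalently, by~\eqref{eqn:pullback} the pullback metrics $F_j^* h_j$ converge locally uniformly, since $\phi_j\to\phi_\infty$ and $e(F_j)\to e(F)$. Either way, $\Psi_{\mathcal{N}}(\phi_j)\to\Psi_{\mathcal{N}}(\phi_\infty)$ in $\mathcal{T}(\mathbb{S},\mathbb{M})$, and as the subsequence was arbitrary, $\Psi_{\mathcal{N}}$ is continuous.

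The hard part will be the local uniformity of the near-pole energy estimates: once one knows that, as $\phi$ ranges over a compact subset of $\mathcal{N}$, the model maps and the solutions of the associated (partially free) Dirichlet problems vary continuously and the energy-comparison constants stay bounded, all of the compactness machinery of Section~\ref{sec:HarmonicMaps} goes through with the same proofs. A secondary point that needs care is ruling out degeneration of the limiting hyperbolic surface --- for instance, a geodesic boundary component associated to a double pole collapsing to a cusp --- but this is precisely what the convergence $L_j\to L_\infty$ and the phase-independence of Theorem~\ref{thm:homeo} prevent.
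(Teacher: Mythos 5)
Your proof takes a genuinely different route from the paper's. The paper avoids proving any new estimate in the direction $\phi\mapsto\rho$: it constructs the map $\Phi_{\mathcal{N},P}\colon\mathcal{T}(\mathbb{S},\mathbb{M})\to\mathcal{N}$, $\rho\mapsto(\Psi_{P_\rho}^{S,M})^{-1}(\rho)$, for a continuously chosen family $P_\rho$; shows that $\Phi_{\mathcal{N},P}$ is injective and continuous by the arguments of Lemmas~\ref{lem:injective} and~\ref{lem:continuous} (which run in the direction where the target hyperbolic surfaces are the given inputs, so the target metrics converge by hypothesis); then, using local compactness and Hausdorffness, concludes that $\Phi_{\mathcal{N},P}$ is locally a homeomorphism onto its image, and finally picks the family $P_\rho$ along the given sequence $\phi_j\to\phi$ so that each $\phi_j$ lies in the image. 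Continuity of $\Psi_{\mathcal{N}}$ then drops out as the local inverse.

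You instead attack the direction $\phi\mapsto\rho$ head-on, which is the properness direction in which the targets $C_j=\Psi_{\mathcal{N}}(\phi_j)$ are the unknowns, and this is where your argument has a genuine gap. You assert that the uniform energy estimate ``is proved exactly as in Lemma~\ref{lem:uniformbound}'' after uniformizing the comparison constants $N_0,N_i$. But the bound produced there has the form $E_{S_k}(F_l)\le E_{S_k}(F_k^0)+\sum_iN_i+N_0$, and the dominant term $E_{S_k}(F_k^0)$ is the energy of a reference harmonic map into the \emph{fixed} target $C$. With the $C_j$ unknown and no a priori control, there is no bound on this term, so the argument is circular: you are using an estimate that presupposes the targets do not degenerate in order to prove they do not degenerate. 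The same circularity affects your claim that the model maps $f_{j,i}$ of Section~\ref{sec:ConstructionOfHarmonicMaps} ``can be chosen to depend continuously on $j$''---they are defined relative to the as-yet-unknown $C_j$. What you actually need, and what the paper invokes exactly where a target-independent bound is required (in Lemma~\ref{lem:proper}), is the Wolf estimate
\[
E_{S_l}(F_j)\le 2\|\phi_j\|_{S_l}+A(F_j(S_l))\le 2\|\phi_j\|_{S_l}+A(C_j),
\]
whose right-hand side is uniformly bounded because $A(C_j)=-2\pi\chi(C_j)$ is a topological constant and $\|\phi_j\|_{S_l}$ is controlled by the convergence $\phi_j\to\phi_\infty$. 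Substituting this estimate for Lemma~\ref{lem:uniformbound} repairs the compactness step; after that, the identification of the limit via the uniqueness in Theorem~\ref{thm:homeo} and the passage to monodromy (or pullback metrics) proceed as you indicate.
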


\begin{proof}
For each $\rho\in\mathcal{T}(\mathbb{S},\mathbb{M})$, choose a tuple $P_\rho=(P_{\rho,i}(z_i))$ of principal differentials so that $P_\rho$ depends continuously on~$\rho$ and a quadratic differential in $\mathcal{Q}_{P_\rho}(S,M)$ has associated marked bordered surface $(\mathbb{S},\mathbb{M})$. Then the map $\Phi_{\mathcal{N},P}:\mathcal{T}(\mathbb{S},\mathbb{M})\rightarrow\mathcal{N}$ given by $\rho\mapsto(\Psi_{P_\rho}^{S,M})^{-1}(\rho)$ is injective and continuous by the proofs of Lemmas~\ref{lem:injective} and~\ref{lem:continuous}, respectively. Since $\mathcal{T}(\mathbb{S},\mathbb{M})$ is locally compact and $\mathcal{N}$ is Hausdorff, $\Phi_{\mathcal{N},P}$ is locally a homeomorphism onto its image.

By construction the map $\Psi_{\mathcal{N}}$ is a left inverse for $\Phi_{\mathcal{N},P}$, so it is continuous when restricted to the image of~$\Phi_{\mathcal{N},P}$. Suppose $\phi\in\mathcal{N}$ is any point and $\{\phi_j\}\subset\mathcal{N}$ is a sequence of points converging to~$\phi$. We can choose the $P_\rho$ so that $P_{\rho_j}=P_{\phi_j}$ for $\rho_j=\Psi_{\mathcal{N}}(\phi_j)$. Then it follows that $\Psi_{\mathcal{N}}(\phi_j)\rightarrow\Psi_{\mathcal{N}}(\phi)$ and hence $\Psi_{\mathcal{N}}$ is continuous at~$\phi$.
\end{proof}

\subsection{Proof of continuity}

Using Lemma~\ref{lem:continuityfixedcomplexstructure}, we can prove the continuity of the maps~$\Psi$ and~$\Psi^\pm$ constructed previously. This establishes Theorem~\ref{thm:introPsipm} from the introduction.

\begin{proposition}
\label{prop:Psicontinuous}
The map $\Psi:\mathcal{Q}(\mathbb{S},\mathbb{M})\rightarrow\mathcal{T}(\mathbb{S},\mathbb{M})$ is continuous.
\end{proposition}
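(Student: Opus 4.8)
The plan is to reduce the statement to the fixed-complex-structure case already established in Lemma~\ref{lem:continuityfixedcomplexstructure}, via a local trivialization of the universal curve over Teichm\"uller space. Since $\mathcal{Q}(\mathbb{S},\mathbb{M})$ and $\mathcal{T}(\mathbb{S},\mathbb{M})$ are metrizable, continuity is local and may be checked along sequences, so it suffices to fix $(S_\infty,\phi_\infty,\theta_\infty)\in\mathcal{Q}(\mathbb{S},\mathbb{M})$ together with a sequence $(S_j,\phi_j,\theta_j)$ converging to it and to prove $\Psi(S_j,\phi_j,\theta_j)\to\Psi(S_\infty,\phi_\infty,\theta_\infty)$. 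Recording the underlying Riemann surface with its $d$ marked points defines a continuous map $\mathcal{Q}(\mathbb{S},\mathbb{M})\to B=\mathcal{T}(g,d)$; write $b_j$ for its value at $(S_j,\phi_j,\theta_j)$, so that $b_j\to b_\infty$.

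First I would fix a contractible neighbourhood $\Delta\ni b_\infty$ in $B$ and a smooth trivialization of the restricted universal curve $X|_\Delta\to\Delta$. As this is a proper submersion, Ehresmann's theorem provides such a trivialization, and it may be taken to carry the sections $p_1,\dots,p_d$ to constant sections; one then also fixes a smoothly varying holomorphic coordinate near each $p_i$. This identifies every $X(b)$, $b\in\Delta$, with a fixed smooth surface $S$ carrying a smoothly varying family $\{J_b\}_{b\in\Delta}$ of complex structures, with $J_{b_\infty}$ the complex structure of $S_\infty$, and, pulling back the differentials, presents a neighbourhood of $(S_\infty,\phi_\infty,\theta_\infty)$ in $\mathcal{Q}(\mathbb{S},\mathbb{M})$ as a family of triples $(J_b,\phi,\theta)$ where $\phi$ is a $J_b$-holomorphic GMN~differential on $S$ with associated marked bordered surface $(\mathbb{S},\mathbb{M})$ and $\theta$ a locally constant marking. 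In these terms the convergence $(S_j,\phi_j,\theta_j)\to(S_\infty,\phi_\infty,\theta_\infty)$ becomes $J_{b_j}\to J_{b_\infty}$ in $C^\infty$, together with convergence of the differentials on $S\setminus M$ and of their principal parts, and with $\theta$ eventually constant.

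The essential step is then to observe that the construction of the harmonic map $F$ and of the homeomorphism $\Phi_P^{S,M}$ in Sections~\ref{sec:ConstructionOfHarmonicMaps}--\ref{sec:Uniqueness}, and hence the map $\Psi_{\mathcal{N}}$ of Lemma~\ref{lem:continuityfixedcomplexstructure}, carries over with the complex structure $J$ on the fixed surface $S$ admitted as an extra continuous parameter, and depends continuously on it. Concretely I would revisit those sections keeping $J$ in the background: the auxiliary harmonic maps $F_l$ are produced by Lemaire's theorem and depend continuously on the domain conformal structure by the results of Sections~3 and~4 of~\cite{EellsLemaire81}; the model maps $f_i$ near the poles are built from the (smoothly varying) coordinates and the (convergent) principal parts, hence vary continuously; the energy comparison bounds of Lemmas~\ref{lem:twistenergy},~\ref{lem:energynearpole} and~\ref{lem:uniformbound}, together with the inputs quoted from~\cite{Gupta19} and~\cite{Wolf91}, hold with constants that remain bounded as $(J,\phi)$ ranges over a compact set; and the uniqueness argument of Section~\ref{sec:Uniqueness} is pointwise in $(J,\phi)$. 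Granting this, the proofs of Lemmas~\ref{lem:injective} and~\ref{lem:continuous}, and then of Lemma~\ref{lem:continuityfixedcomplexstructure}, go through with $J$ as a parameter and show that $(J,\phi,\theta)\mapsto\Psi_{\mathcal{N}}^{J}(\phi)$, composed with the locally constant marking, is continuous on the neighbourhood in question. Since this map agrees there with $\Psi$, and $(S_\infty,\phi_\infty,\theta_\infty)$ was arbitrary, $\Psi$ is continuous; the exceptional cases $\chi(S\setminus M)\ge0$ are handled identically using the corresponding statements for exceptional surfaces.

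The hard part is the local uniformity invoked in the previous paragraph: one must check that the delicate analysis near the higher-order poles — the existence of and estimates for the model maps, the solutions of the partially free Dirichlet problem of~\cite{Gupta19}, and the energy bounds feeding Lemma~\ref{lem:uniformbound} — survives with constants locally bounded in the domain complex structure, so that the diagonal-subsequence construction of Lemma~\ref{lem:convergence} and the uniqueness of its limit continue to produce a family of harmonic maps converging in $C^1$ on compact sets as $J$ and $\phi$ vary jointly. Once this is in hand, the remainder is a formal assembly of the parametrized versions of the fixed-structure lemmas over the trivializing neighbourhood~$\Delta$.
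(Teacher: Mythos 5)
Your proposal takes essentially the same route as the paper: localize over $B=\mathcal{T}(g,d)$, trivialize the universal curve by Ehresmann's theorem, normalize the marking to be locally constant, and reduce to Lemma~\ref{lem:continuityfixedcomplexstructure}. The paper's proof is brief at exactly the place where you dwell: it invokes Lemma~\ref{lem:continuityfixedcomplexstructure} --- which gives continuity in $\phi$ for a \emph{fixed} complex structure --- and then directly concludes that $\Psi(q')$ is close to $\Psi(q)$; the joint continuity as both the complex structure and the differential vary, which you correctly identify as the real content once $q'$ and $q$ lie over distinct points of $B$, is left implicit. Your sketch of how to supply it is the right one: the constructions of Sections~\ref{sec:ConstructionOfHarmonicMaps}--\ref{sec:Uniqueness} and the proof of Lemma~\ref{lem:continuityfixedcomplexstructure} do carry over with the domain complex structure as a continuous parameter, the key points being that the energy bounds of Lemmas~\ref{lem:twistenergy}--\ref{lem:uniformbound} and the constants in the diagonal argument of Lemma~\ref{lem:convergence} are locally uniform, and that the Eells--Lemaire stability results already cited in Lemma~\ref{lem:continuous} cover deformations of the domain metric as well as the target. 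In short: same architecture as the paper's proof, but your version is more explicit about where the analytic work lies and honestly flags the local uniformity that needs to be verified rather than eliding it.
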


\begin{proof}
Consider a point $q\in\mathcal{Q}(\mathbb{S},\mathbb{M})$ which is represented by a marked quadratic differential $(S,\phi,\theta)$, and write $M\subset S$ for the set of poles of~$\phi$. Let $B=\mathcal{T}(g,d)$ be the Teichm\"uller space parametrizing marked Riemann surfaces of genus~$g$ with $d=|M|$ marked points. Then the pair $(S,M)$ can be considered as a point $b\in B$. If $\pi:X\rightarrow B$ is the universal curve, then $S$ is the fiber of~$\pi$ over~$b$. Let $b\in U\subset B$ be a neighborhood of~$b$ over which $\pi$ is trivial. Then there exists a local trivialization 
\[
s_U:\pi^{-1}(U)\rightarrow U\times S
\]
which provides an isomorphism of the fiber over any $b'\in U$ with~$S$. Let $q'\in\mathcal{Q}(\mathbb{S},\mathbb{M})$ represented by a marked quadratic differential $(S',\phi',\theta')$, and write $M'\subset S'$ for the set of poles of~$\phi'$. By choosing~$q'$ sufficiently close to~$q$ in the topology of the moduli space, we can assume that $b'=(S',M')$ defines a point in~$U$ and that $s_U|_{\pi^{-1}(b')}\circ\theta'$ and $\theta$ are isotopic. Then $(S',\phi',\theta')$ is equivalent to a marked quadratic differential with marking~$\theta$. Lemma~\ref{lem:continuityfixedcomplexstructure} shows that $\Psi$ is continuous when restricted to a subspace of $\mathcal{Q}(\mathbb{S},\mathbb{M})$ parametrizing quadratic differentials on a fixed Riemann surface with a fixed marking. Thus, by choosing $q'$ sufficiently close to~$q$, we can ensure that the points $\Psi(q)$ and~$\Psi(q')$ are close. It follows that $\Psi$ is continuous at~$q$.
\end{proof}

\begin{proposition}
\label{prop:Psipmcontinuous}
The map $\Psi^\pm:\mathcal{Q}^\pm(\mathbb{S},\mathbb{M})\rightarrow\mathcal{T}^\pm(\mathbb{S},\mathbb{M})$ is continuous.
\end{proposition}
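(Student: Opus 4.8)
The plan is to deduce the continuity of $\Psi^\pm$ from that of $\Psi$ (Proposition~\ref{prop:Psicontinuous}) by writing the lift out explicitly in coordinates adapted to the two branched coverings; the point will be that the extra datum carried by $\Psi^\pm$ over each puncture is controlled by a single real number that varies continuously. Recall from Section~\ref{sec:ALiftToCoveringSpaces} that a point of $\mathcal{Q}^\pm(\mathbb{S},\mathbb{M})$ is a quadruple $(S,\phi,\theta,(r_p)_{p\in\mathbb{P}})$ with $a_p=(r_p/4\pi\mathrm{i})^2$, where $a_p$ is the leading coefficient of~$\phi$ at~$\theta(p)$; since $a_p$ is a continuous function on $\mathcal{Q}(\mathbb{S},\mathbb{M})$ and $\mathcal{Q}^\pm(\mathbb{S},\mathbb{M})$ is precisely the branched cover on which the square roots $r_p=\pm 4\pi\mathrm{i}\sqrt{a_p}$ become single-valued, the assignment $q^\pm\mapsto r_p$ is continuous on $\mathcal{Q}^\pm(\mathbb{S},\mathbb{M})$. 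On the target side, to each $p\in\mathbb{P}$ one has the length datum $L_p\colon\mathcal{T}(\mathbb{S},\mathbb{M})\to\mathbb{R}_{\geq0}$, which vanishes exactly where the corresponding boundary component degenerates to a cusp, and a point of $\mathcal{T}^\pm(\mathbb{S},\mathbb{M})$ records a point of $\mathcal{T}(\mathbb{S},\mathbb{M})$ together with an orientation for each $p$ with $L_p\neq0$; I will encode this as a \emph{signed length} $\ell_p\in\mathbb{R}$ with $|\ell_p|=L_p$, taken positive when the chosen orientation agrees with the ambient orientation of the surface. Near any point, the branched cover $\mathcal{T}^\pm(\mathbb{S},\mathbb{M})\to\mathcal{T}(\mathbb{S},\mathbb{M})$ is a product over $\mathbb{P}$ of trivial double covers in the directions where $L_p\neq0$ and of the elementary model $\mathbb{R}\to\mathbb{R}_{\geq0}$, $t\mapsto|t|$, in the directions where $L_p=0$, the coordinate $t$ in the latter being the signed length $\ell_p$; this is the structure underlying the homeomorphism $\mathcal{T}^\pm(\mathbb{S},\mathbb{M})\cong\mathbb{R}_{>0}^n$ of Proposition~\ref{prop:shearcoordinates}.

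The heart of the proof is the identity
\[
\ell_p\bigl(\Psi^\pm(S,\phi,\theta,(r_p))\bigr)=\Re(r_p)\qquad\text{for all }p\in\mathbb{P}.
\]
To establish it I would use the compatibility condition of Section~\ref{sec:CompatibilityConditions}: substituting $a_p=-r_p^2/16\pi^2$ gives $|a_p|=|r_p|^2/16\pi^2$ and $\Re a_p=((\Im r_p)^2-(\Re r_p)^2)/16\pi^2$, so that
\[
L_p(\Psi(q))^2=16\pi^2\,|a_p|\,\sin^2\!\bigl(\tfrac{1}{2}\arg a_p\bigr)=8\pi^2\bigl(|a_p|-\Re a_p\bigr)=(\Re r_p)^2,
\]
while, by the sign convention defining the lift in Proposition~\ref{prop:lift}, the orientation assigned to the $p$-th boundary component agrees with the ambient orientation exactly when $\Re r_p>0$. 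When $\Re r_p=0$ both sides vanish and $p$ is a cusp, so the identity holds in all cases. Granting this, continuity follows at once: if $q_j^\pm\to q^\pm$ in $\mathcal{Q}^\pm(\mathbb{S},\mathbb{M})$, with images $q_j\to q$ in $\mathcal{Q}(\mathbb{S},\mathbb{M})$, then Proposition~\ref{prop:Psicontinuous} gives $\Psi(q_j)\to\Psi(q)$, which pins down the base point of $\Psi^\pm(q_j^\pm)$ and hence its local coordinates transverse to the ramification locus; and the identity above together with continuity of $r_p$ gives $\ell_p(\Psi^\pm(q_j^\pm))=\Re r_p(q_j^\pm)\to\Re r_p(q^\pm)=\ell_p(\Psi^\pm(q^\pm))$, pinning down the remaining coordinates. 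By the local product description of $\mathcal{T}^\pm(\mathbb{S},\mathbb{M})\to\mathcal{T}(\mathbb{S},\mathbb{M})$, these two convergences imply $\Psi^\pm(q_j^\pm)\to\Psi^\pm(q^\pm)$.

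The one point requiring care is the local structure of the branched cover $\mathcal{T}^\pm(\mathbb{S},\mathbb{M})\to\mathcal{T}(\mathbb{S},\mathbb{M})$ at a point where several punctures simultaneously correspond to cusps, and the ensuing assertion that base-convergence plus signed-length-convergence of a sequence in $\mathcal{T}^\pm(\mathbb{S},\mathbb{M})$ forces the sequence itself to converge. This reduces, one puncture at a time, to the observation that in the model $\mathbb{R}^n\to\mathbb{R}^{n-1}\times\mathbb{R}_{\geq0}$, $(x',x_n)\mapsto(x',|x_n|)$, a sequence converges as soon as its image and its last coordinate do, the identification of $x_n$ with the signed length $\ell_p$ being exactly the geometric content of that coordinate. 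The remaining ingredients — the compatibility computation, the continuity of $r_p$ on $\mathcal{Q}^\pm(\mathbb{S},\mathbb{M})$, and the continuity of $\Psi$ — are routine or already in hand, so I expect the write-up to be short.
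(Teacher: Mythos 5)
Your argument is correct and is, in substance, an unpacking of what the paper leaves to the reader: the paper's own proof is a single sentence asserting that continuity "follows immediately" from Proposition~\ref{prop:Psicontinuous} and the fact that $\Psi^\pm$ is a lift, which in isolation is not quite a proof (a lift of a continuous map to branched covers need not be continuous). What you supply is precisely the missing ingredient: the lift $\Psi^\pm$ is cut out by the continuous function $q^\pm\mapsto\Re(r_p)$ on the source, and the identity $\ell_p\bigl(\Psi^\pm(q^\pm)\bigr)=\Re(r_p)$, which you verify correctly from the compatibility condition $L_p^2=16\pi^2|a_p|\sin^2(\theta_p/2)$ and the sign convention in Section~\ref{sec:ALiftToCoveringSpaces}, shows that the lift matches the continuous ``signed-length'' coordinates on the target. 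That is the right thing to prove, and the computation checks out.

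The one place to be a little more careful is your appeal to the local product model $\mathbb{R}^n\to\mathbb{R}^{n-1}\times\mathbb{R}_{\ge 0}$ for the branched cover $\mathcal{T}^\pm(\mathbb{S},\mathbb{M})\to\mathcal{T}(\mathbb{S},\mathbb{M})$: Proposition~\ref{prop:shearcoordinates} gives a global homeomorphism $\mathcal{T}^\pm(\mathbb{S},\mathbb{M})\cong\mathbb{R}_{>0}^n$, but it does not by itself describe the cover as a product of folds. A way to close this without invoking the local model is to use that $\mathbb{Z}_2^{\mathbb{P}}$ acts on $\mathcal{T}^\pm(\mathbb{S},\mathbb{M})$ by homeomorphisms and that $\mathcal{T}^\pm(\mathbb{S},\mathbb{M})$ is locally compact: given $q_j^\pm\to q^\pm$, the points $\Psi^\pm(q_j^\pm)$ eventually lie in a compact saturated neighborhood of $\Psi^\pm(q^\pm)$, so any subsequential limit $\sigma^\pm$ maps to $\Psi(q)$ and has $\ell_p(\sigma^\pm)=\Re r_p(q^\pm)$ for all $p$; these two facts force $\sigma^\pm=\Psi^\pm(q^\pm)$, since for $p$ with $\ell_p\neq 0$ the sign of $\ell_p$ determines the orientation, and for $p$ with $\ell_p=0$ the cusp carries no orientation datum. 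This gives the convergence of the full sequence and avoids asserting the local fold structure outright.
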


\begin{proof}
This follows immediately from Proposition~\ref{prop:Psicontinuous} and the fact that $\Psi^\pm$ is a lift of~$\Psi$ to a map of the covering spaces $\mathcal{Q}^\pm(\mathbb{S},\mathbb{M})$ and $\mathcal{T}^\pm(\mathbb{S},\mathbb{M})$.
\end{proof}

\section{Asymptotic property}
\label{sec:AsymptoticProperty}

In this section, we prove an asymptotic property of the map from signed differentials to the enhanced Teichm\"uller space.

\subsection{Images of horizontal and vertical paths}

We have seen in Section~\ref{sec:HorizontalStripDecomposition} that a quadratic differential $\phi$ defined on a Riemann surface~$S$ naturally determines a flat Riemannian metric on $S\setminus\Crit(\phi)$. This defines a metric space structure on the complement of the poles of~$\phi$, and it is a well known fact that this metric space is complete if and only if $\phi$ is complete in the sense of Definition~\ref{def:GMNdifferential}.

We have also seen that $\phi$ determines a foliation of~$S\setminus\Crit(\phi)$ whose leaves are mapped by the distinguished local coordinates to horizontal lines in~$\mathbb{C}$. In what follows, we will say that a smooth path in $S\setminus\Crit(\phi)$ is \emph{horizontal} if its image under any distinguished local coordinate has constant imaginary part. We will say that a smooth path in $S\setminus\Crit(\phi)$ is \emph{vertical} if its image under any distinguished local coordinate has constant real part.

To prove the main result of this section, we will make use of the following statements. The proofs are sketched in Section~2.5 of~\cite{Gupta19}, and further details can be found in the references cited there.

\begin{proposition}[\cite{Gupta19}, Section~2.5]
\label{prop:horizontalverticallength}
Let $\phi$ be a complete meromorphic quadratic differential on~$S$, let $M\subset S$ be the set of poles of~$\phi$, and let $C$ be a surface equipped with a hyperbolic metric. Let $\alpha_h$ be a horizontal path and $\alpha_v$ a vertical path in~$S$, each of length $L$ and lying at a distance $R>0$ from the critical points, with respect to the flat metric.

If $\psi:S\setminus M\rightarrow C$ is a harmonic map which is a diffeomorphism onto its image with Hopf~differential~$\phi$, then there is a universal constant $k>0$ such that 
\begin{enumerate}
\item The image $\psi(\alpha_h)$ has length 
\[
2L+O(e^{-kR}) \quad\text{as $R\rightarrow\infty$}.
\]
Moreover, this image $\psi(\alpha_h)$ is a curve having geodesic curvature $O(e^{-kR})$. It is contained in an $\epsilon$-neighborhood of a geodesic segment where $\epsilon\rightarrow0$ as $R\rightarrow\infty$.
\item The image $\psi(\alpha_v)$ has length 
\[
O(Le^{-kR}) \quad\text{as $R\rightarrow\infty$}.
\]
\end{enumerate}
\end{proposition}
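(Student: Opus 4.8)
The plan is to carry out everything in the natural coordinate of $\phi$ and reduce the statement to the exponential decay of the holomorphic energy of $\psi$. Fix a point of $\alpha_h$ (resp. $\alpha_v$) and a distinguished local coordinate $w=s+\mathrm{i}t$ for $\phi$, in which $\phi=dw^{\otimes2}$ and the flat metric is $|dw|^{2}$; since by hypothesis the path stays at distance $R>0$ from $\Crit(\phi)$, such a coordinate is available along a whole tubular neighbourhood of the path and the local computations glue together. Since $\psi$ is a harmonic diffeomorphism with Hopf differential exactly $\phi$, formula~\eqref{eqn:pullback} gives $\mathcal{H}(\psi)\,\mathcal{L}(\psi)=1$ in this coordinate, and Lemma~\ref{lem:boundholomorphicenergy} gives $\mathcal{H}(\psi)\geq1$, so the function $\lambda=\tfrac12\log\mathcal{H}(\psi)$ satisfies $\lambda\geq0$ and $e(\psi)=\mathcal{H}(\psi)+\mathcal{L}(\psi)=2\cosh(2\lambda)$. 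Substituting into~\eqref{eqn:pullback} and using $1+\cosh(2\lambda)=2\cosh^{2}\lambda$, $\cosh(2\lambda)-1=2\sinh^{2}\lambda$, one finds
\[
\psi^{*}h = 4\cosh^{2}(\lambda)\,ds^{\otimes2}+4\sinh^{2}(\lambda)\,dt^{\otimes2}.
\]

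The only non-formal ingredient is the behaviour of $\lambda$. The Bochner equation~\eqref{eqn:Bochner}, with $K_M=0$ (flat metric) and $K_N=-1$ (hyperbolic metric), becomes the sinh--Gordon equation $\Delta\lambda=2\sinh(2\lambda)$ for the Euclidean Laplacian in the $w$-coordinate. Combined with $\lambda\geq0$, the estimate from Section~5 of~\cite{Han96} that is already invoked in the proof of Lemma~\ref{lem:injective} gives a universal $k>0$ and a constant $C$ with $0\le\lambda(z)\le C\mathrm{e}^{-kR(z)}$, where $R(z)=\dist(z,\Crit(\phi))$ in the flat metric; interior elliptic estimates applied to the same equation then upgrade this to $|\nabla\lambda|(z)=O(\mathrm{e}^{-kR(z)})$ (after possibly shrinking $k$).

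Now the horizontal path is, in the $w$-coordinate, a segment $t=\mathrm{const}$ of Euclidean length $L$, all of whose points have $R(z)\geq R$. From the displayed metric, the length of $\psi(\alpha_h)$ is $\int_0^{L}2\cosh(\lambda)\,ds$, and since $2\cosh\lambda-2=O(\lambda^{2})=O(\mathrm{e}^{-2kR})$ along the path this equals $2L+O(\mathrm{e}^{-kR})$. The image curve, carrying the induced metric, has geodesic curvature equal to that of the segment $t=\mathrm{const}$ in $\psi^{*}h$; the standard formula for the geodesic curvature of a coordinate line in an orthogonal metric $E\,ds^{\otimes2}+G\,dt^{\otimes2}$ gives here $\kappa_g=-\,\partial_t\lambda/(2\cosh\lambda)$, whence $|\kappa_g|\le\tfrac12|\partial_t\lambda|=O(\mathrm{e}^{-kR})$. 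Finally, a curve of bounded length whose geodesic curvature is uniformly $O(\mathrm{e}^{-kR})$ in the negatively (boundedly) curved surface $C$ is $C^1$-close to the geodesic issuing from its basepoint in the same direction: comparing the Frenet equation $\dot T=\kappa_g N$ with that of the geodesic and applying Gr\"onwall's inequality controls the deviation by $C(L)\,\mathrm{e}^{-kR}$, which yields the stated $\epsilon$-neighbourhood assertion with $\epsilon\to0$ as $R\to\infty$. For the vertical path $s=\mathrm{const}$ of Euclidean length $L$, the same displayed metric gives that the length of $\psi(\alpha_v)$ is $\int_0^{L}2\sinh(\lambda)\,dt=O\!\left(L\,\mathrm{e}^{-kR}\right)$, using $0\le\lambda\le C\mathrm{e}^{-kR}$ along the path.

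Everything after the first paragraph is a direct computation plus a Gr\"onwall comparison, so the substantive work lies in the analytic input of the second paragraph. I expect the main obstacle to be not the derivation of the decay estimate itself — which is quoted from~\cite{Han96} and already used above — but the uniformity bookkeeping: verifying that the natural coordinate, the bound $\lambda(z)=O(\mathrm{e}^{-kR(z)})$, and its gradient analogue can be patched along the whole path, and that the constant $k$ can be taken genuinely universal, independent of the particular differential $\phi$ and the hyperbolic surface $C$. This is exactly the point where one must appeal to the local model $\Delta\lambda=2\sinh(2\lambda)$ rather than to the global geometry.
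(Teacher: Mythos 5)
The paper does not prove Proposition~\ref{prop:horizontalverticallength}; it is stated as a citation, with the remark that ``the proofs are sketched in Section~2.5 of~\cite{Gupta19}.'' Your argument is therefore supplying the content rather than paralleling the paper's, and the outline you give is essentially the intended one: express the pullback metric in the $\phi$-natural coordinate, observe that the holomorphic energy satisfies the sinh--Gordon equation, import the exponential decay estimate of Han, and read off the length and curvature estimates from the explicit diagonal form $4\cosh^2(\lambda)\,ds^{\otimes 2}+4\sinh^2(\lambda)\,dt^{\otimes 2}$. Your computations of $e(\psi)=2\cosh(2\lambda)$, the pullback metric, the reduction of~\eqref{eqn:Bochner} to $\Delta\lambda=2\sinh(2\lambda)$, the geodesic curvature $-\partial_t\lambda/(2\cosh\lambda)$ of a $t=\text{const}$ line, and the length integrals are all correct.

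One attribution is off, though the conclusion survives. You cite Lemma~\ref{lem:boundholomorphicenergy} for $\mathcal{H}(\psi)\geq 1$, but that lemma is stated \emph{with respect to the uniformizing metric} on $S\setminus M$, where $K_M=-1$; its maximum-principle proof uses $K_M=-1$ essentially, and the holomorphic energy $\mathcal{H}$ is not a conformal invariant, so the statement does not transfer directly to the flat $|\phi|$-metric you are using. In the flat metric the inequality $\lambda\geq 0$ is still true, but the right way to see it is either (i) directly from $\mathcal{H}\mathcal{L}=1$ together with $\mathcal{J}=\mathcal{H}-\mathcal{L}>0$ (since $\psi$ is orientation preserving), which forces $\mathcal{H}>1$, or (ii) as the $\widetilde w\geq 0$ half of the very estimate from Han~\cite{Han96} that you quote, since in the natural coordinate $\widetilde w=\tfrac12\log(\mathcal{H}/|\varphi|)=\lambda$. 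Since you already invoke that estimate for the upper bound $\lambda\leq Ce^{-kR}$, it is cleaner to read off both inequalities from the same source (or use the elementary $\mathcal{J}>0$ argument) and drop the appeal to Lemma~\ref{lem:boundholomorphicenergy} entirely. The rest — the gradient bound by interior elliptic regularity applied to $\Delta\lambda=2\sinh(2\lambda)$ on a ball of radius $1$, and the Gr\"onwall comparison with the geodesic of the same initial data in the negatively curved target — is standard and adequate for the statement, provided one keeps track (as you do at the end) that the resulting $\epsilon$ depends on $L$ as well as $R$.
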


If $w=x+\mathrm{i}y$ is the distinguished local coordinate on $S\setminus\Crit(\phi)$, then it follows from equation~(5) of~\cite{Gupta19} that the tangent vectors $\partial_x$ and~$\partial_y$ are orthogonal with respect to the metric obtained by pulling back the hyperbolic metric along~$\psi$. Thus Proposition~\ref{prop:horizontalverticallength} implies that $\psi(\alpha_v)$ is an approximately horocyclic arc for $R\gg0$.

\subsection{A contour integral}
\label{sec:AContourIntegral}

Let $\phi$ be a complete and saddle-free differential on~$S$, and let $M\subset S$ be the set of poles of~$\phi$. We will consider the universal cover $\pi:U\rightarrow S\setminus M$ of the surface $S\setminus M$. The quadratic differential $\phi$ determines a singular horizontal foliation of~$S\setminus M$, and this can be lifted to a singular foliation of~$U$. Given a standard saddle connection~$\alpha$ for~$\phi$, we can consider a lift $\widetilde{\alpha}$ of this curve~$\alpha$ to~$U$. We define $c_{\widetilde{\alpha}}$ to be the boundary of an octagon in~$U$ which contains~$\widetilde{\alpha}$ in its interior and whose sides alternately project to vertical and horizontal paths on~$S$. An example is illustrated in bold in~Figure~\ref{fig:octagon}, which also illustrates the lifted foliation.

\begin{figure}[ht]
\begin{center}
\begin{tikzpicture}[scale=2]
\begin{scope}
\clip  (-1.5,-0.9) rectangle (1.5,0.9);
\draw[black,ultra thin] plot[smooth] coordinates {(-1.500,1.466) (-1.363,1.255) (-1.205,1.012) (-0.999,0.720) (-0.673,0.378) (-0.201,0.145) (0.277,-0.012) (0.806,-0.258) (1.116,-0.671) (1.295,-0.977) (1.373,-1.111) (1.508,-1.340)};
\draw[black,ultra thin] plot[smooth] coordinates {(-1.883,-0.931) (-1.673,-0.755) (-1.414,-0.539) (-0.999,-0.250) (-0.384,-0.255) (0.083,-0.385) (0.500,-0.581) (0.827,-0.842) (1.058,-1.098) (1.236,-1.325)};
\draw[black,ultra thin] plot[smooth] coordinates {(-1.851,-1.035) (-1.636,-0.877) (-1.373,-0.695) (-0.999,-0.500) (-0.496,-0.445) (-0.045,-0.537) (0.352,-0.697) (0.680,-0.915) (0.929,-1.145) (1.124,-1.359)};
\draw[black,ultra thin] plot[smooth] coordinates {(1.500,-1.466) (1.363,-1.255) (1.205,-1.012) (0.999,-0.720) (0.673,-0.378) (0.201,-0.145) (-0.277,0.012) (-0.806,0.258) (-1.116,0.671) (-1.295,0.977) (-1.373,1.111) (-1.508,1.340)};
\draw[black,ultra thin] plot[smooth] coordinates {(1.883,0.931) (1.673,0.755) (1.414,0.539) (0.999,0.250) (0.384,0.255) (-0.083,0.385) (-0.500,0.581) (-0.827,0.842) (-1.058,1.098) (-1.236,1.325)};
\draw[black,ultra thin] plot[smooth] coordinates {(1.851,1.035) (1.636,0.877) (1.373,0.695) (0.999,0.500) (0.496,0.445) (0.045,0.537) (-0.352,0.697) (-0.680,0.915) (-0.929,1.145) (-1.124,1.359)};
\draw[black,ultra thin] plot[smooth] coordinates {(1.82,1.2) (1.606,1.068) (1.352,0.927) (1.033,0.794) (0.637,0.725) (0.231,0.761) (-0.141,0.873) (-0.464,1.036)};
\draw[black,ultra thin] plot[smooth] coordinates {(-1.82,-1.2) (-1.606,-1.068) (-1.352,-0.927) (-1.033,-0.794) (-0.637,-0.725) (-0.231,-0.761) (0.141,-0.873) (0.464,-1.036)};
\draw[black,ultra thin] plot[smooth] coordinates {(1.544,-1.145) (1.421,-0.884) (1.285,-0.554) (1.2,0.001) (1.500,0.422) (1.737,0.664) (1.937,0.853)};
\draw[black,ultra thin] plot[smooth] coordinates {(1.591,-1.101) (1.478,-0.835) (1.364,-0.496) (1.35,0.001) (1.582,0.387) (1.799,0.632) (1.989,0.824)};
\draw[black,ultra thin] plot[smooth] coordinates {(-1.544,1.145) (-1.421,0.884) (-1.285,0.554) (-1.2,-0.001) (-1.500,-0.422) (-1.737,-0.664) (-1.937,-0.853)};
\draw[black,ultra thin] plot[smooth] coordinates {(-1.591,1.101) (-1.478,0.835) (-1.364,0.496) (-1.35,-0.001) (-1.582,-0.387) (-1.799,-0.632) (-1.989,-0.824)};
\draw[black,ultra thin] plot[smooth] coordinates {(-0.999,0) (-0.227,-0.198) (0.234,-0.351) (0.649,-0.583) (0.946,-0.870) (1.153,-1.129) (1.317,-1.354)};
\draw[black,ultra thin] plot[smooth] coordinates {(0.999,0) (0.227,0.198) (-0.234,0.351) (-0.649,0.583) (-0.946,0.870) (-1.153,1.129) (-1.317,1.354)};
\draw[black,ultra thin] plot[smooth] coordinates {(1.001,0.01) (1.449,0.455) (1.699,0.690) (1.905,0.876)};
\draw[black,ultra thin] plot[smooth] coordinates {(1.001,-0.01) (1.237,-0.607) (1.387,-0.927) (1.517,-1.182)};
\draw[black,ultra thin] plot[smooth] coordinates {(-1.001,-0.01) (-1.449,-0.455) (-1.699,-0.690) (-1.905,-0.876)};
\draw[black,ultra thin] plot[smooth] coordinates {(-1.001,0.01) (-1.237,0.607) (-1.387,0.927) (-1.517,1.182)};
\node at (-1,0) {{\tiny $\times$}};
\node at (1,0) {{\tiny $\times$}};
\draw[black, very thick] (0.999,0.250) .. controls (1.03,0.17) and (1.1,0.1) .. (1.215,0.047);
\draw[black, very thick] (1.215,0.047) .. controls (1.175,0) and (1.2,-0.25) .. (1.285,-0.554);
\draw[black, very thick] (-0.999,-0.250) .. controls (-1.03,-0.17) and (-1.1,-0.1) .. (-1.215,-0.047);
\draw[black, very thick] (-1.215,-0.047) .. controls (-1.175,0) and (-1.2,0.25) .. (-1.285,0.554);
\draw[black, very thick] (0.815,-0.83) .. controls (0.825,-0.825) and (1,-0.65) .. (1.285,-0.554);
\draw[black, very thick] (-0.815,0.83) .. controls (-0.825,0.825) and (-1,0.65) .. (-1.285,0.554);
\draw[black, very thick] (-0.815,0.83) .. controls (-0.45,0.47) and (0.3,0.14) .. (0.999,0.250);
\draw[black, very thick] (0.815,-0.83) .. controls (0.45,-0.47) and (-0.3,-0.14) .. (-0.999,-0.250);
\node at (0,0.45) {{\tiny $\sigma_4$}};
\node at (0,-0.45) {{\tiny $\sigma_2$}};
\node at (-1.3,0.2) {{\tiny $\sigma_1$}};
\node at (1.3,-0.2) {{\tiny $\sigma_3$}};
\node at (-1.1,0.75) {{\tiny $\rho_1$}};
\node at (1.1,-0.75) {{\tiny $\rho_3$}};
\node at (-1.15,-0.2) {{\tiny $\rho_2$}};
\node at (1.15,0.2) {{\tiny $\rho_4$}};
\end{scope}
\end{tikzpicture}
\end{center}
\caption{An octagon associated to a saddle connection.\label{fig:octagon}}
\end{figure}
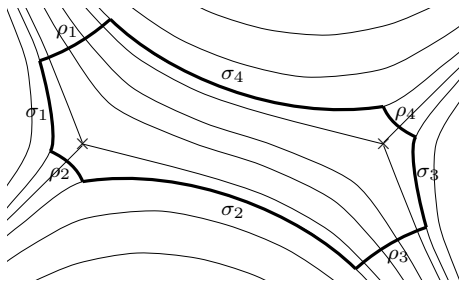

We will write $\widetilde{\phi}=\pi^*\phi$ for the pullback of the quadratic differential to the universal cover. After making a branch cut along~$\widetilde{\alpha}$, we can choose a branch $\lambda$ of the square root of $\widetilde{\phi}$ which is continuous on~$c_{\widetilde{\alpha}}$. We can also choose an orientation for $c_{\widetilde{\alpha}}$ in such a way that the integral $\int_{c_{\widetilde{\alpha}}}\lambda$ over the resulting contour has positive imaginary part. Then we have the following statement, which is closely related to Lemma~2.28 of~\cite{Gupta19}.

\begin{lemma}
\label{lem:contourintegral}
Let $L_1,\dots,L_4$ be the lengths of the segments labeled $\sigma_1,\dots,\sigma_4$, respectively, in Figure~\ref{fig:octagon}, where length is defined using the pullback of the flat metric on~$S\setminus\Crit(\phi)$. Then 
\[
\Re\left(\int_{c_{\widetilde{\alpha}}}\lambda\right)=\sum_i(-1)^iL_i.
\]
\end{lemma}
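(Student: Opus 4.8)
The plan is to evaluate $\int_{c_{\widetilde{\alpha}}}\lambda$ as a sum over the eight sides of the octagon and to exploit the fact that, near each side, $\lambda$ agrees with the differential $dw$ of a distinguished local coordinate in which $\phi=dw^{\otimes2}$. First I would recall the elementary observations that along a horizontal path $w$ has constant imaginary part, so that $\lambda$ restricts there to a real‑valued $1$‑form, while along a vertical path $w$ has constant real part, so that $\lambda$ restricts to a purely imaginary $1$‑form; moreover the flat length of such a segment is exactly the modulus of the corresponding integral of $\lambda$. Applying this to the eight sides of $c_{\widetilde{\alpha}}$, the four vertical sides $\rho_1,\dots,\rho_4$ contribute purely imaginary numbers and therefore drop out of the real part, so that
\[
\Re\Bigl(\int_{c_{\widetilde{\alpha}}}\lambda\Bigr)=\sum_{i=1}^{4}\Re\Bigl(\int_{\sigma_i}\lambda\Bigr)=\sum_{i=1}^{4}\epsilon_i\,L_i,
\]
where $\epsilon_i\in\{+1,-1\}$ records whether the side $\sigma_i$, with the chosen orientation of $c_{\widetilde{\alpha}}$ and the chosen branch $\lambda$, is traversed in the direction of increasing or of decreasing $\Re w$.

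The main work is then to identify the four signs as $\epsilon_i=(-1)^i$. Here I would follow the branch $\lambda$ once around the octagon and analyse the rectilinear path traced out by $\int\lambda$ in the $w$‑plane: because the branch cut runs along $\widetilde{\alpha}$, and the octagon is built so that $c_{\widetilde{\alpha}}$ encircles $\widetilde{\alpha}$ and both of the zeros it joins exactly once (compare the horizontal‑strip picture of Figure~\ref{fig:standardsaddle} and the octagon of Figure~\ref{fig:octagon}), consecutive horizontal sides are forced to be traversed in opposite $\Re w$‑directions; the alternating pattern $\epsilon_i=(-1)^i$, including the overall sign, is then pinned down by reading the orientations off Figure~\ref{fig:octagon} together with the normalisation that $\int_{c_{\widetilde{\alpha}}}\lambda$ has positive imaginary part. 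This bookkeeping with the branch, the orientation of the contour, and the labelling of the sides is the delicate part of the argument; essentially the same computation is carried out in the proof of Lemma~2.28 of~\cite{Gupta19}, which may be invoked here.

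Finally, as a consistency check and for context, I would note that after cutting the octagon open along $\widetilde{\alpha}$ (and an auxiliary arc) so that $\lambda$ acquires a single‑valued primitive, Stokes' theorem identifies $\int_{c_{\widetilde{\alpha}}}\lambda$, up to sign, with twice the difference of the values of $w$ at the two zeros joined by $\widetilde{\alpha}$, i.e.\ with the period of the standard saddle class; this confirms that the chosen orientation is the one making the imaginary part positive. Combined with the description of length data and metric residues in Section~\ref{sec:EnhancedTeichmullerSpace}, the identity then expresses $\Re Z_\phi(\gamma_\alpha)$ as an alternating sum of flat lengths of the horizontal sides $\sigma_i$, which is the shape in which it enters the asymptotic estimate of Theorem~\ref{thm:introclusterasymptotics}.
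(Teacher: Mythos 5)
Your proposal follows essentially the same route as the paper: decompose the contour integral into its eight sides, observe that the vertical sides $\rho_i$ contribute purely imaginary numbers while the horizontal sides $\sigma_i$ contribute real numbers of modulus $L_i$, so only the $\sigma_i$ survive taking the real part, and then pin down the alternating signs using the orientation of $c_{\widetilde{\alpha}}$ and the normalisation $\Im\int_{c_{\widetilde{\alpha}}}\lambda>0$. The one place where the paper is more explicit than you are is in how the overall sign gets fixed: rather than ``reading orientations off the figure,'' the paper notes that consecutive same-type segments have opposite arguments, observes that $\rho_1,\rho_3$ are strictly longer than $\rho_2,\rho_4$ in Figure~\ref{fig:octagon}, and concludes that positivity of the imaginary part forces $\int_{\rho_i}\lambda$ to lie in the upper half-plane for $i=1,3$; the signs of the $\sigma_i$ integrals then follow by the same alternation. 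If you want your sketch to stand on its own rather than lean on Gupta's Lemma~2.28, this length-asymmetry observation is the missing sentence.
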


\begin{proof}
If $e$ is a segment of $c_{\widetilde{\alpha}}$ that projects to a vertical path on~$S$, then the integral of~$\lambda$ over~$e$ is a pure imaginary number with absolute value equal to the length of~$e$. If $e$ is a segment of $c_{\widetilde{\alpha}}$ that projects to a horizontal path on~$S$, then the integral over~$e$ is a real number with absolute value equal to the length of~$e$. Moreover, the integrals over two successive segments that project to paths of the same type (horizontal or vertical) have opposite arguments. The segments labeled~$\rho_1$ and~$\rho_3$ in Figure~\ref{fig:octagon} are strictly longer than the segments labeled~$\rho_2$ and~$\rho_4$, and therefore the requirement that $\int_{c_{\widetilde{\alpha}}}\lambda$ have positive imaginary part implies that $\int_{\rho_i}\lambda$ has positive imaginary part for $i=1,3$. It follows that $\int_{\sigma_i}\lambda=-L_i$ for $i=1,3$ while $\int_{\sigma_i}\lambda=L_i$ for $i=2,4$.
\end{proof}

\subsection{Asymptotics of cross ratios}
\label{sec:AsymptoticsOfCrossRatios}

Continuing with the notation of Section~\ref{sec:AContourIntegral}, let us now consider the rescaled differential $R^2\cdot\phi$ for $R\in\mathbb{R}$. Let us write $P=(P_i(z_i))$ for a tuple of principal differentials encoding the principal parts of~$\phi$ at points~$p_i\in M$. Then there is a point $\Psi_{R\cdot P}^{S,M}(R^2\cdot\phi)\in\mathcal{T}_{R\cdot P}(S,M)$, which can be represented by a marked hyperbolic surface~$(C_R,\psi_R)$ where $\psi_R:S\setminus M\rightarrow C_R$ is a harmonic diffeomorphism onto its image. This map $\psi_R$ can be lifted to a map $\widetilde{\psi}_R:U\rightarrow\mathbb{H}$ which intertwines the action of $\pi_1(S\setminus M)$ on~$U$ by deck transformations with its action on~$\mathbb{H}$ via the monodromy representation.

Let $\alpha$ be any standard saddle connection for~$\phi$, and consider once again the octagon~$c_{\widetilde{\alpha}}$ illustrated in Figure~\ref{fig:octagon}. Each side $\sigma_i$ is contained in a leaf~$Z_i$ of the horizontal foliation, and Proposition~\ref{prop:horizontalverticallength}(1) implies that as we travel along this leaf~$Z_i$ in either direction, the geodesic curvature of $\widetilde{\psi}_R(Z_i)$ tends to zero. It follows that the curve~$\widetilde{\psi}_R(Z_i)$ is asymptotic to a point on~$\partial\bar{\mathbb{H}}$. Moreover, if $\sigma_i$ and~$\sigma_j$ are successive horizontal sides of~$c_{\widetilde{\alpha}}$, then Proposition~\ref{prop:horizontalverticallength}(2) implies that $\widetilde{\psi}(Z_i)$ and $\widetilde{\psi}(Z_j)$ are asymptotic to a common point on~$\partial\bar{\mathbb{H}}$. Hence the images of the leaves containing the sides $\sigma_i$ are asymptotic to exactly four points on~$\partial\bar{\mathbb{H}}$. 

Let us label these points as $z_1,\dots,z_4\in\partial\bar{\mathbb{H}}$ so that the order is compatible with the orientation of $\partial\bar{\mathbb{H}}\cong\mathbb{RP}^1$. We choose these labels in such a way that the arc $\rho_i$ in Figure~\ref{fig:octagon} connects two leaves of the horizontal foliation whose images are asymptotic to the point labeled~$z_i$. Our goal is to understand the cross ratio $Y_\alpha=Y_{\phi,\alpha}(R)$ of $z_1,\dots,z_4$, defined by the formula~\eqref{eqn:crossratio}. To do this, we will first review an elementary fact from hyperbolic geometry.

Consider a geodesic~$g$ in~$\mathbb{H}$ which connects two points on~$\partial\bar{\mathbb{H}}$, and suppose we are given disjoint horocycles around the endpoints of this geodesic (see Figure~\ref{fig:lambdalength}). In this situation, we let $\mu$ be the distance between the points of intersection of~$g$ with the horocycles. Following Penner~\cite{Penner12}, we define the \emph{lambda length} to be $\lambda=\exp(\mu/2)$.

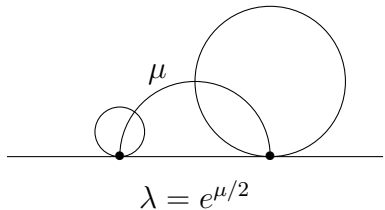
\begin{figure}[ht]
\begin{center}
\begin{tikzpicture}
\draw[black, thin] (-2.5,0) -- (2.5,0);
\draw[black, thin] (-1,0) arc(180:0:1);
\node[black] at (-1,0) {{\tiny $\bullet$}};
\node[black] at (1,0) {{\tiny $\bullet$}};
\draw[black, thin] (-1,0.33) circle (0.33);
\draw[black, thin] (1,1) circle (1);
\node[black] at (0,-0.5) {{$\lambda=e^{\mu/2}$}};
\node[black] at (-0.5,1.1) {{$\mu$}};
\end{tikzpicture}
\end{center}
\caption{Defining the lambda length.\label{fig:lambdalength}}
\end{figure}

In particular, we can consider the ideal quadrilateral with vertices $z_1,\dots,z_4$, and we can choose disjoint horocycles around these vertices (see Figure~\ref{fig:decoratedquadrilateral}). Then by applying the above construction to the geodesic connecting the points~$z_i$ and~$z_j$, we get a corresponding lambda length~$\lambda_{ij}$.

\begin{figure}[ht]
\begin{center}
\begin{tikzpicture}
\draw[black, ultra thin] (0,2) circle (2);
\draw[black, thin] (0,0) arc(180:74:1.5);
\draw[black, thin] (0,4) arc(180:254:2.64);
\draw[black, thin] (0,4) arc(0:-106:1.5);
\draw[black, thin] (0,0) arc(0:74:2.64);
\node[black] at (0,0) {{\tiny $\bullet$}};
\node[black] at (0,4) {{\tiny $\bullet$}};
\node[black] at (1.91,1.44) {{\tiny $\bullet$}};
\node[black] at (-1.91,2.53) {{\tiny $\bullet$}};
\node[black] at (0,4.25) {{$z_1$}};
\node[black] at (2.16,1.44) {{$z_4$}};
\node[black] at (0,-0.25) {$z_3$};
\node[black] at (-2.16,2.53) {{$z_2$}};
\draw[black, thin] (0,3.65) circle (0.35);
\draw[black, thin] (0,0.35) circle (0.35);
\draw[black, thin] (1.67,1.52) circle (0.25);
\draw[black, thin] (-1.67,2.48) circle (0.25);
\end{tikzpicture}
\end{center}
\caption{An ideal quadrilateral with a horocycle around each vertex.\label{fig:decoratedquadrilateral}}
\end{figure}
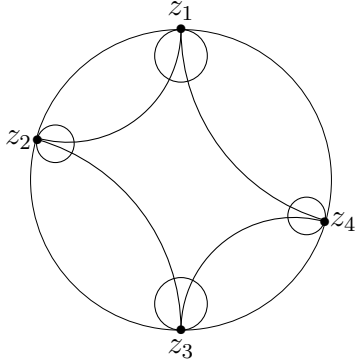

\begin{lemma}[\cite{Penner12}, Chapter~1, Corollary~4.16]
\label{lem:lambdalengthcrossratio}
The cross ratio~$Y_\alpha$ is given in terms of the lambda lengths by 
\begin{equation}
\label{eqn:lambdalengthcrossratio}
Y_\alpha=\frac{\lambda_{12}\lambda_{34}}{\lambda_{23}\lambda_{14}}.
\end{equation}
\end{lemma}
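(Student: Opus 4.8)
The plan is to reduce the identity to an explicit computation in the upper half-plane model, where lambda lengths admit a simple description in terms of Euclidean data. First I would apply an element of $\PSL_2(\mathbb{R})$ to normalize the configuration so that $z_1=\infty$, the remaining points $z_2,z_3,z_4$ become real numbers with $z_2<z_3<z_4$ (matching the cyclic order on $\partial\bar{\mathbb{H}}$), and $\widetilde{\alpha}$ is the geodesic joining $\infty$ and $z_3$. Both sides of the asserted identity are invariant under such an isometry, so no generality is lost.

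Next I would record the elementary formulas for lambda length in this model. If the horocycle at $\infty$ is the horizontal line $\{\Im z=h\}$ and the horocycle at a finite point $z_j\in\mathbb{R}$ is the Euclidean circle of diameter $d_j$ tangent to the real axis at $z_j$, then a direct computation of the signed hyperbolic distance $\mu$ between the feet on the connecting geodesic yields $\lambda_{1j}=\sqrt{h/d_j}$ for $j=2,4$ and $\lambda_{jk}=|z_j-z_k|/\sqrt{d_jd_k}$ for finite indices $j,k$. This is exactly the content of the computation underlying \cite{Penner12}, Chapter~1, Corollary~4.16.

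Substituting these expressions into the right-hand side of~\eqref{eqn:lambdalengthcrossratio} and simplifying, the auxiliary parameters $h,d_2,d_3,d_4$ all cancel, as they must since the left-hand side of~\eqref{eqn:lambdalengthcrossratio} depends only on the four ideal points and not on the chosen horocycles. What remains is $\lambda_{12}\lambda_{34}/(\lambda_{23}\lambda_{14})=|z_3-z_4|/|z_2-z_3|$. On the other hand, letting $z_1\to\infty$ in the cross ratio~\eqref{eqn:crossratio} gives $(z_3-z_4)/(z_2-z_3)$; since~\eqref{eqn:crossratio} already tells us $Y_\alpha\in\mathbb{R}_{>0}$ and every $\lambda_{ij}$ is positive, the two expressions agree exactly.

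The only point that genuinely requires care is the bookkeeping of signs and of the cyclic ordering of $z_1,\dots,z_4$: one must verify that the ordering forced by the construction of the octagon $c_{\widetilde{\alpha}}$ together with the orientation of $\partial\bar{\mathbb{H}}$ is compatible with the ordering used in~\eqref{eqn:crossratio}, so that the ratio of positive lambda lengths matches $Y_\alpha$ itself rather than its reciprocal. Once this is pinned down the remaining argument is a routine substitution; indeed the statement is precisely Corollary~4.16 in Chapter~1 of~\cite{Penner12}, which one may alternatively simply invoke.
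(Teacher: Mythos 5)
The paper gives no proof of this lemma—it is stated purely as a citation to Penner's book—so there is nothing to compare against except the standard argument in the reference itself. Your proposal is a correct, self-contained reconstruction of that argument: normalize so that $z_1=\infty$ and $z_2<z_3<z_4\in\mathbb{R}$, use the standard formulas $\lambda_{1j}=\sqrt{h/d_j}$ and $\lambda_{jk}=|z_j-z_k|/\sqrt{d_jd_k}$ for the lambda lengths, observe the cancellation of the horocycle parameters, and match the result $|z_3-z_4|/|z_2-z_3|$ against the limit of~\eqref{eqn:crossratio} as $z_1\to\infty$. The substitution checks out, and the sign bookkeeping works because with $z_2<z_3<z_4$ both $(z_3-z_4)$ and $(z_2-z_3)$ are negative, so their ratio is the positive quantity $|z_3-z_4|/|z_2-z_3|$, matching the manifestly positive ratio of lambda lengths. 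Your closing remark is also apt: since the lemma is attributed outright to~\cite{Penner12}, simply invoking Corollary~4.16 there is what the paper in fact does, and your computation is exactly the content of that corollary.
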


Lemma~\ref{lem:lambdalengthcrossratio} implies in particular that the expression on the right hand side of~\eqref{eqn:lambdalengthcrossratio} is independent of the choice of horocycles. Using this lemma, we can derive an asymptotic relation between the cross ratio~$Y_\alpha$ and the integral considered previously.

\begin{lemma}
\label{lem:asymptoticscrossratio}
Taking notation as above, we have 
\[
Y_{\phi,\alpha}(R)\cdot\exp\left(R\cdot\Re\int_{c_{\widetilde{\alpha}}}\lambda\right)\rightarrow1 \quad \text{as $R\rightarrow\infty$}.
\]
\end{lemma}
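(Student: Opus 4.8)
The plan is to combine the lambda-length formula for the cross ratio (Lemma~\ref{lem:lambdalengthcrossratio}), the harmonic map estimates of Proposition~\ref{prop:horizontalverticallength} applied to the rescaled differential $R^2\cdot\phi$, and the identity of Lemma~\ref{lem:contourintegral}.

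First I would analyze the image of the octagon under $\widetilde\psi_R$. Because $c_{\widetilde\alpha}$ is fixed once and for all, each horizontal side $\sigma_i$ has a fixed positive $\phi$-length $L_i$ and stays at positive $\phi$-distance from $\Crit(\phi)$, and likewise each vertical side $\rho_i$; passing to the flat metric of $R^2\cdot\phi$ multiplies all of these quantities by $R$. Proposition~\ref{prop:horizontalverticallength} then gives a constant $k>0$ for which $\widetilde\psi_R(\sigma_i)$ has hyperbolic length $2RL_i+O(e^{-kR})$, has geodesic curvature $O(e^{-kR})$, and lies within $O(e^{-kR})$ of a geodesic segment, while $\widetilde\psi_R(\rho_i)$ has hyperbolic length $O(e^{-kR})$. (The refinement that the tubular radius is $O(e^{-kR})$, and not merely $o(1)$ as in the statement of Proposition~\ref{prop:horizontalverticallength}, follows from the pointwise curvature bound by a maximum principle comparison with the equidistant curves of a geodesic.) Extending $\sigma_i$ to the full leaf $Z_i$, which remains at positive $\phi$-distance from $\Crit(\phi)$, the same curvature bound shows $\widetilde\psi_R(Z_i)$ lies within $O(e^{-kR})$ of the geodesic $(z_iz_{i+1})$, indices modulo $4$ and labels as fixed before the lemma, and that $\widetilde\psi_R(\rho_i)$ joins a point of $\widetilde\psi_R(Z_{i-1})$ to a point of $\widetilde\psi_R(Z_i)$, both within $O(e^{-kR})$ of $z_i$.

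Next I would choose horocycles and extract the lambda lengths. Let $p_i$ and $q_i$ denote the endpoints of $\widetilde\psi_R(\sigma_i)$ near $z_{i+1}$ and near $z_i$ respectively; since $\widetilde\psi_R(\rho_i)$ joins $p_{i-1}$ to $q_i$, we have $d(p_{i-1},q_i)=O(e^{-kR})$. Take the horocycle at $z_i$ to pass through $p_{i-1}$. Since $\widetilde\psi_R(\sigma_i)$ lies within $O(e^{-kR})$ of the geodesic $(z_iz_{i+1})$, and its endpoints $q_i$ and $p_i$ lie within $O(e^{-kR})$ of the horocycle at $z_i$ and on the horocycle at $z_{i+1}$ respectively, comparing Busemann functions shows that the distance $\mu_{i,i+1}$ along $(z_iz_{i+1})$ between the two horocycles differs from the hyperbolic length of $\widetilde\psi_R(\sigma_i)$ by $O(e^{-kR})$. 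Hence $\log\lambda_{i,i+1}=\tfrac12\mu_{i,i+1}=RL_i+O(e^{-kR})$; for $R$ large the four horocycles are pairwise disjoint, so Lemma~\ref{lem:lambdalengthcrossratio} applies and gives
\[
\log Y_{\phi,\alpha}(R)=\log\lambda_{12}+\log\lambda_{34}-\log\lambda_{23}-\log\lambda_{14}=R\left(L_1-L_2+L_3-L_4\right)+O(e^{-kR}).
\]

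Finally, Lemma~\ref{lem:contourintegral} gives $\Re\int_{c_{\widetilde\alpha}}\lambda=\sum_i(-1)^iL_i=-L_1+L_2-L_3+L_4$, so
\[
\log Y_{\phi,\alpha}(R)+R\cdot\Re\int_{c_{\widetilde\alpha}}\lambda=O(e^{-kR}),
\]
which tends to $0$ as $R\to\infty$; exponentiating yields the lemma. The main obstacle is the second step: one must convert the harmonic map estimates into a comparison between the hyperbolic lambda lengths of the ideal quadrilateral $z_1z_2z_3z_4$ and the flat lengths $L_i$ that is accurate to $o(1)$ — not merely to $O(R)$ — which is exactly why the exponential form of the tube estimate and of the bounds on $\widetilde\psi_R(\rho_i)$ is needed. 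Everything else is bookkeeping with the scaling of the flat metric and Lemmas~\ref{lem:contourintegral} and~\ref{lem:lambdalengthcrossratio}.
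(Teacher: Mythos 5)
Your proposal is correct and follows essentially the same route as the paper: both combine Lemma~\ref{lem:contourintegral}, Lemma~\ref{lem:lambdalengthcrossratio}, and Proposition~\ref{prop:horizontalverticallength} to reduce the statement to showing the alternating sum $\sum_i(-1)^i\bigl(L_i(R)-\mu_i/2\bigr)$ tends to zero. You add welcome detail the paper glosses over — the explicit choice of horocycles through the endpoints of the images $\widetilde\psi_R(\rho_i)$, and the sharpening of the tubular estimate to $O(e^{-kR})$ — but the underlying argument is the same.
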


\begin{proof}
For each $R$, let us choose disjoint horocycles around the vertices $z_1,\dots,z_4$. Let us write $\mu_i$ for the hyperbolic distance between the horocycles around~$z_i$ and~$z_{i+1}$ where the indices are considered modulo~4. Then by definition we have $\lambda_{i,i+1}=\exp(\mu_i/2)$. Let us also write $L_i(R)$ for the length of the segment~$\sigma_i$ in Figure~\ref{fig:octagon}, where length is defined using the pullback of the $R^2\cdot\phi$-metric. By applying Lemma~\ref{lem:contourintegral} and Lemma~\ref{lem:lambdalengthcrossratio}, we find 
\begin{equation}
\label{eqn:crossratiointegral}
Y_{\phi,\alpha}(R)\cdot\exp\left(R\cdot\Re\int_{c_{\widetilde{\alpha}}}\lambda\right) = \exp\left(\sum_i(-1)^i\left(L_i(R)-\frac{\mu_i}{2}\right)\right).
\end{equation}
By Proposition~\ref{prop:horizontalverticallength}, we know that for $R\gg0$ the curve $\widetilde{\psi}_R(\sigma_i)$ approximates a segment of the geodesic connecting~$z_i$ and~$z_{i+1}$, while $\widetilde{\psi}_R(\rho_i)$ is an approximately horocyclic arc. Moreover, $L_i(R)$ is approximately half the length of $\widetilde{\psi}_R(\sigma_i)$. Since $\sum_i(-1)^i\mu_i$ is independent of the choice of horocycles, it follows that the sum on the right hand side of~\eqref{eqn:crossratiointegral} tends to zero as $R\rightarrow\infty$. This completes the proof.
\end{proof}

\subsection{The foliation near a double pole}

Consider again a quadratic differential $\phi$ on~$S$, and suppose that $\phi$ has a double pole at $p\in S$. In this case, one can show (\cite{Strebel84}, Theorem~6.3) that there exists a local coordinate~$t$, defined in a neighborhood of~$p$, such that 
\[
\phi(t)=\frac{a_p}{t^2}dt^{\otimes2}
\]
for a well defined constant $a_p\in\mathbb{C}^*$. It follows that away from~$p$, any branch of the function $w=\sqrt{a_p}\log(t)$ is a distinguished local coordinate. Assuming $a_p\not\in\mathbb{R}$, the condition $\Im(w)=\text{constant}$ describes a spiral in the $t$-plane around the point $t=0$ where the direction of spiraling depends on the value of the leading coefficient. The curve spirals in the clockwise direction if $\Im(a_p)<0$ and in the counterclockwise direction if $\Im(a_p)>0$. (See Figure~\ref{fig:foliationdoublepole}.)

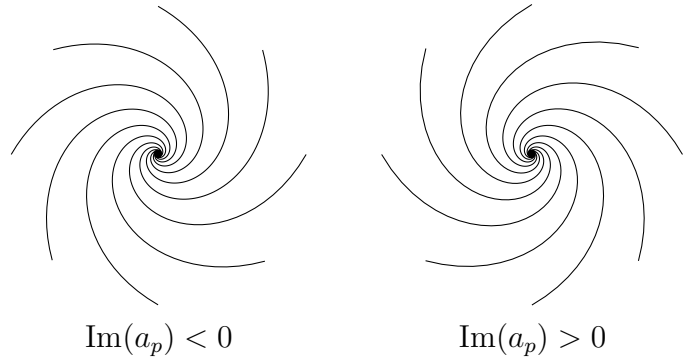
\begin{figure}[ht]
\begin{center}
\begin{tikzpicture}
\begin{polaraxis}[width=60mm,height=60mm,axis lines=none]
\addplot[domain=0:720,samples=150] {exp(0.01*x)}; 
\addplot[domain=0:675,samples=150] {1.6*exp(0.01*x)}; 
\addplot[domain=0:630,samples=150] {2.5*exp(0.01*x)}; 
\addplot[domain=0:585,samples=150] {3.9*exp(0.01*x)}; 
\addplot[domain=0:540,samples=150] {6.0*exp(0.01*x)}; 
\addplot[domain=0:495,samples=150] {9.5*exp(0.01*x)}; 
\addplot[domain=0:450,samples=150] {14.9*exp(0.01*x)}; 
\addplot[domain=0:405,samples=150] {23.3*exp(0.01*x)}; 
\node[black] at (0,0) {{\tiny $\bullet$}};
\end{polaraxis}
\node at (2.22,-0.25) {$\Im(a_p)<0$};
\end{tikzpicture}
\quad
\begin{tikzpicture}
\begin{polaraxis}[width=60mm,height=60mm,axis lines=none]
\addplot[domain=0:720,samples=150] {exp(-0.01*x)}; 
\addplot[domain=-45:720,samples=150] {0.638*exp(-0.01*x)}; 
\addplot[domain=-90:720,samples=150] {0.407*exp(-0.01*x)}; 
\addplot[domain=-135:720,samples=150] {0.259*exp(-0.01*x)}; 
\addplot[domain=-180:720,samples=150] {0.165*exp(-0.01*x)}; 
\addplot[domain=-225:720,samples=150] {0.105*exp(-0.01*x)}; 
\addplot[domain=-270:720,samples=150] {0.067*exp(-0.01*x)}; 
\addplot[domain=-315:720,samples=150] {0.043*exp(-0.01*x)}; 
\node[black] at (0,0) {{\tiny $\bullet$}};
\end{polaraxis}
\node at (2.22,-0.25) {$\Im(a_p)>0$};
\end{tikzpicture}
\end{center}
\caption{The horizontal foliation near a double pole.\label{fig:foliationdoublepole}}
\end{figure}

\begin{lemma}
\label{lem:spiralingdirection}
Let $\phi\in\mathcal{Q}_P(S,M)$ and let $(C,\psi)$ be a marked hyperbolic surface representing $\Psi_P^{S,M}(\phi)$ where $\psi:S\setminus M\rightarrow C$ is a harmonic diffeomorphism onto its image. If $\alpha$ is a horizontal trajectory for~$\phi$ which is asymptotic to a double pole $p\in M$ such that $a_p\not\in\mathbb{R}$, then 
\begin{enumerate}
\item The curve $\psi(\alpha)$ winds infinitely many times around the boundary component of~$C$ corresponding to~$p$, becoming arbitrarily close to this boundary component. (See Figure~\ref{fig:geodesicspiral}.)
\item The direction of spiraling is compatible with the orientation of~$C$ if $\Im(a_p)<0$. It is the opposite direction if $\Im(a_p)>0$.
\end{enumerate}
\end{lemma}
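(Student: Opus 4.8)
The plan is to combine the local model for the foliation near a double pole (Strebel's normal form and the spiraling picture of Figure~\ref{fig:foliationdoublepole}) with the model map $f_i=u_i\circ t_i$ constructed in Section~\ref{sec:ConstructionOfHarmonicMaps} and the uniqueness/asymptotics results already proved. First I would reduce to a neighborhood $U_i$ of the double pole $p=p_i$, where, by the discussion after Theorem~\ref{thm:homeo} and by Proposition~5.5 of~\cite{Sagman19}, the Hopf differential of $\psi$ has a double pole at $p$ with leading coefficient $-\Lambda(\delta_i)L_i^2/16\pi^2$, and the phase $\theta_i=\arg(a_p)$ determines the twisting parameter $\delta_i=(\sin\theta_i)/(1-\cos\theta_i)$. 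Part (1) is essentially already contained in the construction: near $p$ the harmonic diffeomorphism $\psi$ is asymptotic (in the $C^1$ sense, via the distance-function arguments of Section~\ref{sec:Uniqueness} and Lemma~\ref{lem:Fsurjective}, specifically Lemma~5.4 of~\cite{Sagman19}) to the model map $f_i$, which sends the cusp coordinate $x+\mathrm{i}y$ with $y\to\infty$ to $\alpha(x)$ on the geodesic boundary component $\partial_i$ after the shear $t_i$. A horizontal trajectory $\alpha$ asymptotic to $p$ is, in the coordinate $t$ with $\phi(t)=a_p\,t^{-2}\,dt^{\otimes2}$, a logarithmic spiral accumulating on $t=0$; under the isometry~\eqref{eqn:cusp} this spiral is a line of the form $x = \delta\cdot(y-y_0) + \mathrm{const}$ in the $(x,y)$-strip, so its image under $f_i$ wraps around $\partial_i$ infinitely often as $y\to\infty$, approaching $\partial_i$ arbitrarily closely. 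Since $\psi$ is $C^1$-asymptotic to $f_i$ near $p$, the same holds for $\psi(\alpha)$.

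For part (2) the key is to pin down the \emph{sign} of the winding. I would translate the problem to the universal cover: lift $\psi$ to $\widetilde\psi:U_i\setminus\{p\}\to\mathbb H$ as in Section~\ref{sec:AsymptoticsOfCrossRatios}, and track the direction in which $\widetilde\psi(\alpha)$ spirals onto the boundary geodesic $g\subset\widetilde C$. By Proposition~\ref{prop:horizontalverticallength}, for $R\to\infty$ (equivalently, as one moves out along the trajectory toward the pole, where the flat distance $R$ to the finite critical points grows) the image $\widetilde\psi(\alpha)$ has geodesic curvature tending to zero and is approximated by concatenations of geodesic segments turning consistently in one direction. The direction of turning is governed by which way the horizontal spiral rotates in the $t$-plane, which — by the normal form $w=\sqrt{a_p}\log t$ and the computation already recorded in the text — is clockwise if $\Im(a_p)<0$ and counterclockwise if $\Im(a_p)>0$. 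Carrying this sign through the orientation-preserving identification of $U_i\setminus\{p\}$ with the hyperbolic cusp/collar and then through $\psi$ (which is orientation preserving by Lemma~\ref{lem:localdiffeo}) yields the stated correspondence with the orientation of $C$.

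The main obstacle I anticipate is the bookkeeping of orientations and signs: one must be careful that the identification~\eqref{eqn:cusp}, the parametrization $\alpha(x)$ of $\partial_i$, the shear $t_i$, and the ambient orientation of $C$ are all compatible, so that ``$\Im(a_p)<0 \Leftrightarrow$ spiraling agrees with the orientation of $C$'' comes out with the correct sign rather than its reverse. A clean way to do this is to note that the sign of the spiraling is a locally constant function on the space of such $(\phi,p)$ with $a_p\notin\mathbb R$, which has exactly two components ($\Im(a_p)>0$ and $\Im(a_p)<0$); it therefore suffices to verify the claim in one explicit example per component — e.g.\ $\phi = a\,t^{-2}\,dt^{\otimes 2}$ on a small disk glued into a suitable crown, with $a=\pm\mathrm i$ — and then invoke continuity of $\Psi^\pm$ (Proposition~\ref{prop:Psipmcontinuous}), which guarantees that the chosen orientation of the boundary component varies continuously and hence the winding direction cannot jump within a component. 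I would present the argument in that form: reduce to the two model computations, then propagate by connectedness, rather than attempting a single global index computation.
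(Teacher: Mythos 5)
Your proposal is correct in outline but routes through far more machinery than the paper uses. The paper's proof of Lemma~\ref{lem:spiralingdirection} is a short topological argument that never touches the model maps or any asymptotic estimates: take a small disk $D\subset S$ around $p$ and a radial segment $e$ from $p$ to $\partial D$; the logarithmic spiral $\alpha$ crosses $e$ infinitely often, with crossings accumulating at $p$; since $\psi$ is a diffeomorphism onto its image (already established in Section~\ref{sec:HarmonicMaps}), $\psi(\alpha)$ crosses $\psi(e)$ infinitely often, with crossings accumulating on the boundary component of the annulus $\psi(D\setminus\{p\})$, and this forces the infinite winding of part~(1). Part~(2) is then immediate because $\psi$ is orientation-preserving, so the sense in which $\psi(\alpha)$ turns agrees with the sense in which $\alpha$ turns, which is dictated by $\sgn(\Im a_p)$ via the normal form $w=\sqrt{a_p}\log t$. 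In contrast, you invoke the $C^1$-asymptotics of $\psi$ to the shear-adjusted model map $f_i$ (via Lemma~5.4 of~\cite{Sagman19}), the geodesic-curvature decay from Proposition~\ref{prop:horizontalverticallength}, and a lift to the universal cover — none of which is needed here, because the statement is purely qualitative and requires only the diffeomorphism and orientation properties, not any quantitative control on how $\psi$ approximates a model. Your alternative ``verify on one example per component and propagate by connectedness and continuity of $\Psi^\pm$'' is a clean way to settle the sign in~(2) and is legitimate (and arguably more careful than the paper's one-line dismissal), but it is extra work: orientation-preservation of $\psi$, which you already note via Lemma~\ref{lem:localdiffeo}, gives the sign directly without any continuation argument.
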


\begin{proof}
Consider a small disk $D\subset S$ centered at the point~$p$. If $e$ is a segment that points radially outward from~$p$, connecting this point to~$\partial D$, then by the above discussion, we know that $e$ intersects~$\alpha$ in infinitely many points which accumulate at~$p$. The map~$\psi$ takes $D\setminus\{p\}$ to an annulus $A\subset C$ and sends~$e$ to a curve connecting the two boundary components of this annulus. Since $\psi$ is a diffeomorphism onto its image, $\psi(\alpha)$ intersects $\psi(e)$ in infinitely many points that accumulate at the boundary of~$A$. This implies~(1). Part~(2) follows from the similar behavior of the horizontal foliation near~$p$.
\end{proof}

\subsection{The WKB triangulation}

An important fact first noted in~\cite{GaiottoMooreNeitzke13} is that a complete, saddle-free GMN differential determines an associated ideal triangulation. More precisely, suppose we are given a marked quadratic differential $(S,\phi,\theta)$ representing a point in the space $\mathcal{Q}(\mathbb{S},\mathbb{M})$. If $\phi$ is complete and saddle free, then we have seen that the horizontal foliation determines a decomposition of~$S$ into horizontal strips and half planes. By choosing a single generic trajectory within each of the horizontal strips, we obtain a collection of paths on~$S$. The preimage of this collection under the diffeomorphism~$\theta$ is a collection of arcs which define an ideal triangulation $T(\phi)$ of~$(\mathbb{S},\mathbb{M})$. Following the terminology of~\cite{GaiottoMooreNeitzke13}, we call $T(\phi)$ the \emph{WKB~triangulation}.

If the differential~$\phi$ is equipped with a signing, then $T(\phi)$ can be equipped with a natural signing denoted $\epsilon(\phi)$. Indeed, the fact that $\phi$ is saddle-free implies that the residue of~$\phi$ at a double pole corresponding to a puncture $p\in\mathbb{P}$ cannot be real (\cite{BridgelandSmith15}, Section~10.1), and so the chosen value of the residue must lie in either the upper or lower half plane. If this value lies in the upper half plane, we will set $\epsilon(\phi)(p)=+1$, and if it lies in the lower half plane, we will set $\epsilon(\phi)(p)=-1$. Then $(T(\phi),\epsilon(\phi))$ is called the \emph{signed WKB~triangulation}, and its equivalence class is called the \emph{tagged WKB~triangulation} and denoted~$\tau(\phi)$.

\subsection{Asymptotics of cluster coordinates}

We now come to the main result of this section. Fix a complete, saddle-free differential $\phi\in\mathcal{Q}^\pm(\mathbb{S},\mathbb{M})$. Here we use the same symbol to denote a point of $\mathcal{Q}^\pm(\mathbb{S},\mathbb{M})$ and an underlying quadratic differential. Then for any $R>0$, we can consider the image of the rescaled differential $R^2\cdot\phi$ under the map $\Psi^\pm:\mathcal{Q}^\pm(\mathbb{S},\mathbb{M})\rightarrow\mathcal{T}^\pm(\mathbb{S},\mathbb{M})$. Under our assumptions, the differential $\phi$ determines an associated tagged WKB~triangulation~$\tau(\phi)$, and we can define cluster coordinates with respect to~$\tau(\phi)$. In particular, suppose $\gamma=\gamma_\alpha\in\widehat{H}(\phi)$ is the class of a standard saddle connection~$\alpha$ for~$\phi$. In the following, we will use the same symbol $\alpha$ to denote the corresponding arc of the WKB triangulation, and we will write $X_{\phi,\gamma}(R)$ for the cluster coordinate of $\Psi^\pm(R^2\cdot\phi)$ with respect to this tagged arc~$\alpha$.

\begin{theorem}
\label{thm:clusterasymptotics}
Taking notation as above, we have 
\[
X_{\phi,\gamma}(R)\cdot\exp(R\cdot \Re Z_\phi(\gamma))\rightarrow1 \quad \text{as $R\rightarrow\infty$}.
\]
\end{theorem}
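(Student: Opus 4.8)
The plan is to reduce the statement to the asymptotic relation for cross ratios established in Lemma~\ref{lem:asymptoticscrossratio} by carefully matching three things: the contour integral $\int_{c_{\widetilde{\alpha}}}\lambda$ appearing there with the period $Z_\phi(\gamma)$; the cross ratio $Y_{\phi,\alpha}(R)$ with the cluster coordinate $X_{\phi,\gamma}(R)$; and the orientations of the $S^1$-boundary components chosen in the definition of $\Psi^\pm$ with those used to compute shear coordinates. Concretely, by Lemma~\ref{lem:asymptoticscrossratio} we have $Y_{\phi,\alpha}(R)\cdot\exp\bigl(R\cdot\Re\int_{c_{\widetilde{\alpha}}}\lambda\bigr)\to1$, so it suffices to prove two things: first, that $\Re\int_{c_{\widetilde{\alpha}}}\lambda=\Re Z_\phi(\gamma_\alpha)$, and second, that $X_{\phi,\gamma}(R)/Y_{\phi,\alpha}(R)\to1$ as $R\to\infty$ (in fact I expect these to be \emph{equal} once the self-folded case is handled, except possibly up to behaviour that is negligible in the limit).

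\textbf{Step 1: identifying the contour integral with the period.} The octagon boundary $c_{\widetilde{\alpha}}$ encloses the lift $\widetilde{\alpha}$ of the standard saddle connection, and its sides project to horizontal and vertical paths. The key point is that $c_{\widetilde{\alpha}}$, viewed on $S\setminus M$, is homotopic (rel nothing, as a closed loop) to a loop encircling $\alpha$, whose lift to the spectral cover $\Sigma_\phi^\circ$ is precisely the standard saddle class $\gamma_\alpha$. Since $\lambda$ is the branch of $\sqrt{\widetilde\phi}$ on $c_{\widetilde\alpha}$ obtained after the branch cut along $\widetilde\alpha$, the integral $\int_{c_{\widetilde\alpha}}\lambda$ computes $\int_{\gamma_\alpha}\lambda = Z_\phi(\gamma_\alpha)$, once one checks that the orientation of $c_{\widetilde\alpha}$ (chosen so that $\int_{c_{\widetilde\alpha}}\lambda$ has positive imaginary part, via Lemma~\ref{lem:contourintegral}) matches the orientation of $\gamma_\alpha$ (chosen in Section~2.7 so that $Z_\phi(\gamma_\alpha)$ lies in the upper half plane). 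Both normalizations demand positive imaginary part, so the orientations agree and we get $\int_{c_{\widetilde\alpha}}\lambda = Z_\phi(\gamma_\alpha)$ exactly, hence the same real parts. This is the content of Lemma~2.28 of~\cite{Gupta19} adapted to our situation, and I expect it to be essentially bookkeeping with the branch cut.

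\textbf{Step 2: from the cross ratio to the cluster coordinate.} For $R\gg0$ the differential $R^2\cdot\phi$ is still complete and saddle-free with the same WKB triangulation, so $\alpha$ is an arc of $T(\phi)$. If $\alpha$ is not the interior edge of a self-folded triangle, the cluster coordinate is by definition the cross ratio computed at the point of $\mathcal{T}^\pm$ obtained after acting by the signing $\epsilon(\phi)\in\mathbb{Z}_2^{\mathbb{P}}$; one must check that $\Psi^\pm(R^2\cdot\phi)$, after this action, induces precisely the four ideal vertices $z_1,\dots,z_4$ used in Lemma~\ref{lem:asymptoticscrossratio}, i.e. that the boundary orientations supplied by the recipe of Section~\ref{sec:ALiftToCoveringSpaces} (namely $\Re(r_p)>0$ compatible, $\Re(r_p)<0$ opposite) combined with the tagging rule $\epsilon(\phi)$ match the spiralling directions described in Lemma~\ref{lem:spiralingdirection} (clockwise vs.\ counterclockwise according to the sign of $\Im(a_p)$). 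This is where the double-pole analysis pays off: a puncture adjacent to $\alpha$ corresponds to a boundary component around which $\psi_R(\alpha)$ spirals, and the direction is pinned down by $\Im(a_p)$, while the chosen signing of $\phi$ determines $r_p$ (hence $\Re r_p$ and the orientation) — one verifies these two conventions are consistent, so $X_{\phi,\gamma}(R)=Y_{\phi,\alpha}(R)$. In the self-folded case, $X_\alpha=Y_\alpha Y_\beta$ for the encircling edge $\beta$; here one argues that the product of periods $Z_\phi(\gamma_\alpha)+Z_\phi(\gamma_\beta)$ behaves correctly, or more directly that the extra factor $Y_\beta$ contributes the matching exponential, using that $\gamma_\alpha$ and the class of the encircling loop differ by the relevant element of hat homology. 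Combining Steps 1 and 2 with Lemma~\ref{lem:asymptoticscrossratio} gives $X_{\phi,\gamma}(R)\cdot\exp(R\cdot\Re Z_\phi(\gamma))\to1$.

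\textbf{Main obstacle.} The delicate point is Step 2 — matching all the sign conventions: the orientation of the saddle class $\gamma_\alpha$, the orientation of the octagon contour, the choice of boundary orientation in the definition of $\Psi^\pm$ (governed by $\Re r_p$), the tagging $\epsilon(\phi)$ (governed by whether $\mathrm{Res}_p\phi$ lies in the upper or lower half plane), and the spiralling direction of $\psi_R(\alpha)$ (governed by $\Im a_p$). Each of these is a binary choice and they must all line up for the cross ratio $Y_\alpha$ associated to $\Psi^\pm(R^2\cdot\phi)$ to equal the cluster coordinate $X_{\phi,\gamma}(R)$. I would organize this by reducing to the model situation of Theorem~\ref{thm:modelmap} near each double pole: there the harmonic map into a crown is explicit enough that the spiralling direction and the residue can be compared directly, and the compatibility condition $a_p=(r_p/4\pi i)^2$ together with Lemma~\ref{lem:spiralingdirection} closes the loop. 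Everything else — the limiting behaviour of $L_i(R)$ and $\mu_i$, the geodesic approximation of $\widetilde\psi_R(\sigma_i)$, the horocyclic approximation of $\widetilde\psi_R(\rho_i)$ — is already packaged in Proposition~\ref{prop:horizontalverticallength} and used inside Lemma~\ref{lem:asymptoticscrossratio}, so no further analytic work is needed there.
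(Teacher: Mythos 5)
Your plan tracks the paper's own argument closely: the reduction to Lemma~\ref{lem:asymptoticscrossratio}, the use of Lemma~\ref{lem:spiralingdirection} and the conventions of Section~\ref{sec:ALiftToCoveringSpaces} to match spiralling directions against the boundary orientations, and the conclusion $X_{\phi,\gamma}(R)=Y_{\phi,\alpha}(R)$ in the generic case, are all exactly what the paper does. Your Step~1 (orientation-matching for the contour integral vs.\ the period) actually spells out a point the paper leaves implicit, and that part is fine in the non-self-folded case.

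The one place where your sketch is not quite right as written is the self-folded case, and it is worth being precise because it interacts with your Step~1. You identify $\int_{c_{\widetilde\alpha}}\lambda$ with $Z_\phi(\gamma_\alpha)$ in Step~1 as a general fact, and then in Step~2 you say vaguely that ``the product of periods $Z_\phi(\gamma_\alpha)+Z_\phi(\gamma_\beta)$ behaves correctly.'' But the theorem's exponent is $\Re Z_\phi(\gamma)$ for the \emph{single} hat-homology class $\gamma$ attached to the tagged arc $\alpha$, and what is actually true when $\alpha$ is the internal edge of a self-folded triangle is the non-obvious identity
\[
Z_\phi(\gamma)=\int_{c_{\widetilde{\alpha}}}\lambda+\int_{c_{\widetilde{\beta}}}\lambda,
\]
so the naive equality $Z_\phi(\gamma)=\int_{c_{\widetilde\alpha}}\lambda$ from your Step~1 fails there. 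The paper does not reprove this but invokes the proof of Theorem~7.8 of~\cite{Allegretti19}; without that input, this is the one genuine piece of work remaining in your outline, and ``differ by the relevant element of hat homology'' needs to be turned into the displayed identity. A smaller remark: you propose reducing the double-pole sign analysis to the model map of Theorem~\ref{thm:modelmap}, but that theorem concerns poles of order $\geq3$ (crowns); for double poles the relevant tool is exactly Lemma~\ref{lem:spiralingdirection} together with the compatibility condition $a_p=(r_p/4\pi\mathrm{i})^2$, which you also cite, and that is what the paper uses.
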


\begin{proof}
Let $\beta$ be an arc of the WKB triangulation, and let $(C_R,\psi_R)$ be a marked hyperbolic surface representing $\Psi^\pm(R^2\cdot\phi)$ with $\psi_R$ harmonic. We will begin by describing the image of $\beta$ under the map~$\psi_R$. To do this, suppose $p$ is a double pole of~$\phi$ which corresponds to an endpoint of this arc~$\beta$. We have a distinguished value $r_p$ of the residue of~$\phi$ at this pole~$p$. Suppose this value $r_p$ satisfies $\Re(r_p)>0$. If we also have $\Im(r_p)>0$, then the leading coefficient $a_p$ of~$\phi$ at~$p$ satisfies $\Im(a_p)<0$. Then it follows from Lemma~\ref{lem:spiralingdirection} that the image of~$\beta$ under $\psi_R$ spirals into the boundary of~$C_R$ in the direction compatible with the orientation of the surface. By the definitions of Section~\ref{sec:ALiftToCoveringSpaces}, this agrees with the orientation of the boundary component prescribed by $\Psi^\pm(R^2\cdot\phi)\in\mathcal{T}^\pm(\mathbb{S},\mathbb{M})$.

On the other hand, if $\Re(r_p)>0$ and $\Im(r_p)<0$, then the leading coefficient~$a_p$ of~$\phi$ at the pole~$p$ satisfies $\Im(a_p)>0$. In this case, Lemma~\ref{lem:spiralingdirection} implies that $\psi_R(\beta)$ spirals into the boundary of~$C_R$ in the direction opposite to the orientation of the surface. If we change the chosen orientation of this boundary component of~$C_R$ by acting on $\Psi^\pm(R^2\cdot\phi)$ by the signing~$\epsilon(\phi)$, then the definitions of Section~\ref{sec:ALiftToCoveringSpaces} imply that $\psi_R(\beta)$ spirals in the direction prescribed by the resulting orientation.

One can perform a similar analysis in the case where $\Re(r_p)<0$. If $\Re(r_p)=0$ then $p$ corresponds via the map $\psi_R$ to a cusp of~$C_R$, and the leaves of the horizontal foliation do not spiral into the pole~$p$. Thus we see that for any arc $\beta$ of the WKB~triangulation, the ends of~$\psi_R(\beta)$ spiral into the boundary of~$C_R$ in the direction prescribed by $\epsilon(\phi)\cdot\Psi^\pm(R^2\cdot\phi)$.

It follows that if $\alpha$ is not the interior edge of a self-folded triangle, then the cluster coordinate is given by $X_{\phi,\gamma}(R)=Y_{\phi,\alpha}(R)$ where $Y_{\phi,\alpha}(R)$ is defined as in Section~\ref{sec:AsymptoticsOfCrossRatios}, and the period is $Z_\phi(\gamma)=\int_{c_{\widetilde{\alpha}}}\lambda$. If $\alpha$ is the internal edge of a self-folded triangle and $\beta$ is the encircling edge, then $X_{\phi,\gamma}(R)=Y_{\phi,\alpha}(R)Y_{\phi,\beta}(R)$. In this case, one also has 
\[
Z_\phi(\gamma)=\int_{c_{\widetilde{\alpha}}}\lambda+\int_{c_{\widetilde{\beta}}}\lambda
\]
by the proof of Theorem~7.8 in~\cite{Allegretti19}. The desired statement therefore follows from Lemma~\ref{lem:asymptoticscrossratio}.
\end{proof}

\section{Triangulated categories}
\label{sec:TriangulatedCategories}

In this section, we review the construction of the 3-Calabi-Yau triangulated category associated to a triangulated surface.

\subsection{Quivers with potential}

Recall that a \emph{quiver} $Q$ is simply a directed graph. It consists of a finite set $Q_0$ of \emph{vertices}, a finite set $Q_1$ of \emph{arrows}, and maps $s:Q_1\rightarrow Q_0$ and $t:Q_1\rightarrow Q_0$ taking an arrow to its \emph{source} and \emph{target}, respectively. A \emph{path} of length~$d>0$ in~$Q$ is defined as a sequence of arrows $a_1,\dots,a_d$ such that $s(a_i)=t(a_{i+1})$ for every $i$. Such a path will be denoted by $p=a_1\dots a_d$. We define its \emph{source} by $s(p)=s(a_d)$ and its \emph{target} by $t(p)=t(a_1)$. A path is \emph{cyclic} if its source and target coincide. Two paths $p$ and $q$ are \emph{composable} if $s(p)=t(q)$, and in this case their composition $pq$ is defined by concatenation. We also allow paths of length zero and define composition analogously for such paths.

If $Q$ is a quiver and $\Bbbk$ is a field, we write $\Bbbk Q$ for the $\Bbbk$-vector space spanned by the set of all paths in~$Q$. This vector space has a natural bilinear product operation where the product of two paths is defined to be their composition if the paths are composable and is defined to be zero otherwise. The space $\Bbbk Q$ equipped with this multiplication is called the \emph{path algebra} of~$Q$. The paths of length one generate a two-sided ideal $\mathfrak{a}\subset\Bbbk Q$ in the path algebra, and the \emph{complete path algebra} $\widehat{\Bbbk Q}$ is defined as the completion of~$\Bbbk Q$ with respect to this ideal~$\mathfrak{a}$. Concretely, it is the vector space generated by possibly infinite $\Bbbk$-linear combinations of paths in~$Q$ with multiplication induced by composition of paths.

A \emph{potential} for~$Q$ is defined as an element of $\widehat{\Bbbk Q}$ each of whose terms is a cyclic path of positive length. Two potentials $W$ and~$W'$ for~$Q$ are \emph{cyclically equivalent} if their difference $W-W'$ lies in the closure of the vector subspace of~$\widehat{\Bbbk Q}$ spanned by elements of the form $a_1\dots a_d-a_2\dots a_da_1$ where $a_1\dots a_d$ is a cyclic path of positive length. A \emph{quiver with potential} is a pair $(Q,W)$ where $Q$ is a quiver and $W$ is a potential for~$Q$ considered up to cyclic equivalence.

\subsection{Quivers with potential from surfaces}

In this paper, we are interested in a particular class of quivers with potential associated to triangulated surfaces in the work of Labardini-Fragoso~\cite{LabardiniFragoso08}. Let $(\mathbb{S},\mathbb{M})$ be a marked bordered surface and $T$ an ideal triangulation of~$(\mathbb{S},\mathbb{M})$. If $p\in\mathbb{P}$ is any puncture, then the \emph{valency} of~$p$ with respect to~$T$ is defined as the number of half arcs of~$T$ that are incident to~$p$. We say that $T$ is \emph{regular} if every puncture has valency $\geq3$ with respect to~$T$. Below we will define a quiver with potential $(Q(T),W(T,\epsilon))$ associated to a signed triangulation $(T,\epsilon)$ for which the underlying ideal triangulation~$T$ is regular.

By definition, $Q(T)$ has exactly one vertex for each arc of~$T$. We typically draw the vertices at the midpoints of the corresponding arcs on the surface and use the same symbol to denote a vertex and the corresponding arc. The quiver $Q(T)$ has $\varepsilon_{ij}^T$ arrows from $j$ to~$i$ whenever $\varepsilon_{ij}^T>0$. Figure~\ref{fig:quiver} illustrates a portion of an ideal triangulation and the associated quiver.

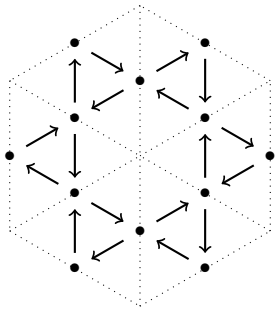
\begin{figure}[ht]
\begin{center}
\begin{tikzpicture}
\coordinate (a) at (0,0);
\coordinate (b) at (1.732,1);
\coordinate (c) at (0,2);
\coordinate (d) at (-1.732,1);
\coordinate (e) at (-1.732,-1);
\coordinate (f) at (0,-2);
\coordinate (g) at (1.732,-1);
\node (B) at (0.866,0.5) {{\tiny $\bullet$}};
\node (C) at (0,1) {{\tiny $\bullet$}};
\node (D) at (-0.866,0.5) {{\tiny $\bullet$}};
\node (E) at (-0.866,-0.5) {{\tiny $\bullet$}};
\node (F) at (0,-1) {{\tiny $\bullet$}};
\node (G) at (0.866,-0.5) {{\tiny $\bullet$}};
\node (BB) at (1.732,0) {{\tiny $\bullet$}};
\node (CC) at (0.866,1.5) {{\tiny $\bullet$}};
\node (DD) at (-0.866,1.5) {{\tiny $\bullet$}};
\node (EE) at (-1.732,0) {{\tiny $\bullet$}};
\node (FF) at (-0.866,-1.5) {{\tiny $\bullet$}};
\node (GG) at (0.866,-1.5) {{\tiny $\bullet$}};
\draw[black, thin, dotted] (a) -- (b);
\draw[black, thin, dotted] (b) -- (c);
\draw[black, thin, dotted] (c) -- (a);
\draw[black, thin, dotted] (c) -- (d);
\draw[black, thin, dotted] (d) -- (a);
\draw[black, thin, dotted] (d) -- (e);
\draw[black, thin, dotted] (e) -- (a);
\draw[black, thin, dotted] (e) -- (f);
\draw[black, thin, dotted] (f) -- (a);
\draw[black, thin, dotted] (f) -- (g);
\draw[black, thin, dotted] (g) -- (a);
\draw[black, thin, dotted] (g) -- (b);
\draw[black, thick,->] (B) -- (C);
\draw[black, thick,->] (C) -- (D);
\draw[black, thick,->] (D) -- (E);
\draw[black, thick,->] (E) -- (F);
\draw[black, thick,->] (F) -- (G);
\draw[black, thick,->] (G) -- (B);
\draw[black, thick,->] (BB) -- (G);
\draw[black, thick,->] (B) -- (BB);
\draw[black, thick,->] (CC) -- (B);
\draw[black, thick,->] (C) -- (CC);
\draw[black, thick,->] (DD) -- (C);
\draw[black, thick,->] (D) -- (DD);
\draw[black, thick,->] (EE) -- (D);
\draw[black, thick,->] (E) -- (EE);
\draw[black, thick,->] (FF) -- (E);
\draw[black, thick,->] (F) -- (FF);
\draw[black, thick,->] (GG) -- (F);
\draw[black, thick,->] (G) -- (GG);
\end{tikzpicture}
\end{center}
\caption{The quiver associated to an ideal triangulation.\label{fig:quiver}}
\end{figure}

The quiver $Q(T)$ constructed in this way has two obvious kinds of cyclic paths. Let us say that a triangle of~$T$ is \emph{internal} if all of its sides are arcs. Then there is a clockwise oriented path $\tau(t)$ of length three inscribed within each internal triangle~$t$ of the triangulation~$T$. On the other hand, there is a counterclockwise oriented path $\pi(p)$ of length~$\geq3$ encircling each puncture~$p\in\mathbb{P}$. Following Labardini-Fragoso~\cite{LabardiniFragoso08}, we define a canonical potential~$W(T,\epsilon)$ for~$Q(T)$ by the formula 
\[
W(T,\epsilon)=\sum_t\tau(t)-\sum_p\epsilon(p)\pi(p)
\]
where the first sum runs over all internal triangles~$t$ of~$T$ and the second sum runs over all punctures $p\in\mathbb{P}$. Thus we get a quiver with potential $(Q(T),W(T,\epsilon))$ canonically associated to $(T,\epsilon)$. This construction can be generalized to the case where $T$ is non-regular; we refer to~\cite{LabardiniFragoso08} for the details concerning non-regular triangulations.

In~\cite{DWZ08}, Derksen, Weyman, and Zelevinsky defined two quivers with potential $(Q,W)$ and $(Q',W')$ to be \emph{right equivalent} if there exists an isomorphism $F:\widehat{\Bbbk Q}\stackrel{\sim}{\rightarrow}\widehat{\Bbbk Q'}$ of their completed path algebras such that $F$ preserves paths of length zero and $F(W)$ is cyclically equivalent to~$W'$. The following result is due to Labardini-Fragoso.

\begin{theorem}[\cite{LabardiniFragoso16}, Theorem~6.1]
\label{thm:QPwelldefined}
Let $(\mathbb{S},\mathbb{M})$ be a marked bordered surface, and assume $(\mathbb{S},\mathbb{M})$ is not one of the following:
\begin{enumerate}
\item A sphere with $\leq5$ marked points.
\item An unpunctured disk with $\leq3$ marked points on its boundary.
\item A once-punctured disk with one marked point on its boundary.
\end{enumerate}
Then up to right equivalence, the quiver with potential associated to a signed triangulation of~$(\mathbb{S},\mathbb{M})$ depends only on the underlying tagged triangulation.
\end{theorem}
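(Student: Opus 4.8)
The plan is to reduce the statement to the invariance of $(Q(T),W(T,\epsilon))$ under a single ``pop'' --- the elementary sign change at a puncture lying in the interior of a self-folded triangle --- and then to realize that pop by an explicit right equivalence. For the reduction, recall that the set of tagged triangulations is by construction the quotient of the set of signed triangulations by the equivalence relation \emph{generated} by the moves $(T,\epsilon)\mapsto(T,\epsilon')$ that fix the ideal triangulation $T$ and reverse the sign $\epsilon(p)$ at a single puncture $p$ in the interior of a self-folded triangle of $T$. Since right equivalence of quivers with potential is transitive, it is enough to treat one such move; and then the two signed triangulations have the \emph{same} underlying ideal triangulation, so that $Q(T)$ is literally unchanged and only the potential is at issue.

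From the formula $W(T,\epsilon)=\sum_{t}\tau(t)-\sum_{q}\epsilon(q)\pi(q)$, the move reverses the sign of exactly one summand, so $W(T,\epsilon')=W(T,\epsilon)+2\epsilon(p)\pi(p)$. A puncture $p$ interior to a self-folded triangle $\sigma$ has valency one, and in Labardini-Fragoso's extension of the construction to non-regular triangulations the cycle $\pi(p)$ is a short cyclic path supported near $\sigma$: it runs through the vertices of $Q(T)$ attached to the internal edge $\ell$ and encircling edge $r$ of $\sigma$ and to the sides of the non-self-folded triangle glued to $\sigma$ along $r$. I would record this local portion of $(Q(T),W(T,\epsilon))$ explicitly, following~\cite{LabardiniFragoso08}.

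The heart of the argument is then to find an automorphism $F$ of the complete path algebra $\widehat{\Bbbk Q(T)}$ with $F(W(T,\epsilon))$ cyclically equivalent to $W(T,\epsilon')$; the natural candidate is the involution fixing every idempotent and every arrow not incident to the vertex $\ell$, and negating a suitably chosen subset of the arrows incident to $\ell$. For this to work the subset must be arranged so that $F(\pi(p))=-\pi(p)$ while $F$ fixes all other terms of $W(T,\epsilon)$, which is possible because $\ell$ is a side of no non-self-folded triangle --- hence appears in no $\tau(t)$ --- and, away from the exceptional surfaces, in no cycle $\pi(q)$ with $q\neq p$; thus $\pi(p)$ is the only cyclic subword of $W(T,\epsilon)$ passing through $\ell$, and $F(W(T,\epsilon))$ agrees with $W(T,\epsilon)+2\epsilon(p)\pi(p)=W(T,\epsilon')$. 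Chaining such right equivalences handles an arbitrary change of representative signed triangulation. The excluded families are precisely the degenerate configurations in which the triangle adjacent to $\sigma$ wraps around or coincides with $\sigma$, forcing an arrow at $\ell$ into another summand of $W$ (or making the construction behave non-standardly), and they are treated separately. The main obstacle is exactly this last step: pinning down a set of arrows at $\ell$ that is ``private'' to $\pi(p)$ requires a careful local analysis of the non-regular quiver-with-potential construction near a self-folded triangle together with a finite case check that it never fails outside the listed exceptions --- this is the genuine content of the theorem and the source of its hypotheses.
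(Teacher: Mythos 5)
The paper does not prove this statement---it is imported verbatim, with its exclusion list, from Labardini-Fragoso's Part~IV---so there is no in-text argument to compare against.

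Your reduction step is sound: tagged triangulations are by definition equivalence classes of signed triangulations under moves fixing $T$ and flipping $\epsilon$ at a single puncture inside a self-folded triangle, right equivalence is transitive, and after one such flip only the summand $\epsilon(p)\pi(p)$ of the potential moves. But the proof proper---producing an automorphism $F$ of $\widehat{\Bbbk Q(T)}$ carrying $W(T,\epsilon)$ to $W(T,\epsilon')$---is exactly the part you defer as ``a careful local analysis\,\dots\,and a finite case check,'' so the sketch names the theorem's difficulty rather than resolving it. Concretely, the load-bearing assertion that $\pi(p)$ is the \emph{only} cyclic subword of $W(T,\epsilon)$ through $\ell$ is left unargued, and it is not robust as stated: since $\pi_T(\ell)=r$ drives the exchange matrix, the vertex $\ell$ sits in $Q(T)$ in parallel with $r$, its arrows run into the triangle $t$ glued along $r$ alongside those of $r$, the cycle you want to isolate shares arrows with $\tau(t)$, and if the base vertex of the self-folded triangle is itself a puncture $q'$ then $\epsilon(q')\pi(q')$ lives in the same neighborhood of the quiver. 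Choosing which arrows at $\ell$ to negate so that only $\pi(p)$ changes sign is precisely what Labardini-Fragoso's local description of the non-regular QP must secure---a description neither you nor this paper writes down. Your heuristic for the exclusion list is likewise a guess; the small surfaces listed are ones where the QP construction or its flip-mutation compatibility degenerates for a variety of reasons, not all of which fit a ``wrapping'' picture.
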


Let us say that a potential is \emph{reduced} if it is a sum of cycles of length $\geq3$. In the case where $Q$ is a quiver with reduced potential~$W$, and $k$ is a vertex of~$Q$ that is not contained in a cyclic path of length two, Derksen, Weyman, and Zelevinsky~\cite{DWZ08} define a new quiver with potential $\mu_k(Q,W)$ called the quiver with potential obtained by \emph{mutation} in the direction~$k$. It is well defined up to right equivalence and depends only on the right equivalence class of~$(Q,W)$. Although the general definition of the mutated quiver with potential $\mu_k(Q,W)$ is somewhat involved, it can be understood in a simple way for quivers with potential arising from triangulated surfaces, thanks to the following result of Labardini-Fragoso.

\begin{theorem}[\cite{LabardiniFragoso08}, Theorem~30]
\label{thm:flipmutation}
Let $(\mathbb{S},\mathbb{M})$ be a marked bordered surface satisfying the assumption of Theorem~\ref{thm:QPwelldefined}. Let $(T,\epsilon)$ be a signed triangulation of~$(\mathbb{S},\mathbb{M})$ and $(T',\epsilon)$ the signed triangulation obtained from $(T,\epsilon)$ by a flip of the arc~$k$. Then, up to right equivalence, 
\[
(Q(T'),W(T',\epsilon))=\mu_k(Q(T),W(T,\epsilon)).
\]
\end{theorem}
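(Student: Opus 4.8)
The plan is to reduce the statement to a purely local computation. Both the assignment $(T,\epsilon)\mapsto(Q(T),W(T,\epsilon))$ and the flip are local in the following sense: the arc~$k$ lies in a quadrilateral~$P$ formed by the (at most) two triangles of~$T$ incident to~$k$, and passing from $T$ to $T'$ changes $T$ only inside~$P$, replacing the diagonal of~$P$ by the other diagonal. Correspondingly, $Q(T')$ agrees with $Q(T)$ away from~$k$ and the arrows incident to~$k$, and the cubic terms $\tau(t)$ and encircling terms $\pi(p)$ of $W(T',\epsilon)$ that involve edges of~$P$ are the only ones that change. On the mutation side, $\mu_k(Q,W)$ is likewise built entirely from the arrows incident to~$k$ and the terms of~$W$ passing through~$k$. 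So it suffices to match the two constructions on a finite list of local models for~$P$, and for each model to exhibit an explicit isomorphism $F\colon\widehat{\Bbbk Q(T')}\xrightarrow{\sim}\widehat{\Bbbk\,\mu_k Q(T)}$ fixing the vertices and sending $W(T',\epsilon)$ to a potential cyclically equivalent to the one produced by~$\mu_k$.

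First I would recall the precise recipe for~$\mu_k$: form $\widetilde{Q}$ by adding an arrow $[\beta\alpha]\colon s(\alpha)\to t(\beta)$ for each pair $\alpha\colon\cdot\to k$, $\beta\colon k\to\cdot$, reversing all arrows at~$k$ to $\alpha^\ast,\beta^\ast$; set $\widetilde{W}=[W]+\sum[\beta\alpha]\,\alpha^\ast\beta^\ast$, where $[W]$ replaces each factor $\beta\alpha$ through~$k$ by $[\beta\alpha]$; then apply the DWZ splitting theorem to extract the reduced part, which is $\mu_k(Q,W)$ up to right equivalence. Then I would enumerate the local configurations of~$P$: the generic case where $P$ has four distinct, non-self-folded sides; the cases where two of the four sides of~$P$ are identified on the surface; the cases where $k$ or one of the bounding triangles is self-folded, or where the diagonal is incident to a puncture of valency~$3$; and the cases where some side of~$P$ lies on~$\partial\mathbb{S}$. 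For each, I would write down $Q(T)$, read off the relevant $\tau(t)$ and $\pi(p)$ together with their signs $\epsilon(p)$, run the recipe above, and compare with $Q(T')$ and $W(T',\epsilon)$ computed directly from the flipped picture, performing the two-cycle cancellations and the reduction explicitly and tracking the signing on the punctures (if any) inside or on~$P$.

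The hard part will be the degenerate configurations. When $P$ contains a self-folded triangle, or when sides of~$P$ are glued together on the surface, the encircling cycles $\pi(p)$ can have length three and can share arrows with the cubic terms, so that $\widetilde{W}$ acquires many trivial length-two terms that must be removed, and the reduction can mix the images of $\pi(p)$ and $\tau(t)$ in a way that is delicate to track; moreover $T'$ may itself contain a self-folded triangle even when $T$ does not, changing which terms of $W(T',\epsilon)$ are present. One also has to check that the sign $\epsilon(p)$ attached to a puncture adjacent to~$P$ is matched correctly through the right equivalence, using that $\mu_k$ is only defined up to right equivalence and invoking Theorem~\ref{thm:QPwelldefined} to know that the target depends only on the underlying tagged triangulation; the surfaces excluded in Theorem~\ref{thm:QPwelldefined} are exactly those where these local models degenerate so badly that the quiver with potential fails to be well defined, which is why they are excluded here as well. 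Assembling the finitely many verified local cases then yields the global statement.
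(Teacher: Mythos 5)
The paper does not prove this result --- it is cited directly from Theorem~30 of~\cite{LabardiniFragoso08}, so there is no in-paper proof to compare your attempt against. Your outline nonetheless captures the structure of the argument in that reference: both the flip at~$k$ and the DWZ mutation $\mu_k$ are local around~$k$, so one reduces to a finite case analysis of the configurations of the quadrilateral~$P$ containing~$k$; in each case one runs the premutation recipe (adjoin composite arrows $[\beta\alpha]$, reverse arrows at~$k$, add the term $\sum[\beta\alpha]\alpha^*\beta^*$), extracts the reduced part via the DWZ splitting theorem, and exhibits an explicit right equivalence with $(Q(T'),W(T',\epsilon))$. You also correctly locate where the real difficulty lies: the degenerate configurations with self-folded triangles, identified sides of~$P$, or low-valency punctures, where the encircling cycles $\pi(p)$ can have length three, share arrows with the $\tau(t)$, and produce many 2-cycles that must be cancelled during reduction.

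That said, what you have written is a plan, not a proof: the substance of Theorem~30 in~\cite{LabardiniFragoso08} \emph{is} the explicit verification of the many local cases, and none of them is carried out here, so the proposal stands or falls entirely on work you defer. One concrete misstep worth flagging is the appeal to Theorem~\ref{thm:QPwelldefined} inside the argument. That result comes from the later paper~\cite{LabardiniFragoso16}, not~\cite{LabardiniFragoso08}, so it cannot be an ingredient of the original proof; more to the point, it is not needed for the statement at hand, since both sides of the claimed right equivalence carry the \emph{same} signing~$\epsilon$ and $T,T'$ differ by an ordinary flip, so there is no ambiguity about which signed representative of a tagged triangulation is meant. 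The hypothesis of Theorem~\ref{thm:QPwelldefined} is imposed on Theorem~\ref{thm:flipmutation} in this paper so that the two statements can be used together (the first to pass to tagged triangulations, the second to handle flips); it is not accurate to say that the excluded surfaces are ``exactly those where the local models degenerate so badly that the quiver with potential fails to be well defined,'' since what actually fails there is independence of the choice of signed representative, a separate issue from whether flip commutes with mutation.
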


\subsection{The Jacobian algebra}

Let us consider once again a quiver $Q$. For any cyclic path $p=a_1\dots a_d$ and any arrow~$a$ in~$Q$, we define the \emph{cyclic derivative} of $p$ with respect to~$a$ by the formula 
\[
\partial_a(p)=\sum_{i:a_i=a}a_{i+1}\dots a_da_1\dots a_{i-1}\in\widehat{\Bbbk Q}.
\]
Extending this operation by linearity and continuity, we can define the cyclic derivative $\partial_a(W)\in\widehat{\Bbbk Q}$ of any potential~$W$ for~$Q$ with respect to the arrow~$a$. Then the \emph{Jacobian ideal} $\mathfrak{J}(Q,W)\subset\widehat{\Bbbk Q}$ is the closure of the two-sided ideal in the path algebra generated by the set $\{\partial_a(W):a\in Q_1\}$. The \emph{Jacobian algebra} of $(Q,W)$ is the quotient 
\[
J(Q,W)=\widehat{\Bbbk Q}/\mathfrak{J}(Q,W).
\]
It is easy to see from this definition that right equivalent quivers with potential have isomorphic Jacobian algebras. In the following, we will write $\mathcal{A}(Q,W)$ for the category of finite dimensional modules over the Jacobian algebra. It is an abelian category with finitely many isomorphism classes of simple objects in bijection with the vertices of~$Q$. Our main goal in this section is to describe a 3-Calabi-Yau triangulated category containing $\mathcal{A}(Q,W)$ as a distinguished subcategory.

\subsection{Hearts and tilting}

Let $\mathcal{D}$ denote a $\Bbbk$-linear triangulated category with shift functor~$[1]$. The notion of a t-structure on~$\mathcal{D}$ determines a full abelian subcategory of~$\mathcal{D}$ called the heart of the t-structure. In this paper we will be interested exclusively in bounded t-structures. By a heart in~$\mathcal{D}$, we will always mean the heart of a bounded t-structure, which can be characterized as follows.

\begin{definition}[\cite{Bridgeland07}, Lemma~3.2]
The \emph{heart} of a bounded t-structure on~$\mathcal{D}$ is a full additive subcategory $\mathcal{A}\subset\mathcal{D}$ such that 
\begin{enumerate}
\item If $j>k$ are integers, then $\Hom_\mathcal{D}(A[j],A[k])=0$ for all objects $A$,~$B\in\mathcal{A}$.
\item For every object $E\in\mathcal{D}$, there is a finite sequence of integers 
\[
k_1>k_2>\dots>k_s
\]
and a sequence of exact triangles 
\[
\xymatrix{
0=E_0 \ar[rr] & & E_1 \ar[ld] \ar[rr] & & E_2\ar[ld] \ar[r] & \cdots \ar[r] & E_{s-1} \ar[rr] & & \ar[ld] E_s=E \\
& A_1 \ar@{-->}[lu] & & A_2 \ar@{-->}[lu] & & & & A_s \ar@{-->}[lu]
}
\]
with $A_j\in\mathcal{A}[k_j]$ for each~$j$.
\end{enumerate}
\end{definition}

A heart in a triangulated category~$\mathcal{D}$ is a full abelian subcategory. To get other abelian subcategories of~$\mathcal{D}$, we use the operation of tilting. In this paper, we will only need a special case of the tilting construction which we describe now.

Let us say that a heart is of \emph{finite length} if it is noetherian and artinian as an abelian category. If $\mathcal{A}\subset\mathcal{D}$ is a finite length heart and $S\in\mathcal{A}$ is a simple object, then we write 
\[
S^\perp=\{E\in\mathcal{A}:\Hom_{\mathcal{A}}(S,E)=0\}, \quad ^\perp S=\{E\in\mathcal{A}:\Hom_{\mathcal{A}}(E,S)=0\}.
\]
Then the categories 
\[
\mu_S^-(\mathcal{A})=\langle S[1], {^\perp S}\rangle, \quad \mu_S^+(\mathcal{A})=\langle S^\perp,S[-1]\rangle
\]
are new hearts called the \emph{left tilt} and \emph{right tilt} of~$\mathcal{A}$ at~$S$, respectively. Here we use the notation~$\langle\mathcal{C}\rangle$ for the \emph{extension closure} of a collection $\mathcal{C}$ of objects in~$\mathcal{D}$. It is defined as the smallest full subcategory of~$\mathcal{D}$ which contains all objects in~$\mathcal{C}$ and is closed under extensions.

\subsection{Triangulated categories from surfaces}

In this paper, we will be interested in a class of 3-Calabi-Yau triangulated categories associated to triangulated surfaces. The existence of these categories is guaranteed by the following theorem.

\begin{theorem}[\cite{BridgelandSmith15}, Theorem~7.1]
\label{thm:category}
Let $(Q,W)$ be a quiver with reduced potential. Then there exists a corresponding 3-Calabi-Yau triangulated category $\mathcal{D}(Q,W)$ over~$\Bbbk$. It has a distinguished bounded t-structure whose heart is the category $\mathcal{A}(Q,W)$ of finite-dimensional modules over the Jacobian algebra $J(Q,W)$. Moreover, if $(Q',W')$ is another quiver with potential and $F:\widehat{\Bbbk Q}\rightarrow\widehat{\Bbbk Q'}$ is a right equivalence, then $F$ induces a canonical triangulated equivalence $\mathcal{D}(Q,W)\rightarrow\mathcal{D}(Q',W')$.
\end{theorem}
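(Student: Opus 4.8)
The plan is to realize $\mathcal{D}(Q,W)$ as the finite-dimensional derived category of the complete Ginzburg dg algebra and to read off all three assertions from standard properties of this construction due to Keller and Keller--Yang~\cite{KellerYang11}. First I would recall the complete Ginzburg dg algebra $\Gamma=\Gamma(Q,W)$: its underlying graded algebra is the completed path algebra of the graded quiver obtained from $Q$ by adjoining, for each arrow $a\in Q_1$, an arrow $a^{*}\colon t(a)\to s(a)$ placed in degree $-1$, and, for each vertex $i\in Q_0$, a loop $t_i$ placed in degree $-2$; its differential is the unique continuous $\Bbbk$-linear derivation with $d(a)=0$ for $a\in Q_1$, $d(a^{*})=\partial_a W$, and $d(t_i)=e_i\bigl(\sum_{a\in Q_1}(aa^{*}-a^{*}a)\bigr)e_i$. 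Directly from the definition of the cyclic derivative one checks that the degree-$0$ cohomology is $H^{0}(\Gamma)=\widehat{\Bbbk Q}/\mathfrak{J}(Q,W)=J(Q,W)$. I would then define $\mathcal{D}(Q,W)$ to be $\mathcal{D}_{\mathrm{fd}}(\Gamma)$, the full subcategory of the derived category of right dg $\Gamma$-modules whose objects have finite-dimensional total cohomology; it is a $\Bbbk$-linear triangulated category with shift inherited from $\mathcal{D}(\Gamma)$.

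Next I would establish the Calabi--Yau property and the distinguished heart. Since $\Gamma$ is the deformed $3$-Calabi--Yau completion of the complete path algebra of $Q$ associated to the potential $W$, it is topologically homologically smooth and carries a canonical bimodule $3$-Calabi--Yau structure; by Keller's general results on Calabi--Yau completions this forces $\mathcal{D}_{\mathrm{fd}}(\Gamma)$ to be a $3$-Calabi--Yau triangulated category, so that there is a natural isomorphism $\Hom(X,Y)\cong\Hom(Y,X[3])^{\vee}$ for $X,Y\in\mathcal{D}_{\mathrm{fd}}(\Gamma)$. Keller and Yang furthermore exhibit a canonical bounded t-structure on $\mathcal{D}_{\mathrm{fd}}(\Gamma)$ whose heart is equivalent, via the functor $H^{0}$, to the category of finite-dimensional $H^{0}(\Gamma)$-modules. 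Combining this with the identification $H^{0}(\Gamma)=J(Q,W)$, the heart is precisely $\mathcal{A}(Q,W)$, which proves the first two sentences of the theorem.

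For the functoriality statement, let $F\colon\widehat{\Bbbk Q}\stackrel{\sim}{\longrightarrow}\widehat{\Bbbk Q'}$ be a right equivalence, so that $Q$ and $Q'$ share a vertex set, $F$ fixes the vertex idempotents, and $F(W)$ is cyclically equivalent to $W'$. The crucial step is to lift $F$ to an isomorphism of dg algebras $\widetilde{F}\colon\Gamma(Q,W)\stackrel{\sim}{\longrightarrow}\Gamma(Q',W')$. One extends $F$ to the higher generators $a^{*}$ and $t_i$ and then modifies the naive choice so that $\widetilde{F}$ commutes with the differentials; the freedom needed for this modification is supplied exactly by the hypothesis that $F(W)$ equals $W'$ only up to cyclic equivalence. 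This construction, together with the fact that two such lifts differ by a dg-algebra homotopy, is carried out in~\cite{KellerYang11}, which is why the resulting equivalence is canonical. Restriction of scalars along $\widetilde{F}^{-1}$ then yields a triangle equivalence $\mathcal{D}(\Gamma(Q,W))\to\mathcal{D}(\Gamma(Q',W'))$ that preserves finite-dimensionality of cohomology and sends the standard t-structure to the standard t-structure; it therefore restricts to the desired equivalence $\mathcal{D}(Q,W)\to\mathcal{D}(Q',W')$, which on hearts is the equivalence $\mathcal{A}(Q,W)\to\mathcal{A}(Q',W')$ induced by the algebra isomorphism $H^{0}(\widetilde{F})\colon J(Q,W)\to J(Q',W')$.

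I expect the main obstacle to lie in the construction and homotopy-uniqueness of the lift $\widetilde{F}$: a degree-zero right equivalence does not by itself determine an action on the degree $-1$ and degree $-2$ generators, and checking compatibility with the Ginzburg differential requires a careful bookkeeping argument with cyclic derivatives and cyclic equivalence. Once this lift is in hand, the remaining steps are routine manipulations with the finite-dimensional derived category of a homologically smooth dg algebra, and the whole argument can be cited from~\cite{KellerYang11} and~\cite{BridgelandSmith15}.
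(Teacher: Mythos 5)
Your proposal reproduces the approach the paper itself sketches: $\mathcal{D}(Q,W)$ is the finite-dimensional derived category of the complete Ginzburg dg algebra, the 3-Calabi--Yau property comes from Keller's theory of Calabi--Yau completions, the heart is recovered from $H^0(\Gamma)=J(Q,W)$, and the functoriality statement is Keller--Yang's lift of a right equivalence to a dg-algebra isomorphism of Ginzburg algebras. The paper does not reprove any of this --- it simply recalls the Ginzburg construction and defers to Bridgeland--Smith and Keller--Yang --- and your unpacking of those references is accurate, with one minor caveat: the hypothesis that $F(W)$ is only \emph{cyclically} equivalent to $W'$ is a weakening that the invariance of cyclic derivatives under cyclic shift renders harmless, rather than a source of extra freedom for constructing $\widetilde{F}$; the freedom you actually use is the latitude in choosing the images of $a^*$ and $t_i$, which is what Keller--Yang exploit.
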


Explicitly, the category $\mathcal{D}(Q,W)$ of Theorem~\ref{thm:category} is defined as the full subcategory of the derived category of the complete Ginzburg dg-algebra of $(Q,W)$ consisting of dg-modules with finite-dimensional cohomology. Further details can be found in~\cite{KellerYang11}. For us the important property of this construction is the following result of Keller and Yang~\cite{KellerYang11}.

\begin{theorem}[\cite{BridgelandSmith15}, Theorem~7.3]
\label{thm:KellerYang}
Let $(Q,W)$ be a 2-acyclic quiver with potential, and let $(Q',W')=\mu_k(Q,W)$ be a quiver with potential obtained by mutation in the direction~$k$. Then there is a canonical pair of $\Bbbk$-linear triangulated equivalences 
\[
\Phi_\pm:\mathcal{D}(Q',W')\rightarrow\mathcal{D}(Q,W)
\]
such that if $S_k\in\mathcal{A}(Q,W)$ is the simple object corresponding to the vertex~$k$, then functors $\Phi_\pm$ induce tilting in the sense that 
\[
\Phi_\pm(\mathcal{A}(Q',W'))=\mu_{S_k}^\pm(\mathcal{A}(Q,W)).
\]
\end{theorem}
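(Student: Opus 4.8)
The plan is to follow Keller and Yang~\cite{KellerYang11}, passing to the complete Ginzburg dg algebras underlying the two categories. Write $\Gamma = \Gamma(Q,W)$ for the complete Ginzburg dg algebra of $(Q,W)$: it is concentrated in non-positive degrees, with degree-zero part $\widehat{\Bbbk Q}$, a degree $-1$ generator $a^*$ for each arrow $a$ of $Q$, a degree $-2$ generator $t_i$ for each vertex $i$, and differential determined by $d(a^*) = \partial_a W$ and $d(t_i) = e_i\bigl(\sum_a [a,a^*]\bigr) e_i$. By definition $\mathcal{D}(Q,W)$ is the full subcategory $\mathcal{D}_{fd}(\Gamma)$ of the derived category of dg $\Gamma$-modules whose objects have finite-dimensional total cohomology, and the distinguished heart $\mathcal{A}(Q,W)$ has simple objects the dg modules $S_i$ supported at the vertices $i$. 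The analogous notation applies to $(Q',W')$.

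First I would recall that the mutation $\mu_k(Q,W)$ of~\cite{DWZ08} factors as a combinatorial \emph{premutation} $\tilde\mu_k$ — reversing the arrows at $k$, adjoining composite arrows through $k$, and modifying $W$ — followed by a \emph{reduction} that splits off the $2$-cycles thereby created. The first technical step is to show that the Ginzburg dg algebra $\tilde\Gamma$ of the premutated data $\tilde\mu_k(Q,W)$ is quasi-isomorphic, as a dg algebra, to $\Gamma' = \Gamma(Q',W')$: this is a homotopy-transfer argument using that reduction of a quiver with potential is a formal change of variables, so that $\tilde\Gamma$ and $\Gamma'$ differ by a contractible dg ideal.

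Next I would construct the equivalences. Since $\tilde\Gamma$ and $\Gamma$ share all their generators away from the vertex $k$, the two algebras can be compared directly, and following~\cite{KellerYang11} one exhibits dg $\tilde\Gamma$-$\Gamma$-bimodules $T_\pm$: as a right $\Gamma$-module $T_+$ has summands $e_i\Gamma$ for $i \neq k$ together with one further summand given by the cone of the canonical map $e_k\Gamma \to \bigoplus_{a:\, t(a)=k} e_{s(a)}\Gamma$, with $T_-$ the dual construction, the left $\tilde\Gamma$-action being the one realizing $T_\pm$ as the images of the free module under mutation of silting objects at $k$. The plan is then to check that $T_\pm$ are invertible in the derived sense — $T_\pm \otimes^L_\Gamma \operatorname{RHom}_\Gamma(T_\pm, \Gamma) \simeq \tilde\Gamma$ and symmetrically — so that, composing with the quasi-isomorphism $\Gamma' \to \tilde\Gamma$, the functors $\Phi_\pm := - \otimes^L_{\tilde\Gamma} T_\pm$ are triangle equivalences $\mathcal{D}(\Gamma') \to \mathcal{D}(\Gamma)$. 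Since $T_\pm$ are perfect over $\Gamma$ on both sides, these equivalences preserve finite-dimensionality of cohomology and hence restrict to equivalences $\mathcal{D}(Q',W') \to \mathcal{D}(Q,W)$.

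To obtain the statement about hearts, I would then compute the images of the simple dg modules $S_i'$ of $\mathcal{A}(Q',W')$ under $\Phi_+$: one checks that $\Phi_+(S_k') = S_k[-1]$ and that the objects $\Phi_+(S_i')$ for $i \neq k$ are built by extensions from $S_i$ and $S_k[-1]$, so that the $\Phi_+(S_i')$ are precisely the simple objects of $\mu_{S_k}^+(\mathcal{A}(Q,W)) = \langle S_k^\perp, S_k[-1]\rangle$. Because a finite length heart equals the extension closure of its simple objects and $\Phi_+$ is an equivalence, this forces $\Phi_+(\mathcal{A}(Q',W')) = \mu_{S_k}^+(\mathcal{A}(Q,W))$, and dually for $\Phi_-$. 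The main obstacle is the combination of the second and third steps: verifying that the bimodules $T_\pm$ genuinely implement a two-sided derived equivalence requires meticulous bookkeeping of the Ginzburg differential — in particular of the cyclic derivatives $\partial_a W$ and the degree $-2$ relations under both premutation and reduction — while simultaneously controlling the $\mathfrak{m}$-adic completions so that the infinite sums and homotopies involved make sense. This is exactly the technical core of~\cite{KellerYang11}, and a self-contained proof would reproduce a substantial portion of that paper.
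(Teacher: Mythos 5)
The paper does not prove this theorem; it is quoted verbatim as Theorem~7.3 of Bridgeland--Smith, which is itself an adaptation of the main result of Keller--Yang~\cite{KellerYang11}. So there is no ``paper's own proof'' to compare against --- the expected treatment is exactly the citation. That said, your outline is a faithful high-level summary of the Keller--Yang argument: pass to the completed Ginzburg dg algebras, factor Derksen--Weyman--Zelevinsky mutation as premutation followed by reduction, observe that reduction induces a quasi-isomorphism on Ginzburg dg algebras, construct the tilting bimodule $T$ whose summands are $e_i\Gamma$ for $i\neq k$ together with a cone at $k$, check it is a two-sided tilting complex perfect on both sides (hence restricts to $\mathcal{D}_{fd}$), and finally read off the effect on the simples to identify the image of the canonical heart with the tilt of $\mathcal{A}(Q,W)$ at $S_k$. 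You are also right that filling in the details would essentially reproduce a substantial portion of~\cite{KellerYang11}.

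Two small imprecisions worth flagging. First, the cone you write for $T_+$ uses $\bigoplus_{a:\,t(a)=k}e_{s(a)}\Gamma$; the left approximation that produces the $\mu_k^+$ side is $e_k\Gamma\to\bigoplus_{a:\,s(a)=k}e_{t(a)}\Gamma$ (maps $e_k\Gamma\to e_j\Gamma$ come from elements of $e_j\widehat{\Bbbk Q}e_k$, i.e.\ arrows out of $k$), while the version you wrote is the shift of the right approximation governing $\mu_k^-$; so the indexing is consistent with one of $T_\pm$ but you have the labels swapped or should shift. Second, in Keller--Yang the two equivalences are not really two independent bimodule constructions; rather, one fixes one of them (say $\Phi_-$) and obtains the other as $\Phi_+ = \Tw_{S_k}\circ\Phi_-$, which is precisely the content of Proposition~7.1 of~\cite{BridgelandSmith15} invoked elsewhere in the paper ($\Tw_{S}(\mu_S^-(\mathcal{A}))=\mu_S^+(\mathcal{A})$). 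Presenting $\Phi_\pm$ this way also makes the final claim about hearts immediate once one of the two cases is established. Neither point changes the correctness of the overall plan.
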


The category that we are interested in is obtained by applying Theorem~\ref{thm:category} to the quiver with potential associated to a triangulation of a marked bordered surface. In order to make use of the results of~\cite{BridgelandSmith15}, we consider only marked bordered surfaces of the following type.

\begin{definition}[\cite{BridgelandSmith15}]
\label{def:amenable}
A marked bordered surface $(\mathbb{S},\mathbb{M})$ is \emph{amenable} if it is not one of the following:
\begin{enumerate}
\item A closed surface with a single puncture.
\item A sphere with $\leq5$ punctures.
\item An unpunctured disk with $\leq4$ marked points on its boundary.
\item A once-punctured disk with one, two, or four marked points on its boundary.
\item A twice-punctured disk with two marked points on its boundary.
\item An annulus with one marked point on each boundary component.
\end{enumerate}
\end{definition}

If $\tau$ is a tagged triangulation of an amenable marked bordered surface $(\mathbb{S},\mathbb{M})$, then by Theorem~\ref{thm:QPwelldefined}, there is an associated quiver with potential $(Q(\tau),W(\tau))$ which is well defined up to right equivalence. Hence, by Theorem~\ref{thm:category} there is an associated triangulated category $\mathcal{D}(Q(\tau),W(\tau))$. If $\tau'$ is any other tagged triangulation of~$(\mathbb{S},\mathbb{M})$, then $\tau$ and~$\tau'$ are related by a sequence of flips of tagged arcs. It then follows from Theorems~\ref{thm:flipmutation} and~\ref{thm:KellerYang} that the categories $\mathcal{D}(Q(\tau),W(\tau))$ and $\mathcal{D}(Q(\tau'),W(\tau'))$ are equivalent. Hence the category $\mathcal{D}(Q(\tau),W(\tau))$ is determined by~$(\mathbb{S},\mathbb{M})$ up to a noncanonical equivalence.

\subsection{The exchange graph}

For any triangulated category~$\mathcal{D}$, the \emph{tilting graph} $\Tilt(\mathcal{D})$ is the graph whose vertices are the finite length hearts in~$\mathcal{D}$, where two vertices are connected by an edge if the corresponding hearts are related by a tilt at a simple object. In particular, we can take $\mathcal{D}=\mathcal{D}(Q,W)$ to be the category associated to a 2-acyclic quiver with potential $(Q,W)$. In this case, the graph $\Tilt(\mathcal{D})$ has a distinguished vertex given by the canonical heart $\mathcal{A}(Q,W)\subset\mathcal{D}(Q,W)$. We write $\Tilt_\Delta(\mathcal{D})$ for the connected component of~$\Tilt(\mathcal{D})$ containing this vertex.

Let $\Aut(\mathcal{D})$ denote the group of all triangulated autoequivalences of~$\mathcal{D}$. There is a natural action of this group on the tilting graph $\Tilt(\mathcal{D})$, and we write 
\[
\Aut_\Delta(\mathcal{D})\subset\Aut(\mathcal{D})
\]
for the subgroup preserving $\Tilt_\Delta(\mathcal{D})$. We let 
\[
\Nil_\Delta(\mathcal{D})\subset\Aut_\Delta(\mathcal{D})
\]
be the subgroup of autoequivalences that act trivially and write 
\[
\cAut_\Delta(\mathcal{D})=\Aut_\Delta(\mathcal{D})/\Nil_\Delta(\mathcal{D})
\]
for the quotient, which acts effectively on~$\Tilt_\Delta(\mathcal{D})$.

If $\mathcal{A}\in\Tilt_\Delta(\mathcal{D})$ and $S\in\mathcal{A}$ is a simple object, then $S$ is spherical, and hence by the work of Seidel and Thomas~\cite{SeidelThomas01}, there is an associated autoequivalence $\Tw_S\in\Aut(\mathcal{D})$ called a \emph{spherical twist}. It has the property $\Tw_S(\mu_S^-(\mathcal{A}))=\mu_S^+(\mathcal{A})$ by Proposition~7.1 of~\cite{BridgelandSmith15}. Moreover, the group $\Sph_\mathcal{A}(\mathcal{D})=\langle\Tw_S:S\in\mathcal{A}\text{ simple}\rangle\subset\Aut(\mathcal{D})$ is independent of the choice of vertex~$\mathcal{A}$, and therefore we can simply denote this group by $\Sph_\Delta(\mathcal{D})$. We write 
\[
\cSph_\Delta(\mathcal{D})\subset\cAut_\Delta(\mathcal{D})
\]
for the image of $\Sph_\Delta(\mathcal{D})$ in $\cAut_\Delta(\mathcal{D})$.

This group $\cSph_\Delta(\mathcal{D})$ acts on $\Tilt_\Delta(\mathcal{D})$, and the quotient 
\[
\Exch_\Delta(\mathcal{D})=\Tilt_\Delta(\mathcal{D})/\cSph_\Delta(\mathcal{D})
\]
is known in cluster theory as the \emph{exchange graph}. The quotient group 
\[
\mathcal{G}_\Delta(\mathcal{D})=\cAut_\Delta(\mathcal{D})/\cSph_\Delta(\mathcal{D})
\]
acts by symmetries on $\Exch_\Delta(\mathcal{D})$ and is known as the \emph{cluster modular group}.

In the case where the quiver with potential arises from a tagged triangulation of an amenable marked bordered surface $(\mathbb{S},\mathbb{M})$, we can give a simple description of these objects. Indeed, in this case one can define another graph $\Tri_{\bowtie}(\mathbb{S},\mathbb{M})$ whose vertices are tagged triangulations of~$(\mathbb{S},\mathbb{M})$, where two vertices are connected by an edge if the corresponding triangulations are related by a flip of a tagged arc. There is a natural action of the signed mapping class group $\MCG^\pm(\mathbb{S},\mathbb{M})$ on this graph $\Tri_{\bowtie}(\mathbb{S},\mathbb{M})$, and one has the following result.

\begin{theorem}[\cite{Allegretti21}, Theorem~10.1]
\label{thm:graphgroup}
Let $\tau$ be a tagged triangulation of an amenable marked bordered surface $(\mathbb{S},\mathbb{M})$. Let $(Q,W)=(Q(\tau),W(\tau))$ be the quiver with potential determined by~$\tau$, and let $\mathcal{D}=\mathcal{D}(Q,W)$ be the associated triangulated category. Then 
\begin{enumerate}
\item There is an isomorphism of graphs 
\[
\Tri_{\bowtie}(\mathbb{S},\mathbb{M})\cong\Exch_\Delta(\mathcal{D}).
\]
\item There is an isomorphism of groups 
\[
\MCG^\pm(\mathbb{S},\mathbb{M})\cong\mathcal{G}_\Delta(\mathcal{D}).
\]
\end{enumerate}
Under these isomorphisms, the action of the cluster modular group on the exchange graph coincides with the action of the signed mapping class group on the graph of tagged triangulations.
\end{theorem}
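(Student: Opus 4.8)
The statement combines a graph isomorphism and a group isomorphism, and the plan is to build the first explicitly and then read off the second by tracking symmetries. First I would construct a map $\Theta\colon\Tri_{\bowtie}(\mathbb{S},\mathbb{M})\to\Exch_\Delta(\mathcal{D})$. Fix the base tagged triangulation $\tau$, so that $(Q,W)=(Q(\tau),W(\tau))$ and the canonical heart $\mathcal{A}(Q,W)$ represent a vertex of $\Tilt_\Delta(\mathcal{D})$ and hence of $\Exch_\Delta(\mathcal{D})$. Given any tagged triangulation $\sigma$, choose a path of flips $\tau=\tau_0-\tau_1-\cdots-\tau_m=\sigma$, which exists since the tagged flip graph of an amenable surface is connected~\cite{FominShapiroThurston08}. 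By Theorem~\ref{thm:flipmutation} the flip $\tau_{i-1}\to\tau_i$ at an arc $k_i$ corresponds to the mutation $\mu_{k_i}$ of quivers with potential, and by Theorem~\ref{thm:KellerYang} this mutation is realized by a tilt $\mu^{\pm}_{S_{k_i}}$ of the heart at the simple object indexed by $k_i$, together with an identification of the two derived categories. Composing these data along the path yields a heart in $\Tilt_\Delta(\mathcal{D})$ attached to $\sigma$, a priori depending on the path and on the choices of signs. I would then check that its class in $\Exch_\Delta(\mathcal{D})=\Tilt_\Delta(\mathcal{D})/\cSph_\Delta(\mathcal{D})$ is well defined: changing one sign alters the heart by the spherical twist $\Tw_{S}$ (using $\Tw_S(\mu^-_S\mathcal{A})=\mu^+_S\mathcal{A}$, Proposition~7.1 of~\cite{BridgelandSmith15}), while path-independence follows because any two flip-paths with the same endpoints differ by the square and pentagon relations of the cluster exchange graph, each of which maps to a relation in $\Tilt_\Delta(\mathcal{D})$ modulo $\cSph_\Delta(\mathcal{D})$. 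This produces a graph homomorphism $\Theta$ sending flips to tilts, i.e.\ edges to edges.

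Next I would show $\Theta$ is an isomorphism of graphs. Surjectivity is nearly formal: $\Tilt_\Delta(\mathcal{D})$ is the connected component of $\mathcal{A}(Q,W)$, so every vertex is obtained from $\mathcal{A}(Q,W)$ by finitely many tilts at simple objects, and an induction using Theorem~\ref{thm:KellerYang} shows that at each stage the simple objects are exactly those indexed by the arcs of the current triangulation and each tilt is realized by a flip; hence the image of $\Theta$ meets every $\cSph_\Delta(\mathcal{D})$-orbit. Injectivity is the delicate point: one must rule out that two non-isotopic tagged triangulations give hearts related by an element of $\cSph_\Delta(\mathcal{D})$. For this I would invoke the analysis of Bridgeland and Smith~\cite{BridgelandSmith15}, who in the course of identifying $\Stab_\Delta(\mathcal{D})$ with $\mathcal{Q}^\pm(\mathbb{S},\mathbb{M})$ match the hearts reachable from $\mathcal{A}(Q,W)$, up to spherical twists, with the combinatorics of the surface, and show no such accidental coincidence occurs precisely when $(\mathbb{S},\mathbb{M})$ is amenable. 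This establishes part~(1).

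For part~(2) I would produce the homomorphism $\MCG^\pm(\mathbb{S},\mathbb{M})\to\mathcal{G}_\Delta(\mathcal{D})$ from functoriality of the construction of $\mathcal{D}$. An element $\gamma\in\MCG(\mathbb{S},\mathbb{M})$ sends $\tau$ to $\gamma\tau$ and induces an isomorphism of quivers with potential $(Q(\tau),W(\tau))\to(Q(\gamma\tau),W(\gamma\tau))$; by Theorem~\ref{thm:category} this gives a triangulated equivalence, hence, after identifying both categories with $\mathcal{D}$, an element of $\Aut_\Delta(\mathcal{D})$, well defined modulo $\Nil_\Delta(\mathcal{D})$ and $\cSph_\Delta(\mathcal{D})$. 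The $\mathbb{Z}_2^{\mathbb{P}}$ factor, which acts by changing the tags at punctures, is sent to the autoequivalences implementing the corresponding change of signing in the potential. One verifies this is a group homomorphism whose induced action on $\Exch_\Delta(\mathcal{D})\cong\Tri_{\bowtie}(\mathbb{S},\mathbb{M})$ is the tautological action of $\MCG^\pm(\mathbb{S},\mathbb{M})$ on tagged triangulations; since that action is faithful, the homomorphism is injective. Surjectivity is then equivalent to the assertion that every automorphism of $\Tri_{\bowtie}(\mathbb{S},\mathbb{M})$ arising from $\cAut_\Delta(\mathcal{D})$ is induced by a signed mapping class, which follows from the rigidity of the tagged flip graph of an amenable surface — again a result drawn from~\cite{BridgelandSmith15}, and the point at which the exceptional surfaces of Definition~\ref{def:amenable} must be removed. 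Compatibility of the two actions is immediate from the construction of $\Theta$.

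The main obstacle is the injectivity of $\Theta$ together with the surjectivity of $\MCG^\pm(\mathbb{S},\mathbb{M})\to\mathcal{G}_\Delta(\mathcal{D})$: both come down to the absence of accidental coincidences — distinct triangulations whose hearts agree modulo spherical twists, and graph automorphisms not coming from surface homeomorphisms — and both are exactly what the amenability hypothesis is designed to exclude. Rather than reprove these from scratch, I would import the relevant rigidity statements from~\cite{BridgelandSmith15} (and the consistency of the flip/mutation/tilting dictionary from Theorems~\ref{thm:QPwelldefined}, \ref{thm:flipmutation}, and~\ref{thm:KellerYang}), so that the remaining work is the bookkeeping of relations needed to make $\Theta$ well defined and equivariant.
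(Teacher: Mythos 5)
The paper does not prove this statement: it is quoted verbatim from \cite{Allegretti21}, Theorem~10.1, so there is no paper-internal argument to compare your proposal against. That said, I can assess your outline on its own merits.

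You have correctly identified the pillars: Labardini-Fragoso's flip/mutation compatibility (Theorem~\ref{thm:flipmutation}), Keller-Yang's mutation/tilting dictionary (Theorem~\ref{thm:KellerYang}), and the amenability hypothesis as the source of the needed rigidity. The weakest link in your sketch is the well-definedness of $\Theta$. You assert that any two flip-paths with the same endpoints differ by square and pentagon relations and that each such relation ``maps to a relation in $\Tilt_\Delta(\mathcal{D})$ modulo $\cSph_\Delta(\mathcal{D})$'' — but that is precisely the nontrivial statement, and you do not indicate how to verify it. Checking the pentagon relation at the level of Keller-Yang equivalences modulo spherical twists is not automatic. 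The proof in the literature (\cite{BridgelandSmith15}, Section~9; \cite{Allegretti21}, Section~10) sidesteps path-independence entirely: one constructs a bijection between hearts in $\Tilt_\Delta(\mathcal{D})$ and tagged triangulations equipped with an auxiliary signing, then quotients by the spherical twist group, so well-definedness is built in rather than verified after the fact. A similar remark applies to your treatment of surjectivity in part~(2): you invoke a ``rigidity of the tagged flip graph'' which is not, as stated, a result of \cite{BridgelandSmith15}; what is actually used there is the explicit description of $\cAut_\Delta(\mathcal{D})$ in terms of surface data, not a general automorphism-rigidity theorem for $\Tri_{\bowtie}(\mathbb{S},\mathbb{M})$. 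As a roadmap your proposal points at the right sources, but these two steps would need to be replaced by the more structured arguments in those references rather than the shortcuts you suggest.
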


\section{The main results}
\label{sec:TheMainResults}

In this section, we formulate our main results in terms of the triangulated category introduced above.

\subsection{Stability conditions}

We begin by recalling the notion of a stability condition from~\cite{Bridgeland07}. If $\mathcal{A}$ is an abelian category, then a \emph{stability function} on~$\mathcal{A}$ is defined to be a group homomorphism $Z:K(\mathcal{A})\rightarrow\mathbb{C}$ such that for any nonzero object $E\in\mathcal{A}$, the complex number $Z(E)$ lies in the semi-closed upper half plane 
\[
\mathcal{H}=\{r\exp(\mathrm{i}\pi\phi):r>0\text{ and }0<\phi\leq1\}\subset\mathbb{C}.
\]
Given a stability function $Z:K(\mathcal{A})\rightarrow\mathbb{C}$, we can assign to any nonzero object $E\in\mathcal{A}$ a well defined \emph{phase} given by 
\[
\phi(E)=\frac{1}{\pi}\arg Z(E)\in(0,1].
\]
A nonzero object $E\in\mathcal{A}$ is said to be \emph{semistable} with respect to~$Z$ if every proper nonzero subject $A\subset E$ satisfies $\phi(A)\leq\phi(E)$.

If we are given a stability function on an abelian category~$\mathcal{A}$, then the semistable objects provide a way to filter arbitrary objects of~$\mathcal{A}$. More precisely, if $E\in\mathcal{A}$ is a nonzero object, then a \emph{Harder-Narasimhan filtration} of~$E$ is a finite sequence of subobjects 
\[
0=E_0\subset E_1\subset\dots\subset E_{n-1}\subset E_n=E
\]
such that each quotient $F_j=E_j/E_{j-1}$ is semistable, and 
\[
\phi(F_1)>\phi(F_2)>\dots>\phi(F_n).
\]
A stability function~$Z$ on~$\mathcal{A}$ is said to have the \emph{Harder-Narasimhan property} if every nonzero object of~$\mathcal{A}$ has a Harder-Narasimhan filtration. Using these concepts, we can give the following definition of a stability condition on a triangulated category.

\begin{definition}[\cite{Bridgeland07}, Proposition~5.3]
Let $\mathcal{D}$ be a triangulated category. Then a \emph{stability condition} $(\mathcal{A},Z)$ on~$\mathcal{D}$ consists of the heart~$\mathcal{A}$ of a bounded t-structure on~$\mathcal{D}$ together with a stability function~$Z$ on~$\mathcal{A}$ having the Harder-Narasimhan property.
\end{definition}

One can show using our definitions that if $\mathcal{A}\subset\mathcal{D}$ is a heart, then there is an isomorphism $K(\mathcal{A})\cong K(\mathcal{D})$. Thus a stability function on~$\mathcal{A}$ induces a homomorphism $Z:K(\mathcal{D})\rightarrow\mathbb{C}$ called then \emph{central charge}. In the examples that we consider, the Grothendieck group is a finite-rank lattice $K(\mathcal{D})\cong\mathbb{Z}^n$, and we will restrict attention to stability conditions satisfying the \emph{support property} from~\cite{KontsevichSoibelman08}: For any norm $\|\cdot\|$ on $K(\mathcal{D})\otimes_{\mathbb{Z}}\mathbb{R}$, there is a constant $C>0$ such that 
\[
\|\gamma\|<C\cdot|Z(\gamma)|
\]
for every class $\gamma\in K(\mathcal{D})$ represented by a semistable object. If we write $\Stab(\mathcal{D})$ for the set of stability conditions on~$\mathcal{D}$ satisfying this support property, then the main result of~\cite{Bridgeland07} can be formulated as follows.

\begin{theorem}[\cite{Bridgeland07}, Theorem~1.2]
\label{thm:localiso}
The set $\Stab(\mathcal{D})$ has the structure of a complex manifold such that the map 
\begin{equation}
\label{eqn:forgetful}
\Stab(\mathcal{D})\rightarrow\Hom_{\mathbb{Z}}(K(\mathcal{D}),\mathbb{C})
\end{equation}
taking a stability condition to its central charge is a local isomorphism.
\end{theorem}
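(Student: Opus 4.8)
The plan is to reproduce the argument of \cite{Bridgeland07}, whose core is a deformation theorem asserting that a stability condition may be uniquely perturbed so as to realise any sufficiently nearby central charge. First I would replace the data $(\mathcal{A},Z)$ by the equivalent datum of a \emph{slicing} together with a compatible central charge: a slicing is a family of full additive subcategories $\mathcal{P}(\phi)\subset\mathcal{D}$, $\phi\in\mathbb{R}$, with $\mathcal{P}(\phi+1)=\mathcal{P}(\phi)[1]$, with $\Hom_{\mathcal{D}}(A,B)=0$ whenever $A\in\mathcal{P}(\phi_1)$, $B\in\mathcal{P}(\phi_2)$ and $\phi_1>\phi_2$, and such that every object has a Harder--Narasimhan filtration with semistable factors of strictly decreasing phase; compatibility with $Z$ means $Z(\mathcal{P}(\phi))\subset\mathbb{R}_{>0}\cdot e^{\mathrm{i}\pi\phi}$. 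Here $\mathcal{P}(\phi)$ is recovered from $(\mathcal{A},Z)$ as the category of semistable objects of phase $\phi$, and conversely $\mathcal{A}=\mathcal{P}((0,1])$. On the set of slicings one has the generalised metric
\[
d(\mathcal{P}_1,\mathcal{P}_2)=\sup_{0\neq E\in\mathcal{D}}\ \max\bigl\{\,\lvert\phi_1^{-}(E)-\phi_2^{-}(E)\rvert,\ \lvert\phi_1^{+}(E)-\phi_2^{+}(E)\rvert\,\bigr\}\in[0,\infty],
\]
where $\phi_i^{\pm}(E)$ are the highest and lowest phases occurring in the $\mathcal{P}_i$-Harder--Narasimhan filtration of $E$; incorporating the comparison $\lvert\log(m_{\sigma_1}(E)/m_{\sigma_2}(E))\rvert$ of masses gives a metric on $\Stab(\mathcal{D})$ which defines its topology, and under the support property this topology agrees on small balls with the one induced from the norm topology on $\Hom_{\mathbb{Z}}(K(\mathcal{D}),\mathbb{C})$.

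The main step, which I also expect to be the main obstacle, is the deformation lemma: given $\sigma=(\mathcal{P},Z)\in\Stab(\mathcal{D})$ and $\varepsilon\in(0,\tfrac18)$, there exists $\eta>0$ such that every $W\in\Hom_{\mathbb{Z}}(K(\mathcal{D}),\mathbb{C})$ with $\lVert W-Z\rVert<\eta$ (the norm being the one furnished by the support property for $\sigma$) is the central charge of a unique stability condition $\tau=(\mathcal{Q},W)$ with $d(\sigma,\tau)<\varepsilon$. I would construct $\mathcal{Q}$ by declaring, for each half-open interval $I\subset\mathbb{R}$ of length $<1$, the category $\mathcal{Q}(I)$ to be the extension closure of those $\sigma$-semistable objects whose $\sigma$-phase lies within $\varepsilon$ of $I$ and whose $W$-phase lies in $I$, and then verifying that this defines a slicing with central charge $W$. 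The delicate points are: (i) the support property bounds $\lvert Z(\gamma)\rvert\geq C^{-1}\lVert\gamma\rVert$ on semistable classes, which keeps $W(\gamma)\neq0$ and forces $\arg W(\gamma)$ to stay close to $\arg Z(\gamma)$ once $\eta$ is small, so the construction is well posed and masses do not degenerate; (ii) the Harder--Narasimhan property for $(\mathcal{Q},W)$, which I would establish by refining each object's $\sigma$-HN filtration and regrouping the graded pieces by $W$-phase, using a decreasing-length finiteness argument to terminate; and (iii) uniqueness, which follows because any stability condition within distance $\varepsilon$ of $\sigma$ with central charge $W$ must have exactly these semistable objects in each phase band.

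Finally, granting the deformation lemma, the forgetful map of \eqref{eqn:forgetful} is injective on each ball $B_\varepsilon(\sigma)$ by uniqueness, continuous because phases and masses depend continuously on $Z$, open onto a neighbourhood of $Z$ by existence, and has continuous local inverse by the same metric estimates; hence it is a local homeomorphism onto an open subset of the finite-dimensional complex vector space $\Hom_{\mathbb{Z}}(K(\mathcal{D}),\mathbb{C})$. Transporting the standard holomorphic charts of the target through these local homeomorphisms equips $\Stab(\mathcal{D})$ with a complex manifold structure — the transition functions are restrictions of the identity map of $\Hom_{\mathbb{Z}}(K(\mathcal{D}),\mathbb{C})$, hence holomorphic — and by construction the forgetful map is then a local biholomorphism, that is, a local isomorphism of complex manifolds. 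The verification of point (ii), the Harder--Narasimhan property of the perturbed slicing, is the part I expect to demand the most care.
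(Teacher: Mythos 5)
This theorem is cited in the paper as Theorem~1.2 of~\cite{Bridgeland07}; the paper supplies no proof of its own, so the comparison is against Bridgeland's argument. Your outline does follow the right strategy: pass to slicings, equip the set of stability conditions with the generalized metric, prove a deformation theorem asserting local existence and uniqueness of a stability condition realising a nearby central charge, deduce that the forgetful map is a local homeomorphism, and transport the linear complex structure from $\Hom_{\mathbb{Z}}(K(\mathcal{D}),\mathbb{C})$. The $\frac18$ bound, the use of the support property to keep masses bounded away from zero, and the observation that the chart transitions are restrictions of the identity are all in place.

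Where the sketch comes loose is in the construction of the new slicing and the verification of the slicing axioms. You define $\mathcal{Q}(I)$ as an extension closure of $\sigma$-semistable objects sorted by $W$-phase, but a slicing requires specifying $\mathcal{Q}(\phi)$ for each real $\phi$, and the intersection of your $\mathcal{Q}(I)$ over intervals shrinking to $\phi$ gives \emph{all} objects whose $\sigma$-HN factors have $W$-phase $\phi$, including $W$-unstable ones. Moreover the Hom-vanishing axiom — $\Hom(\mathcal{Q}(\phi_1),\mathcal{Q}(\phi_2))=0$ for $\phi_1>\phi_2$ — is not addressed and is genuinely non-automatic: if $\phi_1-\phi_2<2\varepsilon$ the $\sigma$-phase windows of the defining generators overlap, so vanishing does not follow from $\sigma$-semiorthogonality. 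The ``refine-and-regroup'' scheme for HN filtrations also does not directly produce $W$-HN filtrations, because a $W$-semistable factor of an object $E$ need not be an extension of a contiguous block of $\sigma$-HN factors of $E$ — the two filtrations are genuinely different filtrations, not coarsenings of one another. Bridgeland circumvents all of this by working inside the quasi-abelian hearts $\mathcal{P}\bigl((\phi-\varepsilon,\phi+\varepsilon)\bigr)$ and declaring an object $W$-semistable of phase $\phi$ when every strict subobject in that quasi-abelian category has $W$-phase $\leq\phi$; the slicing axioms (Hom vanishing, HN existence, compatibility as $\phi$ varies) are then extracted from the thinness of these hearts and the support/finiteness hypothesis. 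Your extension-closure description is a correct \emph{a posteriori} characterisation of $\mathcal{Q}(I)$, but taking it as the definition makes the axioms significantly harder to verify, and that verification — which you flag as the crux — is where the actual content of the proof lies.
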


\subsection{Group actions}

To describe the space of stability conditions on a triangulated category, one typically considers quotients by various group actions. For any triangulated category~$\mathcal{D}$, there is a natural action of the group $\Aut(\mathcal{D})$ on $\Stab(\mathcal{D})$. If $(\mathcal{A},Z)\in\Stab(\mathcal{D})$ is a stability condition and $\Phi\in\Aut(\mathcal{D})$ is an autoequivalence of~$\mathcal{D}$, then $\Phi\cdot(\mathcal{A},Z)=(\mathcal{A}',Z')$ is defined by 
\[
\mathcal{A}'=\Phi(\mathcal{A}), \quad Z'(E)=Z(\Phi^{-1}(E))
\]
for an object $E\in\mathcal{A}'$.

In the case where $\mathcal{D}=\mathcal{D}(Q,W)$ is the 3-Calabi-Yau triangulated category associated to a 2-acyclic quiver with potential~$(Q,W)$, there is a distinguished component $\Stab_\Delta(\mathcal{D})\subset\Stab(\mathcal{D})$ of the space of stability conditions on~$\mathcal{D}$. It is the component containing those stability conditions of the form~$(\mathcal{A},Z)$ where $\mathcal{A}$ is a vertex lying in the distinguished component $\Tilt_\Delta(\mathcal{D})\subset\Tilt(\mathcal{D})$ of the tilting graph. As explained in Section~7.7 of~\cite{BridgelandSmith15}, the subgroup $\Aut_\Delta(\mathcal{D})\subset\Aut(\mathcal{D})$ preserves this component while $\Nil_\Delta(\mathcal{D})\subset\Aut_\Delta(\mathcal{D})$ acts trivially on it. Hence there is an induced action of~$\cAut_\Delta(\mathcal{D})=\Aut_\Delta(\mathcal{D})/\Nil_\Delta(\mathcal{D})$ on $\Stab_\Delta(\mathcal{D})$. We write 
\[
\Sigma(Q,W)=\Stab_\Delta(\mathcal{D})/\cSph_\Delta(\mathcal{D})
\]
for the quotient of the distinguished component by the subgroup $\cSph_\Delta(\mathcal{D})\subset\cAut_\Delta(\mathcal{D})$.

In addition to the action of the group of autoequivalences, there is a natural action of the group of complex numbers on the space of stability conditions. This action has the property that the forgetful map~\eqref{eqn:forgetful} is $\mathbb{C}$-equivariant where $z\in\mathbb{C}$ acts on the space of central charges by mapping $Z\in\Hom_{\mathbb{Z}}(K(\mathcal{D}),\mathbb{C})$ to~$e^{-\mathrm{i}\pi z}\cdot Z$.

\subsection{Stability conditions from surfaces}

Let $\tau_0$ be a tagged triangulation of an amenable marked bordered surface $(\mathbb{S},\mathbb{M})$. We will denote by $(Q,W)=(Q(\tau_0),W(\tau_0))$ the associated quiver with potential and by $\mathcal{D}=\mathcal{D}(Q,W)$ the associated 3-Calabi-Yau triangulated category. In~\cite{BridgelandSmith15}, Bridgeland and Smith gave a description of the quotient $\Stab_\Delta(\mathcal{D})/\cAut_\Delta(\mathcal{D})$ in terms of meromorphic quadratic differentials on Riemann surfaces. Here we employ a slightly modified version of this result involving the space $\mathcal{Q}^\pm(\mathbb{S},\mathbb{M})$ introduced in Section~\ref{sec:SignedDifferentials}. In the following statement, we consider the $\mathbb{C}$-action on this space where a complex number $z\in\mathbb{C}$ sends a quadratic differential $\phi$ to the rescaled differential~$e^{-2\pi iz}\cdot\phi$.

\begin{theorem}[\cite{Allegretti21}, Theorem~10.3]
\label{thm:stabquad}
Take notation as in the previous paragraph. Then there is an isomorphism of complex manifolds 
\begin{equation}
\label{eqn:stabquad}
\Sigma(Q,W)\cong\mathcal{Q}^\pm(\mathbb{S},\mathbb{M})
\end{equation}
which is equivariant with respect to the actions of $\mathcal{G}_\Delta(\mathcal{D})\cong\MCG^\pm(\mathbb{S},\mathbb{M})$ and~$\mathbb{C}$.
\end{theorem}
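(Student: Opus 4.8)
The plan is to follow the strategy used by Bridgeland and Smith in~\cite{BridgelandSmith15}, where a closely related isomorphism is proved, while keeping track of the extra data that distinguishes $\Sigma(Q,W)=\Stab_\Delta(\mathcal{D})/\cSph_\Delta(\mathcal{D})$ from the quotient $\Stab_\Delta(\mathcal{D})/\cAut_\Delta(\mathcal{D})$ they consider, and likewise the extra data distinguishing the branched cover $\mathcal{Q}^\pm(\mathbb{S},\mathbb{M})$ of signed differentials from $\mathcal{Q}(\mathbb{S},\mathbb{M})$. Concretely, I would construct a map $\Sigma(Q,W)\to\mathcal{Q}^\pm(\mathbb{S},\mathbb{M})$ directly and verify that it is a biholomorphism by a local analysis using period coordinates, a global analysis using the chamber decomposition supplied by Theorem~\ref{thm:graphgroup}, and a check of equivariance for the two group actions.

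To define the map, one represents a point of $\Sigma(Q,W)$ by a stability condition $\sigma=(\mathcal{A},Z)$ with $\mathcal{A}\in\Tilt_\Delta(\mathcal{D})$, well defined up to the action of $\cSph_\Delta(\mathcal{D})$. The heart $\mathcal{A}$ determines a vertex of $\Exch_\Delta(\mathcal{D})$, hence by Theorem~\ref{thm:graphgroup}(1) a tagged triangulation $\tau$ of $(\mathbb{S},\mathbb{M})$, and the simple objects of $\mathcal{A}$ are indexed by the tagged arcs of $\tau$. On the open set where $\tau$ is the tagged WKB triangulation of a complete, saddle-free GMN differential, the values $Z(S)$ on the simples are the periods $Z_\phi(\gamma)$ of the standard saddle classes of a unique such differential $\phi$, as in Section~4 of~\cite{BridgelandSmith15}; moreover, because $Z(S)$ itself and not merely $Z(S)^2$ is remembered, the data determines the sign of $\Res_p(\phi)=\pm4\pi\mathrm{i}\sqrt{a_p}$ at each puncture, i.e.\ a signing in the sense of Section~\ref{sec:SignedDifferentials}, so that we obtain a point of $\mathcal{Q}^\pm(\mathbb{S},\mathbb{M})$. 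This defines the map on the generic locus; it extends over the walls because a tilt of $\mathcal{A}$ at a simple object corresponds, by Theorems~\ref{thm:flipmutation}, \ref{thm:KellerYang} and~\ref{thm:graphgroup}, to a flip of a tagged arc, so adjacent chambers are glued compatibly on the two sides. The loci where $\phi$ acquires a simple pole (the branch locus of $\mathcal{Q}^\pm\to\mathcal{Q}$) and their categorical counterparts are handled by a separate but routine argument, as in~\cite{BridgelandSmith15}.

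That the resulting map is a local biholomorphism comes from comparing two systems of holomorphic local coordinates: the central charge map $\Stab(\mathcal{D})\to\Hom_{\mathbb{Z}}(K(\mathcal{D}),\mathbb{C})$, which is a local isomorphism by Theorem~\ref{thm:localiso}, and the period coordinates on moduli of quadratic differentials together with the chosen residues at the double poles. The construction is built so as to intertwine these, so the map is \'etale; since both sides fibre over a common base recording the length and residue data, it is a local isomorphism. For global bijectivity one argues as in~\cite{BridgelandSmith15}: the chambers of $\Sigma(Q,W)$, the regions on which the heart is constant up to spherical twists, are indexed by $\Exch_\Delta(\mathcal{D})$; the chambers of $\mathcal{Q}^\pm(\mathbb{S},\mathbb{M})$, the regions on which the tagged WKB triangulation is constant, are indexed by $\Tri_{\bowtie}(\mathbb{S},\mathbb{M})$; Theorem~\ref{thm:graphgroup}(1) identifies these index sets compatibly with the gluing data, and one checks that the map restricts to an isomorphism on each chamber and that there is no monodromy obstruction to assembling these into a global bijection.

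Equivariance for $\mathcal{G}_\Delta(\mathcal{D})\cong\MCG^\pm(\mathbb{S},\mathbb{M})$ follows because $\mathcal{G}_\Delta(\mathcal{D})$ acts on $\Sigma(Q,W)$ through autoequivalences, hence permutes hearts and so acts on $\Exch_\Delta(\mathcal{D})$, while $\MCG^\pm(\mathbb{S},\mathbb{M})$ acts on $\mathcal{Q}^\pm(\mathbb{S},\mathbb{M})$ by changing markings and signings, hence acts on $\Tri_{\bowtie}(\mathbb{S},\mathbb{M})$; Theorem~\ref{thm:graphgroup}(2) identifies these two actions on the index sets, and the chamberwise rigidity from the previous step upgrades this to equivariance of the map. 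For the $\mathbb{C}$-action, $z\in\mathbb{C}$ replaces $Z$ by $e^{-\mathrm{i}\pi z}\cdot Z$ on the categorical side and $\phi$ by $e^{-2\pi\mathrm{i}z}\cdot\phi$ on the geometric side; since the latter multiplies every period $Z_\phi(\gamma)=\int_\gamma\sqrt{\phi}$ by $e^{-\mathrm{i}\pi z}$, the matching of central charges with periods is $\mathbb{C}$-equivariant for small $z$, where the heart and WKB triangulation are unchanged, and this propagates to all of $\Sigma(Q,W)$ by continuity and the group law. The main obstacle is the global bijectivity above: patching the chamberwise isomorphisms without monodromy is exactly where the full strength of the Bridgeland--Smith analysis, together with Theorem~\ref{thm:graphgroup}'s encoding of the combinatorics of flips of tagged triangulations (including the subtle behaviour at self-folded triangles), is needed; the remaining steps are bookkeeping that reduces the statement to their argument while tracking the residue and signing data.
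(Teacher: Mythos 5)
The paper does not prove this statement; it cites it verbatim as Theorem~10.3 of~\cite{Allegretti21} and uses the isomorphism and its equivariance as a black box in Section~\ref{sec:TheMainResults}, so there is no in-paper proof to compare against.

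On the merits of your sketch: the two refinements you identify over the Bridgeland--Smith correspondence are the right ones, namely (i) that the central charge $Z$ retains the sign of the period on the residue class at each double pole, not just its square, and hence selects a signing in the sense of Section~\ref{sec:SignedDifferentials}, so the target is naturally $\mathcal{Q}^\pm(\mathbb{S},\mathbb{M})$ rather than $\mathcal{Q}(\mathbb{S},\mathbb{M})$; and (ii) that Theorem~\ref{thm:graphgroup} matches the combinatorics of tilting with tagged flips, so the chamberwise definitions can in principle be glued. But the central difficulty --- global bijectivity, and specifically your assertion that there is ``no monodromy obstruction'' when the chamberwise isomorphisms are assembled --- is announced rather than argued, and this is exactly where all the hard analysis lives in~\cite{BridgelandSmith15}: properness of the period map, behaviour near the strata where simple poles appear (which is also where $\mathcal{Q}^\pm\to\mathcal{Q}$ branches), and density of complete saddle-free differentials. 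Moreover, passing from the Bridgeland--Smith orbifold quotient $\Stab_\Delta(\mathcal{D})/\cAut_\Delta(\mathcal{D})$ to the cover $\Stab_\Delta(\mathcal{D})/\cSph_\Delta(\mathcal{D})$ requires showing that $\cSph_\Delta(\mathcal{D})$ is precisely the deck group matching $\mathcal{Q}^\pm(\mathbb{S},\mathbb{M})$ over the orbifold quotient; the index-set identification of Theorem~\ref{thm:graphgroup}(1) is necessary but not sufficient for this, since one must also control the automorphisms fixing a chamber. Finally, the aside that ``both sides fibre over a common base recording the length and residue data'' is neither established in the text nor needed; the local biholomorphism already follows once central charges are matched with period coordinates via Theorem~\ref{thm:localiso}, and that clause should be dropped rather than leaned on.
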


If $\mathcal{A}\in\Tilt_\Delta(\mathcal{D})$, then there is a corresponding subset $\Stab(\mathcal{A})\subset\Stab(\mathcal{D})$ consisting of stability conditions of the form $(\mathcal{A},Z)$ for some stability function~$Z$. As explained in~\cite{BridgelandSmith15}, Section~7.7, it is a locally closed subset of the space of stability conditions. By Theorem~\ref{thm:graphgroup}, there is a unique corresponding vertex $\tau\in\Tri_{\bowtie}(\mathbb{S},\mathbb{M})$. An important property of the isomorphism~\eqref{eqn:stabquad} is that if $\sigma\in\Sigma(Q,W)$ is any point corresponding to a stability condition in the interior of~$\Stab(\mathcal{A})$, then $\sigma$ is mapped to a complete, saddle-free quadratic differential~$\phi$ having tagged WKB~triangulation~$\tau$. There is an isomorphism $K(\mathcal{A})\cong\widehat{H}(\phi)$ which identifies identifies the stability function~$Z_\sigma$ of~$\sigma$ with the period map~$Z_\phi$ and identifies the classes of simple objects in~$\mathcal{A}$ with standard saddle classes of~$\phi$.

\subsection{Positive points of cluster varieties}

In the theory of cluster algebras and cluster varieties, a \emph{seed} is defined to be an ordered triple $\mathbf{i}=(\Gamma,\{e_i\}_{i\in I}, \langle-,-\rangle)$ consisting of a lattice $\Gamma$ of finite rank, a basis $\{e_i\}_{i\in I}$ for~$\Gamma$ indexed by some finite set~$I$, and an integer-valued skew form $\langle-,-\rangle$. An isomorphism of seeds is defined to be an isomorphism of the underlying lattices that preserves the distinguished bases and the skew forms.

For example, suppose that $(Q,W)$ is a 2-acyclic quiver with potential and $\mathcal{D}=\mathcal{D}(Q,W)$ is the associated 3-Calabi-Yau triangulated category. Then we get a lattice $\Gamma=K(\mathcal{D})$ of finite rank. Given a heart $\mathcal{A}\in\Tilt_\Delta(\mathcal{D})$, there are finitely many simple objects $S_i\in\mathcal{A}$~($i\in I$) up to isomorphism, and their classes $e_i=[S_i]\in K(\mathcal{A})\cong K(\mathcal{D})$ form a basis for~$\Gamma$. There is a bilinear form $\langle-,-\rangle$ on~$\Gamma$ given by the Euler pairing 
\[
\langle [E],[F]\rangle=\sum_{i\in\mathbb{Z}}(-1)^i\dim_\Bbbk\Hom_{\mathcal{D}}^i(E,F)
\]
where $\Hom_{\mathcal{D}}^i(E,F)=\Hom_{\mathcal{D}}(E,F[i])$. The 3-Calabi-Yau property of~$\mathcal{D}$ implies that this form is skew-symmetric. In this way, we see that there is a seed naturally associated to any vertex $\mathcal{A}\in\Tilt_\Delta(\mathcal{D})$. If $\mathcal{A}'$ is obtained from~$\mathcal{A}$ by applying a spherical twist, then the seed associated to~$\mathcal{A}$ is isomorphic to the one associated to~$\mathcal{A}$. It follows that, up to isomorphism, there is a well defined seed $\mathbf{i}_t$ associated to each vertex $t\in\Exch_\Delta(\mathcal{D})$.

We say that two seeds $\mathbf{i}=(\Gamma,\{e_i\}_{i\in I}, \langle-,-\rangle)$ and $\mathbf{i}'=(\Gamma',\{e_i'\}_{i\in I}, \langle-,-\rangle')$ are related by \emph{mutation} at $k\in I$ if we have $\Gamma'=\Gamma$ and $\langle-,-\rangle'=\langle-,-\rangle$ and if the bases are related by 
\begin{equation}
\label{eqn:basismutation}
e_j'=
\begin{cases}
-e_k & \text{if $j=k$} \\
e_j+[\langle e_k,e_j\rangle]_+\cdot e_k & \text{if $j\neq k$}
\end{cases}
\end{equation}
where we write $[m]_+=\max(m,0)$.

Continuing the example from above, suppose that $t$,~$t'\in\Exch_\Delta(\mathcal{D})$ are connected by an edge. Then $t$ and~$t'$ correspond to hearts $\mathcal{A}$,~$\mathcal{A}'\in\Tilt_\Delta(\mathcal{D})$ so that if $S_i$ ($i\in I$) are the simple objects of~$\mathcal{A}$ up to isomorphism, then we have $\mathcal{A}'=\mu_{S_k}^+(\mathcal{A})$ for some~$k$. Then the proof of Proposition~7.1 in~\cite{BridgelandSmith15} shows that $\mathcal{A}'$ has simple objects $S_i'$~($i\in I$) in bijection with those of~$\mathcal{A}$, and the classes $e_i=[S_i]$ and $e_i'=[S_i']$ are related by~\eqref{eqn:basismutation}. It follows that, up to composition with seed isomorphisms, the seeds $\mathbf{i}_t$ and~$\mathbf{i}_{t'}$ are related by mutation at~$k\in I$.

To any seed $\mathbf{i}=(\Gamma,\{e_i\}_{i\in I}, \langle-,-\rangle)$, we associate the real manifold 
\[
\mathcal{T}_{\mathbf{i}}=\Hom_{\mathbb{Z}}(\Gamma,\mathbb{Z})\otimes_{\mathbb{Z}}\mathbb{R}_{>0}\cong\mathbb{R}_{>0}^I.
\]
For any $\gamma\in\Gamma$, there is a function $X_{\mathbf{i},\gamma}:\mathcal{T}_{\mathbf{i}}\rightarrow\mathbb{R}_{>0}$ given by $X_{\mathbf{i},\gamma}(g\otimes h)=h^{g(\gamma)}$. Note that if $\mathbf{i}'$ is a seed which is isomorphic to~$\mathbf{i}$, then the two seeds have the same lattice, and so we have a canonical homeomorphism $\mathcal{T}_{\mathbf{i}}\cong\mathcal{T}_{\mathbf{i}'}$. We can therefore think of~$\mathcal{T}_{\mathbf{i}}$ as depending only on the isomorphism class of~$\mathbf{i}$. For $k\in I$, we define a homeomorphism $\mu_k:\mathcal{T}_{\mathbf{i}}\rightarrow\mathcal{T}_{\mu_k(\mathbf{i})}$ by the formula 
\[
\mu_k^*(X_{\mu_k(\mathbf{i}),\gamma})=X_{\mathbf{i},\gamma}\cdot(1+X_{\mathbf{i},e_k})^{\langle\gamma,e_k\rangle}.
\]
We then define a space by gluing the manifolds $\mathcal{T}_{\mathbf{i}}$ using these homeomorphisms.

\begin{definition}
The \emph{enhanced Teichm\"uller space} of $Q$ is the space 
\[
\mathcal{T}(Q)=\left(\coprod_{t\in\Exch_\Delta(\mathcal{D})}\mathcal{T}_{\mathbf{i}_t}\right)/\sim
\]
where $\sim$ is the relation defined by gluing each pair of spaces $\mathcal{T}_{\mathbf{i}}$ and $\mathcal{T}_{\mu_k(\mathbf{i})}$ by the homeomorphism $\mu_k:\mathcal{T}_{\mathbf{i}}\rightarrow\mathcal{T}_{\mu_k(\mathbf{i})}$.
\end{definition}

This space $\mathcal{T}(Q)$ arises as the set of $\mathbb{R}_{>0}$-valued points of the cluster Poisson variety of~$Q$ (see Section~9.6 of~\cite{Allegretti21} for a definition of the cluster Poisson variety in our setup). In particular, it is independent of the choice of potential~$W$. Note that the action of the cluster modular group $\mathcal{G}_\Delta(\mathcal{D})$ on the exchange graph $\Exch_\Delta(\mathcal{D})$ gives rise to a natural action of $\mathcal{G}_\Delta(\mathcal{D})$ on~$\mathcal{T}(Q)$.

\subsection{Positive points of cluster varieties from surfaces}

Let $\tau_0$ once again be a tagged triangulation of an amenable marked bordered surface $(\mathbb{S},\mathbb{M})$. We write $(Q,W)=(Q(\tau_0),W(\tau_0))$ for the associated quiver with potential and $\mathcal{D}=\mathcal{D}(Q,W)$ for the associated triangulated category. We can then apply the construction described above to get a space $\mathcal{T}(Q)$, and we have the following result.

\begin{theorem}
\label{thm:clusterteich}
Take notation as in the previous paragraph. Then there is a homeomorphism 
\[
\mathcal{T}(Q)\cong\mathcal{T}^\pm(\mathbb{S},\mathbb{M})
\]
which is equivariant with respect to the action of $\mathcal{G}_\Delta(\mathcal{D})\cong\MCG^\pm(\mathbb{S},\mathbb{M})$.
\end{theorem}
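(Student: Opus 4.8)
The plan is to build the homeomorphism $\mathcal{T}(Q)\cong\mathcal{T}^\pm(\mathbb{S},\mathbb{M})$ by matching up the cell-wise descriptions of the two spaces: on the side of $\mathcal{T}(Q)$, the chart $\mathcal{T}_{\mathbf{i}_t}\cong\mathbb{R}_{>0}^I$ attached to a vertex $t\in\Exch_\Delta(\mathcal{D})$; on the side of $\mathcal{T}^\pm(\mathbb{S},\mathbb{M})$, the cluster-coordinate chart attached to a tagged triangulation $\tau$ of $(\mathbb{S},\mathbb{M})$ via Proposition~\ref{prop:shearcoordinates} and the discussion of cluster coordinates following it. The dictionary between vertices of the exchange graph and tagged triangulations is exactly Theorem~\ref{thm:graphgroup}(1): $\Tri_{\bowtie}(\mathbb{S},\mathbb{M})\cong\Exch_\Delta(\mathcal{D})$, compatibly with the edges (flips on one side, mutations on the other, by Theorems~\ref{thm:flipmutation} and~\ref{thm:KellerYang}). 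So the first step is to fix this identification and, for each tagged triangulation $\tau$ with corresponding vertex $t$, to identify the index set $I$ of simple objects of the heart $\mathcal{A}$ with the set of tagged arcs of $\tau$, using the last sentence of Theorem~\ref{thm:stabquad}'s surrounding discussion (simple objects $\leftrightarrow$ standard saddle classes $\leftrightarrow$ arcs of the WKB triangulation) together with the seed $\mathbf{i}_t$ attached to $\mathcal{A}$.

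Next I would define, for each $\tau$, a homeomorphism $h_\tau\colon\mathcal{T}_{\mathbf{i}_t}\to\mathcal{T}^\pm(\mathbb{S},\mathbb{M})$ by sending the point of $\mathcal{T}_{\mathbf{i}_t}$ with coordinates $(X_{\mathbf{i}_t,e_\alpha})_\alpha$ to the point of $\mathcal{T}^\pm(\mathbb{S},\mathbb{M})$ whose cluster coordinates with respect to $\tau$ are $(X_\alpha)_\alpha$ with $X_\alpha=X_{\mathbf{i}_t,e_\alpha}$; this is a homeomorphism by Proposition~\ref{prop:shearcoordinates} (both sides are $\mathbb{R}_{>0}^n$, and $n$ matches because $|I|=n$ is the rank of $K(\mathcal{D})$ and equals the number of arcs of a tagged triangulation by Proposition~2.10 of~\cite{FominShapiroThurston08}). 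The crucial step is then to check that these charts glue: if $\tau'$ is obtained from $\tau$ by a flip of a tagged arc $k$, corresponding to an edge $t$–$t'$ of the exchange graph, then I must verify that the transition $h_{\tau'}^{-1}\circ h_\tau$ equals the gluing map $\mu_k\colon\mathcal{T}_{\mathbf{i}_t}\to\mathcal{T}_{\mu_k(\mathbf{i}_t)}$. On the cluster-variety side the gluing map is $\mu_k^*(X_{\mu_k(\mathbf{i}),\gamma})=X_{\mathbf{i},\gamma}\cdot(1+X_{\mathbf{i},e_k})^{\langle\gamma,e_k\rangle}$; on the Teichm\"uller side the change of cluster coordinates under a flip is given by Proposition~\ref{prop:changecoordinates}. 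So the content is: the exchange matrix $\varepsilon_{\alpha\gamma}^T$ appearing in Proposition~\ref{prop:changecoordinates} agrees, under the identification of arcs with classes of simples, with the skew form $\langle e_k,e_\alpha\rangle$ of the seed $\mathbf{i}_t$, and the two formulas are then literally the same formula (rewriting $X_\alpha(1+X_\gamma^{-\operatorname{sgn}(\varepsilon_{\alpha\gamma})})^{-\varepsilon_{\alpha\gamma}}$ as $X_\alpha(1+X_\gamma)^{\langle e_\alpha,e_\gamma\rangle}$ using the standard elementary identity in $\mathbb{R}_{>0}$). That the combinatorics of $Q(\tau)$ — hence the Euler form on $K(\mathcal{D})$ — is encoded by the exchange matrix of $\tau$ is built into the construction of $(Q(\tau),W(\tau))$ recalled in Section~\ref{sec:TriangulatedCategories}: $Q(T)$ has $\varepsilon_{ij}^T$ arrows from $j$ to $i$, and the $3$-Calabi-Yau Euler pairing of the corresponding simples recovers $\varepsilon_{ij}^T$.

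Having shown the charts glue, I would conclude that the $h_\tau$ descend to a well-defined continuous map $\mathcal{T}(Q)\to\mathcal{T}^\pm(\mathbb{S},\mathbb{M})$; it is a homeomorphism because it restricts to a homeomorphism on each chart and the charts cover both spaces (every point of $\mathcal{T}^\pm(\mathbb{S},\mathbb{M})$ lies in the image of some $h_\tau$ since shear coordinates are everywhere finite and positive). Finally, equivariance: both actions are defined through the identification of $\Exch_\Delta(\mathcal{D})$ with $\Tri_{\bowtie}(\mathbb{S},\mathbb{M})$ and of $\mathcal{G}_\Delta(\mathcal{D})$ with $\MCG^\pm(\mathbb{S},\mathbb{M})$ from Theorem~\ref{thm:graphgroup}, and the action in each case permutes the charts by permuting vertices/triangulations while acting trivially on the coordinate labels within a chart, so equivariance is immediate from the construction once one checks the labels are carried along correctly (in particular that the $\mathbb{Z}_2^{\mathbb{P}}$ factor, acting by changing boundary orientations on the Teichm\"uller side and by the analogous sign change on the cluster side, matches — this is the point where one uses the precise definition of cluster coordinates via the signing $\epsilon$ in Section~\ref{sec:EnhancedTeichmullerSpace}). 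The main obstacle I expect is the gluing step: one has to be careful that the identification $I\leftrightarrow\{\text{tagged arcs of }\tau\}$ is made consistently across all $\tau$ (i.e. that Theorem~\ref{thm:graphgroup}'s graph isomorphism can be upgraded to a compatible bijection of arcs with simples at every vertex, not just at a single basepoint), and that the sign conventions in $\varepsilon_{\alpha\gamma}^T$ versus $\langle e_k,e_\alpha\rangle$ line up so that Proposition~\ref{prop:changecoordinates} and the $\mu_k^*$ formula coincide on the nose rather than up to an inversion; handling self-folded triangles (where $X_\alpha=Y_\alpha Y_\beta$) requires invoking the already-established fact that $X_\alpha$ depends only on the underlying tagged arc, so that on the level of tagged triangulations everything is a genuine cluster variety and Proposition~\ref{prop:changecoordinates} applies uniformly.
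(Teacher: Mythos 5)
Your proposal follows essentially the same route as the paper's own proof: match the chart $\mathcal{T}_{\mathbf{i}_t}\cong\mathbb{R}_{>0}^I$ for each vertex $t\in\Exch_\Delta(\mathcal{D})$ to the cluster-coordinate chart for the corresponding tagged triangulation $\tau$ under the graph isomorphism of Theorem~\ref{thm:graphgroup}, verify that the mutation gluing $\mu_k^*$ matches the flip formula of Proposition~\ref{prop:changecoordinates} using the identity $\langle e_i,e_j\rangle=\varepsilon^T_{\alpha_i\alpha_j}$, and read equivariance off the construction. The only cosmetic difference is that the paper packages the arc$\leftrightarrow$basis bijection and the Euler form computation into a single citation (Lemma~9.10 of Bridgeland--Smith), whereas you rederive them from the surrounding discussion of standard saddle classes and from the construction of $Q(\tau)$; the substance is identical.
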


\begin{proof}
Associated to any vertex $t\in\Exch_\Delta(\mathcal{D})$ is a seed $\mathbf{i}_t=(\Gamma,\{e_i\}_{i\in I}, \langle-,-\rangle)$ well defined up to isomorphism. The functions $X_{\mathbf{i}_t,e_i}$ provide a homeomorphism $\mathcal{T}_{\mathbf{i}_t}\cong\mathbb{R}_{>0}^I$. If $\tau$ is the tagged triangulation corresponding to~$t$ under the isomorphism of Theorem~\ref{thm:graphgroup}, then by Lemma~9.10 of~\cite{BridgelandSmith15}, the tagged arcs of $\tau$ are naturally in bijection with the basis elements $e_i$. Let us write $\alpha_i$ for the tagged arc corresponding to~$e_i$. Then there is a cluster coordinate $X_{\tau,\alpha_i}$ for each of these tagged arcs, and these coordinates provide a homeomorphism $\mathcal{T}^\pm(\mathbb{S},\mathbb{M})\cong\mathbb{R}_{>0}^I$. Hence there is a homeomorphism $\mathcal{T}^\pm(\mathbb{S},\mathbb{M})\cong\mathcal{T}_{\mathbf{i}_t}$.

Next suppose that $t'$ is connected to~$t$ by an edge of~$\Exch_\Delta(\mathcal{D})$, and let $\tau'$ be the tagged triangulation corresponding to~$t'$ under the isomorphism of Theorem~\ref{thm:graphgroup}. Then $\tau'$ is obtained from~$\tau$ by a flip of some tagged arc~$\gamma$. By Lemma~9.10 of~\cite{BridgelandSmith15}, one has $\langle e_i,e_j\rangle=\varepsilon_{\alpha_i\alpha_j}^T$. Using this fact together with~\eqref{eqn:basismutation}, one can check that the transformation used to glue $\mathcal{T}_{\mathbf{i}_t}$ to~$\mathcal{T}_{\mathbf{i}_{t'}}$ coincides with the transformation in Proposition~\ref{prop:changecoordinates}. Hence there is a canonical homeomorphism $\mathcal{T}(\mathbb{S},\mathbb{M})\cong\mathcal{T}(Q)$. The equivariance follows from Theorem~\ref{thm:graphgroup}.
\end{proof}

For any vertex $t\in\Exch_\Delta(\mathcal{D})$, let $\mathbf{i}_t=(\Gamma,\{e_i\}_{i\in I}, \langle-,-\rangle)$ be the corresponding seed, well defined up to isomorphism. Let $\tau\in\Tri_{\bowtie}(\mathbb{S},\mathbb{M})$ be the tagged triangulation corresponding to~$t$ under the isomorphism of Theorem~\ref{thm:graphgroup}. From the proof of Theorem~\ref{thm:clusterteich}, one see that for each $i\in I$, the function $X_{\mathbf{i},e_i}:\mathcal{T}_\mathbf{i}\rightarrow\mathbb{R}_{>0}$ is identified with the cluster coordinate $X_{\tau,\alpha_i}:\mathcal{T}^\pm(\mathbb{S},\mathbb{M})\rightarrow\mathbb{R}_{>0}$ associated to a corresponding tagged arc~$\alpha_i$ of~$\tau$. The correspondence between basis elements $e_i$ and tagged arcs~$\alpha_i$ is such that if $\phi$ is a complete, saddle-free differential with tagged WKB~triangulation~$\tau$, then $e_i$ is the class in $K(\mathcal{A})\cong\widehat{H}(\phi)$ of the standard saddle connection corresponding to~$\alpha_i$. See~\cite{BridgelandSmith15}, Section~10.4 for details.

\subsection{From stability conditions to Teichm\"uller space}

If we now write $\widehat{\Psi}:\Sigma(Q,W)\rightarrow\mathcal{T}(Q)$ for the composition of $\Psi^\pm:\mathcal{Q}^\pm(\mathbb{S},\mathbb{M})\rightarrow\mathcal{T}^\pm(\mathbb{S},\mathbb{M})$ with the homeomorphisms of Theorems~\ref{thm:stabquad} and~\ref{thm:clusterteich}, then we have our first main result.

\begin{theorem}
Let $(Q,W)$ be the quiver with potential associated to a tagged triangulation of an amenable marked bordered surface, and let $\mathcal{D}=\mathcal{D}(Q,W)$ be the associated 3-Calabi-Yau triangulated category. Then there is a  $\mathcal{G}_\Delta(\mathcal{D})$-equivariant continuous map 
\[
\widehat{\Psi}:\Sigma(Q,W)\rightarrow\mathcal{T}(Q)
\]
from the space of stability conditions to the enhanced Teichm\"uller space.
\end{theorem}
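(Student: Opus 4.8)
The plan is to obtain $\widehat{\Psi}$ by transporting the map $\Psi^\pm$ of Theorem~\ref{thm:introPsipm} (equivalently Proposition~\ref{prop:lift}, whose continuity is Proposition~\ref{prop:Psipmcontinuous}) across the two identifications furnished by Theorems~\ref{thm:stabquad} and~\ref{thm:clusterteich}. Concretely, I would set $\widehat{\Psi}$ to be the composition
\[
\Sigma(Q,W)\stackrel{\sim}{\longrightarrow}\mathcal{Q}^\pm(\mathbb{S},\mathbb{M})\xrightarrow{\ \Psi^\pm\ }\mathcal{T}^\pm(\mathbb{S},\mathbb{M})\stackrel{\sim}{\longleftarrow}\mathcal{T}(Q),
\]
where the left arrow is the isomorphism of complex manifolds from Theorem~\ref{thm:stabquad} and the right arrow is the homeomorphism from Theorem~\ref{thm:clusterteich}, inverted. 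Here one must first pick a tagged triangulation $\tau_0$ of the amenable surface $(\mathbb{S},\mathbb{M})$ giving rise to $(Q,W)$, so that both Theorem~\ref{thm:stabquad} and Theorem~\ref{thm:clusterteich} apply; since the category $\mathcal{D}(Q,W)$ is determined up to equivalence by $(\mathbb{S},\mathbb{M})$ and the constructions of $\Sigma(Q,W)$, $\mathcal{T}(Q)$ depend only on $\mathcal{D}$, the resulting $\widehat\Psi$ does not depend on this choice up to the natural identifications.

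The verification then breaks into two routine steps. First, \emph{continuity}: the left map is a homeomorphism (indeed biholomorphism) by Theorem~\ref{thm:stabquad}, the middle map $\Psi^\pm$ is continuous by Proposition~\ref{prop:Psipmcontinuous}, and the inverse of the right map is a homeomorphism by Theorem~\ref{thm:clusterteich}; a composition of continuous maps is continuous. Second, \emph{equivariance}: the isomorphism of Theorem~\ref{thm:stabquad} intertwines the action of $\mathcal{G}_\Delta(\mathcal{D})$ on $\Sigma(Q,W)$ with the action of $\MCG^\pm(\mathbb{S},\mathbb{M})$ on $\mathcal{Q}^\pm(\mathbb{S},\mathbb{M})$ via the isomorphism $\mathcal{G}_\Delta(\mathcal{D})\cong\MCG^\pm(\mathbb{S},\mathbb{M})$ of Theorem~\ref{thm:graphgroup}; the map $\Psi^\pm$ is $\MCG^\pm(\mathbb{S},\mathbb{M})$-equivariant by Proposition~\ref{prop:lift}; and the homeomorphism of Theorem~\ref{thm:clusterteich} is $\mathcal{G}_\Delta(\mathcal{D})$-equivariant for the same identification of groups. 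Chaining these three equivariances, and using that the same group isomorphism $\mathcal{G}_\Delta(\mathcal{D})\cong\MCG^\pm(\mathbb{S},\mathbb{M})$ is used throughout, shows $\widehat\Psi$ is $\mathcal{G}_\Delta(\mathcal{D})$-equivariant.

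I do not expect a genuine obstacle here: all the hard analytic and categorical content has already been established (existence and continuity of $\Psi^\pm$ in Section~\ref{sec:AMapBetweenModuliSpaces}, and the dictionary between the categorical and surface pictures in Theorems~\ref{thm:graphgroup}, \ref{thm:stabquad}, and~\ref{thm:clusterteich}), so the theorem is essentially a bookkeeping assembly of these inputs. If anything requires care, it is checking that the group isomorphism $\mathcal{G}_\Delta(\mathcal{D})\cong\MCG^\pm(\mathbb{S},\mathbb{M})$ appearing in Theorems~\ref{thm:graphgroup}, \ref{thm:stabquad}, \ref{thm:clusterteich} is literally the same one in all three statements (it is, as all are normalized against the fixed triangulation $\tau_0$), so that the equivariances genuinely compose rather than merely being compatible up to an outer automorphism. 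Granting that, the proof is the one-line composition above together with the two-sentence continuity/equivariance check.

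\begin{proof}
Fix a tagged triangulation $\tau_0$ of the amenable marked bordered surface $(\mathbb{S},\mathbb{M})$ with $(Q,W)=(Q(\tau_0),W(\tau_0))$ and $\mathcal{D}=\mathcal{D}(Q,W)$. Define $\widehat{\Psi}$ as the composition of the isomorphism $\Sigma(Q,W)\stackrel{\sim}{\to}\mathcal{Q}^\pm(\mathbb{S},\mathbb{M})$ of Theorem~\ref{thm:stabquad}, the map $\Psi^\pm:\mathcal{Q}^\pm(\mathbb{S},\mathbb{M})\to\mathcal{T}^\pm(\mathbb{S},\mathbb{M})$ of Proposition~\ref{prop:lift}, and the inverse of the homeomorphism $\mathcal{T}(Q)\stackrel{\sim}{\to}\mathcal{T}^\pm(\mathbb{S},\mathbb{M})$ of Theorem~\ref{thm:clusterteich}. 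Each of the three maps is continuous---the first and third by Theorems~\ref{thm:stabquad} and~\ref{thm:clusterteich}, and $\Psi^\pm$ by Proposition~\ref{prop:Psipmcontinuous}---so $\widehat{\Psi}$ is continuous. For equivariance, recall that Theorem~\ref{thm:graphgroup} gives an isomorphism $\mathcal{G}_\Delta(\mathcal{D})\cong\MCG^\pm(\mathbb{S},\mathbb{M})$; the isomorphism of Theorem~\ref{thm:stabquad} is equivariant for this identification, $\Psi^\pm$ is $\MCG^\pm(\mathbb{S},\mathbb{M})$-equivariant by Proposition~\ref{prop:lift}, and the homeomorphism of Theorem~\ref{thm:clusterteich} is equivariant for the same identification. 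Composing, $\widehat{\Psi}$ is $\mathcal{G}_\Delta(\mathcal{D})$-equivariant. Finally, since $\Sigma(Q,W)$, $\mathcal{T}(Q)$, and the $\mathcal{G}_\Delta(\mathcal{D})$-actions depend on $(\mathbb{S},\mathbb{M})$ only through $\mathcal{D}$ up to canonical identification, the map $\widehat{\Psi}$ is independent of the choice of $\tau_0$.
\end{proof}
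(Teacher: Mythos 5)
Your proposal is correct and follows essentially the same route as the paper's own proof: define $\widehat{\Psi}$ as the composition through $\mathcal{Q}^\pm(\mathbb{S},\mathbb{M})$ and $\mathcal{T}^\pm(\mathbb{S},\mathbb{M})$ via Theorems~\ref{thm:stabquad} and~\ref{thm:clusterteich}, and deduce continuity from Proposition~\ref{prop:Psipmcontinuous} and equivariance from the $\MCG^\pm(\mathbb{S},\mathbb{M})$-equivariance of~$\Psi^\pm$ together with the equivariance statements in those two theorems. The extra remark you add about the consistency of the group isomorphism $\mathcal{G}_\Delta(\mathcal{D})\cong\MCG^\pm(\mathbb{S},\mathbb{M})$ across the cited results is a reasonable point of care but is not discussed separately in the paper.
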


\begin{proof}
The continuity of $\widehat{\Psi}$ follows from Proposition~\ref{prop:Psipmcontinuous}, while the equivariance follows from the $\MCG^\pm(\mathbb{S},\mathbb{M})$-equivariance of~$\Psi^\pm$ combined with the equivariance properties in Theorems~\ref{thm:stabquad} and~\ref{thm:clusterteich}.
\end{proof}

Next suppose that $\sigma=(\mathcal{A},Z)$ is a stability condition in the interior of~$\Stab(\mathcal{A})$ for some heart $\mathcal{A}\in\Tilt_\Delta(\mathcal{D})$. We will consider the 1-parameter family of stability conditions $\sigma_R=(\mathcal{A},R\cdot Z)$ for~$R>0$. The heart~$\mathcal{A}$ determines a seed~$\mathbf{i}=(\Gamma,\{e_i\}_{i\in I}, \langle-,-\rangle)$ whose underlying lattice is~$\Gamma=K(\mathcal{D})$, and hence we have a function $X_{\mathbf{i},\gamma}:\mathcal{T}_{\mathbf{i}}\rightarrow\mathbb{R}_{>0}$ for every $\gamma\in K(\mathcal{D})$. In the following, we will write $X_{\sigma,\gamma}(R)=X_{\mathbf{i},\gamma}(\widehat{\Psi}(\sigma_R))$.

\begin{theorem}
Take notation as in the last paragraph. Then 
\[
X_{\sigma,\gamma}(R)\cdot\exp(R\cdot\Re Z(\gamma))\rightarrow1 \quad \text{as $R\rightarrow\infty$}.
\]
\end{theorem}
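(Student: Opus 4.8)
The plan is to reduce this statement to Theorem~\ref{thm:clusterasymptotics} (equivalently Theorem~\ref{thm:introclusterasymptotics}) by transporting everything through the isomorphisms of Theorems~\ref{thm:stabquad} and~\ref{thm:clusterteich}. First I would use the fact that $\sigma=(\mathcal{A},Z)$ lies in the interior of $\Stab(\mathcal{A})$ to invoke the property of the isomorphism~\eqref{eqn:stabquad} recorded after Theorem~\ref{thm:stabquad}: the image of $\sigma$ under $\Sigma(Q,W)\cong\mathcal{Q}^\pm(\mathbb{S},\mathbb{M})$ is a complete, saddle-free quadratic differential $\phi$ whose tagged WKB~triangulation is the tagged triangulation $\tau$ corresponding to the vertex $t\in\Exch_\Delta(\mathcal{D})$ determined by~$\mathcal{A}$, together with an isomorphism $K(\mathcal{A})\cong\widehat{H}(\phi)$ identifying the stability function $Z=Z_\sigma$ with the period map $Z_\phi$ and the simple classes of~$\mathcal{A}$ with the standard saddle classes of~$\phi$. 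Under this identification the class $\gamma\in K(\mathcal{D})$ corresponds to a class in $\widehat{H}(\phi)$, which I will also call $\gamma$, and $Z(\gamma)=Z_\phi(\gamma)$.

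Next I would track the rescaling. The family $\sigma_R=(\mathcal{A},R\cdot Z)$ corresponds, under the $\mathbb{C}$-equivariance of~\eqref{eqn:stabquad}, to rescaling the differential: multiplying the central charge by $R$ corresponds to replacing $\phi$ by $R^2\cdot\phi$, since $Z_{R^2\phi}(\gamma)=\int_\gamma\sqrt{R^2\phi}=R\int_\gamma\sqrt\phi=R\cdot Z_\phi(\gamma)$. (Here I am using that multiplying $\phi$ by a positive real $R^2$ does not change the WKB~triangulation or the hat homology lattice, so $\gamma$ still makes sense.) Then I would use the definition $\widehat{\Psi}=\Psi^\pm$ composed with the homeomorphisms of Theorems~\ref{thm:stabquad} and~\ref{thm:clusterteich}, together with the compatibility of cluster coordinates spelled out at the end of Section~\ref{sec:TheMainResults}: under $\mathcal{T}(Q)\cong\mathcal{T}^\pm(\mathbb{S},\mathbb{M})$ the function $X_{\mathbf{i},\gamma}$ is identified with the Fock--Goncharov coordinate $X_{\tau,\gamma}$ attached to the tagged arc corresponding to~$\gamma$. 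Consequently $X_{\sigma,\gamma}(R)$ is exactly the quantity $X_{\phi,\gamma}(R)$ of Section~\ref{sec:AsymptoticProperty}, namely the cluster coordinate of $\Psi^\pm(R^2\cdot\phi)$ with respect to this arc.

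With these identifications in place the statement becomes literally
\[
X_{\phi,\gamma}(R)\cdot\exp\!\left(R\cdot\Re Z_\phi(\gamma)\right)\rightarrow1\quad\text{as }R\rightarrow\infty,
\]
which is precisely Theorem~\ref{thm:clusterasymptotics}. So the proof is: (i) quote the interior-of-$\Stab(\mathcal{A})$ property of~\eqref{eqn:stabquad} to get a complete saddle-free $\phi$ with the right WKB~triangulation and the identification of $Z$ with $Z_\phi$ and of $\gamma$ with a standard saddle class; (ii) use $\mathbb{C}$-equivariance to match $\sigma_R$ with $R^2\cdot\phi$; (iii) use the cluster-coordinate compatibility from Theorem~\ref{thm:clusterteich} to match $X_{\sigma,\gamma}(R)$ with $X_{\phi,\gamma}(R)$; (iv) apply Theorem~\ref{thm:clusterasymptotics}.

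The only genuine subtlety, and the step I expect to require the most care, is bookkeeping in (i)--(iii): one must be sure that $\gamma$ really is (or is a $\mathbb{Z}$-linear combination of) standard saddle classes for the WKB~triangulation of $\phi$ itself, so that Theorem~\ref{thm:clusterasymptotics} applies to this very arc rather than to some mutation of it — this is why we assume $\sigma$ lies in the \emph{interior} of $\Stab(\mathcal{A})$, which forces $\phi$ to be saddle-free with tagged WKB~triangulation exactly $\tau$. One should also check the sign/normalisation conventions relating the $\mathbb{C}$-action on $\Stab_\Delta(\mathcal{D})$ (acting by $e^{-\mathrm{i}\pi z}$ on central charges) to the $\mathbb{C}$-action on $\mathcal{Q}^\pm(\mathbb{S},\mathbb{M})$ (acting by $e^{-2\pi\mathrm{i}z}$ on differentials), but restricted to the positive real ray these both amount to the scaling $Z\mapsto R\cdot Z$, $\phi\mapsto R^2\cdot\phi$, so no difficulty arises. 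Once the dictionary is set up correctly the result is immediate.
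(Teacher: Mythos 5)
Your proposal is correct and follows essentially the same route as the paper's own proof: use the remark after Theorem~\ref{thm:stabquad} to identify $\sigma_R$ with the rescaled differential $R^2\cdot\phi$ and to match $Z$ with $Z_\phi$ and the basis classes with standard saddle classes, then use the remark after Theorem~\ref{thm:clusterteich} to match $X_{\mathbf{i},e_i}$ with the corresponding cluster coordinates, and finally apply Theorem~\ref{thm:clusterasymptotics}. Your additional remarks about the interior-of-$\Stab(\mathcal{A})$ hypothesis, the multiplicativity of $X_\gamma$ in $\gamma$, and the compatibility of the two $\mathbb{C}$-actions when restricted to the positive real ray are exactly the right sanity checks, even if the paper leaves them implicit.
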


\begin{proof}
By the remarks following Theorem~\ref{thm:stabquad}, the stability condition $\sigma_R$ corresponds to a quadratic differential~$R^2\cdot\phi$ where $\phi$ is complete and saddle-free. There is an isomorphism $K(\mathcal{A})\cong\widehat{H}(\phi)$ that identifies the stability function~$Z$ with the period map~$Z_\phi$ and identifies the basis elements provided by the seed~$\mathbf{i}$ with standard saddle connections. By the remark following Theorem~\ref{thm:clusterteich}, the functions $X_{\mathbf{i},e_i}(R)$ are identified with cluster coordinates on~$\mathcal{T}^\pm(\mathbb{S},\mathbb{M})$ corresponding to these standard saddle-connections. The theorem therefore follows from Theorem~\ref{thm:clusterasymptotics}.
\end{proof}

\bibliographystyle{amsplain}

\begin{thebibliography}{99}

\bibitem{Allegretti18} Allegretti, D.G.L. (2018). Stability conditions and cluster varieties from quivers of type~$A$. \emph{Advances in Mathematics}, \textbf{337}, 260--293.

\bibitem{Allegretti19} Allegretti, D.G.L. (2019). Voros symbols as cluster coordinates. \emph{Journal of Topology}, \textbf{12}(4), 1031--1068.

\bibitem{Allegretti20} Allegretti, D.G.L. (2020). On the wall-crossing formula for quadratic differentials. \texttt{arXiv:2006.08059 [math.GT]}.

\bibitem{Allegretti21} Allegretti, D.G.L. (2021). Stability conditions, cluster varieties, and Riemann-Hilbert problems from surfaces. \emph{Advances in Mathematics}, \textbf{380}.

\bibitem{AllegrettiBridgeland20} Allegretti, D.G.L. and Bridgeland, T. (2020). The monodromy of meromorphic projective structures. \emph{Transactions of the AMS}, \textbf{373}(9), 6321--6367.

\bibitem{BapatDeopurkarLicata20} Bapat, A., Deopurkar, A., and Licata, M. (2020). A Thurston compactification of the space of stability conditions. \texttt{arXiv:2011.07908 [math.RT]}.

\bibitem{BiquardBoalch04} Biquard, O. and Boalch, P. (2004). Wild non-abelian Hodge theory on curves. \emph{Compositio Mathematica}, \textbf{140}(1), 179--204.

\bibitem{BAGG97} Biswas, I., Ar\'es-Gastesi, P. and Govindarajan, S. (1997). Parabolic Higgs bundles and Teichm\"uller spaces for punctured surfaces. \emph{Transactions of the American Mathematical Society}, \textbf{394}(4), 1551--1560.

\bibitem{Bridgeland07} Bridgeland, T. (2007). Stability conditions on triangulated categories. \emph{Annals of Mathematics}, \textbf{166}(2), 317--345.

\bibitem{BridgelandSmith15} Bridgeland, T. and Smith, I. (2015). Quadratic differentials as stability conditions. \emph{Publications Math\'ematiques de l'Institut des Hautes \'Etudes Scientifiques}, \textbf{121}(1), 155--278.

\bibitem{DW07} Daskalopoulos, G. and Wentworth, R.A. (2007). Harmonic maps and Teichm\"uller theory. In Handbook of Teichm\"uller theory I, \emph{IRMA Lectures in Mathematics and Theoretical Physics}, \textbf{11}, 33--109.

\bibitem{DWZ08} Derksen, H., Weyman, J., and Zelevinsky, A. (2008). Quivers with potentials and their representations~I: Mutations. \emph{Selecta Mathematica}, \textbf{14}(1), 59--119.

\bibitem{DHKK14} Dimitrov, G., Haiden, F., Katzarkov, L., and Kontsevich, M. (2017). Dynamical systems and categories. \emph{The influence of Solomon Lefschetz in geometry and topology}, \textbf{621}, 133--170.

\bibitem{DumasNeitzke20} Dumas, D. and Neitzke, A. (2020). Opers and nonabelian Hodge: numerical studies. \texttt{arXiv:2007.00503 [math.DG]}.

\bibitem{EellsLemaire81} Eells, J. and Lemaire, L. (1981). Deformations of metrics and associated harmonic maps. \emph{Geometry and Analysis, Patodi Memorial Volume}, \textbf{90}(1), 33--45.

\bibitem{EellsSampson64} Eells, J. and Sampson, J.H. (1964). Harmonic mappings of Riemannian manifolds. \emph{American Journal of Mathematics}, \textbf{86}(1), 109--160.

\bibitem{Fan21} Fan, Y.W. (2021). Systolic inequalities for K3 surfaces via stability conditions. \emph{Mathematische Zeitschrift}, \textbf{384}, 1--23.

\bibitem{FanFilip20} Fan, Y.W. and Filip, S. (2020). Asymptotic shifting numbers in triangulated categories. \texttt{arXiv:2008.06159 [math.AG]}.

\bibitem{FFHKL21} Fan, Y.W., Filip, S., Haiden, F., Katzarkov, L., and Liu, Y. (2021). On pseudo-Anosov autoequivalences. \emph{Advances in Mathematics}, \textbf{384}.

\bibitem{FanFuOuchi21} Fan, Y.W., Fu, L., and Ouchi, G. (2021). Categorical polynomial entropy. \emph{Advances in Mathematics}, \textbf{383}.

\bibitem{FockGoncharov1} Fock, V.V. and Goncharov, A.B. (2006). Moduli spaces of local systems and higher Teichm\"uller theory. \emph{Publications Math\'ematiques de l'Institut des Hautes \'Etudes Scientifiques}, \textbf{103}(1), 1--211.

\bibitem{FockGoncharov2} Fock, V.V. and Goncharov, A.B. (2007). Dual Teichm\"uller and lamination spaces. In \emph{Handbook of Teichm\"uller theory I, IRMA Lectures in Mathematics and Theoretical Physics}, \textbf{11}, 647--684.

\bibitem{FominShapiroThurston08} Fomin, S., Shapiro, M., and Thurston, D. (2008). Cluster algebras and triangulated surfaces. Part~I: Cluster complexes. \emph{Acta Mathematica}, \textbf{201}(1), 83--146.

\bibitem{GaiottoMooreNeitzke13} Gaiotto, D., Moore, G.W., and Neitzke, A. (2013). Wall-crossing, Hitchin systems, and the WKB approximation. \emph{Advances in Mathematics}, \textbf{234}(2013), 239--403.

\bibitem{Gupta19} Gupta, S. (2019). Harmonic maps and wild Teichm\"uller spaces. \emph{Journal of Topology and Analysis}, 1--45.

\bibitem{Han96} Han, Z.C. (1996). Remarks on the geometric behavior of harmonic maps between surfaces. \emph{Elliptic and parabolic methods in geometry}, 57--66.

\bibitem{Hitchin87} Hitchin, N.J. (1987). The self-duality equations on a Riemann surface. \emph{Proceedings of the London Mathematical Society}, \textbf{3}(1), 59--126.

\bibitem{Haiden21} Haiden, F. (2021). 3-d Calabi-Yau categories for Teichm\"uller theory. \texttt{arXiv:2104.06018 [math.AG]}.

\bibitem{HKK17} Haiden, F., Katzarkov, L., and Kontsevich, M. (2017). Flat surfaces and stability structures. \emph{Publications Math\'ematiques de l'Institut des Hautes \'Etudes Scientifiques}, \textbf{126}(1), 247--318.

\bibitem{Hartman67} Hartman, P. (1967). On homotopic harmonic maps. \emph{Canadian Journal of Mathematics}, \textbf{19}, 673--687.

\bibitem{Jost13} Jost, J. (2013). \emph{Partial Differential Equations}. Graduate Texts in Mathematics. Springer-Verlag.

\bibitem{KellerYang11} Keller, B. and Yang, D. (2011). Derived equivalences from mutations of quivers with potential. \emph{Advances in Mathematics}, \textbf{226}(3), 2118--2168.

\bibitem{KontsevichSoibelman08} Kontsevich, M. and Soibelman, Y. (2008). Stability structures, motivic Donaldson-Thomas invariants, and cluster transformations. \texttt{arXiv:0811.2435 [math.AG]}.

\bibitem{LabardiniFragoso08} Labardini-Fragoso, D. (2008). Quivers with potential associated to triangulated surfaces. \emph{Proceedings of the London Mathematical Society}, \textbf{98}(3), 797--839.

\bibitem{LabardiniFragoso16} Labardini-Fragoso, D. (2016). Quivers with potential associated to triangulated surfaces, part IV: Removing boundary assumptions. \emph{Selecta Mathematica}, \textbf{22}(1), 145--189.

\bibitem{Lemaire82} Lemaire, L. (1982). Boundary value problems for harmonic and minimal maps of surfaces into manifolds. \emph{Annali della Scuola Normale Superiore di Pisa, Classe di Scienze}, \textbf{9}(1), 91--103.

\bibitem{Lohkamp91} Lohkamp, J. (1991). Harmonic diffeomorphisms and Teichm\"uller theory. \emph{Manuscripta Mathematica}, \textbf{71}(1), 339--360.

\bibitem{Penner12} Penner, R.C. (2012). \emph{Decorated Teichm\"uller theory}. European Mathematical Society.

\bibitem{Sabbah99} Sabbah, C. (1999). Harmonic metrics and connections with irregular singularities. \emph{Annales de l'institut Fourier}, \textbf{49}(4), 1265--1291.

\bibitem{Sagman19} Sagman, N. (2019). Infinite energy equivariant harmonic maps, domination, and anti-de Sitter 3-manifolds. \texttt{arXiv:1911.06937 [math.DG]}.

\bibitem{Sampson78} Sampson, J.H. (1978). Some properties and applications of harmonic mappings. \emph{Annales Scientifiques de l'\'Ecole Normale Sup\'erieure}, \textbf{11}(2), 211--228.

\bibitem{SchoenYau76} Schoen, R. and Yau, S.T. (1976). Harmonic maps and the topology of stable hypersurfaces and manifolds with non-negative Ricci curvature. \emph{Commentarii Mathematici Helvetici}, \textbf{51}(1), 333--341.

\bibitem{SchoenYau78} Schoen, R. and Yau, S.T. (1978). On univalent harmonic maps between surfaces. \emph{Inventiones mathematicae}, \textbf{44}(3), 265--278.

\bibitem{SeidelThomas01} Seidel, P. and Thomas, R. (2001). Braid group actions on derived categories. \emph{Duke Mathematical Journal}, \textbf{108}(1), 37--108.

\bibitem{Simpson90} Simpson, C.T. (1990). Harmonic bundles on noncompact curves. \emph{Journal of the American Mathematical Society}, \textbf{3}(3), 713--770.

\bibitem{Strebel84} Strebel, K. (1984). \emph{Quadratic differentials}. Springer-Verlag.

\bibitem{Wolf89} Wolf, M. (1989). The Teichm\"uller theory of harmonic maps. \emph{Journal of Differential Geometry}, \textbf{29}(2), 449--479.

\bibitem{Wolf91} Wolf, M. (1991). Infinite energy harmonic maps and degeneration of hyperbolic surfaces in moduli space. \emph{Journal of Differential Geometry}, \textbf{33}(2), 487--539.

\end{thebibliography}

\end{document}